\newcommand{\df}{\stackrel{\mathrm{def}}{=}}
\newcommand{\X}{\mathscr{X}}
\newcommand{\sm}{\smallsetminus}
\newcommand{\vre}{\varepsilon}
\newcommand{\gUdt}{\mrm{U}_{\mrm{dt}}}
\newcommand{\gUue}{\mrm{U}_{\mrm{ue}}}
\newcommand{\wstfrak}{\mathfrak{w}_{\mathrm{st}}}
\let\orgdescriptionlabel\descriptionlabel
\renewcommand*{\descriptionlabel}[1]{
  \let\orglabel\label
  \let\label\@gobble
  \phantomsection
  \edef\@currentlabel{#1\unskip}
  \let\label\orglabel
  \orgdescriptionlabel{#1}
}
\newcommand{\Sue}{S_\mrm{ue}}
\newcommand{\Str}{S_\mrm{tr}}
\newcommand{\Sdt}{S_\mrm{dt}}
\newcommand{\Hne}{H^{\mathrm{fne}}}
\newcommand{\Vne}{\mathfrak{h}_{\mathrm{fne}}}
\newcommand{\Wst}{W^{\mrm{st}}}
\newcommand{\hibarF}{\bar{h}_i^F}
\newcommand{\XS}{\X^S_{d+1}}
\newcommand{\barmu}{\bar{\mu}}
\newcommand{\hibar}{\bar{h}_i}
\newcommand{\mf}{\mathfrak}
\newcommand{\mrm}{\mathrm}
\newcommand{\gG}{\mathrm{G}}
\newcommand{\gP}{\mathrm{P}}
\newcommand{\gU}{\mathrm{U}}
\newcommand{\Sf}{S_{\mrm{f}}}
\newcommand{\GL}{\mathrm{GL}}
\newcommand{\PGL}{\mathrm{PGL}}
\newcommand{\SL}{\mathrm{SL}}
\newcommand{\Ad}{\mathrm{Ad}}
\newcommand{\Lie}{\mathrm{Lie}}
\newcommand{\lieg}{\mathfrak{g}}
\newcommand{\lieu}{\mathfrak{u}}
\newcommand{\norm}[1]{\left\lVert {#1} \right\rVert}
\newcommand{\dist}{\mrm{dist}}
\newcommand{\der}[1][]{
  \ifthenelse{ \equal{#1}{} }
  {\ensuremath{\mathrm{d}}}
  {\ensuremath{\mathrm{d}_{#1}\!}}
}
\newcommand{\rquotient}[2]{\mathchoice
  {
    \raisebox{-.6ex}{\small \newline ${#1}$}\!\big\backslash\!\raisebox{.6ex}{\small ${#2}$}
  }
  {
    {#1}\backslash{#2}
  }
  {
    \raisebox{-.4ex}{\tiny \newline ${#1}$}\!\backslash\!\raisebox{.4ex}{\tiny ${#2}$}
  }
  {
    \raisebox{-.3ex}{\tiny \newline ${#1}$}\!\backslash\!\raisebox{.3ex}{\tiny ${#2}$}
  }
}
\newcommand{\id}{\mathrm{Id}}
\newcommand{\supp}{\operatorname{supp}}
\newcommand{\N}{\mathbb{N}}
\newcommand{\bN}{\mathbb{N}}
\newcommand{\NN}{\mathbb{N}}
\renewcommand{\P}{\mathbb{P}}
\newcommand{\Q}{\mathbb{Q}}
\newcommand{\bQ}{\mathbb{Q}}
\newcommand{\R}{\mathbb{R}}
\newcommand{\bR}{\mathbb{R}}
\newcommand{\Z}{\mathbb{Z}}
\newcommand{\bZ}{\mathbb{Z}}
\newcommand{\Kcal}{\mathcal{K}}
\newcommand{\Qcal}{\mathcal{Q}}
\newcommand{\Ucal}{\mathcal{U}}
\newcommand{\Zcal}{\mathcal{Z}}
\newcommand{\gfrak}{\mathfrak{g}}
\newcommand{\ufrak}{\mathfrak{u}}
\newcommand{\wfrak}{\mathfrak{w}}
\newcommand{\Xbf}{\mathbf{X}}
\newcommand{\ybf}{\mathbf{y}}
\newcommand{\pbf}{\mathbf{p}}
\newcommand{\xbf}{\mathbf{x}}
\newtheorem{theorem}{Theorem}[section]
\newtheorem{lem}[theorem]{Lemma}
\newtheorem{thm}[theorem]{Theorem}
\newtheorem*{thm-nonumber}{Theorem}
\newtheorem{prop}[theorem]{Proposition}
\newtheorem{corollary}[theorem]{Corollary}
\theoremstyle{definition}
\newtheorem*{definition-nono}{Definition}
\newtheorem{example}[theorem]{Example}
\newtheorem{remark}[theorem]{Remark}
\newtheorem*{acknowledgement}{Acknowledgements}
\newtheoremstyle{step}{}{}{}{}{}{:}{ }{}
\theoremstyle{step}
\renewcommand{\a}{\alpha}
\renewcommand{\b}{\beta}
\newcommand{\g}{\gamma}
\renewcommand{\d}{\delta}
\newcommand{\e}{\varepsilon}
\renewcommand{\l}{\lambda}
\newcommand{\s}{\sigma}
\renewcommand{\th}{\theta}
\renewcommand{\r}{\rightarrow}
\def\multiset#1#2{\ensuremath{\left(\kern-.3em\left(\genfrac{}{}{0pt}{}{#1}{#2}\right)\kern-.3em\right)}}
\newcommand{\seti}[1]{\left\{#1\right\}}
\numberwithin{equation}{section}
\title{Measure rigidity and equidistribution for fractal carpets}
\author{Osama Khalil}
 \address{Department of Mathematics, Statistics, and Computer Science, University of Illinois Chicago}
 \email{okhalil@uic.edu}
\author{Manuel Luethi}
\address{EPFL/SB/IMB/TAN, Station 8, EPFL CH-1015 Lausanne}
\email{manuel.luthi@epfl.ch}
\author{Barak Weiss}
\address{Department of Mathematics, Tel Aviv University}
\email{barakw@tauex.tau.ac.il}
\date{}
   \def\MR#1{}
\begin{document}

\begin{abstract}
    Let $\theta$  be a Bernoulli measure which is stationary for a random walk generated by finitely many contracting rational affine dilations of $\R^d$, and let $\mathcal{K} = \mathrm{supp}(\theta)$ be the corresponding attractor. An example in dimension $d=1$ is the Hausdorff measure on Cantor's middle thirds set, and examples in higher dimensions include missing digits sets, Sierpi\'nski carpets and Menger sponges.
    Let $ \nu$ denote the image of $\theta$ under the map $\mathcal{K} \r \SL_{d+1}(\R)/\SL_{d+1}(\Z)$ which sends $\mathbf{x}$ to the lattice $\Lambda_{\mathbf{x}} = \mathrm{span}_{\Z} (\mathbf{e}_1, \ldots, \mathbf{e}_d, \mathbf{e}_{d+1}+ (\mathbf{x},0)).$
    We prove equidistribution of the pushforward measures $a_{n*}\nu$ along any diverging sequence of diagonal matrices $ (a_n)\subset \SL_{d+1}(\R)$ that expand the first $d$ coordinates
    under a natural non-escape of mass condition.
    The latter condition is known to hold whenever $\th$ is absolutely friendly.
    We also show that weighted badly approximable vectors and Dirichlet-improvable vectors (for arbitrary norm) form a subset of $\mathcal{K}$ of $\theta$-measure zero.
 The key ingredient is a measure classification theorem for the stationary measures of an associated random walk on an $S$-arithmetic space, introduced by two of the authors in \cite{KhalilLuethi}. A new feature of this setting is that this random walk admits stationary measures which are not invariant.
\end{abstract}

\maketitle


   \section{Introduction}
This paper proves equidistribution results for pushforwards of certain fractal measures on certain homogeneous spaces, by analyzing certain random walks adapted to the fractal measures. These dynamical results, are then applied to problems about the Diophantine properties of typical points on certain fractals. We refer the reader to \cite{KLW, BQ1, SimmonsWeiss, ProhaskaSertShi, KhalilLuethi, BHZ, DattaJana} for related work.

Let $k$ and $d$ be positive integers with $k \geq 2$  and let $\Phi = \{f_1, \ldots, f_k\}$ be a  collection of affine maps $f_i: \R^d \to \R^d$ of the form
\begin{equation}\label{eq: def fi}
f_i (\mathbf{x}) = \varrho \xbf   + \mathbf{y}_i, \ \ \text{ where }
\ybf_i
\in \Q^d, \ \varrho \in \Q, \ 0< |\varrho|<1.
\end{equation}
 We will refer to such a collection $\Phi$ as a {\em carpet IFS}. This terminology is motivated by the example of the Sierpi\'nski carpet.
 The {\em attractor} of $\Phi$ is the unique nonempty compact subset of $\R^d$ satisfying $\mathcal{K} = \bigcup f_i(\mathcal{K})$.
 We say $\Phi$ is {\em irreducible} if there is no finite collection of proper affine subspaces of $\R^d$ which is left invariant by each of the maps $f_i$.
For a probability vector $\mathbf{p} = (p_1, \ldots, p_k),$ let $\theta= \theta(\Phi, \mathbf{p})$ be the associated Bernoulli measure supported on its attractor $\mathcal{K}.$
These terms are explained in \textsection \ref{subsec: prelims attractors}, and
the sets $\mathcal{K}$ and  measures $\theta$ which arise in this way form a fairly large class of self-similar fractal sets and measures; for the purpose of this introduction it is enough to note that commonly studied self-similar sets, like Cantor's middle thirds set, missing digit  sets in $d \geq 1$ dimensions, or the Sierpi\'nski carpet, can arise as $\mathcal{K},$ and their Hausdorff measure  can arise as the Bernoulli measure $\theta$.

Let $\X_{d+1}$ denote the space of lattices of covolume one. This space is naturally identified with the quotient of Lie groups
$\SL_{d+1}(\R)/\SL_{d+1}(\Z)$,
via the map $g \SL_{d+1}(\Z) \mapsto g\Z^{d+1}$, and this identification equips $\X_{d+1}$ with the topology of a non-compact manifold and with the measure
$m_{\X_{d+1}},$ which is the unique $\SL_{d+1}(\R)$-invariant measure. Let
\begin{equation}\label{eq: def u+}
   u: \R^d \to \SL_{d+1}(\R), \ \  u(\xbf) \df  \begin{pmatrix}
      \mathrm{Id} & \xbf \\
      \mathbf{0} & 1
    \end{pmatrix}, \ \ \ U \df \left\{u(\xbf ) : \xbf \in \R^d \right\}.
\end{equation}
For $g \in \SL_{d+1}(\R)$, let $$\Lambda_g \df g \Z^{d+1},  \ \ \text{  let }
\Lambda_{\xbf} \df \Lambda_{u(\xbf)}, $$
and denote by $\nu_g $
the pushforward of $\theta$ under the map $\mathbf{x} \mapsto \Lambda_{u(\mathbf{x})g}$.

Given a sequence $(a_n)_{n\in \N}$ of elements of $\mrm{SL}_{d+1}(\R)$, we say that $\th$ is \textit{uniformly non-divergent along $(a_n)$} if for every $\e>0$, there is a compact set $K\subset \X_{d+1}$ so that for all $g$, for all large enough $n$ we have
\begin{align*}
    a_{n \ast}\nu_g(K)\geq 1-\e.
\end{align*}

With this notation we are ready to state our main equidistribution result.

\begin{thm}\label{thm: equidistribution main}
    Let $\Phi$ be an irreducible carpet-IFS,
   let $\theta = \theta(\Phi, \mathbf{p})$ be a Bernoulli measure, for some probability vector $\mathbf{p},$ and
   let
    $ \nu_0 $ be the pushforward of $\theta$ under $\xbf \mapsto \Lambda_{\xbf}$.
    Let $(a_n)_n$ be any sequence of diagonal matrices tending to $\infty$ in $\mrm{SL}_{d+1}(\R)$.
    If $\th$ is uniformly non-divergent along $(a_n)$, then
    \begin{equation}\label{eq: equidistribution main}
    \lim_{n \to \infty} a_{n*}  \nu_0 = m_{\X_{d+1}},
    \end{equation}
         with respect to the weak-$\ast$ topology on the space of probability measures on $\X_{d+1}.$
\end{thm}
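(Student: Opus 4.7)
The plan is to lift $\nu_0$ to a measure $\nu_0^S$ on an $S$-arithmetic lattice space $\X^S_{d+1}$ where it is stationary for a finitely-supported random walk attached to $(\Phi,\mathbf{p})$, and to apply the measure classification of \cite{KhalilLuethi} to the weak-$\ast$ subsequential limits of $a_{n*}\nu_0^S$. Let $S=\{\infty\}\cup\Sf$, where $\Sf$ is the finite set of rational primes dividing the denominators of $\varrho$ and of the coordinates of the $\ybf_i$; embed $\mathcal{K}\subset\Q^d$ diagonally into $\Q_S^d$, and push $\theta$ forward through $\xbf\mapsto u(\xbf)\Gamma_S$ for the appropriate $S$-arithmetic lattice $\Gamma_S$, to obtain $\nu_0^S$, whose image in $\X_{d+1}$ is $\nu_0$. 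The equicontracting self-similarity $\theta=\sum_i p_i(f_i)_*\theta$ combined with the factorisation
\begin{equation*}
u(f_i(\xbf))=u(\ybf_i)\cdot c\cdot u(\xbf)\cdot c^{-1},
\end{equation*}
where $c$ is the (projective) diagonal element implementing $\xbf\mapsto\varrho\xbf$, identifies $\nu_0^S$ as a stationary measure for a random walk $\Pcal$ on $\X^S_{d+1}$ driven by the elements $g_i=u(\ybf_i) c$ with weights $p_i$. These $g_i$ contract at $\infty$ with rate $|\varrho|$ and dilate at the primes of $\Sf$, so the walk is hyperbolic in the $S$-arithmetic sense.

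Uniform non-divergence along $(a_n)$ yields tightness of $\{a_{n*}\nu_0^S\}_n$; extract a weak-$\ast$ subsequential limit $\mu^S$, whose projection to $\X_{d+1}$ is the corresponding limit $\mu$ of $a_{n*}\nu_0$. Both are probability measures, and one must show $\mu=m_{\X_{d+1}}$. The first substantial step is to show that $\mu^S$ is still $\Pcal$-stationary. Since $a_n$ acts only on the archimedean factor, conjugation $a_n g_i a_n^{-1}$ distorts the archimedean component but is trivial at finite places. Applying $a_{n*}$ to $\nu_0^S=\sum_i p_i g_{i*}\nu_0^S$, rewriting each term as $g_{i*}(a_n^{(i)})_*\nu_0^S$ with $a_n^{(i)}:=g_i^{-1}a_n g_i$ again a diverging diagonal sequence that expands the same archimedean unipotent $U$, and exploiting uniform tightness of all the relevant sequences (another application of the non-divergence hypothesis), one passes to the limit to obtain $\mu^S=\sum_i p_i g_{i*}\mu^S$.

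One now invokes the measure classification of \cite{KhalilLuethi}: every $\Pcal$-stationary probability measure on $\X^S_{d+1}$ is either a canonical lift of $m_{\X_{d+1}}$ or is supported on a proper algebraically constrained $\Pcal$-invariant subset. The $a_n$-expansion along the full archimedean unipotent $U$, combined with the irreducibility of $\Phi$, rules out the second alternative: concentration of $\mu^S$ on a proper invariant subvariety would force $\theta$ itself to live on a proper affine subspace of $\R^d$ invariant under the $f_i$, contradicting irreducibility. Therefore $\mu=m_{\X_{d+1}}$, and since every subsequential limit equals $m_{\X_{d+1}}$, the full sequence converges, proving \eqref{eq: equidistribution main}. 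The main obstacle in this outline is the verification of stationarity of the limit: the walk contracts at $\infty$ with rate $|\varrho|$ while $a_n$ diverges, so a naive commutation argument fails, and the $S$-arithmetic rigidity at the finite places together with uniform non-divergence is what allows stationarity to survive the passage to the limit. This is also where the feature flagged in the abstract, that $\Pcal$ admits non-invariant stationary measures, must be handled with care, since invariance arguments available in more classical settings are not at our disposal here.
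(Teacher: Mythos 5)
Your high-level strategy — lift $\nu_0$ to an $S$-arithmetic space, recognize the lift as stationary for a finitely-supported random walk attached to $\Phi$, take a subsequential limit, show the limit is still stationary, and appeal to a classification of stationary measures — is exactly the paper's. But there are two substantive gaps, one in the setup and one in the endgame.

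First, you choose the walk elements $g_i = u(\ybf_i)c$, placing the translation in the archimedean component. This creates the commutation difficulty you flag at the end, and your proposed fix does not work: $a_n^{(i)} = g_i^{-1}a_ng_i$ is \emph{not} diagonal (it contains a unipotent component coming from the $u(\ybf_i)$ factor), and even if it were, knowing $a_{n*}\nu_0^S\to\mu^S$ says nothing a priori about the limit of $(a_n^{(i)})_*\nu_0^S$. The paper's crucial observation, already in \cite{KhalilLuethi} and encoded in its definition of $h_i$ (see~\eqref{eq: def h}), is to move the translation entirely to the finite places: the archimedean component of $h_i$ is purely diagonal, the translation is realized by the identity $h_i\Lambda^S = u(\ybf_i)\Lambda^S$ where $\lambda_i\in\Lambda^S$ absorbs the unipotent part. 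With this lift $h_i$ commutes with every $a_n\subset A$, so stationarity of the limit is immediate by Proposition~\ref{prop: commutes with RW}, and the ``obstacle'' you describe disappears. The lift you wrote down is the wrong one, and the subtle argument you sketch to compensate is not valid.

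Second, your argument ruling out the non-Haar alternative does not follow. You claim that concentration of $\mu^S$ on a proper invariant subset would force $\theta$ onto a proper affine subspace invariant under the $f_i$, contradicting irreducibility; but $\mu^S$ is a \emph{limit} of $a_{n*}\nu_0^S$, not $\nu_0^S$ itself, so no conclusion about $\theta$ can be drawn this way. Indeed, as \S\ref{sec: examples} shows, the random walk genuinely admits non-invariant stationary measures (supported on the solenoid) and irreducibility alone cannot dismiss them. The actual argument (Theorem~\ref{thm: equidistribution general} and Corollary~\ref{cor: dynamical}) is different: the classification theorem of the paper (Theorem~\ref{thm: dynamical}, not the result of \cite{KhalilLuethi}, which needed a stronger hypothesis) yields that any non-Haar ergodic piece suffers complete escape of mass under the auxiliary diagonal element $b$ of \eqref{eq: def a2}. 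One then writes $\bar\nu_\infty = (1-\eta)m_{\X^S_{d+1}}+\eta\nu'$, uses that $b$ commutes with both the walk and the $a_j$, and plays this off against uniform non-divergence to force $\eta=0$. Your outline has no ingredient playing the role of this escape-of-mass argument, and irreducibility is used elsewhere (in the proof of the stationary measure classification), not here.
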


As we will see in \textsection
\ref{subsec: friendliness}, conditions on $\theta$ and $(a_n)$ which guarantee uniform non-divergence are well-understood. Clearly, non-divergence of the sequence $a_{n*}\nu_0$ is a necessary condition for \eqref{eq: equidistribution main}, and all known methods for establishing this non-divergence estimate actually yield the stronger uniform version we use here. In particular, uniform non-divergence is known to hold when $(a_n)$ is \textit{drifting away from walls} and $\theta$ is \textit{absolutely friendly}; cf.~\textsection \ref{sec: preliminaries} for definitions.
Here we mention that absolute friendliness is satisfied for large classes of examples including the Hausdorff measure on $\Kcal$ under the open set condition, and for all Bernoulli measures under the strong separation condition.

Results about limits of measures as in \eqref{eq: equidistribution main} have a long history in homogeneous dynamics, and we briefly mention a few that are relevant to our discussion.
In the case
$a_n =\mathrm{diag}\left(e^{n/d}, \ldots, e^{n/d}, e^{-n}\right),$ the group $U$ is the expanding horospherical group for the action of $(a_n)$, and in this case, if
 $\theta$ is Lebesgue measure on $\R^d$ then
 \eqref{eq: equidistribution main} is an easy consequence  of the Howe-Moore theorem on mixing on $\mathscr{X}_{d+1}$.
 Also in the case of Lebesgue measure, for more general sequences $(a_n)$ which expand $U$ by conjugations,
 \eqref{eq: equidistribution main} was established in \cite{KWDirichletimprovable}
 and \cite{KleinbockMargulis}.

The first equidistribution results for a measure which is singular with respect to Lebesgue measure were given by Shah in \cite{ShahInventiones}, in the case that $\theta$ is the length measure on an analytic nondegenerate curve, and these results were  later extended by various authors (see \cite{ShahYang} and references therein).
The first equidistribution result for fractal measures came in the paper \cite{KhalilLuethi}, where an additional hypothesis was imposed, namely an inequality involving the contractions in the IFS $\Phi$ and the coefficients of the probability vector $\pbf.$ Under these conditions it was shown that \eqref{eq: equidistribution main} holds in an effective form.  Further effective results in case $d=1$ were obtained by Datta and Jana in  \cite{DattaJana}, under an assumption involving Fourier decay, and in  great generality in a breakthrough paper of  B\'enard, He and Zhang in \cite{BHZ}.
The
techniques of this paper are different from those used in \cite{KhalilLuethi,DattaJana,BHZ}.

\subsection{Applications to Diophantine approximations}
As with all of the previously mentioned dynamical results, Theorem \ref{thm: equidistribution main} is motivated by questions in Diophantine approximations. Our Diophantine applications, which we now state, concern so-called weighted approximation. Given a {\em weight vector}
$\mathbf{r} = (r_1, \ldots, r_d)$, with
$$
\sum_{i=1}^d r_i =1, \  \ \ \ \ \forall i, \ r_i>0,
$$
and $\xbf = (x_1, \ldots, x_d)$, we say that $\xbf$ is {\em $\mathbf{r}$-badly approximable} if there is $c>0$ such that for all  $Q \in \N$ and all $(P_1, \ldots, P_d) \in \Z^d$, we have
\begin{equation}\label{eq: def BA}
Q \, \max_i \left( |Qx_i - P_i|^{1/r_i} \right) \geq c.
\end{equation}
Let
\begin{equation}\label{eq: def weighted diagonal group}
a_t^{(\mathbf{r})} \df \mathrm{diag}\left(e^{r_1t}, \ldots, e^{r_dt}, e^{-t} \right) \subset \SL_{d+1}(\R).
\end{equation}
Given a norm $\| \cdot \|$ on $\R^{d+1},$ we set
\begin{equation}\label{eq: def epsilon norm}
\vre_{\| \cdot \|} \df \sup \{\vre > 0: \text{ there is } \Lambda \in \X_{d+1} \text{ such that } B_{\| \cdot \|}(0, \vre) \cap \Lambda =\{0\} \},
\end{equation}
and
say that $\xbf \in \R^d$ is {\em $(\mathbf{r}, \| \cdot \|)$-Dirichlet improvable} if there is $\vre < \vre_{\| \cdot\|} $ such that  for all sufficiently large $t$, the lattice $a_t^{(\mathbf{r})} \Lambda_{\xbf}$ contains  vectors $\mathbf{y}$ with
$0< \|y \| \leq \vre.$
The  case in which $\| \cdot \| = \| \cdot \|_\infty$ is the supremum norm was studied by Davenport and Schmidt \cite{DavenportSchmidt} and the case of general norms was studied by Kleinbock and Rao \cite{KleinbockRao}, see \textsection \ref{subsec: Dani correspondence} for more details.
With these notations we have:
\begin{thm}\label{thm: diophantine main}
    Let $\Phi$ and $\theta$ be as in Theorem \ref{thm: equidistribution main}.  Then for any weight vector $\mathbf{r},$ and for any norm $\| \cdot \|$ on $\mathbb{R}^{d+1}$,  the set of $(\mathbf{r}, \| \cdot \|)$-Dirichlet improvable vectors has $\theta$-measure zero.  In particular, the set of $\mathbf{r}$-badly approximable points has $\theta$-measure zero.
\end{thm}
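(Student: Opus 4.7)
Via the standard Dani correspondence (cf.\ \cite{KleinbockMargulis, KleinbockRao}), $\xbf \in \R^d$ is $(\mathbf{r}, \|\cdot\|)$-Dirichlet improvable if and only if there exists $\vre < \vre_{\|\cdot\|}$ such that the orbit $a_n^{(\mathbf{r})}\Lambda_\xbf$ is eventually trapped in the closed set $E_\vre \df \{\Lambda \in \X_{d+1} : \alpha_1^{\|\cdot\|}(\Lambda) \leq \vre\}$, where $\alpha_1^{\|\cdot\|}(\Lambda)$ denotes the $\|\cdot\|$-length of the shortest nonzero vector of $\Lambda$. Similarly, $\xbf$ is $\mathbf{r}$-badly approximable if and only if its $a_n^{(\mathbf{r})}$-orbit is bounded in $\X_{d+1}$, from which the inclusion of the BA set into the DI set is standard via Mahler compactness. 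Writing $B_{\vre, N} \df \{\xbf : a_n^{(\mathbf{r})}\Lambda_\xbf \in E_\vre \text{ for all } n \geq N\}$ and $\text{DI} = \bigcup_{\vre, N} B_{\vre, N}$ over countably many $\vre < \vre_{\|\cdot\|}$ and $N \in \N$, it suffices to show $\theta(B_{\vre, N}) = 0$ for every such $(\vre, N)$.

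Apply Theorem \ref{thm: equidistribution main} to the diverging diagonal sequence $a_n \df a_n^{(\mathbf{r})}$. All weights $r_i$ are strictly positive, so $(a_n^{(\mathbf{r})})$ drifts away from walls, and $\theta$ is absolutely friendly under the standing hypotheses (as noted in the discussion following the theorem); uniform non-divergence along $(a_n)$ therefore holds, and Theorem \ref{thm: equidistribution main} gives $a_{n \ast}\nu_0 \to m_{\X_{d+1}}$ in the weak-$\ast$ topology. The function $\alpha_1^{\|\cdot\|}$ is continuous on $\X_{d+1}$, so $E_\vre$ is closed; the definition of $\vre_{\|\cdot\|}$ ensures that $E_\vre^c$ is a nonempty open set, hence $m_{\X_{d+1}}(E_\vre) < 1$. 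For every $n \geq N$ and every $\xbf \in B_{\vre, N}$, the condition $a_n^{(\mathbf{r})}\Lambda_\xbf \in E_\vre$ yields $\theta(B_{\vre, N}) \leq a_{n \ast}\nu_0(E_\vre)$, and letting $n \to \infty$ via the portmanteau theorem gives $\theta(B_{\vre, N}) \leq m_{\X_{d+1}}(E_\vre) < 1$.

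The main obstacle is upgrading this strict inequality to the equality $\theta(B_{\vre, N}) = 0$, since weak-$\ast$ convergence alone does not rule out trapping on a subset of positive but sub-full $\theta$-measure. My plan is to exploit the self-similarity $\theta = \sum_{|\omega| = k} p_\omega (f_\omega)_\ast\theta$ together with the rationality of $\varrho$ and the $\ybf_i$: for each word $\omega$ of length $k$, conjugation of $u(f_\omega(\xbf))$ by a diagonal element commuting with $a_n^{(\mathbf{r})}$ relates $a_n^{(\mathbf{r})}\Lambda_{f_\omega(\xbf)}$ to $a_n^{(\mathbf{r})}\Lambda_\xbf$ modified by a bounded element of $\SL_{d+1}(\Q)$. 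Moving to the $S$-arithmetic homogeneous space introduced in \cite{KhalilLuethi} (with $S$ containing the archimedean place and the primes dividing the denominators of $\varrho$ and the $\ybf_i$), these rational shifts are absorbed into a compact family of base points $g_\omega$, and $\nu_0$ decomposes as a convex combination of pushforwards $\nu_{g_\omega}$ to each of which Theorem \ref{thm: equidistribution main} applies uniformly. Combining these equidistribution statements with the trapping of $B_{\vre, N}$ in $E_\vre$ and letting $k \to \infty$ should yield $\theta(B_{\vre, N}) = 0$. The hardest step is the $S$-arithmetic bookkeeping needed to ensure compactness of the family $\{g_\omega\}$ uniformly in $\omega$ and to verify the required uniform equidistribution in this setting.
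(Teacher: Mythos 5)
You correctly reduce the claim to showing $\theta(B_{\vre,N})=0$ for each trapping set, and your observation that weak-$\ast$ convergence alone only yields $\theta(B_{\vre,N}) \leq m_{\X_{d+1}}(E_\vre) < 1$ is exactly the right diagnosis of the difficulty. However, the fix you sketch does not close the gap. Decomposing $\theta = \sum_{|\omega|=k}p_\omega (f_\omega)_\ast\theta$ and applying equidistribution to each piece (which is legitimate, and is what the stronger Theorem \ref{thm: equidistribution general} delivers, since it holds uniformly over base points) only produces, for each $\omega$, the \emph{same} upper bound $(f_\omega)_\ast\theta(B_{\vre,N}) \leq m_{\X_{d+1}}(E_\vre)$. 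Averaging these with weights $p_\omega$ reproduces the bound $\theta(B_{\vre,N}) \leq m_{\X_{d+1}}(E_\vre)$ with no amplification as $k\to\infty$. A convex combination of inequalities $\leq c<1$ cannot by itself yield $=0$; you need a mechanism that, on at least one cylinder, forces the relative measure of $B_{\vre,N}$ to be \emph{close to one}, which then contradicts the uniform upper bound $<1$.

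That mechanism is a density argument. The paper proves a self-similar Lebesgue density theorem (Proposition \ref{prop: Lebesgue density}): for $\theta$-a.e.\ $\xbf$ in a Borel set $B$, the relative measures $\theta(B\cap f_n(\mathcal{K}))/\theta(f_n(\mathcal{K}))\to 1$ along the nested cylinders $f_n(\mathcal{K})$ containing $\xbf$. (It is proved via the increasing martingale theorem applied to the cylinder filtration, using that the $\theta$-measure of overlaps $f_i(\mathcal{K})\cap f_j(\mathcal{K})$ vanishes under the standing separation hypotheses.) Assuming $\theta(B)>0$ for the trapping set $B$, one selects a single cylinder $f(\mathcal{K})$ with $\theta(B\cap f(\mathcal{K}))/\theta(f(\mathcal{K})) \geq 1-\delta/2$, where $\delta = m_{\X_{d+1}}(\mathcal{U})>0$ and $\mathcal{U}$ is the open complement of $E_\vre$. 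The normalized restriction $\theta_1 = f_\ast\theta$ is precisely the Bernoulli measure of the conjugated carpet IFS $\Phi'$ of \eqref{eq: if we let}, which is again a rational carpet IFS with the same contraction ratio and same friendliness constants (Proposition \ref{prop: fractal measures are friendly}), so Theorem \ref{thm: equidistribution general} applies directly to $\theta_1$ and gives $\liminf_t a^{(\mathbf{r})}_{t\ast}\nu_1(\mathcal{U})\geq \delta$; but trapping forces $a^{(\mathbf{r})}_{t\ast}\nu_1(\mathcal{U})\leq \delta/2$ for $t\geq t_0$, a contradiction. Your instinct that self-similarity, rationality, and the $S$-arithmetic framework are the right tools is sound, and the uniform non-divergence you invoke is indeed what makes the rescaled measures tractable; the missing piece is replacing the convex-combination averaging by the pointwise density statement that singles out a cylinder with near-full relative measure of $B$.
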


The case $\mathbf{r} = (1/d, \cdots, 1/d)$ is called the case of {\em equal weights}.  In case $d=1$ the second assertion was proved by Einsiedler, Fishman and Shapira \cite{EFS}, and the case of equal weights in all dimensions was settled in \cite{SimmonsWeiss}.
Note that in case $d=1$ the only weights are the equal weights, thus our results are only new  when $d>1$.
The first results about general  weight vectors $\mathbf{r}$ were obtained by Prohaska, Sert and Shi \cite{ProhaskaSertShi}; however, they could only treat measures  defined by an IFS of affine maps, which depends on $\mathbf{r}$. Finally note that an  effective version of Theorem \ref{thm: equidistribution main} would have additional important Diophantine applications. Namely, in case $d=1$ it was used in \cite{BHZ} to  settle a question attributed to Mahler; the analogous conjecture in higher dimensions is still open.

\subsection{Random walks on $S$-arithmetic spaces}\label{subsec: RW S arithmetic}
  The results of this paper rely on a description of stationary measures for certain random walks on an $S$-arithmetic space, which depends on the IFS $\Phi. $
Let $d \in \N$ and denote by $\mathbf{G}$ the automorphism group of the $\bQ$-algebra $\mrm{Mat}_{d+1}$, i.e., $\mathbf{G} = \PGL_{d+1}$; cf.~\cite[\textsection3.1]{KhalilLuethi}. For a finite collection of places $S = \{\infty, p_1, \ldots, p_\ell\}$, where $p_1, \ldots, p_\ell$ are primes, we define an $S$-arithmetic homogeneous space
$$ \X_{d+1}^S \df G^S / \Lambda^S,$$
where
$$
G^S \df \mathbf{G}(\R) \times \prod_{j=1}^\ell \mathbf{G}(\Q_{p_j}),$$
and $\Lambda^S
$ is the diagonal embedding in $G^S$ of $$\mathbf{G}\left(\Z\left[\frac{1}{p_1}, \cdots, \frac{1}{p_\ell} \right ] \right).$$

The homogeneous space  $\X_{d+1}^S$ is equipped with a unique $G^S$-invariant probability measure $m_{\X_{d+1}^S}.$ Let $G \df \mathbf{G}(\R)$. Notice that the space of lattices $\X_{d+1}$ can also be identified with $G/ \mathbf{G}(\Z)$, and for $K_{\mathrm{f}} = \prod_{j=1}^\ell \mathbf{G}(\Z_{p_j})$, we have that  $K_\mathrm{f} \backslash \X^S_{d+1} \cong \X_{d+1}$; cf.~\cite[\textsection 3.1 \& Appendix B]{KhalilLuethi}.  There is a transitive action of $G^S$ on $\X_{d+1}^S$ by left translations, and since we have an inclusion $G \to G^S, $ we get an action of $G$ on $\X_{d+1}^S$, for which the projection $\X_{d+1}^S \to \X_{d+1}
$ is equivariant.  For more information about $S$-arithmetic groups and $S$-arithmetic homogeneous spaces, we refer the reader to \cite{PlatonovRapinchuk, Ratnerpadic, Tomanov}.

Following \cite{KhalilLuethi}, we now define
a random walk on $\X_{d+1}^S,$ depending on the carpet IFS $\Phi$ and a probability vector $\mathbf{p}$.
We will define elements $h_1, \ldots, h_k \in G^S$ below.
Then,
 fixing a probability vector $\mathbf{p},$
the {\em $\Phi$-adapted random walk}  is obtained by moving from $x \in \X^S_{d+1}$
 to $h_i x$, independently, with probability $p_i.$

Let
$\Sf \df \{ p_1, \ldots, p_\ell\}$ be all the primes which appear in the denominators of all the coefficients of the  maps of the IFS $\Phi$, as well as the numerator of the contraction ratio $\varrho$, and let $S \df \Sf \cup \{\infty\}$.
That is, $\Sf$ consists of the primes appearing in the decomposition of $rq$, where
$$\varrho = \frac{r}{q},  \ \ \ \gcd(r,q)=1,$$
and the denominators of all the coefficients of the translation vectors $\ybf_i$. This choice means that if we write
$$\Z\left[\frac{1}{\Sf}\right] \df \Z \left[\frac{1}{p_1}, \ldots, \frac{1}{p_\ell} \right],$$
then
$\Sf$ is the smallest set of primes such that $\varrho$ is invertible in $\Z\left[\frac{1}{\Sf}\right]$ and each $\mathbf{y}_i$ belongs to $\left(\Z\left[\frac{1}{\Sf}\right] \right)^d.$

Here and in what follows, we will define elements of $G^S$ by specifying $|S|$-tuples of matrices in $\GL_{d+1}(\Q_\s),$ where $\s$ ranges over $S$ and where $\Q_\infty$ is another notation for the field $\R$; the reader should keep in mind that each of these matrices should be thought of as a coset representatives modulo the center of $\GL_{d+1}(\Q_\sigma)$.
For
$$i=1, \ldots, k \ \ \text{ and } f_i \in \Phi, \ \ \ \ f_i(\xbf) =  \varrho \xbf + \ybf_i,$$
we define  $h_i$   in $G^S$ by
$h_i = \left( h_i^{(\sigma)} \right)_{\sigma\in S}$,
where
 \begin{equation}\label{eq: def h}
h_i^{(\sigma)} \df   \left \{ \begin{matrix}
  \begin{pmatrix}
    \varrho \, \mathrm{Id}_d & -\mathbf{y}_i \\
    \mathbf{0} & 1
  \end{pmatrix} & \, \ \ \ \ \ \ \  \ \text{if } \sigma \in S_{\mathrm{f}}  \\
 \begin{pmatrix}
     \varrho \, \mathrm{Id}_d & \mathbf{0} \\
    \mathbf{0} & 1
  \end{pmatrix} & \ \ \  \ \ \ \ \ \ \  \text{if } \sigma =\infty.
  \end{matrix} \right.
  \end{equation}

Let
 \begin{equation}\label{eq: def Sue} S_{\mathrm{ue}} \df \{ p \in S_{\mathrm{f}} : p|q\} \ \  \ \text { and } \ \  S_{\mathrm{ue}}^c \df S \sm S_{\mathrm{ue}}.
 \end{equation}

The subscript `ue' stands for `uniform expansion'.
 Let $\mu$
 be the measure on $G^S$ given by
 \begin{equation}\label{eq: def mu}
     \mu = \sum_{i=1}^k p_i \delta_{h_i}.
 \end{equation}
 By a {\em  measure on $\X_{d+1}^S$} we mean a  finite regular Borel measure. Recall that a measure $\nu$ on $\X_{d+1}^S$ is called {\em $\mu$-stationary} if $\mu * \nu = \nu,$ where $\mu * \nu$ is the measure on $\X_{d+1}^S$ defined by
 $$
 \forall f \in C_c\left(\X_{d+1}^S \right), \ \ \ \ \int_{\X_{d+1}^S} f \, \der(\mu * \nu) = \int_G  \int_{\X_{d+1}^S} f \, \der g_{*}\nu \, \der\mu(g).
 $$
 The collection of stationary probability measures is a closed convex set in the space of probability measures on $\X_{d+1}^S$, and $\nu$ is called {\em $\mu$-ergodic} if it is an extreme point of this convex set.

The following 
 property of stationary measures is our main result. Let
 $b \df \left(b^{(\sigma)} \right)_{\sigma \in S},$
 where
 \begin{equation}\label{eq: def a2}
 b^{(\sigma)} = \begin{cases}
     \begin{pmatrix}
            \varrho^{-1} \, \mathrm{Id} & \mathbf{0} \\
            \mathbf{0} & 1
        \end{pmatrix}&  \sigma \in \Sue,
        \\
        \mathrm{Id} &   \sigma \in \Sue^c.
 \end{cases}
 \end{equation}
For a sequence of measures $(\nu_k)_{k \in \N}$, we will write $\nu_k\to 0$ if  the sequence $(\nu_k)$ converges in the weak-* topology to the zero measure. In other words, for every compact set $K$, $\nu_k(K) \to 0.$ We will refer to this as {\em complete escape of mass}.

\begin{thm}
\label{thm: dynamical}
    Let $\Phi$ be an irreducible carpet IFS,
let $ h_i$ be as in \eqref{eq: def h},  let $\mathbf{p}$ be a probability vector, and let $\mu$ be as in \eqref{eq: def mu}.
Suppose $\nu$ is an ergodic $\mu$-stationary measure on $\X_{d+1}^S$.
Then one of the following holds:
\begin{enumerate}
    \item \label{item: mass escape}  $b^{k}_{\ast}\nu\to 0$ as $k\to \infty$.
    \item $\nu = m_{\X_{d+1}^S}$.
    \end{enumerate}
\end{thm}

\begin{remark}
    In Theorem \ref{thm: stationary main} we give a finer description of stationary measures. In particular, this theorem implies that if $\nu$ is an ergodic stationary measure that differs from $m_{\XS}$, then for $\nu$-almost every $x_0 = g_0 \Gamma^S \in \XS$, the trajectory $(b^k x_0)_{k\geq 0}$ diverges because there is a rational vector in an exterior power of $\mathfrak{g}$ which is contracted to zero under the adjoint action of $b^kg_0$, as $k \to \infty$; that is, in the terminology of \cite{Weiss_divergence_GAFA}, the trajectory diverges for `obvious reasons'. 
\end{remark}

\subsection{Organization of the paper}
Theorem \ref{thm: dynamical}, which provides a description of stationary measures for a certain random walk,  is the main result of this paper. In  \textsection \ref{sec: preliminaries}, we introduce  some standard facts and introduce our notation. In \textsection \ref{sec: examples} we illustrate some stationary measures which arise for the random walk we consider. As these examples show,  the random walk we consider is not {\em stiff}, i.e., there are stationary measures which are not invariant under individual elements in $\mathrm{supp}(\mu)$. This sets the random walk considered here apart from many of the setups studied in  prior works, and is a major complication in our setup. In \textsection \ref{sec: deducing other results} we deduce the other main results of the paper from Theorem \ref{thm: dynamical}. Note that our argument for equidistribution is quite different from the one used in prior work; the crucial point for these deductions is that the maps considered in \eqref{eq: equidistribution main} and in Theorem 
\ref{thm: dynamical}  commute with all elements in $\mathrm{supp}(\mu),$ and thus send stationary measures to stationary measures.

In \textsection \ref{sec: mu vs mu bar} we begin the proof of Theorem 
\ref{thm: dynamical}. We introduce an auxiliary random walk $\bar \mu$. The trajectories for the $\bar \mu$-random walk stay within a bounded distance from those of the $ \mu$-random walk, and treating $\bar \mu$ makes it possible to avoid some technical complications. We state Theorem \ref{thm: stationary main}, which is the analogue of Theorem 
\ref{thm: dynamical} for this random walk, and show, by considering a random walk on a common cover, that Theorem \ref{thm: stationary main} implies Theorem 
\ref{thm: dynamical}.
The proof of Theorem \ref{thm: stationary main} occupies the rest of the paper. The steps and main ideas for its proof are described in \textsection \ref{sec:outline of proof} below.

\begin{acknowledgement}
   The authors are grateful to Aaron Brown, Nishant Chandgotia, and \c{C}agr{\i} Sert  for stimulating discussions, and are grateful to the anonymous referees for pointing out some inaccuracies and helping make the paper more readable.
   O.K. is partially supported by NSF grants DMS-2247713 and DMS-2337911,
   M.L.~is partially supported by SNSF grants 200021-197045 and 200021L-231880, and
   B.W. is partially supported   by grants  ISF-NSFC 3739/21 and ISF 2021/24. The authors are grateful to the CMSA at Harvard University for its hospitality in May 2023, when some of the work on this paper was conducted. M.L.~thanks Dmitry Kleinbock for the hospitality.
   \end{acknowledgement}

\section{Outline of the Proof}
\label{sec:outline of proof}
For convenience of the reader, we provide an informal outline of the proof of Theorem~\ref{thm: stationary main}.
The proof follows the exponential drift strategy of Benoist and Quint \cite{BQ1}, but requires substantial adaptations to our non-stiff setup.
 In this section we recall the strategy of \cite{BQ1}, indicate the complications which arise in our  setting, and explain how we deal with them. For the purpose of this section, there will be no harm in ignoring the distinction between $\mu$ and $\bar \mu$, that is, assume that the random walk is the one described in \textsection \ref{subsec: RW S arithmetic}.

Let $B=(\supp \bar \mu)^{\N}$ be the space of infinite words in the random walk and $\b=\bar{\mu}^{\otimes \N}$ be the associated Bernoulli measure.   Let $\nu$ be a $\bar \mu$-stationary measure.  Recall that we have a disintegration  $\nu = \int_B \nu_b\, \der \b(b)$ in terms of Furstenberg limit measures $\nu_b=\lim_{n\to\infty} (b_1\cdots b_n)_\ast \nu$; cf.~\textsection\ref{sec:generalities stationary measures} for details.
Let $W_b$ denote the subgroup tangent to the directions that are contracted to $0$ under the adjoint action of $b_n^{-1}\cdots b_1^{-1}$ as $n\to\infty$.
In our setting, this group is deterministic (i.e., independent of $b$) and is a proper subgroup of the `stable group' $\Wst$ which we introduce in \eqref{eq: def Wst}.

We assume that Case~\eqref{item: case 2} of Theorem~\ref{thm: stationary main} does not hold, which by ergodicity, implies that $\nu$-almost every point has trivial stabilizer in $\Wst$.   The goal of the exponential drift argument, implemented in \textsection\ref{sec: exponential drift}, is to show that $\nu_b$ is invariant under a one-parameter (necessarily unipotent) subgroup of $W_b$ for almost every $b$. The result will then follow by an application of Ratner's theorem along with the special structure of our random walk that is used to rule out other homogeneous measures besides $m_{\XS}$. This is carried out in \textsection\ref{sec:endgame}.

Following \cite{BQ1}, given a generic $b$, with the aid of Lusin's theorem, we find two $\e$-close points $x,y$ in the support of $\nu_b$, so that the leafwise measures of $\nu_b$ along the $W_b$-orbits of $x,y$ are also close.  We then apply the random walk to this configuration with the goal of finding two new points $x_2,y_2$, in the support of $\nu_{b_2}$, for a suitable $b_2$,  which satisfy the following:
\begin{enumerate}
    \item\label{item:exp drift growth} The distance between $x_2,y_2$ is $\asymp 1$.
    \item\label{item:exp drift direction} The displacement between $x_2$ and $y_2$ essentially points in the direction of $W_{b_2}$.
    \item\label{item:exp drift cancel distortion} The leafwise measures of $\nu_{b_2}$ along the $W_{b_2}$-orbits of $x_2,y_2$ remain close.
\end{enumerate}
Taking $\e$ to $0$, and repeating this process, produces points with displacement belonging to  $W_{b'}$ for suitable $b'$, and whose leafwise measures agree, thus implying the desired invariance.

The random walk maneuvers that produce such $x_2,y_2$ consist of first deleting the length-$n$ prefix of $b$, $n=n(\e)$, and then adjoining a suitably chosen length-$m$ prefix, $m=m(n)$.
The first leg (resp.~second) of this itinerary is referred to as the backward (resp.~forward) random walk.
Item~\eqref{item:exp drift direction} follows from general properties of linear random walks, roughly that vectors tend to point towards the direction of a suitable top Lyapunov space under the random walk; cf.~\textsection \ref{sec:GrowthPropertiesRandomWalk}.

\subsection*{The role of the no-rotations hypothesis}
The reason the strategy involves both going backward and forward by the random walk is to achieve item~\eqref{item:exp drift cancel distortion}, where $n$ and $m$ are chosen so that the distortion of the leafwise measures in the two legs nearly cancel each other out.
Here, and in all prior works, the key property needed is the conformality of the action of the random walk on these leafwise measures. In our setting, this is the reason we assume the linear part of our IFS has no rotations. Indeed, otherwise, these rotations may generate a non-compact group over one of the primes in the definition of the induced random walk on $\XS$. Such non-compactness would lead to a non-conformal action on the  Lyapunov space corresponding to $W_b$.

\subsection*{The role of the group $\Wst$}
To achieve~\eqref{item:exp drift growth}, we must ensure that $x,y$ are not aligned along directions that may contract by going either backward or forward by the random walk.
In \cite{BQ1}, and almost all prior works, this is done by ensuring that $\nu_b$ gives $0$ mass to $W_b$-orbits, so that backward motion does not contract the displacement, while relying on growth properties of random walks to ensure that, for any given vector in the tangent space, most forward random walk trajectories will cause it to grow (with additional complications arising from neutral/central directions).

By contrast, our random walk admits a non-trivial deterministic \textit{contracting} subspace under \textit{every} forward random walk trajectory. The group $\Wst$ is thus defined to be the group generated by deterministic forward-contracting \textit{and} backward-contracting groups, in addition to the centralizer of the random walk.
A key step in carrying out the above strategy is thus to show that $\nu_b$ gives $0$ mass to $\Wst$-orbits. This is Theorem~\ref{prop:NonAlignmentLimitMeasures}.
Due to the mixed behavior of $\Wst$ (some of its directions expand in the future while others expand in the past), the proof of this result  represents the major departure in our proof compared to prior works.
A key ingredient is a projection argument introduced in Lemma~\ref{lem:AvoidanceStable} to separate these mixed behaviors allowing us to handle them individually.
This is the critical step where the hypothesis that points have trivial stabilizers in $\Wst$ is used.

To get that most forward trajectories of our random walk expand a given transverse direction to $\Wst$, we note that such directions point along $\mrm{G}_{\Sue}$ to which we apply 
results analogous to those of Simmons and the third author~\cite{SimmonsWeiss}; cf.~\textsection\ref{sec:GrowthPropertiesRandomWalk}.
Note that the results of~\cite{SimmonsWeiss} hold for a certain real random walk, which nonetheless has the same block structure as the restriction of the random walk considered in this article to the $\Sue$-adic places.

\subsection*{Non-atomicity in the presence of contracting spaces}

Non-alignment along $\Wst$-orbits is done by applying the forward and backward random walks to separately contract the respective pieces of the orbit, reducing the problem to showing almost sure non-atomicity of the measures $\nu_b$.
This is established in the strong form needed for the proof in Theorem~\ref{thm:NonAtomicityLimitMeasures}.
The presence of a deterministic forward contracting space poses significant difficulties in this step. For instance, the standard arguments involving a
 `Margulis inequality' (see \cite[Prop. 3.9 \& \textsection 6.2]{BQ1})
 do not seem applicable in this setting.
Note also that such non-atomicity fails to hold without the hypothesis on trivial stabilizers in $\Wst$, as shown by the examples of stationary, non-invariant measures given in \textsection\ref{sec: examples}.

Instead, we argue directly by a delicate local analysis. We first show that averaging on the set of words which grow a given displacement vector to a macroscopic size satisfies a certain pointwise ergodic theorem in the underlying Bernoulli shift space; cf.~Theorem~\ref{thm:PrefixErgodicTheorem}.
Our hypothesis that the IFS has a single contraction ratio is used in this step to ensure that this set of words has nearly full measure (indeed, otherwise these words will be given polynomially decaying measure in their length as can be checked by a direct computation).  We believe it is possible to push our arguments to bypass this difficulty, and hope to return to this problem in future work.

Equipped with the above ergodic theorem, if the measures $\nu_b$ were atomic, we would obtain a contradiction by finding words that simultaneously push generic points towards one another (by continuity and the aforementioned prefix ergodic theorem), and away from one another (by the expansion of the action transverse to orbits of the forward-contracted group).
The fact that the words have comparable norm in every irreducible component allows us to control the speed at which our points diverge from one another and capture our pair of generic points just as they are starting to separate from one another.

\section{Preliminaries}\label{sec: preliminaries}
In this section we collect some standard facts about our objects of study.
\subsection{Attractors of IFS's}\label{subsec: prelims attractors}

A mapping $f : \R^d \to \R^d$ is said to be a {\em contracting affine half-dilation} if it is of the form $f(\mathbf{x}) = \varrho \mathbf{x} +
\mathbf{y}$, where $\varrho \in \R$, $0< |\varrho|<1$ is the {\em contraction ratio} and $\mathbf{y
}$ is the {\em translation.}  The nomenclature `half-dilation' is due to the fact that we allow negative contraction ratios. Note that when $d>1$, the semigroup of contracting affine dilations is strictly contained in the well-studied semigroup of contracting similarity maps, in which one is allowed to compose the maps $f$ as above with orthogonal transformations.  We say that $f$ is {\em rational} if $\varrho \in \Q$ and $\mathbf{y} \in \Q^d$.
A collection $\Phi = \{f_1, \ldots, f_k\}$ of maps is called an {\em iterated function system (IFS).} With this terminology, the carpet-IFS's we consider in this paper are iterated function systems of rational half-dilation affine maps with constant contraction ratio.

Let $B = \{1, \ldots, k\}^{\N}$ and let $\mathrm{cod}: B \to \R^d$ be the map defined by
\begin{equation}\label{eq: def coding map}
\mathrm{cod}(b) = \lim_{n \to \infty} f_{i_1} \circ \cdots \circ f_{i_n} (\mathbf{x}_0),  \ \ \ \ \text{ where } b = (i_1, i_2, \ldots) \ \ \text{ and } \mathbf{x}_0 \in \R^d.
\end{equation}
It is well-known  that the limit in \eqref{eq: def coding map} exists for all $b$, is independent of the choice of $\mathbf{x}_0,$ and that the map $\mathrm{cod}$ is continuous. The image of $\mathrm{cod}$ is called the {\em attractor} of $\Phi$, and we denote it by
$\mathcal{K} = \mathcal{K}(\Phi)
.$
Basic results about the attractor $\mathcal{K}$ were obtained in classical work of Moran and Hutchinson \cite{Moran, Hutchinson}.  Among other things, they showed that $\mathcal{K}$ is the unique non-empty compact subset of $\R^d$ satisfying the stationarity property
\begin{equation}\label{eq: stationarity property K} \mathcal{K} = \bigcup_{i=1}^k
f_i(\mathcal{K}).
\end{equation}
When the elements in this union are disjoint we say that $\Phi$ satisfies {\em strong separation}.
If there is a non-empty open subset $U \subset \R^d$ such that $f_i(U) \subset U$ for all $i$ and $f_i(U) \cap f_j(U) = \varnothing $ for $i \neq j$, we say that $\Phi$ satisfies the {\em open set condition}.
It is known that strong separation  implies the open set condition.
Let $\mathbf{p}=(p_1, \ldots, p_k)$ be a {\em probability vector of full support}, that is a $k$-tuple of real numbers such that
$$
\sum_{i=1}^k p_i =1,
 \ \ \ \forall i, \ p_i>0.$$
 The {\em Bernoulli measure} $\theta = \theta(\Phi, \mathbf{p})$ on $\mathcal{K}$ is the image of the measure $(\sum_{i=1}^k p_i \delta_i)^\N$ under the map $\mathrm{cod}.$ It is the unique measure on $\R^d$ which satisfies the stationarity property
\begin{equation}\label{eq: stationarity theta}
\theta = \sum_{i=1}^k p_i f_{i*} \theta.
\end{equation}
Let $s = \dim (\mathcal{K})$ denote the Hausdorff dimension of $\mathcal{K}$.
It is well-known that if one assumes the open set condition, then up to scaling, the restriction of $s$-dimensional Hausdorff measure to $\mathcal{K}$ is a Bernoulli measure. If in addition, one assumes that the contraction ratios are equal to each other, this Bernoulli measure is given by the uniform probability vector $\mathbf{p} = (1/k, \ldots, 1/k).$

 We will be interested in the effect of conjugation of elements of $\Phi$ by an affine
 similarity mapping $f: \R^d \to \R^d$. If we let
 \begin{equation}\label{eq: if we let}
 \Phi = \{f_1, \ldots, f_k\} \ \ \text{ and } \Phi' = \{f'_1, \ldots, f'_k\}, \text{ where } f'_i = f \circ f_i \circ f^{-1},
 \end{equation}
then it can be easily checked  that
\begin{equation}\label{eq: Phi prime}
\mathcal{K}(\Phi') = f(\mathcal{K}(\Phi)) \ \ \text{ and } \ \theta(\Phi', \mathbf{p}) = f_* \theta(\Phi, \mathbf{p}).
\end{equation}

\begin{remark}\label{remark: referee}
It was shown in \cite[Prop. 3.1]{Broderick_Fishman_Simmons} that $\Phi$ is irreducible if and only if there is no proper affine subspace of $\R^d$ which is fixed by all of the maps in $\Phi$. We claim that in our setting of an IFS of affine half-dilations, this is also equivalent to the condition that the translation vectors $\ybf_i$ are not contained in a proper affine subspace. Indeed, a half-dilation map $f(\xbf) = \varrho \xbf+ \ybf$ preserves a linear subspace $V$ if and only if $\ybf \in V$. Thus, if the $\ybf_i$ are all contained in an affine subspace $V$, we can  apply a conjugation so that $V$ is linear (i.e., passes through the origin), and apply the previous observation to see that $\Phi$ is reducible. Conversely, if $V$ is proper and invariant under all the $\ybf_i$ then the $\ybf_i$ all belong to $V$. 
We are indebted to an anonymous referee of the paper for pointing this out.
\end{remark}

\subsubsection{Examples}
A {\em missing digit set} is a set of the form
$$
\mathcal{K} = \left\{\sum_{i=1}^\infty a_i b^{-1} : a_i \in D \right\},
$$
where $b \geq 3 $ and $D \subset \{0, \ldots, b-1\}, $ with $2 \leq | D| \leq b-1.$ The standard example is given by the Cantor middle thirds set, with $b=3$ and $D = \{0, 2\}$. These sets are attractors of the IFS in dimension $d=1$, given by  $f_i(x) = \frac{1}{b} x + \frac{a_i}{b},$ where $D = \{a_1, \ldots, a_k\}$. This IFS satisfies strong separation if $D$ does not contain consecutive indices, and satisfies the open set condition for any $D$. Thus the class of  missing digit sets is obviously contained in the class of carpet IFS's which we consider in this paper. Other well-known examples of irreducible carpet IFS's satisfying the open set condition are those whose attractors are the {\em Sierpi\'nski carpet} and {\em Menger sponge}, which are examples in dimensions $d=2$ and $d=3$ respectively.

An example of a well-known fractal not covered by our results is the Koch snowflake. This two-dimensional fractal is the attractor of an irreducible IFS on $\R^2$ satisfying the open set condition, where the maps in the IFS are similarities, but these similarities do not have rational coefficients and cannot be represented as dilations (rotations by multiples of $\pi/3$ are required). Another example not covered by our work is the translation of a Sierpi\'nski carpet by an irrational vector. It would be interesting to know
whether Bernoulli measures on these fractals satisfy the conclusion of Theorem \ref{thm: equidistribution main}.

\subsection{Friendliness of some fractal measures}\label{subsec: friendliness}
We now introduce some properties of a measure $\theta$ on $\R^d$, following \cite{KLW}. Let $B(\xbf, r)$ be the ball of radius $r$ centered at $\xbf$, with respect to the Euclidean metric on $\R^d$. For a constant $D \geq 1$, we say that $\theta$ is {\em $D$-Federer}
if for any $\xbf \in \mathrm{supp}(\theta)$ and any $r>0$ we have
\begin{equation}\label{eq: Federer}
\theta (B(\xbf, 3r))\leq D \,\theta(B(\xbf, r)).
\end{equation}
For any $A \subset \R^d$ and $\vre>0$, we write $A^{(\vre)} = \bigcup_{a \in A} B(a,\vre)$ for the $\vre$-neighborhood of $A.$  Given positive $C, \alpha$, we say that $\theta$ is {\em $(C, \alpha)$-absolutely decaying} if for any ball $B$ centered in $\mathrm{supp}(\theta),$ any affine hyperplane $\mathcal{L}$, any $B = B(\xbf, r)$ with $\xbf \in \mathrm{supp}(\theta), \, r \in (0,1)$ and any $\vre >0$, we have
\begin{equation}\label{eq: absolute decay}
\theta\left(B \cap \mathcal{L}^{(\vre)} \right)
\leq C\left( \frac{\vre}{r} \right)^{\alpha} \theta(B).
\end{equation}

A measure $\theta$ for which there are $D, C, \alpha  $ such that $\theta$ is $D$-Federer and $(C, \alpha)$-absolutely decaying is called {\em absolutely friendly.}

We will need the following:

\begin{prop}\label{prop: fractal measures are friendly}
    Let $\Phi$ be an irreducible carpet-IFS. Assume that $\theta$ is a measure on  the attractor $\mathcal{K}$ of $\Phi$, satisfying the conditions of Theorem \ref{thm: equidistribution main}, namely at least one of the following:
    \begin{enumerate}
        \item \label{item: case 1 friendly} $\Phi$ satisfies the open set condition and $\theta$ is the Hausdorff measure on $\mathcal{K}$;
        \item \label{item: case 2 friendly} $\Phi$ satisfies strong separation and  $\theta = \theta(\Phi, \mathbf{p})$ is a Bernoulli measure, for some probability vector $\mathbf{p}.$
    \end{enumerate}
Then $\theta$ is absolutely friendly. Moreover, there are $D, C, \alpha$ such that for any conjugate $\Phi'$ of $\Phi$ by an affine dilation map, the Hausdorff measure on the attractor $\mathcal{K}'$ of $\Phi'$  is $D$-Federer and $(C, \alpha)$-absolutely decaying.
\end{prop}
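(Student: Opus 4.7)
The plan is to reduce each of the two bulleted assertions to established results on self-similar fractals, and then to observe that the relevant constants are invariant under the class of reparametrizations that appear in the ``moreover'' clause. Since carpet IFS's consist only of (half)-dilations with a common contraction ratio $\varrho$ and no rotational part, all of the geometric hypotheses in the KLW framework for friendliness apply directly, which makes the reduction relatively clean.

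For case~\eqref{item: case 1 friendly}, under the open set condition the theorem of Hutchinson--Moran implies that the $s$-dimensional Hausdorff measure on $\mathcal{K}$ is Ahlfors $s$-regular, namely $\theta(B(\xbf,r)) \asymp r^s$ uniformly in $\xbf \in \mathcal{K}$ and $r \in (0,\diam{\mathcal{K}}]$. Ahlfors regularity immediately gives the Federer condition~\eqref{eq: Federer}. For case~\eqref{item: case 2 friendly}, under strong separation the coding map $\mathrm{cod}\colon B \to \mathcal{K}$ of~\eqref{eq: def coding map} is a bijection with uniformly bounded multiplicity of covers by cylinders at each scale, and~\eqref{eq: stationarity theta} gives that the $\theta$-mass of a cylinder of depth $n$ is the corresponding product of the $p_i$'s. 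This, together with the constant contraction ratio $\varrho$ (so all cylinders of depth $n$ have comparable diameter $\asymp |\varrho|^n$), gives the Federer condition with a constant depending only on $\pbf$ and the separation constants. In both cases, the absolute decay~\eqref{eq: absolute decay} follows from the irreducibility hypothesis via the standard argument, cf.\ \cite{KLW}: irreducibility plus compactness yield a constant $\eta>0$ such that for every affine hyperplane $\mathcal{L}$ meeting $\mathcal{K}$, the $\theta$-mass of $\mathcal{K}\smallsetminus \mathcal{L}^{(\eta)}$ is bounded below; one then iterates the self-similarity relation~\eqref{eq: stationarity property K}--\eqref{eq: stationarity theta}, choosing at each scale a composition $f_{i_1}\circ\cdots\circ f_{i_n}$ whose image lies inside the ball $B$ of interest, to upgrade this single-scale non-concentration into exponential decay $\theta(B\cap\mathcal{L}^{(\varepsilon)}) \leq C(\varepsilon/r)^\alpha\theta(B)$ with $\alpha>0$.

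For the ``moreover'' clause, let $f(\xbf) = \lambda \xbf + \bbf$ be an affine dilation and let $\Phi'$ be the conjugate IFS as in~\eqref{eq: if we let}. A direct computation shows that $\Phi'$ is again an irreducible carpet IFS with the same contraction ratio $\varrho$, its attractor is $\mathcal{K}' = f(\mathcal{K})$ by~\eqref{eq: Phi prime}, and by the standard scaling property of Hausdorff measure, $\mathcal{H}^s|_{\mathcal{K}'} = |\lambda|^s\cdot f_\ast(\mathcal{H}^s|_{\mathcal{K}})$ up to a normalizing constant. The map $f$ sends Euclidean balls of radius $r$ to balls of radius $|\lambda|r$, and affine hyperplanes to affine hyperplanes, while the factor $|\lambda|^s$ cancels in every ratio appearing in~\eqref{eq: Federer} and~\eqref{eq: absolute decay}. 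Consequently, the constants $D, C, \alpha$ extracted above from the geometry of $\mathcal{K}$ serve \emph{verbatim} for the Hausdorff measure on any such $\mathcal{K}'$.

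The main technical obstacle is producing the absolute decay constants $C,\alpha$ in a way that is manifestly invariant under the conjugations $f$; the key point is that the single-scale non-concentration estimate $\inf_{\mathcal{L}}\theta(\mathcal{K}\smallsetminus\mathcal{L}^{(\eta)})>0$ is a purely metric statement about the pair $(\mathcal{K},\theta)$ and therefore transforms covariantly under affine dilations, so the iteration argument yields the same exponent $\alpha$ and constant $C$ for every conjugate. Once this invariance is noted, no additional work is needed.
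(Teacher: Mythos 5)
Your proof follows essentially the same strategy as the paper. For the first assertion the paper's proof is a one-line citation: case~\eqref{item: case 1 friendly} to \cite[\textsection 8]{KLW} and case~\eqref{item: case 2 friendly} to \cite[Thm.~1.7]{DFSU_dynamically_defined_measures} (based on \cite{Urbanski}); you instead sketch the arguments behind those references (Ahlfors $s$-regularity under OSC giving Federer in case~\eqref{item: case 1 friendly}, cylinder bookkeeping under strong separation for Federer in case~\eqref{item: case 2 friendly}, and the KLW-style compactness-plus-iteration argument from irreducibility for absolute decay). The outlines are correct but are only sketches of nontrivial arguments; in particular, comparability of cylinder masses in case~\eqref{item: case 2 friendly} and the passage from single-scale non-concentration to the exponent $\alpha$ in~\eqref{eq: absolute decay} both require the kind of careful estimates that the cited references carry out, which you gloss over. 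For the ``moreover'' clause your argument is exactly the paper's: use~\eqref{eq: Phi prime} and note that an affine dilation sends Euclidean balls to Euclidean balls and affine hyperplanes to affine hyperplanes, so all ratios in~\eqref{eq: Federer} and~\eqref{eq: absolute decay} are unchanged and the constants $D,C,\alpha$ can be taken uniformly over conjugates.
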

\begin{proof}
    The first assertion is proved in \cite[\textsection 8]{KLW} in case \eqref{item: case 1 friendly}, and in \cite[Thm. 1.7]{DFSU_dynamically_defined_measures} in case \eqref{item: case 2 friendly} (the latter proof is based on \cite{Urbanski}). For the second assertion we use \eqref{eq: Phi prime}, and note that an affine similarity map sends Euclidean balls to Euclidean balls, and hence does affect the validity of \eqref{eq: Federer} and \eqref{eq: absolute decay}.
\end{proof}

We will also need the following `self-similar Lebesgue density theorem':

\begin{prop}
    \label{prop: Lebesgue density}
    Let $\Phi$ and $\theta$ be as in Proposition \ref{prop: fractal measures are friendly}, let $\mathrm{cod}: \{1, \ldots, k\}^{\N} \to \mathcal{K}$ be the coding map, and let $A \subset \R^d$ be a Borel set. Then for $\theta$-a.e. $\xbf \in A$ we have that
    \begin{equation}
        \label{eq: from martingale}
    \lim_{n \to \infty} \frac{\theta\left(A \cap f_n(\mathcal{K})\right)}{\theta\left(f_n(\mathcal{K}) \right)}=1,
    \end{equation}
    where $\xbf = \mathrm{cod}(i_1, i_2, \ldots)$ and $f_n = f_{i_1} \circ \cdots \circ f_{i_n}.$
\end{prop}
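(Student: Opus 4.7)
The plan is to reinterpret the ratio as a Doob martingale on the coding space. Write $\Omega=\{1,\ldots,k\}^{\N}$, equip it with the Bernoulli measure $\beta=\left(\sum_{i=1}^{k} p_i\delta_i\right)^{\otimes\N}$ (where, in case~\eqref{item: case 1 friendly} of Proposition~\ref{prop: fractal measures are friendly}, $\mathbf{p}$ is the specific probability vector for which the Hausdorff measure equals $\theta(\Phi,\mathbf{p})$) so that $\mathrm{cod}_{\ast}\beta=\theta$. Denote by $\mathcal{F}_n$ the $\sigma$-algebra on $\Omega$ generated by the length-$n$ cylinders $[\omega_1,\ldots,\omega_n]$, and note that $\mathcal{F}_\infty:=\sigma(\bigcup_n\mathcal{F}_n)$ is the full Borel $\sigma$-algebra of $\Omega$ because the cylinder diameters (under the product topology) shrink to $0$.

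The first step is to translate the geometric ratio into a conditional probability on $\Omega$. Using that $f_i(\mathcal{K})\cap f_j(\mathcal{K})$ is $\theta$-null for $i\neq j$, which is immediate under strong separation and classical for the Hausdorff measure under the open set condition, one iterates to see that $\mathrm{cod}^{-1}\!\left(f_{\omega_1}\circ\cdots\circ f_{\omega_n}(\mathcal{K})\right)$ coincides with $[\omega_1,\ldots,\omega_n]$ up to a $\beta$-null set. Consequently, for any Borel set $B\subset\R^d$ and $\widetilde{B}:=\mathrm{cod}^{-1}(B)$,
\[
\frac{\theta\!\left(B\cap f_n(\mathcal{K})\right)}{\theta\!\left(f_n(\mathcal{K})\right)}=\frac{\beta\!\left(\widetilde{B}\cap[\omega_1,\ldots,\omega_n]\right)}{\beta\!\left([\omega_1,\ldots,\omega_n]\right)}=\mathbb{E}_{\beta}\!\left[\mathbf{1}_{\widetilde{B}}\,\middle|\,\mathcal{F}_n\right](\omega),
\]
where $\omega=(i_1,i_2,\ldots)$ and $f_n=f_{i_1}\circ\cdots\circ f_{i_n}$.

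The second step is a direct application of Doob's martingale convergence theorem: since $\mathcal{F}_\infty$ is the full Borel $\sigma$-algebra on $\Omega$ (up to $\beta$-nullity),
\[
\lim_{n\to\infty}\mathbb{E}_{\beta}\!\left[\mathbf{1}_{\widetilde{B}}\,\middle|\,\mathcal{F}_n\right](\omega)=\mathbf{1}_{\widetilde{B}}(\omega)\quad\text{for $\beta$-a.e.\ }\omega\in\Omega.
\]
Pushing the null exceptional set forward by $\mathrm{cod}$, and using once more the null-overlap fact to identify $\theta$-a.e.\ point of $\mathcal{K}$ with its unique coding, this says precisely that for $\theta$-a.e.\ $\xbf\in B$ the ratio in \eqref{eq: from martingale} converges to $\mathbf{1}_{\widetilde{B}}(\omega)=1$. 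The only nonroutine ingredient is verifying the null-overlap fact in case~\eqref{item: case 1 friendly}; once this is in hand, the statement reduces to standard measure theory and Doob's theorem, so there is no serious obstacle.
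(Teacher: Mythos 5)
Your proof is correct and follows essentially the same strategy as the paper's: both identify the ratio as a Doob martingale of the indicator of $B$ with respect to an increasing filtration of cylinder-type $\sigma$-algebras that generates the Borel $\sigma$-algebra, establish the null-overlap fact to make this identification valid, and conclude by the increasing martingale theorem. The only cosmetic difference is that you run the martingale on the coding space $\Omega$ and push forward at the end, while the paper runs it directly on $\mathcal{K}$ using the $\sigma$-algebras generated by the level-$n$ cylinder sets $f(\mathcal{K})$; these two presentations are equivalent once the null-overlap fact is in hand.
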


In this statement we have used that $\xbf$ determines its coding sequence $(i_1, i_2, \dots)$, which is true for $\theta$-a.e.\,$\xbf$ in view of the separation assumptions of Proposition \ref{prop: fractal measures are friendly}.

Note that in the Lebesgue density theorem, valid for all finite Borel measures on $\R^d$ (see \cite[Cor. 2.14]{Mattila}), the appearance of $f_n(\mathcal{K})$ is replaced with a ball centered at $\xbf$, shrinking as $n \to \infty.$ The self-similar structure allows us to replace balls with shrinking copies of the attractor.

\begin{proof}
For each $n,$ let $\mathcal{B}_n$ be the finite $\sigma$-algebra of subsets of $\mathcal{K} $ generated by the sets $f(\mathcal{K})$, where $f$ ranges over all the compositions $f_{i_1} \circ \cdots \circ f_{i_n}$.  We claim that under the assumptions on $\theta$ we have that for any $i\neq j$, $\theta(f_i(\mathcal{K}) \cap f_j(\mathcal{K})) = 0.$
Indeed, under the strong separation condition $\theta(f_i(\mathcal{K}) \cap f_j(\mathcal{K})) = \varnothing$, and for the Hausdorff measure on $\mathcal{K}$ we use \eqref{eq: stationarity theta} and the scaling of Hausdorff measure under similarity maps.

This implies
that in the covering \eqref{eq: stationarity property K}, the sets are disjoint up to $\theta$-nullsets. Thus the atoms of  $\mathcal{B}_n$ are (up to $\theta$-nullsets)  the sets 
$f(\mathcal{K})$ themselves, and the quotient on the left-hand side of \eqref{eq: from martingale} is the conditional expectation of the indicator function $\mathbf{1}_{A}$ w.r.t. the $\sigma$-algebra $\mathcal{B}_n$, evaluated at $\xbf$. Since the diameter of the sets 
$f_n(\mathcal{K})$ goes to zero, the $\sigma$-algebra generated by $\bigcup_n \mathcal{B}_n$ is the Borel $\sigma$-algebra on $\mathcal{K}. $ The increasing Martingale theorem (see \cite[Chap. 5.2]{EW}) now implies that for $\theta$-a.e. $\xbf$, the left hand side of \eqref{eq: from martingale} converges to $\mathbf{1}_{A}(\xbf)$. This completes the proof.
\end{proof}

\subsection{The Dani-Kleinbock correspondence and homogeneous dynamics}\label{subsec: Dani correspondence}
The so-called {\em Dani correspondence} was developed by Dani in \cite{Dani86}. He showed that for equal weights approximation, a vector $\xbf \in \R^d$ is badly approximable if and only if the trajectory $\{a_t \Lambda_{\xbf} : t>0\}$ is bounded in $\X_{d+1},$ where $a_t = 
a_t^{(\mathbf{r}_0)}$ is the one-parameter group as in \eqref{eq: def weighted diagonal group}, corresponding to equal weights $\mathbf{r}_0 = (1/d, \cdots, 1/d)$. The correspondence was later developed and extended to the notions of Dirichlet improvability and weighted approximation in the papers \cite{Kleinbockweights, KWDirichletimprovable, KleinbockRao}. In particular the following hold:

\begin{prop}\label{prop: Dani Kleinbock correspondence}
Let $\xbf \in \R^d, \, \Lambda_{\xbf} $ and $\mathbf{r}$ be as above. Then the following hold:
  \begin{enumerate}
      \item \label{item: i} $\xbf $ is $\mathbf{r}$-badly approximable if and only if the trajectory $\left\{a^{(\mathbf{r})}_t \Lambda_{\xbf} : t>0 \right \}$ is bounded in $\X_{d+1}.$
      \item \label{item: ii} Denoting the sup-norm by $\|\cdot \|_\infty$, we have that $\mathbf{x}$ is $(\mathbf{r}, \| \cdot \|_\infty)$-Dirichlet improvable if and only if for some $\vre \in (0,1), $ for all sufficiently large $T$, there are  $Q \in \N$ and $(P_1, \ldots, P_d) \in \Z^d$  such that $Q \leq T$ and $\max_i \left( |Qx_i -P_i|^{1/r_i}\right ) \leq \frac{\vre}{T}.$
  \end{enumerate}
\end{prop}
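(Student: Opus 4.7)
The plan is to carry out the standard Dani--Kleinbock translation. After flipping a sign on $(P_1, \ldots, P_d)$, any nonzero vector of the lattice $a_t^{(\mathbf{r})}\Lambda_{\xbf}$ has the form
\[
v_{t,P,Q} = \bigl(e^{r_1 t}(Q x_1 - P_1),\ldots,e^{r_d t}(Q x_d - P_d),\,e^{-t}Q\bigr)
\]
for some $(P,Q) \in \Z^{d+1}\setminus\{0\}$; for $t$ sufficiently large one must have $Q\neq 0$ (otherwise the first $d$ coordinates are integers $P_i$ with $e^{r_i t}|P_i|$ bounded, forcing $P_i=0$). The sup norm is $\|v_{t,P,Q}\|_\infty = \max(\max_i e^{r_i t}|Qx_i - P_i|,\,e^{-t}|Q|)$, which is the central quantity for both parts.

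For part~(i), Mahler's compactness criterion reduces boundedness of the orbit $\{a_t^{(\mathbf{r})}\Lambda_{\xbf}\}_{t > 0}$ to a uniform lower bound $\delta_0>0$ on $\|v_{t,P,Q}\|_\infty$ over all $t>0$ and nonzero $(P,Q)$. To deduce this from $\mathbf{r}$-BA with constant $c$, I would take $\delta = \min(1,c/2)$ and argue: for any $t$ and $(P,Q)$ with $Q\geq 1$, either $e^{-t}Q \geq \delta$ already, or $Q < \delta e^t$, in which case the BA hypothesis $\max_i |Qx_i - P_i|^{1/r_i} \geq c/Q$ produces some $i$ with $e^{r_i t}|Qx_i - P_i| \geq (c e^t/Q)^{r_i} \geq (c/\delta)^{r_i} \geq 1$. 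Conversely, given $\delta_0$, for each pair $(P,Q)$ with $Q\geq 1$ I would choose $t$ so that $e^{-t}Q = \delta_0/2$; the lower bound $\delta_0$ must then be realized by some coordinate $i$, giving $|Qx_i - P_i| \geq \delta_0(\delta_0/(2Q))^{r_i}$, and taking $1/r_i$-th powers and multiplying by $Q$ yields the $\mathbf{r}$-BA inequality with a constant depending on $\delta_0$ and $\min_i r_i$.

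For part~(ii), I use the same dictionary with the substitution $T = \vre e^t$. Noting that $\vre_{\|\cdot\|_\infty}=1$ (saturated by $\Z^{d+1}$ via Minkowski), the DI condition unpacks to: there is $\vre<1$ such that for all large $t$, some nonzero $(P,Q)$ satisfies $|Q|\leq \vre e^t$ and $|Qx_i - P_i|\leq \vre e^{-r_i t}$ for every $i$. Setting $T = \vre e^t$ turns these into $Q\leq T$ and $|Qx_i - P_i|^{1/r_i}\leq \vre^{1+1/r_i}/T$; taking the max over $i$ gives $\max_i |Qx_i-P_i|^{1/r_i} \leq \vre'/T$ with $\vre' := \max_i \vre^{1+1/r_i}\in (0,1)$. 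The reverse direction is the same calculation run backwards, solving for $\vre$ given $\vre'$ through a monotone bijection of $(0,1)$ onto itself, so existence of a witness $\vre<1$ in the dynamical formulation is equivalent to existence of a witness $\vre'<1$ in the Diophantine one.

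The only subtlety is bookkeeping; the main steps requiring care are the choice of scaling parameter $t$ in part~(i) (two separate choices for the two directions of the equivalence) and the verification in part~(ii) that the substitution $T = \vre e^t$ gives a bijective dictionary between the parameter $\vre \in (0,1)$ in the dynamical formulation and the parameter in the Diophantine one. Nothing beyond Mahler's compactness criterion and elementary exponent manipulations is required.
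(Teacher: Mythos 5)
The paper does not actually prove Proposition~\ref{prop: Dani Kleinbock correspondence}; it is stated as a consequence of prior work and the relevant references (\cite{Kleinbockweights}, \cite{KWDirichletimprovable}, \cite{KleinbockRao}) are cited in the preceding paragraph. So there is no ``paper's proof'' to compare against, and your direct reconstruction via Mahler's criterion is the natural approach.

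Your argument for item~\eqref{item: i} is correct, and the two scaling choices (the split $e^{-t}Q\geq\delta$ versus $Q<\delta e^t$ in one direction, the choice $e^{-t}Q=\delta_0/2$ in the other) are exactly the right moves.

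For item~\eqref{item: ii}, however, note that what you actually prove is the equivalence of ``there is $\vre<1$'' on both sides, whereas the Proposition as printed reads ``for every $\vre\in(0,1)$'' on the Diophantine side. These are genuinely different: a $\forall\vre$ Diophantine condition is incompatible with $\xbf$ being $\mathbf{r}$-badly approximable (with BA constant $c$, any solution to the system with $Q\leq T$ and $\max_i|Qx_i-P_i|^{1/r_i}\leq\vre/T$ forces $\vre\geq c$), yet the paper itself records the Davenport--Schmidt fact that $\mathbf{r}$-badly approximable vectors are $(\mathbf{r},\|\cdot\|_\infty)$-Dirichlet improvable. So the $\exists\vre$ version you prove is what is consistent with the surrounding text, but you should have flagged the mismatch explicitly rather than silently rewriting the quantifier. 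Relatedly, your phrase ``solving for $\vre$ given $\vre'$ through a monotone bijection'' overstates the symmetry: the forward direction produces $\vre'=\vre^{1+1/\max_i r_i}$, while the reverse direction requires $\vre\geq(\vre')^{1/(1+1/\min_i r_i)}$; these two maps are inverses only in the equal-weights case. Since both are monotone bijections of $(0,1)$ onto itself, the conclusion (equivalence of the existence of a witness) is correct, but the proof should not claim they are inverse to each other.
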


We can view the expression appearing on the left-hand side of \eqref{eq: def BA} as the sup-norm of the vector $Q(|Qx_1-P_1|^{1/r_1}, \ldots, |Qx_d-P_d|^{1/r_d}).$
It follows from item \eqref{item: i} of Proposition \ref{prop: Dani Kleinbock correspondence}, and can also be easily inferred directly from the definition, that the property of being $\mathbf{r}$-badly approximable does not depend on the choice of a norm on $\R^d.$ On the other hand, Dirichlet improvability depends rather delicately on the chosen norm.
Note that following
\cite{KleinbockRao}, we have defined the notion of Dirichlet improvability for a general norm in terms of the dynamical behavior of the trajectory $\left\{a^{(\mathbf{r})}_t \Lambda_{\xbf} : t>0 \right \}$. Item \eqref{item: ii} of Proposition \ref{prop: Dani Kleinbock correspondence} shows that for the sup-norm, there is a simple interpretation of this property in terms of Diophantine inequalities, and indeed, this was the original definition introduced by Davenport and Schmidt \cite{DavenportSchmidt}. Similar Diophantine interpretations can be given for norms on $\R^{d+1}$ arising from a norm $\|\cdot \|'$ on $\R^d$ via the formula  $\| (y_1, \ldots, y_{d+1})\|'\df  \max(\|(y_1, \ldots, y_d)\|, \, |y_{d+1}|)$, but there is no such interpretation for general norms on $\R^{d+1}$. It was shown by Davenport and Schmidt (see \cite{DavenportSchmidt}) that $\mathbf{r}$-badly approximable are necessarily $(\mathbf{r}, \| \cdot \|_\infty)$-Dirichlet improvable.  For general norms this implication does not necessarily hold, see \cite{KleinbockRao}.

\subsubsection{Quantitative nondivergence for friendly measures}
Given a probability measure $\nu$ on $\X_{d+1}$ and a sequence of elements $(a_j) \subset \SL_{d+1}(\R)$, we say that $\nu$ has {\em no escape of mass under $(a_j)$} if any weak-* accumulation point of $(a_{j*}\nu)$ is a probability measure; equivalently, for any $\vre>0$ there is a compact subset $K \subset \X_{d+1}$ such that for all large enough $j$, $a_{j*} \nu(K) \geq 1-\vre.$

Let $A \subset G$ be the group of diagonal matrices of determinant one. Each $a \in A$ can be represented as
$$\exp(\Xbf) = \mathrm{diag}\left( e^{X_1}, \ldots, e^{X_{d+1}} \right)$$ where
$$\Xbf = (X_1, \ldots, X_{d+1}), \ \ \ X_i
\in \R, \ \ \ X_1 + \cdots + X_{d+1} =0.$$ Following \cite{KWDirichletimprovable} we say that a sequence $a_n = \exp\left(\Xbf^{(n)} \right)$ {\em drifts away from walls} if $$\left \lfloor \Xbf^{(n)} \right \rfloor \to \infty, \ \ \ \text{ where } \lfloor \Xbf \rfloor \df
\min \left\{X_i: i=1, \ldots, d \right \} .$$

The following result, proved in \cite{KLW}, is useful for exhibiting no escape of mass.

\begin{prop}\label{prop: KLW nondivergence}
Let $\theta$ be an absolutely friendly measure on $\R^d$, let $\nu$ be the pushforward of $\theta$ under the map $\xbf \mapsto \Lambda_{\xbf}$, and let $(a_j)$ be a sequence in $A$ that drifts away from walls. Then $\nu$ has no escape of mass under $(a_j).$
\end{prop}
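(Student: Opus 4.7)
I would reduce the statement to the quantitative non-divergence theorem for friendly measures proved in \cite{KLW}. By Mahler's compactness criterion, a subset $K \subset \X_{d+1}$ is relatively compact iff it admits a uniform lower bound on the length of the shortest nonzero vector of each lattice in $K$. Hence non-escape of mass of the sequence $(a_{j\ast}\nu)$ is equivalent to the assertion that for every $\vre > 0$ there exist $\eta > 0$ and $j_0$ such that, for all $j \geq j_0$,
\[
\theta\bigl(\bigl\{\xbf \in \supp(\theta) : a_j \Lambda_{\xbf} \text{ contains a nonzero vector of norm } \leq \eta\bigr\}\bigr) < \vre.
\]
The main theorem of \cite{KLW} supplies such an estimate---of the form $C'(\eta/\rho_j)^{\alpha'}$---provided that $\theta$ is Federer and absolutely decaying (given by absolute friendliness), and that for every primitive subgroup $\Gamma \subseteq \Z^{d+1}$ of rank $k$ the non-contraction bound
\[
\sup_{\xbf \in B \cap \supp(\theta)} \bigl\|(\wedge^k a_j u(\xbf))\mathbf{w}_\Gamma\bigr\|_\infty \;\geq\; \rho_j
\]
holds with $\rho_j \to \infty$, uniformly in $\Gamma$, where $\mathbf{w}_\Gamma$ is the wedge of an integer basis of $\Gamma$ and $B$ is a fixed ball containing $\supp(\theta)$.

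To verify this non-contraction bound, write $a_j = \exp(\Xbf^{(j)})$, so that the drift-away-from-walls hypothesis gives $X_i^{(j)} \geq \lfloor \Xbf^{(j)} \rfloor \to \infty$ for each $i = 1, \ldots, d$. For a primitive $\mathbf{v} \in \Z^{d+1}$: if $v_{d+1} = 0$ then by integrality $\|a_j \mathbf{v}\|_\infty \geq e^{\lfloor \Xbf^{(j)} \rfloor}$ independently of $\xbf$; if $v_{d+1} \neq 0$, then since $\supp(\theta)$ has positive diameter there is a universal $c > 0$ and a point $\xbf^\ast \in \supp(\theta)$ with $\max_{i \leq d}|v_i + x^\ast_i v_{d+1}| \geq c|v_{d+1}| \geq c$, whence $\|a_j u(\xbf^\ast)\mathbf{v}\|_\infty \geq c\, e^{\lfloor \Xbf^{(j)} \rfloor}$. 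The higher-rank case is treated by the same dichotomy applied to the Plücker coordinates of $\mathbf{w}_\Gamma$ under $\wedge^k a_j u(\xbf)$: those indexed by subsets $I \subset \{1,\ldots,d\}$ are scaled by $e^{\sum_{i\in I} X_i^{(j)}} \geq e^{k \lfloor \Xbf^{(j)}\rfloor}$ (with the special case $I = \{1,\ldots,d\}$ handled using $\det a_j = 1$), while absolute decay prevents the corresponding polynomial coefficients from vanishing identically on $\supp(\theta)$.

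The main obstacle will be the uniformity of $\rho_j$ over all primitive $\Gamma$; this is handled by combining the integrality $\|\mathbf{w}_\Gamma\|_\infty \geq 1$ with the fact that the non-planarity and absolute decay constants depend only on $\supp(\theta)$ and not on $\Gamma$. The remaining hypothesis of \cite{KLW}---that the coordinates of $\xbf \mapsto (\wedge^k a_j u(\xbf))\mathbf{w}_\Gamma$ are $(C,\alpha)$-good on balls meeting $\supp(\theta)$ with constants depending only on the friendliness constants---is automatic because these coordinates are polynomials in $\xbf$ of degree at most $k \leq d+1$, and polynomials of bounded degree are $(C,\alpha)$-good on absolutely friendly measures by a standard argument carried out in \cite{KLW}. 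Plugging $\rho_j \to \infty$ into the conclusion of the KLW theorem then yields the required smallness for any prescribed $\eta$, completing the proof.
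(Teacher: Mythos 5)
Your proposal follows essentially the same route as the paper: both reduce the statement to the quantitative non-divergence theorem of \cite{KLW} for absolutely friendly measures, with the key hypothesis being the non-contraction of Pl\"ucker coordinates, which you verify using the drifting-away-from-walls assumption; the paper simply cites \cite{KLW} for Proposition~\ref{prop: KLW nondivergence} and gives a parallel sketch only for the stronger Proposition~\ref{prop: KLW nondivergence uniform}. One small slip, immaterial to the argument: the coordinates of $\xbf \mapsto (\wedge^k u(\xbf))\mathbf{w}_\Gamma$ are affine in $\xbf$, not of degree $k$, since $u(\xbf)e_i = e_i$ for $i\le d$ and $u(\xbf)e_{d+1} = e_{d+1}+\sum_i x_i e_i$.
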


We will need the following strengthening of Proposition \ref{prop: KLW nondivergence}, which establishes the uniform non-divergence property of Theorem \ref{thm: equidistribution main}:
\begin{prop}\label{prop: KLW nondivergence uniform}
For any positive $\varepsilon, C, \alpha,$ and any $ D\geq 1$ there is a compact $K\subset \X_{d+1}$ such that for any $(C, \alpha)$-decaying and $D$-Federer compactly supported measure $\theta$ on $\R^d$,  for any
$g \in \SL_{d+1}(\R)$, the measure $\nu = \nu_g$ defined below \eqref{eq: def u+} satisfies that
for any sequence $\{a_j \} \subset A$ which drifts away from walls, for all sufficiently large $j$ we have
$$a_{j*}  \nu (K) \geq 1-\varepsilon.$$

\end{prop}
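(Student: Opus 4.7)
The plan is to apply the quantitative non-divergence theorem for friendly measures of Kleinbock--Lindenstrauss--Weiss~\cite{KLW} (a refinement of Kleinbock--Margulis), keeping careful track of how constants depend on the data. The key point is that the QND constants depend only on $(C, \alpha, D, d)$ and not on the specific lattice, so the compact set $K$ in the conclusion can be chosen uniformly in $g$; only the threshold for ``sufficiently large $j$'' need depend on $g$, $\theta$, and the sequence $(a_j)$.

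First, I would use the identity $a_j u(\xbf) g = u(T_j \xbf)(a_j g)$, where $T_j = \mathrm{diag}(e^{X_i^{(j)} - X_{d+1}^{(j)}})_{i \leq d}$, to rewrite $a_{j\ast}\nu_g$ as the pushforward of the dilated measure $(T_j)_\ast\theta$ under $\ybf \mapsto u(\ybf) a_j g \Z^{d+1}$. Since $T_j$ is an affine diagonal dilation, $(T_j)_\ast\theta$ is again $(C,\alpha)$-decaying and $D$-Federer with the same constants, by the argument for \eqref{eq: Phi prime}. The KLW QND estimate then supplies constants $C_0 = C_0(C, \alpha, D, d)$ and $\alpha_0 = \alpha_0(C, \alpha, D, d)$ such that, once the sup-hypothesis
\[
\sup_{\ybf \in \supp((T_j)_\ast\theta)} \|u(\ybf) \cdot w\| \geq 1
\]
is verified for every $w = a_j g v_L$, where $v_L \in \wedge^k \Z^{d+1}$ is the wedge representative of a primitive sublattice $L \subset \Z^{d+1}$, one obtains a bound $(T_j)_\ast\theta(\{\ybf : \eta_1(u(\ybf) a_j g \Z^{d+1}) < \eta_0\}) \leq \vre$ for $\eta_0 := (\vre/C_0)^{1/\alpha_0}$. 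Taking $K := \{\Lambda \in \X_{d+1} : \eta_1(\Lambda) \geq \eta_0\}$, compact by Mahler's criterion and depending only on $(\vre, C, \alpha, D, d)$, this translates to $a_{j\ast}\nu_g(K) \geq 1 - \vre$.

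The main obstacle is verifying the sup-hypothesis uniformly over all primitive sublattices $L$. Writing the weight decomposition $g v_L = w_0 + e_{d+1} \wedge w_1$ with $w_0 \in \wedge^k \R^d$ and $w_1 \in \wedge^{k-1} \R^d$, one finds that $u(\ybf)(a_j g v_L)$ has constant part $a_j w_0 + e_{d+1}\wedge u_1^{(j,L)}$ and linear part $\ybf \wedge u_1^{(j,L)}$, where $u_1^{(j,L)} := e^{X_{d+1}^{(j)}} (a_j|_{\R^d}) w_1$. Since $g v_L$ lies in the lattice $g \wedge^k \Z^{d+1}$, its norm is bounded below by a systole constant $c(g) > 0$, and $\|w_0\|^2 + \|w_1\|^2 = \|g v_L\|^2 \geq c(g)^2$ by orthogonality. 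I would split into two regimes: when $\|w_1\| \leq c(g)/2$, Pythagoras gives $\|w_0\| \geq c(g)\sqrt{3}/2$, and the drift-away-from-walls hypothesis forces the constant term $\|a_j w_0\| \geq e^{k\lfloor X^{(j)}\rfloor} \|w_0\|$ above $1$ for $j$ large; when $\|w_1\| > c(g)/2$, a direct calculation shows that the diameter $R_j$ of $\supp((T_j)_\ast\theta)$ satisfies $R_j \|u_1^{(j,L)}\| \gtrsim e^{k\lfloor X^{(j)}\rfloor} \|w_1\| \cdot \mathrm{diam}(\supp\theta)$, so the linear part $\|\ybf \wedge u_1^{(j,L)}\|$ grows beyond $1$ on a ball of radius $R_j$.

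The delicate step is the second regime, where the ball-sup of the polynomial must be transferred to its sup on the fractal support; this transfer is a standard consequence of the $(C,\alpha)$-good property of polynomials against friendly measures, costing only a constant factor depending on $(C, \alpha, d, k)$. Having verified both regimes, plugging into the KLW estimate completes the proof, yielding $a_{j\ast}\nu_g(K) \geq 1 - \vre$ for all $j$ sufficiently large, with $K$ depending only on $(\vre, C, \alpha, D)$ as required.
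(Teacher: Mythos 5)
Your plan is the right one — apply the quantitative non-divergence theorem of Kleinbock--Lindenstrauss--Weiss, use the fact that the QND constants depend only on $(C,\alpha,D,d)$ so that $K$ is uniform, and verify the sup-hypothesis over primitive sublattices using a weight decomposition under $U$ together with the drift-away-from-walls hypothesis, allowing the ``sufficiently large $j$'' threshold to depend on $g$. This is the structure of the proof in the paper as well, which verifies the assumptions of \cite[Thm.~4.3]{KLW} with $h(u)=a_j\tau(u)g_0$.

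However, there is a genuine gap in the step where you move the $j$-dependence from the map to the measure. You rewrite $a_{j\ast}\nu_g$ as the pushforward of $(T_j)_\ast\theta$ under $\ybf\mapsto u(\ybf)a_jg\Z^{d+1}$, and assert that $(T_j)_\ast\theta$ is again $(C,\alpha)$-decaying and $D$-Federer with \emph{the same constants}, citing the argument for \eqref{eq: Phi prime}. That argument is only for affine \emph{similarities}, which send Euclidean balls to Euclidean balls. Here $T_j=\mathrm{diag}(e^{X_i^{(j)}-X_{d+1}^{(j)}})_{i\leq d}$ is a genuinely anisotropic dilation when the weights are unequal, and the anisotropy is unbounded as $j\to\infty$ (drift away from walls controls $\min_i X_i^{(j)}$, not the spread among the $X_i^{(j)}$). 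An anisotropic map takes balls to ellipsoids, so the Federer and absolute-decay conditions \eqref{eq: Federer}--\eqref{eq: absolute decay} are \emph{not} preserved with uniform constants: for example the pushforward of Lebesgue measure on the unit cube under $\mathrm{diag}(t,1/t,\ldots)$ has absolute-decay constant $C$ growing linearly in $t$ (take a ball of radius $1$ centered on the support and a hyperplane parallel to the long axis). Since you later invoke these constants — both in the QND estimate applied to $(T_j)_\ast\theta$ and in the ``standard transfer from ball-sup to fractal-sup'' in the second regime — the constants you feed into KLW are not uniform, and the compact set $K$ you extract is not the one claimed. The fix is precisely what the paper does: keep the measure $\theta$ fixed and absorb the $j$-dependence into the function side, taking $h(u)=a_j\tau(u)g_0$ as in \cite{KWDirichletimprovable} and using that the relevant coordinate functions $\xbf\mapsto\langle a_ju(\xbf)gv_L,\ \cdot\ \rangle$ are polynomials of bounded degree, hence $(C',\alpha')$-good with respect to the \emph{original} friendly $\theta$, uniformly in $j$, $g$, and the sublattice $L$. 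Your weight decomposition, systole bound, and two-regime case analysis then serve exactly to verify the sup-hypothesis, as in the paper's sketch.
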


The proof is essentially given in \cite{KWDirichletimprovable}, based on \cite{KLW}. We sketch the argument for the reader's convenience.

\begin{proof}[Sketch of proof.]
  We first claim that in proving this statement we may assume that $\mathrm{supp}(\theta) \subset \mathbf{B},$ where $\mathbf{B}$ is the unit ball around the origin in $\R^d.$
  Indeed, if the support of $\theta$ is not contained in $\mathbf{B}$, we can replace $\theta$
  by $f_*\theta$, for a linear contraction $f: \R^d \to \R^d$
 which maps $\mathrm{supp}(\theta)$ into $\mathbf{B}.$ As in the proof of Proposition \ref{prop: fractal measures are friendly}, such a map does not affect the constants $D, C, \alpha$,
 and can be realized by the conjugation action of $a_t$ on $U$ for some $t_0\in \R$. This amounts to replacing $x_0$ by $a_{t_0}x_0$, replacing $\nu$ by $a_{t_0*}\nu$, and replacing $(a_j)$ by $(a_j a_{t_0}),$ which is also a sequence drifting away from walls.

We now verify the assumptions of \cite[Thm. 4.3]{KLW}, with $U$ a big enough ball containing $\mathbf{B}$ and with $h(u) = a_j \tau(u)g_0$,  where  $\tau$ is as in \cite[Sec. 2.1]{KWDirichletimprovable}, $g_0$ satisfies $x_0 = \pi(g_0)$, and with $\rho =1$, and we verify the conditions for all $j$ large enough.  Assumption (1) is immediate from \cite[Lemma 4.1]{KLW} and \cite[Lemma 3.3]{KWDirichletimprovable} (where in \cite[Lemma 3.3]{KWDirichletimprovable} we take $\mathbf{w}$ to be $\mathbf{v}_1 \wedge \cdots \wedge \mathbf{v}_j$, where $\mathbf{v}_i = g_0 \mathbf{u}_i$ and $\mathbf{u}_1, \ldots, \mathbf{u}_j$ generate $\Z^k \cap V$ as a $\Z$-module). Assumption (2) is verified in \cite[Pf. of Thm. 3.3]{KLW} for the standard one parameter flow and for $g_0 = \mathrm{Id}$. For the general case we need here, we argue as in \cite[Section 3.3]{KWDirichletimprovable}, only using the drifting away from walls assumption. Namely,  in  the proof in \cite[Proof of Thm. 3.5]{KWDirichletimprovable}, the constant $\delta$ is allowed to depend on $g_0.$ This does not cause any issues and the same proof goes through.

\end{proof}
\subsection{Generalities on stationary measures}
\label{sec:generalities stationary measures}
We will use the following basic facts about stationary measures. Most of the results are valid in great generality but we only state the results which we will need here. See \cite{BQ1, BQbook} for details and proofs.

\begin{prop}\label{prop: commutes with RW}

Let $G$ be a locally compact second countable group, acting continuously on a locally compact second countable space $X$, and let $ \mu$ be a measure as in \eqref{eq: def mu}.  Let
\begin{equation}\label{eq: def B beta}
B \df \{1, \ldots, k\}^{\N} \cong \{h_1, \ldots, h_k\}^{\N},\ \ \  \ \ \ \beta  \df  \mu^{\N}.
\end{equation}
and $T: B \to B$ the shift map. Then:
\begin{itemize}
   \item
    The collection of $\mu$-stationary measures on $X$ is a compact convex set in the space of probability measures on $X. $ Ergodic stationary measures are extremal in this cone.
    \item
    If a homeomorphism $\varphi: X \to X$ commutes with each element in the support of $\mu$,   then it maps any (ergodic) $ \mu$-stationary measure to an (ergodic) $\mu$-stationary measure.

    \item
    An ergodic stationary measure which assigns full measure to  a countable subset of $X$  is supported on a finite set and gives equal mass to all elements in this set.
    \item
    For any $\mu$-stationary probability measure $\nu$, for $\beta$-almost every sequence $b=(h_n)_n$, the limit
    \begin{align}\label{eq: def nu b}
        \nu_b \df \lim_{n\to\infty} (h_1 \circ \cdots \circ h_n)_\ast \nu
    \end{align}
    exists and is a probability measure on $X$. The collection $(\nu_b)$ of {\em limit measures} satisfies
    \begin{equation}\label{eq: stationarity equivariance1} \text{ for every }i \text{ and }\beta\text{-a.e. } b \in B, \
    \nu_{h_ib}  = h_{i*} \nu_b,
    \end{equation}
    and
    \begin{equation}\label{eq: stationarity equivariance2}
\nu = \int \nu_b \; \der\beta(b) .
\end{equation}
\item Conversely, for any assignment $b\mapsto \nu_b$, where $\nu_b$ is a measure on $X$ and the assignment satisfies \eqref{eq: stationarity equivariance1}, the measure defined by \eqref{eq: stationarity equivariance2} is stationary and the collection $(\nu_b)_{b \in B}$ is its system of leafwise measures.
    \item

 Also  define
\begin{equation}
\label{eq: def beta X}
B^X \df B \times X, \  \ \ T^X(b,x) = \left(Tb, h_{b_1}^{-1}x \right), \ \ \ \beta^X \df \int_{B^X} \delta_b \otimes \nu_b \, \der \beta(b).
\end{equation}
Then $\beta^X$ is $T^X$-invariant, and is ergodic if and only if $\nu$ is ergodic.
    \end{itemize}
\end{prop}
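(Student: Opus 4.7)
The plan is straightforward: all six bullets are standard facts from Benoist--Quint's theory of stationary measures for random walks on locally compact spaces, and I would verify them in order, citing \cite{BQ1, BQbook} throughout. I would rely on three tools: weak-$*$ compactness of probability measures together with continuity of convolution by a finitely supported measure, a martingale convergence argument to produce the Furstenberg limit measures, and a skew-product computation for the ergodicity equivalence.

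First, for the opening bullet, convexity of the stationary set is linear in $\nu$, and continuity of the convolution $\nu\mapsto \mu*\nu$ in the weak-$*$ topology (using that $\mu$ is finitely supported) gives closedness of its fixed-point set; compactness on the space of Borel probability measures on $X$ follows from Prokhorov. Extremality of ergodic measures is the definition in force. For the second bullet, the commutation hypothesis yields
\begin{equation*}
\int f\, d(\mu * \varphi_* \nu) = \sum_i p_i \int f\circ h_i \circ \varphi \, d\nu = \int f \, d(\varphi_* (\mu * \nu)),
\end{equation*}
so $\varphi_*$ preserves stationarity, and, being an affine homeomorphism of a compact convex set, it preserves extremality. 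For the third bullet, I would let $c^* := \sup_x \nu(\{x\})$ and observe that stationarity forces each atom attaining $c^*$ to have all its $h_i^{-1}$-preimages of mass $c^*$ as well; the set $F$ of maximal atoms is finite and forward-invariant under all $h_i^{-1}$, so by ergodicity $\nu$ is supported on $F$ with uniform weights.

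The analytic core is the fourth bullet. Fixing $f \in C_c(X)$, I would set $M_n(b) := \int f\, d((h_{b_1} \cdots h_{b_n})_* \nu)$; a one-step computation using $\mu * \nu = \nu$ shows that $(M_n)$ is a bounded martingale over $(B,\beta)$ with respect to the filtration generated by the first $n$ coordinates, hence converges $\beta$-almost surely. Choosing a countable dense family in $C_c(X)$, I extract a full-measure set of $b$ along which $(h_{b_1}\cdots h_{b_n})_* \nu$ has a weak-$*$ limit $\nu_b$; upgrading the limit from a sub-probability to a probability uses the same martingale applied to a cutoff sequence $\chi_K \nearrow 1$ together with $\nu(X)=1$. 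The equivariance \eqref{eq: stationarity equivariance1} is immediate from continuity of pushforward by $h_i$, and the disintegration \eqref{eq: stationarity equivariance2} is the $n=0$ case of the martingale identity $\int M_n \, d\beta = \int f\, d\nu$. This is the step where the main technical content sits, and it is the one I would expect to be the main obstacle; everything else is bookkeeping.

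The remaining bullets are formal. For the converse in the fifth bullet, given $b \mapsto \nu_b$ satisfying \eqref{eq: stationarity equivariance1}, defining $\nu$ by \eqref{eq: stationarity equivariance2} and using the factorization $\beta = \mu \otimes \beta$ under $b \mapsto (b_1, Tb)$ gives
\begin{equation*}
\mu * \nu = \sum_i p_i \int (h_i)_* \nu_b \, d\beta(b) = \sum_i p_i \int \nu_{h_i b}\, d\beta(b) = \int \nu_{b'}\, d\beta(b') = \nu.
\end{equation*}
For the sixth bullet, $T^X$-invariance of $\beta^X$ reduces by a test-function computation to the identity $(h_{b_1}^{-1})_* \nu_b = \nu_{Tb}$, which is \eqref{eq: stationarity equivariance1} read at the shifted sequence together with the shift-invariance of $\beta$; the ergodicity equivalence follows from the observation that, via \eqref{eq: stationarity equivariance2} and the fourth bullet, extremal decompositions of $\nu$ as a $\mu$-stationary measure lift bijectively to extremal decompositions of $\beta^X$ as a $T^X$-invariant measure, so one is an extreme point if and only if the other is. For this last correspondence I would simply cite \cite[Ch.~2]{BQbook}.
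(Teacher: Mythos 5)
The paper itself gives no proof of this proposition: it states the facts and simply refers to \cite{BQ1, BQbook}. Your proposal reconstructs the standard arguments from those references, so in spirit you are taking the same route, just with the steps written out. Most of your sketch is correct: the martingale computation $\mathbb{E}[M_{n+1}\mid b_1,\dots,b_n]=M_n$ is the right identity, the cutoff argument $\int\nu_b(X)\,\der\beta(b)=\lim_K\int f_K\,\der\nu=1$ does upgrade the limit to a probability measure, the atom argument in the third bullet correctly uses that the finite set of maximal atoms is $h_i$-invariant for each $i$, and the Bernoulli-factorization computation for the converse bullet is clean.

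One point deserves a caveat. For the first bullet you write that ``compactness on the space of Borel probability measures on $X$ follows from Prokhorov.'' This does not go through as stated: $X$ is only assumed locally compact, and the set of probability measures on a non-compact $X$ is not weak-$*$ compact (mass can escape), so Prokhorov would require a tightness input that is not given. What the abstract hypotheses of the proposition actually yield is that the set of $\mu$-stationary probability measures is a \emph{closed} convex subset of the (non-compact) space of probability measures; indeed the paper's own earlier discussion in \textsection\ref{subsec: RW S arithmetic} says ``closed convex set,'' not ``compact,'' and the paper uses only closedness together with a separate non-escape-of-mass argument in the proof of Theorem~\ref{thm: equidistribution general}. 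So either the word ``compact'' in the proposition should be read as ``closed,'' or one must supply a tightness/non-divergence hypothesis; in any case Prokhorov alone does not deliver it. Similarly, the fifth bullet's claim that the given family $(\nu_b)$ coincides with the system of limit measures of the resulting $\nu$ is not purely a matter of the stationarity computation you wrote; one also needs to show that the equivariance relation \eqref{eq: stationarity equivariance1} pins down the family uniquely $\beta$-a.e.\ (this is in \cite[Ch.~2]{BQbook}), and it would be worth making that citation explicit rather than leaving it implicit.

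Neither issue affects the later use of the proposition in the paper, but both are places where ``follows from the standard facts'' hides a step that a careful reader would want spelled out.
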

We remark that what we denote here by $T^X$ is the {\em backward random walk transformation} which is denoted in \cite{BQbook} by $T^{\vee  X}.$ It should not be confused with the forward random walk transformation $(b,x) \mapsto (Tb, h_{b_1}x).$
\subsection{Irreducible $S$-adic lattices}\label{subsec: S adic lattices}
In this subsection we record some simple properties of the groups introduced in
\textsection \ref{subsec: RW S arithmetic}  which we will use repeatedly.

\begin{prop}\label{prop: in repeated use}
For any $x \in \XS$ and any nontrivial $g  = (g^{(\s)})_{\s \in S} \in G^S$ in the stabilizer of $x$, we have
\begin{equation}\label{eq: and thus lambda}
     \forall \sigma \in S, \ \ \ \ g^{(\sigma)} \neq \mathrm{Id}.\end{equation}
\end{prop}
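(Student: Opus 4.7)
The plan is to pass the stabilizer condition through the principal bundle structure $G^S \to \XS$ and then exploit the injectivity of the embedding of $\mathbf{G}(\Z[1/\Sf])$ into each local completion $\mathbf{G}(\Q_\sigma)$.

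First I would lift $x$ to $G^S$: write $x = g_0 \Lambda^S$ for some $g_0 \in G^S$. The assumption $gx = x$ then translates to $g_0^{-1} g g_0 \in \Lambda^S$. Since $\Lambda^S$ is, by definition, the diagonal embedding of $\mathbf{G}(\Z[1/\Sf])$ into the product $G^S = \prod_{\sigma \in S} \mathbf{G}(\Q_\sigma)$, we can pick $\gamma_0 \in \mathbf{G}(\Z[1/\Sf])$ with
\[
g_0^{-1} g g_0 = \bigl(\iota_\sigma(\gamma_0)\bigr)_{\sigma \in S},
\]
where $\iota_\sigma \colon \mathbf{G}(\Z[1/\Sf]) \to \mathbf{G}(\Q_\sigma)$ is the map induced by the inclusion of rings $\Z[1/\Sf] \hookrightarrow \Q_\sigma$.

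Next I would argue by contradiction: suppose there exists $\sigma_0 \in S$ with $g^{(\sigma_0)} = \mathrm{Id}$. Projecting the displayed identity above to the $\sigma_0$-component yields
\[
\iota_{\sigma_0}(\gamma_0) = \bigl(g_0^{(\sigma_0)}\bigr)^{-1} g^{(\sigma_0)} g_0^{(\sigma_0)} = \mathrm{Id}.
\]
Here comes the only substantive point: the map $\iota_{\sigma_0}$ is injective, because $\mathbf{G} = \PGL_{d+1}$ is an affine $\Q$-group and its functor of points carries any injection of $\Q$-algebras (such as $\Z[1/\Sf] \hookrightarrow \Q_{\sigma_0}$) to an injection on $\mathbf{G}$-points. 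Hence $\gamma_0$ is trivial in $\mathbf{G}(\Z[1/\Sf])$, which forces $\iota_\sigma(\gamma_0) = \mathrm{Id}$ for every $\sigma \in S$ simultaneously. Consequently $g_0^{-1} g g_0 = \mathrm{Id}$, and therefore $g = \mathrm{Id}$, contradicting the hypothesis that $g$ is nontrivial.

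The argument is entirely formal, so I do not anticipate a real obstacle; the only thing worth pinning down cleanly is the injectivity of $\iota_{\sigma_0}$, which in the form used in the paper (where $\mathbf{G}$ is the automorphism group of the $\Q$-algebra $\mathrm{Mat}_{d+1}$) follows from the fact that an $R$-algebra automorphism of $\mathrm{Mat}_{d+1}(R)$ that becomes trivial after base change along an injective ring map $R \hookrightarrow R'$ was already trivial to begin with.
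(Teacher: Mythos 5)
Your proof is correct and follows essentially the same route as the paper: reduce by conjugation to the case of an element of $\Lambda^S$, then use that $\Lambda^S$ is the diagonal embedding of $\mathbf{G}(\Z[1/\Sf])$ with each local embedding $\iota_\sigma$ injective, so that a nontrivial diagonal element has every component nontrivial. The only difference is that the paper compresses the injectivity point into the phrase ``follows immediately from the definition of $\Lambda^S$ as a diagonal embedding,'' whereas you spell it out explicitly.
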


\begin{proof}
    Since property \eqref{eq: and thus lambda} is invariant under conjugations, and the stabilizer of $x = \pi(g_0)$ is a conjugate of $\Lambda^S$, it suffices to consider the case that $x = \pi(\Lambda^S)$ and that $g \in \Lambda^S$. In this case \eqref{eq: and thus lambda} follows immediately from the definition of $\Lambda^S$ as a diagonal embedding,
\end{proof}

\begin{prop}\label{prop: projection Z trivial}
    For any $\s \in \Sue$, there are no nontrivial elements of   $\mathbf{G}(\Q_\s)$ which commute  with all of the matrices $h^{(\sigma)}_1, \ldots, h^{(\sigma)}_k$.
\end{prop}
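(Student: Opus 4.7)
The plan is to directly analyze the block structure of a hypothetical centralizer. Lift a purported nontrivial centralizing element $g \in \mathbf{G}(\Q_\sigma) = \PGL_{d+1}(\Q_\sigma)$ to a representative $\tilde g \in \GL_{d+1}(\Q_\sigma)$ written in $(d,1)$ block form as
\[
\tilde g = \begin{pmatrix} A & \mathbf{v} \\ \mathbf{w}^T & c \end{pmatrix}.
\]
Commutation in $\PGL$ means $\tilde g \, h_i^{(\sigma)} = \lambda_i \, h_i^{(\sigma)} \, \tilde g$ for some scalars $\lambda_i \in \Q_\sigma^\times$. The first step is to expand both sides using \eqref{eq: def h} (which at every finite place includes the translation $-\ybf_i$) and read off the four block-wise identities: $\varrho A = \lambda_i(\varrho A - \ybf_i \mathbf{w}^T)$, $-A \ybf_i + \mathbf{v} = \lambda_i(\varrho \mathbf{v} - c\, \ybf_i)$, $\varrho \mathbf{w}^T = \lambda_i \mathbf{w}^T$, and $-\mathbf{w}^T \ybf_i + c = \lambda_i c$.

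The main case analysis splits on whether $\mathbf{w}=0$. If $\mathbf{w} \neq 0$, the third identity forces $\lambda_i = \varrho$ for every $i$, and the fourth then yields $\mathbf{w}^T \ybf_i = (1-\varrho) c$, a quantity independent of $i$, hence $\mathbf{w}^T(\ybf_i - \ybf_j) = 0$ for all $i,j$. If $\mathbf{w}=0$, then invertibility of $\tilde g$ precludes both $A$ and $c$ from vanishing; since some $\lambda_i \neq 1$ would force $A=0$ (from the first identity) and $c=0$ (from the fourth), we must have $\lambda_i=1$ for every $i$. The second identity then simplifies to $(cI - A)\ybf_i = (\varrho-1)\mathbf{v}$, whose right side is independent of $i$, whence $(cI - A)(\ybf_i - \ybf_j)=0$ for all $i,j$.

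Both cases reduce to the linear-algebraic input that $\{\ybf_i - \ybf_j\}_{i,j}$ spans $\Q_\sigma^d$. Because the $\ybf_i$ lie in $\Q^d$, this follows from spanning over $\R$, which in turn follows from irreducibility of $\Phi$ via the fixed-point trick: if the real span $V \df \mathrm{span}_\R\{\ybf_i - \ybf_j\}$ were a proper subspace, then the fixed points $\xbf_i \df \ybf_i/(1-\varrho)$ of $f_i$ would all lie in the single coset $\xbf_1 + V$, and the identity $f_i(\xbf) - \xbf_i = \varrho(\xbf - \xbf_i)$ would give $f_i(\xbf_1 + V) = \xbf_i + \varrho V \subseteq \xbf_1 + V$, producing a single proper affine subspace invariant under every $f_i$, contradicting irreducibility. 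With spanning in hand, Case $\mathbf{w}\neq 0$ gives $\mathbf{w}=0$ (contradiction), and Case $\mathbf{w}=0$ gives $A = cI$, whence $\mathbf{v}=0$ since $\varrho \neq 1$. Thus $\tilde g = c\cdot \mathrm{Id}$ and $g$ is trivial in $\mathbf{G}(\Q_\sigma)$.

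I do not anticipate a serious obstacle: the only hypothesis used on $\varrho$ is $\varrho \notin \{0,1\}$, which holds by \eqref{eq: def fi}, and the argument is uniform in $\sigma \in \Sf$ — the distinction between $\Sue$ and $\Sue^c$ plays no role here and is relevant only for the way the statement is invoked later. The one place requiring care is ensuring that the invertibility of $\tilde g$ really rules out the degenerate possibility $A=0$, $c=0$, $\mathbf{w}=0$, which it does as soon as $d+1 \geq 2$.
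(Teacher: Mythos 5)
Your proof is correct and takes a genuinely different, more elementary route than the paper. The paper's proof proceeds by abstraction: it sets $H = \overline{\langle h_i^{(\s)}\rangle}$, observes that it suffices to show $H$ is \emph{epimorphic} in $\mathbf{G}(\Q_\s)$ (so that anything fixed by $H$ in the adjoint action on $\mathrm{Mat}_{d+1}$ is fixed by all of $\mathbf{G}(\Q_\s)$), then reduces via \cite[Lemma~5.2]{Shah-LimitDistributions} to showing the Zariski closure of $H$ contains $U^{(\s)}$, and finally invokes the spanning of the differences $\ybf_i-\ybf_j$. You instead skip the epimorphic-subgroup machinery and Shah's lemma entirely, working directly with a lift $\tilde g \in \GL_{d+1}$ and the projective-commutation equations $\tilde g\, h_i^{(\s)} = \lambda_i h_i^{(\s)} \tilde g$, reading off four block equations and closing the case analysis by hand. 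Both approaches bottom out in the same geometric input --- irreducibility of $\Phi$ forces $\mathrm{span}\{\ybf_i-\ybf_j\}$ to be all of $\Q_\s^d$, via the fixed-point argument showing that otherwise the coset $(1-\varrho)^{-1}\ybf_1 + V$ is a proper invariant affine subspace --- and your observation that rationality of the $\ybf_i$ lets one pass freely between spanning over $\R$, $\Q$, and $\Q_\s$ is correct. What your route buys is self-containedness and transparency (no appeal to epimorphic subgroups); what the paper's route buys is reusability, since the epimorphic-closure reduction would apply with minor changes if the $h_i$ had a more complicated block structure. One small remark: as you note, your argument works uniformly for all $\s \in \Sf$ and uses only $\varrho \in \Q \smallsetminus \{0,1\}$; the restriction to $\Sue$ in the statement is inherited from where it is invoked, not from any constraint of the proof, and the same is true of the paper's argument.
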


\begin{proof}
Suppose $\s \in \Sue$, let
$$H \df \overline{\left \langle h_i^{(\s)}: i = 1, \ldots, k \right \rangle},$$
and let $C$ be the centralizer of $H$. Since $\mathbf{G}(\Q_\s)$ is center-free, it suffices to prove that $C$ is contained in the center of $\mathbf{G}(\Q_\s)$. Since $\mathbf{G}(\Q_\s)$ is a matrix group, it is enough to show that $H$ is epimorphic in $\mathbf{G}(\Q_\s)$; that is, in any finite-dimensional representation of algebraic groups, any vector fixed by $H$ is fixed by $\mathbf{G}(\Q_\s)$. In order to verify this statement, we can replace $H$ by its Zariski closure, and by using~\cite[Lemma 5.2]{Shah-LimitDistributions} or~\cite[Lemma 2.3]{KWDirichletimprovable}, we see that it suffices to show that the Zariski closure of $H$ contains the group
\begin{equation}\label{eq: def u+ sigma}
  U^{(\s)} \df u^{(\s)}(\Q^d_\s), \quad \text{ where } u^{(\s)} \colon \bQ_\s^d \to \mathbf{G}(\bQ_\s),
    \quad \quad u^{(\s)}(\xbf) \df \begin{pmatrix}
        \id & \xbf \\
        \mathbf{0} & 1
    \end{pmatrix}
\end{equation}
(indeed, the above-referenced results show that nonzero vectors fixed by $U^{(\sigma)}$ are expanded by the $h^{(\sigma)}_i$, and in particular, not fixed).

To this end, let
 $U_0 \df \bar{H}^Z \cap U^{(\s)}$, where $\bar{H}^Z$ denotes the Zariski closure of $H$.
 Since $U^{(\s)}$ is normalized by $H$, so is $U_0$.
 Also, for each $i,j$, $U_0$ contains the elements
 $$h_i^{(\s)} \circ \left(h_j^{(\s)}\right)^{-1}= u^{(\s)}(\mathbf{y}_j - \mathbf{y}_i);$$
 that is, $U_0$ contains the image under $u^{(\sigma)}$ of all the differences of the translations vectors appearing in the IFS $\Phi$. Let $V_0$ be the subspace of $\Q_{\s}^d$ spanned by these differences.
An easy calculation shows that the maps in $\Phi$ all preserve the affine subspace $(1- \varrho)^{-1} \mathbf{y}_1 + V_0$. Since $\Phi$ is irreducible, this means that $V_0 = \Q_{\s}^d$, and since $U_0$ is Zariski closed, this means that $U_0 = u^{(\s)}(V_0) = U^{(\s)}$, as required.
\end{proof}

\section{Examples of Stationary Measures}\label{sec: examples}
In analogy with Theorem \ref{thm: equidistribution main}, we define a measure $\hat \nu_0$ on $\X_{d+1}^S$ as the pushforward of $\theta$ under the map $\xbf \mapsto u(\xbf) \Lambda^S.$ 
 For a $\Q$-algebraic group $\mathbf{J}$ and $S_0 \subset S$ we will write
\begin{equation}\label{eq: notation places}\mathrm{J}_{S_0}\df \prod_{\sigma \in S_0} \mathbf{J}(\Q_\sigma).
 \end{equation}
We let $\mathbf{U}$ be the unipotent group which is the image of the embedding \eqref{eq: def u+}, in all places, and denote the  orbit of the identity coset $\Lambda^S$ under the group  $\mathrm{U}_S$ by $\mathscr{U}^S$.
This is a compact subset of $\X_{d+1}^S$, which is isomorphic to the compact $S$-arithmetic quotient
$$\mathbf{J}(\Q_S)/\mathbf{J}\left(\Z\left[ \frac{1}{S} \right] \right), \ \ \ \text{ where } \mathbf{J} = (\mathbf{G}_{\mathrm{a}})^d$$ and
$\mathbf{G}_{\mathrm{a}}$ is the additive group. We can also view $\mathscr{U}^S$ as a solenoid, that is, the natural extension of $\R^d/\Z^d$ for which the multiplication maps $\times_{p}, \, p \in \Sf$ are invertible; cf. \cite[Chap. II.10]{Hewitt_Ross} and ~\cite[Ex.~10.3]{EinsiedlerLuethi}. We denote the unique
$\mathrm{U}_S$-invariant measure on $\mathscr{U}^S$ by $m_{\mathscr{U}}.$
We denote by $A \subset \SL_{d+1}(\R)$ the group of diagonal matrices of determinant one.

We repeat here a simple computation from \cite[\textsection 4]{KhalilLuethi} that is central to our approach. By identifying $\SL_{d+1}(\R)$ with a subgroup of $\mathbf{G}(\R)$, we consider the map $u$ in \eqref{eq: def u+} as a map $\R^d \to G^S.$ From \eqref{eq: def h} one sees that for any $i \in \{1, \ldots, k\}$ and any $\xbf \in \R^d$ we have that
$
h_i u(\xbf) h_i^{-1} = u(\varrho \, \xbf).
$
Let $\lambda_i $ be the diagonal embedding of
$
 \begin{pmatrix}
    \varrho \, \mathrm{Id}_d & -\mathbf{y}_i \\
    \mathbf{0} & 1
  \end{pmatrix} \in \mathbf{GL}_{d+1}\left( \Z \left[ \frac{1}{S} \right] \right), $ so that
 $$
 \lambda_i \in \Lambda^S \ \ \text{ and } \
 h_i \lambda_i^{-1} =  u(\mathbf{y}_i).
 $$
As a consequence
\begin{equation}\label{eq: crucial identity}
h_i u(\xbf) \Lambda^S = u( \varrho \, \xbf)h_i \Lambda^S = u( \varrho \, \xbf ) \, u( \mathbf{y}_i) \Lambda^S = u(f_i(\xbf)) \Lambda^S.
\end{equation}

 \begin{prop}\label{prop: examples stationary measures}
     The  measures $m_{\X_{d+1}^S}, \, \hat \nu_0$ and $m_{\mathscr{U}}$ are all $\mu$-stationary. For any $a \in A$, the same holds for the pushforwards of these measures by $a$.
 \end{prop}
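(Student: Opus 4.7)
The plan is to verify each of the three measures separately, and then to deduce the $A$-pushforward statement as a direct consequence of the fact that every $a\in A$ commutes with each $h_i$.

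For $m_{\X^S_{d+1}}$ the claim is immediate: this measure is by definition $G^S$-invariant and hence $h_i$-invariant for each $i$, so $\mu\ast m_{\X^S_{d+1}}=m_{\X^S_{d+1}}$.

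For $\hat\nu_0$ I would apply the identity~\eqref{eq: crucial identity} directly. It asserts $h_i\cdot u(\xbf)\Lambda^S = u(f_i(\xbf))\Lambda^S$, which identifies $h_{i\ast}\hat\nu_0$ with the pushforward of $f_{i\ast}\theta$ under the defining map $\xbf\mapsto u(\xbf)\Lambda^S$. Taking the $\mathbf{p}$-weighted sum over $i$ and invoking the self-similarity identity $\theta=\sum_i p_i f_{i\ast}\theta$ from~\eqref{eq: stationarity theta} then gives $\mu\ast\hat\nu_0=\hat\nu_0$.

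For $m_{\mathscr{U}}$ a small extension of the block computation underlying~\eqref{eq: crucial identity} is needed, and this is where I expect the main (albeit modest) obstacle. A short coordinate calculation, valid uniformly over all $\sigma\in S$, yields $h_i^{(\sigma)} u^{(\sigma)}(\xbf_\sigma) (h_i^{(\sigma)})^{-1} = u^{(\sigma)}(\varrho\xbf_\sigma)$, so $h_i$ normalizes $\mrm{U}_S$. Moreover $h_i\lambda_i^{-1}$ lies in $\mrm{U}_S$ (it is trivial at each finite place and equals $u^{(\infty)}(\ybf_i)$ at $\infty$), whence $h_i\mathscr{U}^S\subset\mathscr{U}^S$. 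Consequently $h_{i\ast}m_{\mathscr{U}}$ is a probability measure supported on the solenoid $\mathscr{U}^S\cong\mrm{U}_S/(\mrm{U}_S\cap\Lambda^S)$ that remains $\mrm{U}_S$-invariant (by the normalization property above). The critical step is then the uniqueness of the $\mrm{U}_S$-invariant probability measure on $\mathscr{U}^S$, which follows from its description as a compact abelian topological group; this forces $h_{i\ast}m_{\mathscr{U}}=m_{\mathscr{U}}$ for each $i$ and hence $\mu\ast m_{\mathscr{U}}=m_{\mathscr{U}}$.

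Finally, any $a\in A$ is trivial at every finite place and a diagonal element of $\SL_{d+1}(\R)$ at $\infty$, so it commutes with $h_i^{(\infty)}=\mathrm{diag}(\varrho\,\mathrm{Id}_d,1)$ for every $i$. Thus $a$ commutes with every element of $\mrm{supp}(\mu)$, and the second bullet of Proposition~\ref{prop: commutes with RW} immediately implies that $a_\ast$ carries each of the three stationary measures above to a new stationary measure, completing the proof.
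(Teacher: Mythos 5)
Your proposal is correct and follows essentially the same line of reasoning as the paper for all three measures and for the $A$-pushforward statement. The only minor variation is in the $m_{\mathscr{U}}$ step: the paper observes that $w\Lambda^S \mapsto h_i w \Lambda^S$ is an affine automorphism of the solenoid (hence preserves Haar measure), whereas you observe that $h_{i\ast}m_{\mathscr{U}}$ remains a $\mathrm{U}_S$-invariant probability measure on $\mathscr{U}^S$ and then invoke uniqueness of such a measure on a compact homogeneous space — two equivalent standard routes to the same conclusion.
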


 \begin{proof}
    Since each $ h_i$
    preserves $m_{\X^S_{d+1}}$, it is clear that this measure is
    $\mu$-stationary.
    For $\hat \nu_0,$ we have from  \eqref{eq: crucial identity} that the map $\xbf \mapsto u(\xbf)\Lambda^S$ is a conjugacy between the actions of the  $f_i$ on $\R^d$, and of the  $h_i$ on the orbit $U\Lambda^S \subset \X_{d+1}^S$. Stationarity for $\mu$ now follows from \eqref{eq: stationarity theta}. For $m_{\mathscr{U}}$ we observe that  each $h_i$ belongs to the normalizer of $\mathrm{U}_S,$ and $h_i \lambda_i^{-1} \in \mathrm{U}_S$, and so by repeating the computation in \eqref{eq: crucial identity} and using that $\mathrm{U}_S$ is abelian, we have that
    \begin{equation}\label{eq: solenoid automorphism}
        \forall w \in \mathrm{U}_S, \ \ \ h_i w\Lambda^S =  u(\mathbf{y}_i) (h_i w h_i^{-1})\Lambda^S.
        \end{equation}
    That is, applying $h_i$ to $\mathscr{U}^S$ induces a solenoid affine endomorphism; i.e., a map which is the composition of a group endomorphism  and a group translation. Since its inverse is given by applying $h_i^{-1}$ this is actually an automorphism, and thus preserves Haar measure on $\mathscr{U}^S.$ This implies that every $h_i$ preserves $m_{\mathscr{U}}$, and in particular $m_{\mathscr{U}}$ is $\mu$-stationary.
    The last assertion follows from Proposition \ref{prop: commutes with RW}.
 \end{proof}

Note that $\hat \nu_0$ is supported on the
 solenoid $\mathscr{U}^S$ and is not invariant under the individual elements $ h_i$.

 \begin{remark}
     The measures appearing in Proposition \ref{prop: examples stationary measures} are all ergodic. For $\hat \nu_0$, this follows from the uniqueness in \eqref{eq: stationarity theta}. For $m_{\X_{d+1}^S}$ and for $m_{\mathscr{U}}$ one can give proofs along the lines of
     \cite[Lemma 7.1]{ChaikaKhalilSmillie}; we will not be using this statement and so we omit the details.
 \end{remark}

\begin{example}\label{example: new example}
For $ k \in\{1, \ldots, d\}$, let $\mathbf{U}^{(k)}$ denote the unipotent subgroup of $\mathbf{GL}_{d+1}$ whose Lie algebra is spanned by elementary matrices $E_{ij}$ for $i \in \{1, \ldots, k\}, \, j \in \{k+1, \dots, d+1\}$ (with this notation, the group $\mathrm{U}$ of the previous example is $U^{(d)}$). Let $\mathbf{N}^{(k)}$ denote the normalizer of $\mathbf{U}^{(k)}$ and let $N_1^{(k)}$ denote the subgroup of elements of $\mathbf{N}^{(k)}(\Q_S)$ whose action by conjugation on $\mathbf{U}^{(k)}(\Q_S)$ preserves the Haar measure (note that $N_1^{(k)}
$ is a topological group but not an $S$-adic group). It is not hard to check that each $h_i$ is contained in $N_1^{(k)}$. Thus, the $N_1^{(k)}$-invariant measure on any closed finite-volume orbit for $N_1^{(k)}$ is a stationary measure. 
\end{example}

 We now construct some additional ergodic $\mu$-stationary measures by  `finite-index perturbations' of the measures appearing in Proposition \ref{prop: examples stationary measures}. These examples are not surprising but are useful for understanding some of the conditions appearing in our results.

\begin{example}
    The orbit $U \Lambda^S$ is dense in the solenoid $\mathscr{U}^S$, and the measure $\hat \nu_0$ is supported on a compact subset of this orbit. Here we construct another $\mu$-stationary measure, supported on a different $U$-orbit in the solenoid. As we saw in \eqref{eq: solenoid automorphism}, this is equivalent to producing a stationary measure with this property on $\mathscr{U}^S$, invariant for the random walk by affine automorphisms.

    Write $\xbf = \left(x^{(\sigma)}\right)_{\sigma \in S}$ for an element of $\mathbf{G}_{\mathrm{a}}^d$,  and let $[\xbf]$  denote the corresponding coset in $
    \mathscr{U}^S.$ In these coordinates, the $U$-orbits in $\mathscr{U}^S$ are obtained by adding real numbers to the $\sigma = \infty$ coordinate of $\xbf,$ and $\hat \nu_0$ is supported on the orbit of the zero coset $\left[ 0 \right].$ Let $\varphi_i$ denote
    the affine automorphism induced by $h_i$. Then
    $$
    \varphi_i ([\xbf]) = \left[\left( \bar x^{(\sigma)} \right)_{\sigma
    \in S}  \right], \ \ \ \ \text{ where }
 \ \bar x^{(\sigma)} = \left\{  \begin{matrix} \varrho x^{(\sigma)} & \sigma \in S_{\mathrm{f}} \\ \varrho x^{(\sigma)} + \mathbf{y}_i & \ \sigma = \infty.  \end{matrix} \right.    $$

  For a first example, suppose that $\varrho = \frac{1}{q}$ for some $q \in \N$, and that $S_{\mathrm{f}}$ consists of the prime divisors of $q$ (in other words, the divisors of the translations $\mathbf{y}_i $ do not involve primes not dividing $q$).  Let $a$ be an integer coprime to $q$
 and let
 $$
  \mathbf{z} = \left(z^{(\sigma)} \right), \ \ \text{ where } z^{(\infty)} =0 \ \ \text{ and } \
    z^{(p)} \df \sum_{i \geq 0} aq^i \in \Q_p  \ \text{ for } p \in S_{\mathrm{f}}. $$
    This choice implies that for $p \in S_{\mathrm{f}},$
   $ \varrho z^{(p)} = \frac{a}{q} + z^{(p)}.$
    Since we can replace a coset representative by subtracting $\frac{a}{q}$ from each coordinate, we see that the random walk fixes the $U$-orbit through $[\mathbf{z}]$, acting on this $U$-orbit by $\xbf \mapsto \xbf + \mathbf{y}_i - \frac{a}{q}. $
Thus if we define
    $$
    \Phi' \df \{f'_1, \ldots, f'_k\}, \ \ \text{ where } \
    f'_i (\xbf) = f(\xbf)-\frac{a}{q}, $$
 let $\theta'$ be the unique stationary measure for $\Phi'$ on $\R^d$  (in the sense of \eqref{eq: stationarity theta}),
  and let $\nu'$ be the pushforward of $\theta'$ under the map $\xbf \mapsto u(\mathbf{z} + \xbf) \Lambda^S,$
   then $\nu'$ is a $\mu$-stationary measure on $\mathscr{U}^S$. It differs from $\hat \nu_0$ since it is supported on the orbit of $\left[\mathbf{z} \right]$.

   In this first example, the main property of the $U$-orbit of $u(\mathbf{z})\Lambda^S$ is that it is mapped to itself by the affine maps $\{\varphi_1, \ldots, \varphi_k)$. More complicated examples can be constructed by considering finite collections of $U$-orbits which are permuted by this collection of maps. In the preceding example, this will happen if in the definition of $\mathbf{z}$, we replace the series $\sum aq^i$ with a series with a periodic sequence of coefficients. We leave the details to the dedicated reader.
    \end{example}

\section{Deducing Theorems~\ref{thm: equidistribution main} and \ref{thm: diophantine main} from Theorem \ref{thm: dynamical}}\label{sec: deducing other results}

\subsection{Proof of Theorems~\ref{thm: equidistribution main} and \ref{thm: diophantine main}}
Let $\theta$ and $\nu_0$ be as in Theorem \ref{thm: equidistribution main}.
As in \textsection \ref{sec: examples}, let $\hat
\nu_0$ denote the pushforward of $\theta$ under $\xbf \mapsto u(\xbf)\Lambda^S$.
Let $P: \X_{d+1}^S \to \X_{d+1}$ be the projection, so that $\nu_0 = P_* \hat \nu_0.$
In this subsection we will assume Theorem \ref{thm: dynamical} and show:

\begin{theorem}\label{thm: equidistribution general}
    If $(a_n) \subset A$ is a sequence
   so that $\theta$ is uniformly non-divergent along $(a_n)$,
    then
    \begin{equation}\label{eq: drifting}
   \lim_{n\to \infty}  a_{n*} \hat \nu_0 =m_{\X_{d+1}^S}.
    \end{equation}
\end{theorem}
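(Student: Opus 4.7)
The plan is to apply Corollary~\ref{cor: dynamical} to any weak-$*$ subsequential limit of $a_{n*}\hat\nu_0$, decompose it into ergodic components, and then exclude the escape-of-mass components by a tightness argument based on the specific structure of $\hat\nu_0$.

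Pass to a subsequence $(n_j)$ with $a_{n_j*}\hat\nu_0 \to \nu_\infty$ weak-$*$; it suffices to show $\nu_\infty = m_{\X_{d+1}^S}$. Since $a_n$ sits in the real place while each $h_i^{(\infty)}$ is diagonal (and $a_n$ acts trivially at finite places), $a_n$ commutes with every element of $\supp\mu$, so each $a_{n*}\hat\nu_0$ is $\mu$-stationary by Proposition~\ref{prop: commutes with RW}, and hence so is $\nu_\infty$. Applying the uniform non-divergence hypothesis with $g=\mathrm{Id}$ gives that $P_*\nu_\infty$ has total mass one, and lifting through the compact fibers of $P$ shows that $\nu_\infty$ is itself a probability measure. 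The ergodic decomposition together with Corollary~\ref{cor: dynamical} then yields a splitting
\[
\nu_\infty \;=\; c\,\nu^{(1)} \;+\; (1-c)\, m_{\X_{d+1}^S},
\]
where $c \in [0,1]$ and every ergodic component of $\nu^{(1)}$ satisfies $b^k_*(\cdot)\to 0$ weak-$*$ as $k\to\infty$. By dominated convergence applied to the ergodic decomposition, $b^k_*\nu^{(1)} \to 0$, so $b^k_*\nu_\infty \to (1-c)\,m_{\X_{d+1}^S}$. The proof reduces to showing $c=0$.

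To force $c=0$, I will establish uniform tightness of the family $\{b^k_*\nu_\infty\}_{k\ge 0}$. The critical identity is that $b$ is supported at the $\Sue$-places while $u(\xbf)$ is supported at $\infty$, so the two commute in $G^S$, which gives
\[
b^k_*\hat\nu_0 \;=\; \bigl(\xbf \mapsto u(\xbf)\, b^k \Lambda^S\bigr)_*\theta.
\]
Choosing a lift $g_k \in \SL_{d+1}(\R)$ of $P(b^k\Lambda^S)\in \X_{d+1}$ and using the $G$-equivariance of $P$, this projects to
\[
P_*\, b^k_*\hat\nu_0 \;=\; \nu_{g_k}.
\]
Since $a_n$ and $b^k$ act on disjoint places and therefore commute, I can apply the uniform non-divergence hypothesis at the base point $g=g_k$: for each $\e>0$ there is a compact $K\subset\X_{d+1}$, \emph{independent of $k$}, such that $a_{n*}\nu_{g_k}(K)\ge 1-\e$ for all $n$ large (in terms of $k$ and $\e$). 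Along the subsequence $(n_j)$, $P_*b^k_* a_{n_j*}\hat\nu_0 = a_{n_j*}\nu_{g_k}$, so taking $j\to\infty$ gives $P_* b^k_*\nu_\infty(K)\ge 1-\e$ and hence $b^k_*\nu_\infty(P^{-1}(K))\ge 1-\e$ uniformly in $k$.

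Combining this tightness with the limit $b^k_*\nu_\infty \to (1-c)\,m_{\X_{d+1}^S}$ forces the limit to be a probability measure, so $c=0$ and $\nu_\infty = m_{\X_{d+1}^S}$. Since every subsequential limit coincides with $m_{\X_{d+1}^S}$, the full sequence converges. The main obstacle I anticipate is precisely the key identity $P_* b^k_*\hat\nu_0 = \nu_{g_k}$ and its interaction with the uniform non-divergence hypothesis: exploiting the commutation of $b$ with $u(\xbf)$ is what isolates a base point $g_k$ to which the hypothesis can be applied, and the strength of the hypothesis—uniformity in the group element $g$—is exactly what is needed here, since the base points $g_k$ typically diverge in $\X_{d+1}$ as $k\to\infty$. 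Without this structural feature of $\hat\nu_0$, the tightness of $\{b^k_*\nu_\infty\}$ would be unavailable.
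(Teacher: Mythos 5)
Your proof is correct and follows essentially the same approach as the paper: both rely on the commutation $b^k u(\xbf)\Lambda^S = u(\xbf)b^k\Lambda^S$ to identify $P_*b^k_*\hat\nu_0$ with $\nu_{g_k}$ for a moving base point $g_k$, and both exploit the uniformity in $g$ of the non-divergence hypothesis to control the escaping component arising from Corollary~\ref{cor: dynamical}. The paper phrases the endgame as a direct contradiction at one well-chosen $k_0$, while you phrase it as uniform tightness of $\{b^k_*\nu_\infty\}_k$ combined with the Portmanteau inequality; these are the same argument in slightly different packaging.
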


Since the map $P$ is equivariant for the action of $\SL_{d+1}(\R)$, Theorem \ref{thm: equidistribution main} follows immediately from Theorem \ref{thm: equidistribution general}.
For the proof, the key property of the diagonal element $b$ appearing in Theorem \ref{thm: dynamical}  is  that it commutes with $\mrm{PGL}_{d+1}(\R)$. This follows from \eqref{eq: def a2} and the fact that $\infty \notin S_{\mathrm{ue}}$.

\begin{proof}[Proof of Theorem \ref{thm: equidistribution general} assuming Theorem \ref{thm: dynamical}]
Let $\mathcal{T} = (a_n)$. The space of measures on $\X_{d+1}^S$ of total mass at most one, is compact with respect to the weak-* topology. Thus it suffices to show that \eqref{eq: drifting} holds along any subsequence of $\mathcal{T}$ for which the left-hand side converges. We will take a subsequence of $\mathcal{T},$ which we still denote by $(a_j)$ to simplify notation, and for which $\lim_{j \to \infty} a_{j*} \hat \nu_0 = \bar \nu_\infty$, where $\bar \nu_{\infty}$ is a measure on $\X_{d+1}^S$ of total mass at most one. We need to show $\bar \nu_{\infty} = m_{\X_{d+1}^S}.$ Along the proof we will freely pass to subsequences, which we will continue to denote by $(a_j)$.

We first claim that $\bar \nu_\infty$ is a probability measure. Indeed,  by the assumption of uniform non-divergence along $(a_n)$, there is no escape of mass in the real factor $\X_{d+1}$; that is, that any subsequential limit $\lim_{j \to \infty} a_{j*} \nu_0$ is a probability measure on $\X_{d+1}$.
 Since the projection $P$ is a proper map, this also implies that there is no escape of mass in the $S$-adic extension $\XS.$

Note that the projection of each of the elements $h_i$ to $\PGL_{d+1}(\R)$ is contained in the diagonal group, and thus commutes with the $a_j$. Also note from Proposition \ref{prop: examples stationary measures} that $\hat \nu_0$ is $ \mu$-stationary.
It follows by Proposition \ref{prop: commutes with RW} that $\bar \nu_\infty$ is a $\mu$-stationary measure. By Theorem  \ref{thm: dynamical}, there is $\eta \in [0,1]$ such that $\bar \nu_\infty = (1- \eta)  m_{\X_{d+1}^S} +\eta \nu' $, where $\nu'$ is a $ \mu$-stationary measure such that for $b$ as in \eqref{eq: def a2}, we have that $b^{k}_* \nu' \to_{k \to \infty} 0$.  We want to prove $\eta = 0.$

Assume for the sake of contradiction that $\eta>0$.
By
the assumption of uniform non-divergence along $(a_n)$, there is a compact $K \subset \X_{d+1}$ such that for any $x_0 \in \X_{d+1},$
 for all large enough $j$ we have
\begin{equation} \label{eq: on other hand}
a_{j*} \nu_{x_0}(K) \geq 1-\frac{\eta}{3},
\end{equation}
where $\nu_{x_0}$ is the pushforward of $\theta$ under the map $\xbf \mapsto u(\xbf)x_0.$
Let $\bar K \df P^{-1}(K) \subset \X^S_{d+1},$ where $P:\X_{d+1}^S \to \X_{d+1}$ is the projection. Since $P$ is proper, $\bar K$ is compact.

By the definition of $\eta,$ for some $k_0$ large enough we have
\begin{equation} \label{eq: on one hand}
b^{k_0}_* \bar \nu_\infty(\bar K) < 1-\frac{\eta}{2}.
\end{equation}

Now, using \eqref{eq: on one hand}, let $f$ be a compactly supported continuous function on $\X_{d+1}^S$ satisfying the pointwise bound $\mathbf{1}_{\bar K} \leq f \leq 1,$ and such that
\[\begin{split}
1 -\frac{\eta}{2}
> & \int f \, \der(b^{k_0}_* \bar \nu_\infty)  =  \int f(b^{k_0}x) \der \bar \nu_\infty(x)
= \lim_{j\to \infty} \int f(b^{k_0} a_j  x) \der \hat \nu_0(x)
\\  \geq &  \lim_{j\to \infty} \int \mathbf{1}_{\bar K} (b^{k_0} a_j  u(\xbf)\Lambda^S ) \der \theta(\xbf)
=  \lim_{j\to \infty} \int \mathbf{1}_{\bar K} (a_j u(\xbf) b^{k_0} \Lambda^S ) \der \theta(\xbf) \\
= &
\lim_{j\to \infty} \int \mathbf{1}_{ K} (a_j x ) \der \nu_{x_0}(x) =\lim_{j\to\infty} (a_{j*}\nu_{x_0}) ( K),
\end{split}\]
where $x_0  = P(b^{k_0}\Lambda^S)\in \X_{d+1}
.$
Note that in the second line we used the commutation relation between $b$ and the elements of the random walk.
We have obtained  a contradiction to \eqref{eq: on other hand}.
\end{proof}

\begin{proof}[Proof of Theorem \ref{thm: diophantine main}]
Given a norm $\| \cdot \|$ on $\R^{d+1},$ let
$\vre_{\| \cdot \|} $ be as in \eqref{eq: def epsilon norm}
and let $\vre<\vre_{\| \cdot \|}.$ Let $\mathbf{r}$ be a weight vector. We need to show that for $\theta$-a.e. $\xbf \in \R^d$, the trajectory $\left\{a^{(\mathbf{r})}_t \Lambda_{\xbf} : t>0 \right\}$ visits $\mathcal{U}$ along an unbounded set, where $$
\mathcal{U} = \left\{\Lambda \in \X_{d+1}: \Lambda \cap \overline{B}(0, \vre) = \{0\} \right \}.
$$
Here $\overline{B}(0, \vre)$ is the closed ball of radius $\vre$ around the origin in $\R^d$, with respect to the given norm.
Note that $\mathcal{U}$ is open, is nonempty by definition of $\vre_{\| \cdot \|},$ and hence $$
\delta \df m_{\X_{d+1}}\left( \mathcal{U}\right) \ \ \text{ satisfies } \delta >0.
$$

 Assume for the sake of contradiction that there is $t_0>0$ such that
$$
\theta \left( B \right)>0, \ \ \ \text{ where } B \df
\left\{\xbf \in \R^d: \forall t \geq t_0, \ a^{(\mathbf{r})}_t \Lambda_{\xbf} \notin \mathcal{U} \right \}.
$$
Using Proposition \ref{prop: Lebesgue density}, there is $f : \R^d \to \R^d$ of the form $f = f_{i_1} \circ \cdots \circ f_{i_n}$ such that
$$\frac{\theta(B \cap f(\mathcal{K})) }{ \theta(f(\mathcal{K}))}\geq 1-\frac{\delta}{2}.$$
Let $\theta_1 = f_* \theta $. Because of the self-similar structure, $\theta_1$ is the normalized restriction of $\theta$ to $f(\mathcal{K})$, and we have $\theta_1(B) \geq 1 - \frac{\delta}{2}.$
Let $\bar \nu_1$ be the measure obtained by pushing forward $\theta_1$ under the map $\xbf \mapsto u(\xbf) \Lambda^S.$ By Propositions \ref{prop: fractal measures are friendly} and \ref{prop: KLW nondivergence}, the measure $\theta_1$ satisfies uniform non-divergence along any unbounded subsequence $(a_j) \subset \left\{a^{(\mathbf{r})}_t \right\}$. Also, the definition of $B$ implies that for all $t \geq t_0,$
$$
a^{(\mathbf{r})}_{t*}\bar \nu_1(\mathcal{U}) \leq \frac{\delta}{2}.
$$
On the other hand, $\theta_1$ is the self-similar measure for the conjugated IFS $\Phi'$ as in \eqref{eq: if we let}, and thus we can apply Theorem \ref{thm: equidistribution general} to
obtain that
$$\liminf_{t\to
\infty} a^{(\mathbf{r})}_{t*}\bar \nu_1 (\mathcal{U}) \geq  m_{\X_{d+1}}(\mathcal{U}) = \delta.$$
This gives the desired contradiction.
\end{proof}

\begin{remark}
Following \cite{KWDirichletimprovable}, one can define Dirichlet improvability along any unbounded sequence $(a_n)$ of the diagonal group. Our arguments then show that for any measure $\theta$ as in Theorem \ref{thm: equidistribution main},   and any sequence $(a_n)$ drifting away from walls, the set of $\mathbf{x}$ which are Dirichlet improvable along $(a_n)$ are a nullset with respect to $\theta$.

\end{remark}

\section{Measure Classification for an Auxiliary Random Walk and Proof of Theorem~\ref{thm: dynamical}}
\label{sec: mu vs mu bar}

 Theorem \ref{thm: dynamical} is derived from a detailed analysis of stationary measures for a related measure $\bar \mu.$
We now introduce this random walk and state the corresponding measure rigidity result, Theorem~\ref{thm: stationary main} below.

Let $S_{\mathrm{ue}}$ be as in \eqref{eq: def Sue}. Recall that $\varrho = \frac{r}{q}$ with $\gcd(r,q)=1$, and let
\begin{equation}\label{eq: def Sdt}
\Sdt \df \{\infty\} \cup \left \{p \in \Sf: p|r \right \},  \ \ \ \ \
 S_{\mathrm{tr}} \df \left\{p \in \Sf : \gcd(p,q)=\gcd(p,r)=1 \right\} , \end{equation}
so that
 \begin{equation}\label{eq: partition of places}
S = S_{\mathrm{ue}}\sqcup \Sdt \sqcup S_{\mathrm{tr}} .
 \end{equation}
 In the remainder of the paper,  for an algebraic group $\mathbf{J}$, we will simplify the notation \eqref{eq: notation places} by writing $\mrm{J}_{\mrm{ue}} = \mrm{J}_{\Sue}, \, \mrm{J}_{\mrm{dt}} = \mrm{J}_{\Sdt}, \, \mrm{J}_{\mrm{tr}} = \mrm{J}_{\Str}$.

 Let $\mathbf{U} \subset \mathbf{G}$ denote the algebraic group consisting of elements of the form appearing in \eqref{eq: def u+},
 and let
\begin{equation}\label{eq: def Wst} W^{\mathrm{st}} \df \mathrm{U}_{\mathrm{ue}}\times \mathrm{G}_{\mathrm{dt}} \times \mathrm{G}_{\mathrm{tr}}.
\end{equation}
The subscript 
`st' stands for
`stable'.

Using the same probability vector $\mathbf{p}$ appearing in \eqref{eq: def mu}, we let
\begin{equation}\label{eq: def mu bar}
\bar \mu \df \sum p_i \delta_{\bar h_i},
\end{equation}
where
$\bar h_i=\left(\bar h_i^{(\sigma)}\right)_{\sigma \in S}$
  are defined by
\begin{equation}\label{eq: def h'}
\bar h_i^{(\sigma)} \df \left\{  \begin{matrix}
\begin{pmatrix}
     \varrho \, \mathrm{Id}_d & -\mathbf{y}_i \\
    \mathbf{0} & 1
  \end{pmatrix} & \ \ \ \ \text{if } \sigma \in S_{\mathrm{ue}} \\
\begin{pmatrix}
    \varrho \, \mathrm{Id}_d & \mathbf{0} \\
    \mathbf{0} & 1
  \end{pmatrix}
 & \ \ \ \text{ if } \sigma \in \Sdt \\     \mathrm{Id} & \ \ \, \ \text{ if } \sigma \in S_{\mathrm{tr}}.
 \end{matrix}
  \right.
  \end{equation}

The subscripts dt, tr stand respectively for  `deterministic' and `trivial'. To explain this terminology, and motivate the use of this modified random walk,  note first
 that words in the generators $\hibar$ always remain at a uniformly bounded distance from the corresponding words in the generators $h_i$. As a result, the two random walks exhibit similar dynamical properties. On the other hand, the $\bar\mu$-random walk is easier to analyze. The directions tangent to $\mrm{G}_{\mrm{tr}}$ remain bounded in forward and backward time under the $\mu$-random walk, but these directions are left  unchanged by the $\bar\mu$-random walk. Additionally, since the $\Sdt$-coordinates of $\hibar$ are simultaneously diagonal, the behavior of the $\bar \mu$ random walk on these coordinates is deterministic, while still capturing the essence of the action of the $\mu$-random walk.

 Note that when the numerator of the contraction ratio $\varrho$ is equal to $1$, and when all the primes appearing in the denominators of the $\mathbf{y}_i$ also appear in the denominator of $\varrho$, we have $h_i = \bar h_i$ and $
 \mu = \bar \mu.$
 However, in the general case, these random walks are different. 
 
 We now present  our result about $\bar \mu$-stationary measures. 
 Given a closed unimodular subgroup $M\subseteq G^S$, equipped with a Haar measure $m_M$, we denote by $N_1(M)$ the {\em unimodular normalizer of $M$}, that is the set of elements $g\in G^S$ such that $gMg^{-1}=M$ and conjugation by $g$ preserves $m_{M}$. 

\begin{thm}\label{thm: stationary main}
    Let $\Phi$ be an irreducible carpet IFS satisfying the open set condition, let $\bar h_i$ be as in \eqref{eq: def h'},  let $\mathbf{p}$ be a probability vector, let $\bar \mu$ be as in \eqref{eq: def mu bar}, and let $\Wst$ be as in \eqref{eq: def Wst}. Then for any $\bar \mu$-ergodic $\bar \mu$-stationary measure $\nu$, one of the following holds:
    \begin{enumerate}
    \item  \label{item: case 2}
    $
    \nu\left( \left\{x \in \X_{d+1}^S: \mathrm{Stab}(x) \cap W^{\mathrm{st}} \neq \{\mathrm{Id} \} \right\} \right) =1.$
    \item \label{item: case 1} there is $g_0 \in G^S$ and  a closed subgroup $M \subset G^S,$  such that for $x_0 = g_0 \Lambda^S \in \X_{d+1}^S$, 
    we have:
    \begin{enumerate}[(A)]
    \item \label{item: A list} $\nu$ is $M$-invariant;
    \item \label{item: B list}$Mx_0$ is a closed orbit and $M \cap \mathrm{Stab}(x_0)$ is a lattice in $M$; 
    \item \label{item: B1 list}
    The conjugate $g_0^{-1} M g_0$ is of finite index in $\mathbf{M}(\Q_S)$, where $\mathbf{M}$ is a $\Q$-algebraic subgroup of $\mathbf{M}$;
    \item \label{item: C list}
    $N_1(M)x_0$ is a closed orbit satisfying
    $\nu(N_1(M)x_0)=1$; 
    \item \label{item: D list}
    $\bar h_i \in N_1(M)$ for each $i$; 
    \item \label{item: E list} the element $b$ of \eqref{eq: def a2} normalizes $M$, and we have $b_* m_{M}=c\, m_{M}$ for $c<1$.
    \end{enumerate}
    
    \item \label{item: case 3} $\nu = m_{\X_{d+1}^S}.$
    \end{enumerate}
\end{thm}

It would be interesting to classify all ergodic stationary measures for the random walks generated by $\mu$ and $\bar \mu.$

\subsection{Compact and contracting extensions of random walks}
In this section, we construct a common cover of the random walks with laws $\mu$ and $\bar{\mu}$ and prove the key Proposition~\ref{prop:prod structure of compact extension}, which   transfers measure rigidity results from one to the other.  For related results on extensions of other random walks, see \cite[\textsection 5]{SimmonsWeiss} and \cite[Thm. 1.1]{AggarwalGhosh}.

Let $k_i\in G^S$ be such that $k_i\bar{h}_i =h_i$.
More explicitly, we have that $k_i=\left(k_i^{(\s)}\right)_\s$, where
\begin{align*}
    k_i^{(\sigma)} \df \left\{  \begin{matrix}
    \mathrm{Id} & \ \ \ \ \ \ \  \ \text{ if } \sigma \in S_{\mathrm{ue}} \cup \{\infty\} \\
 \begin{pmatrix}
    \mathrm{Id}_d &  -\mathbf{y}_i \\
    0 & 1
  \end{pmatrix}
 & \ \ \ \, \ \ \ \ \  \text{ if } \sigma \in \Sdt \sm \{\infty\}\\
  \begin{pmatrix}
     \varrho \, \mathrm{Id}_d & -\mathbf{y}_i \\
    0 & 1
  \end{pmatrix}
   &  \,\,\text{if } \sigma \in S_{\mathrm{tr}}.
 \end{matrix}
  \right.
\end{align*}

Consider the subgroup $F=\prod_{\s\in S}F^{(\s)}$ of $G^S$ defined as follows:
\begin{align*}
    F^{(\s)} \df \begin{cases}
        \{\id\} &  \s \in \Sue \cup\{\infty\} \\
        \mathbf{U}(\Q_\s) & \s \in \Sdt\sm \{\infty\} \\
        \overline{\left \langle k_i^{(\s)} \right\rangle}
        & \s \in \Str.
    \end{cases}
\end{align*}
Note that the group $F$ is normalized by both $h_i$ and $\bar{h}_i$ for all $i$.
We shall consider an extension of our random walks on $\X_{d+1}^S$ to the space
\begin{align*}
    E = \X_{d+1}^S \times F.
\end{align*}
The random walk is generated by the following elements:
\begin{align}\label{eq:RW on E}
    \hibarF (x,f) \df (\bar{h}_i x, k_i \bar{h}_i f \bar{h}_i^{-1}).
\end{align}
In analogy with \eqref{eq: def mu} and \eqref{eq: def mu bar}, define the law  of this random walk as
\begin{align*}
    \bar{\mu}^F = \sum_i p_i \delta_{\bar h_i^F}.
\end{align*}
We define two projections $P,\bar{P}:E\to \X^S_{d+1}$ by $P(x,f) = fx$ and $\bar{P}(x,f)=x$.
Then, we note the following equivariance properties of these projections:
\begin{align}\label{eq:equivariance under both projections}
    P\left(\hibarF(x,f) \right) = h_i P(x,f), \qquad
    \bar{P}\left(\hibarF(x,f) \right) = \bar{h}_i \bar{P}(x,f).
\end{align}
The projections $P, \bar P$ and the equivariance properties \eqref{eq:equivariance under both projections} essentially show that  the space $E$, equipped with the random walk generated by the maps $\bar h_i^F$, is a  common extension for the random walks driven by $\mu$ and $\bar \mu$. This will allow us to transfer information about stationary measures from one random walk to the other.

\begin{prop}\label{prop:prod structure of compact extension}
    There exists a probability measure $m_F$ on $F$ such that every $\barmu^F$-stationary probability measure $m$ on $E$ with $\bar{P}_\ast m= m_{\XS}$ is of the form $m=m_{\XS}\otimes m_F$.
\end{prop}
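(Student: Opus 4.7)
My plan exploits the key structural observation that the $F$-coordinate of $\hibarF(x,f) = (\bar h_i x, \psi_i(f))$, where $\psi_i(f) := k_i \bar h_i f \bar h_i^{-1}$, depends only on $f$. Thus the $F$-factor supports an autonomous random walk, and any $\bar\mu^F$-stationary measure on $E$ projects to a stationary measure on $F$. The strategy is to construct the candidate $m_F$ via a place-by-place analysis of $F = \prod_\sigma F^{(\sigma)}$, verify that $m_{\X_{d+1}^S} \otimes m_F$ is $\bar\mu^F$-stationary, and prove uniqueness of the fibered form.

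At each place the autonomous dynamics takes an explicit form. For $\sigma \in \Sue \cup \{\infty\}$, $F^{(\sigma)}$ is trivial and $m_F^{(\sigma)} := \delta_{\id}$. For $\sigma \in \Sdt \setminus \{\infty\}$, identifying $F^{(\sigma)} = \mathbf{U}(\Q_\sigma)$ with $\Q_\sigma^d$ turns $\psi_i^{(\sigma)}$ into the affine map $\mathbf{x} \mapsto \varrho \mathbf{x} - \mathbf{y}_i$, a strict $\sigma$-adic contraction since $\sigma \mid r$ and $\gcd(r,q) = 1$ force $|\varrho|_\sigma < 1$; the standard IFS theory of \textsection \ref{subsec: prelims attractors}, transplanted to $\Q_\sigma^d$, gives a unique stationary probability measure $m_F^{(\sigma)}$ on the corresponding attractor. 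For $\sigma \in \Str$, $\bar h_i^{(\sigma)} = \id$ and $\psi_i^{(\sigma)}(f) = k_i^{(\sigma)} f$ is pure left-multiplication; a diagonal conjugation by $\mathrm{diag}(1,\dots,1,\lambda) \in \PGL_{d+1}(\Q_\sigma)$ with $|\lambda|_\sigma$ large enough simultaneously brings every $k_i^{(\sigma)}$ into $\PGL_{d+1}(\Z_\sigma)$, so $F^{(\sigma)} = \overline{\langle k_i^{(\sigma)}\rangle}$ is a compact group, and I take $m_F^{(\sigma)}$ to be its normalized Haar measure. Setting $m_F := \prod_\sigma m_F^{(\sigma)}$, the product $m_{\X_{d+1}^S} \otimes m_F$ is $\bar\mu^F$-stationary since $\hibarF$ acts as a direct product.

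For uniqueness, suppose $m$ is any $\bar\mu^F$-stationary probability measure on $E$ with $\bar P_\ast m = m_{\X_{d+1}^S}$ and disintegrate $m = \int \delta_y \otimes m_y \, dm_{\X_{d+1}^S}(y)$. Stationarity yields $m_y = \sum_i p_i (\psi_i)_\ast m_{\bar h_i^{-1} y}$, which iterates to
\[
m_y = \int_B (\psi_{b_1} \circ \cdots \circ \psi_{b_n})_\ast \, m_{(\bar h_{b_1} \cdots \bar h_{b_n})^{-1} y} \, d\beta(b).
\]
At $\sigma \in \Sdt \setminus \{\infty\}$, the composition $\psi_{b_1}^{(\sigma)} \circ \cdots \circ \psi_{b_n}^{(\sigma)}$ contracts by $|\varrho|_\sigma^n \to 0$, collapsing any probability on $F^{(\sigma)}$ to the Dirac mass at $c^{(\sigma)}(b) := \lim_n \psi_{b_1}^{(\sigma)} \cdots \psi_{b_n}^{(\sigma)}(\cdot)$; passing to the limit yields $m_y^{(\sigma)} = \int_B \delta_{c^{(\sigma)}(b)} \, d\beta(b) = m_F^{(\sigma)}$ independent of $y$. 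This reduces the problem to stationary measures on $\X_{d+1}^S \times F^{(\Str)}$, where $F^{(\Str)} := \prod_{\sigma \in \Str} F^{(\sigma)}$ is compact. Here the iteration is isometric, so I switch to Fourier analysis on $F^{(\Str)}$: for each nontrivial irreducible unitary representation $\pi$, the matrix-valued function $T_\pi(y) := \int \pi(f) \, dm_y^{(\Str)}(f)$ satisfies, via the equality case of Jensen's inequality applied to $\|\cdot\|_{HS}^2$ after integrating over $y$, the cocycle relation $T_\pi(\bar h_i y) = \pi(k_i^{(\Str)}) T_\pi(y)$. Since $\pi$ is unitary, $T_\pi^\ast T_\pi$ is $\bar h_i$-invariant for every $i$ and hence constant by a Howe--Moore-type ergodicity for each individual $\bar h_i$-action on $(\X_{d+1}^S, m_{\X_{d+1}^S})$; combined with the marginal constraint $\int T_\pi \, dm_{\X_{d+1}^S} = 0$ (the Haar measure on $F^{(\Str)}$ kills nontrivial Fourier modes) and the non-triviality of $\pi$, a cocycle rigidity argument using the irreducibility of $\Phi$ (in the spirit of Proposition \ref{prop: projection Z trivial}) forces $T_\pi \equiv 0$. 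Thus $m_y^{(\Str)} = m_F^{(\Str)}$ for $m_{\X_{d+1}^S}$-a.e.\ $y$, completing the proof.

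The main obstacle is the analysis at $\Str$ places: the isometric nature of the $F^{(\Str)}$-dynamics precludes a direct contraction argument, and one must combine a Fourier decomposition with a cocycle-rigidity statement. The key technical step is ruling out nontrivial $\pi$-equivariant matrix-valued functions on $(\X_{d+1}^S, m_{\X_{d+1}^S})$, which in turn hinges on the irreducibility of the carpet-IFS $\Phi$ together with the structure of the $S$-arithmetic lattice $\Lambda^S$.
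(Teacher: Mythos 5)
Your proposal uses the same place-by-place decomposition as the paper's proof: trivial at $\Sue\cup\{\infty\}$, contraction at $\Sdt\smallsetminus\{\infty\}$, and an ergodicity argument at $\Str$; the $\Sdt$-contraction step and the verification of stationarity of $m_{\XS}\otimes m_F$ agree with what the paper does. Where you genuinely diverge is at the $\Str$ places: the paper packages the argument by observing that the $\Gamma^Q$-action on $\XS\times Q$ is ergodic with respect to $m_{\XS}\otimes m_Q$ (weak-mixing on $\XS$ via Howe--Moore, plus unique ergodicity of the dense translation action on the compact group $Q$, combined via \cite[Thm.~9.23(2)]{GlasnerBook}), and then invokes \cite[Prop.~5.3]{SimmonsWeiss}; you instead carry out a direct Fourier computation on the fibers $m_y^{(\Str)}$. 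Your Jensen/strict-convexity step in $L^2(\XS,M_{\dim\pi}(\C))$ producing the cocycle relation $T_\pi(\bar h_i y)=\pi(k_i^{(\Str)})T_\pi(y)$ is correct, as is the deduction that $T_\pi^*T_\pi$ is constant from weak-mixing.

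However, there is a real gap at the end, and your stated ingredient is not the right one. You invoke ``a cocycle rigidity argument using the irreducibility of $\Phi$ (in the spirit of Proposition~\ref{prop: projection Z trivial})'' to force $T_\pi\equiv 0$, but irreducibility of $\Phi$ plays no role here: Proposition~\ref{prop: projection Z trivial} concerns the $\Sue$-components, whereas at $\Str$ the group $F^{(\sigma)}$ is by \emph{definition} the closure of $\langle k_i^{(\sigma)}\rangle$, so density is free. What is actually needed is the following: write each column $f_k=T_\pi(\cdot)e_k$ and decompose $f_k=c_k+f_k^0$ with $c_k\in\C^{\dim\pi}$ constant and $f_k^0\in L^2_0(\XS)\otimes\C^{\dim\pi}$. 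The cocycle relation splits this decomposition, giving $\pi(k_i^{(\Str)})c_k=c_k$ for all $i$ (hence $c_k=0$ by density and irreducibility of $\pi$, not of $\Phi$), and $f_k^0\circ\bar h_i=\pi(k_i^{(\Str)})f_k^0$, i.e.\ $f_k^0$ is an invariant vector for the representation $R(\bar h_i)\otimes\pi(k_i^{(\Str)})^{-1}$ on $L^2_0(\XS)\otimes\C^{\dim\pi}$. This representation has no invariant vectors because its $L^2_0(\XS)$-factor has vanishing matrix coefficients by Howe--Moore (the real projection of $\bar h_i^n$ tends to infinity, and $\pi$ is unitary finite-dimensional so tensoring cannot help). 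Hence $f_k\equiv 0$ and $T_\pi\equiv 0$. Incidentally, your claim that $\int T_\pi\,\der m_{\XS}=0$ also deserves a justification --- it requires knowing the $F^{(\Str)}$-marginal of $m$ is Haar, which itself uses unique stationarity of Haar under the random walk of left translations by a dense subgroup --- but once the Howe--Moore argument above is supplied this constraint is in fact a consequence rather than an input. With these corrections your Fourier route closes; the paper's route is shorter because it reuses the compact-extension machinery of~\cite{SimmonsWeiss}, whereas yours is more elementary and self-contained.
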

\begin{proof}
    We will deduce this result from a combination of uniqueness results for Haar measures on compact groups, stationary measures for contracting IFS's, and from the mixing property of the $\bar\mu$-random walk on $\XS$.

    We begin by giving a more explicit description of the action of the random walk on the $F$-coordinate. Let
    \begin{equation}\label{eq: definition Q}
    Q \df \prod_{\s\in \Str} F^{(\s)}. 
    \end{equation}
    Since $F^{(\sigma)}$ is compact for $\sigma \in \Str$, $Q$ is a compact factor of $F$.
    For $f = \left( f^{(\sigma)} \right) \in F$ we have
    \begin{align*}
        (k_i \bar{h}_i f \bar{h}_i^{-1})^{(\s)} =
        \begin{cases}
            \id & \text{ if } \s \in \Sue\cup \{\infty\} \\
            \begin{pmatrix}
                \id_d & -\mathbf{y}_i + \varrho \mathbf{x} \\ 0 & 1
            \end{pmatrix}
            & \text{ if } \s \in \Sdt \sm \{\infty\}
            \text{ and } f^{(\sigma)} = u(\xbf),
  \ \xbf \in \Q_\s^d
            \\
            k_i^{(\s)} f^{(\sigma)} & \text{ if } \s \in \Str.
        \end{cases}
    \end{align*}
In particular, if $f \in Q$ then $k_i \bar h_i f \bar h_i^{-1} = k^Q_if \in Q,$ where $k^Q_i$ is the projection of $k_i$ to $Q.$

    Let $m$ be a $\barmu^F$-stationary measure with $P_\ast m= m_{\XS}$.
    Let $\bar{P}_0:\XS\times F \r \XS \times Q $ and $\bar{P}_1 : \XS\times Q \r \XS$ denote the standard projections so that $\bar{P}=\bar{P}_1 \circ \bar{P}_0$.
    We first show that
    \begin{align}\label{eq:product with Q}
        (\bar{P}_0)_\ast m = m_{\XS}\otimes m_Q,
    \end{align}
    where $m_Q$ denotes the Haar probability measure on $Q$.

    To this end, let $\Gamma^Q$ denote the group generated by $\bar{h}_i^Q$, where $\bar{h}_i^Q$ denotes the restriction of the action of $\hibarF$ to $\XS\times Q$.
    Our claim will follow by~\cite[Proposition 5.3]{SimmonsWeiss} upon verifying ergodicity of the action of $\Gamma^Q$ with respect to $m_{\XS}\otimes m_Q$.
    By the Howe-Moore theorem, since the group generated by $\hibar$ is unbounded, its action on $\XS$ is weak-mixing with respect to $m_{\XS}$.
    Moreover, since the group generated by $\left(k_i^{(\s)} \right)_{\s\in \Str} $ is dense in $Q$, its action by left translations on $Q$ is ergodic with respect to $m_Q$.
    Hence, the action of $\Gamma^Q$ on $\XS\times Q$ is ergodic with respect to the product measure $m_{\XS}\otimes m_Q$; cf.~\cite[Theorem 9.23(2)]{GlasnerBook}. This gives
    \eqref{eq:product with Q}.

    To conclude the proof, we
    define
    $$\mathrm{U}_0 \df \mrm{U}_{\Sdt \sm \{\infty\}} = \prod_{\s \in \Sdt \sm \{\infty\}}F^{(\s)} ,$$
    which is a factor of $F$ complementary to $Q$,
    and
    take advantage of the fact that action of the random walk on $\mathrm{U}_0 $ is given by contracting affine maps.
    Let
    \begin{align}\label{eq: right smack}
        m_b \df \lim_{n\to\infty} \left(\bar{h}^F_{i_1}\circ \cdots \circ \bar{h}^F_{i_n} \right)_\ast m
    \end{align}
    be the limit measures as in Proposition \ref{prop: commutes with RW}, so that
    \begin{equation}\label{eq: integrating back}
    m = \int m_b\, \der(\barmu^F)^\N(b).
    \end{equation}
    We will show that for almost every $b$, there is $u_b\in \mrm{U}_0$ such that
    \begin{align}\label{eq:Siegel times Haar times Dirac}
        m_b=m_{\XS}\otimes m_Q \otimes \d_{u_b}.
    \end{align}
    Plugging into \eqref{eq: integrating back}, we see that this
    implies the proposition with $m_F = m_Q \otimes \int \d_{u_b}\;\der (\barmu^F)^\N(b)$.

    By~\eqref{eq:product with Q}, the projection of $m$ to $\XS\times Q$ is invariant by elements of $\Gamma^Q$, and hence, by \eqref{eq: right smack} we have that $(\bar{P}_0)_\ast m_b = m_{\XS}\otimes m_Q$.
    Moreover, note that $|\varrho|_\s <1$ for all $\s\in \Sdt$ by definition of the deterministic places.
    Hence, letting $\bar{P}^{U_0}: E \r \mrm{U}_0$ denote the standard projection, it follows that $\left(\bar{P}^{U_0}\right)_\ast m_b$ is a Dirac mass at some point $u_b\in \mrm{U}$.
    This implies~\eqref{eq:Siegel times Haar times Dirac} and concludes the proof.
\end{proof}

\subsection{Escape of mass, and derivation of Theorem~\ref{thm: dynamical} from Theorem~\ref{thm: stationary main}}
Let $\nu$ be an ergodic $\mu$-stationary measure on $\XS$ and let $\nu^F$ be some  lift of $\nu$ to $E$, that is a measure satisfying  $P_\ast \nu^F=\nu$;  the existence of such a lift can be constructed using a Borel section of the projection $P:E\to\XS$. We will choose the section so that the $F$-coordinate takes values in $Q$. This choice implies that the sequence of measures $\left(\barmu^F\right)^{\ast n} \ast \nu^F$ are all supported on  $\XS \times Q$.

Let $\bar{\nu}^F$ be a weak-$\ast$ limit measure of the sequence of measures
\begin{align*}
    \nu^F_N \df \frac{1}{N} \sum_{n=1}^N \left(\barmu^F\right)^{\ast n} \ast \nu^F
\end{align*}
as $N\to \infty$.
Then,~\eqref{eq:equivariance under both projections} implies the following properties of $\bar{\nu}^F$.
\begin{lem}\label{lem:equivariance}
    For all $N\geq 1$, the measures $\nu^F_N$ satisfy
    \begin{align*}
        P_\ast \nu^F_N = \nu.
    \end{align*}
    In particular, $\bar{\nu}^F$ is a $\barmu^F$-stationary probability measure.
    Moreover, we have that $\bar{P}_\ast \bar{\nu}^F$ is $\barmu$-stationary.
\end{lem}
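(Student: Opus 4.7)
The plan is to verify the three assertions in order, all using the pair of equivariance relations in~\eqref{eq:equivariance under both projections} together with the defining stationarity $\mu * \nu = \nu$ and the choice that $\nu^F$ is a lift of $\nu$ with $F$-coordinate in the compact factor $Q$.

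\smallskip
\textbf{Step 1 (the identity $P_\ast \nu^F_N = \nu$).} The first equivariance relation $P \circ \hibarF = h_i \circ P$ implies that pushing forward a convolution on $E$ by $P$ turns it into the corresponding convolution on $\XS$ with respect to the laws $\mu$ rather than $\barmu^F$. More precisely, I would check (by a one line computation on indicators of products of generators) that
\begin{equation*}
    P_\ast\!\left( \left(\barmu^F\right)^{\ast n} \ast \nu^F \right) \;=\; \mu^{\ast n} \ast P_\ast \nu^F \;=\; \mu^{\ast n} \ast \nu.
\end{equation*}
Since $\nu$ is $\mu$-stationary, the right-hand side equals $\nu$ for every $n$, so averaging over $n \leq N$ yields $P_\ast \nu^F_N = \nu$.

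\smallskip
\textbf{Step 2 (existence of a limit and stationarity of $\bar\nu^F$).} I first need to ensure $\bar\nu^F$ is a probability measure, i.e.\ that no mass escapes. By construction $\nu^F$ is supported in $\XS \times Q$, and the set $\XS \times Q$ is preserved by every $\hibarF$ (because $k_i \bar h_i Q \bar h_i^{-1} \subset Q$ from the explicit formula for $k_i \bar h_i f \bar h_i^{-1}$ in the $\Str$-coordinates), so each $\nu^F_N$ is supported in $\XS \times Q$ as well. Combining this with $P_\ast \nu^F_N = \nu$ from Step 1 and the compactness of $Q$, I obtain tightness of the family $\{\nu^F_N\}$: any compact set of the form $K \times Q \subset E$ with $\nu(P(K\times Q)) \geq 1-\e$ does the job. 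Hence $\bar\nu^F$ is a probability measure. Stationarity is then the standard averaging argument:
\begin{equation*}
    \barmu^F \ast \nu^F_N \;-\; \nu^F_N \;=\; \frac{1}{N}\left( \left(\barmu^F\right)^{\ast (N+1)} \ast \nu^F \;-\; \barmu^F \ast \nu^F \right),
\end{equation*}
which is a difference of probability measures divided by $N$, hence tends to $0$ in total variation. Passing to the weak-$\ast$ limit gives $\barmu^F \ast \bar\nu^F = \bar\nu^F$.

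\smallskip
\textbf{Step 3 (stationarity of $\bar P_\ast \bar\nu^F$).} Apply $\bar P_\ast$ to the identity $\barmu^F \ast \bar\nu^F = \bar\nu^F$ from Step 2. Using the second equivariance relation $\bar P \circ \hibarF = \hibar \circ \bar P$ in the same way as in Step 1, one obtains
\begin{equation*}
    \barmu \ast \bar P_\ast \bar\nu^F \;=\; \bar P_\ast\!\left(\barmu^F \ast \bar\nu^F\right) \;=\; \bar P_\ast \bar\nu^F,
\end{equation*}
so $\bar P_\ast \bar\nu^F$ is $\barmu$-stationary. No step is genuinely hard; the only point requiring a moment of thought is the tightness argument in Step 2, which relies crucially on the choice that the lift $\nu^F$ takes values in the compact factor $Q$.
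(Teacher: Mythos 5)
Your proof is correct and follows essentially the same approach as the paper's: push stationarity down through $P$ and $\bar P$ using the equivariance identities in \eqref{eq:equivariance under both projections}, and use the compactness of the $Q$-factor to rule out escape of mass so that the Ces\`aro limit $\bar\nu^F$ is a probability measure. The paper compresses Steps~2 and~3 into one-line citations of Proposition~\ref{prop: commutes with RW} and the compact-extension property, whereas you spell out the Krylov--Bogoliubov averaging and the tightness estimate explicitly; this is the same argument made precise.
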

\begin{proof}
    Since $\nu$ is $\mu$-stationary, the $P$-equivariance in~\eqref{eq:equivariance under both projections} implies that $P_\ast(\barmu^F\ast \nu^F)=\nu$.
    This implies the first claim.
    The second claim follows from the first assertion, Proposition \ref{prop: commutes with RW}, and the fact that $\XS \times Q \to \XS$ is a compact extension, so there is no loss of mass in taking limits.
    The last claim is immediate from the averaging construction of $\bar{\nu}^F$ and the $\bar{P}$-equivariance in~\eqref{eq:equivariance under both projections}.
\end{proof}

We now use the structure of the lattice $\Lambda^S, $ to obtain the following statement:
\begin{prop}\label{prop: escape of mass case 2}
Let  $W^{\mathrm{st}}$ and $b$ be as in \eqref{eq: def Wst} and \eqref{eq: def a2}, and let  $x_0 \in \X_{d+1}^S$ such that $\mathrm{Stab}(x_0) \cap W^{\mathrm{st}} \neq \{ \mathrm{Id} \}.$ Then $b^n x_0 \to \infty $ as $n\to \infty.$
\end{prop}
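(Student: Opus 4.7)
The plan is to invoke the $S$-arithmetic Mahler criterion: writing $x_0 = g_0 \Lambda^S$ with $g_0 \in G^S$, it suffices to exhibit a sequence $\lambda_n \in \Lambda^S \setminus \{\id\}$ such that the conjugates $(b^n g_0)\lambda_n(b^n g_0)^{-1}$ tend to the identity in $G^S$, since this forces the injectivity radius at $b^n x_0$ to vanish. I would construct such a sequence from the given nontrivial element $h = g_0\lambda_0 g_0^{-1} \in \Stab(x_0)\cap \Wst$, where $\lambda_0 \in \Lambda^S\setminus\{\id\}$.

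The first step is to observe that $\lambda_0$ is necessarily unipotent of nilpotency index at most $2$: for every $\sigma \in \Sue$, the element $h^{(\sigma)} = u^{(\sigma)}(\xbf^{(\sigma)})$ is a nontrivial element of the abelian unipotent group $\mathbf{U}(\Q_\sigma)$ (non-triviality from Proposition~\ref{prop: in repeated use}), whose standard unipotent lift satisfies $(h^{(\sigma)} - \id)^2 = 0$, and this rational identity descends to a lift of $\lambda_0$. Fixing a rescaled lift of the form $cI + N \in \GL_{d+1}(\Z[1/\Sf])$ with $N^2 = 0$, I would then form the $\Z[1/\Sf]$-parameter family $\mu_t := [cI + tN]\in\Lambda^S$, and check that for every $t\in\Z[1/\Sf]$ and every $\sigma\in\Sue$ one has $g_0^{(\sigma)}\mu_t(g_0^{(\sigma)})^{-1} = u^{(\sigma)}(t\xbf^{(\sigma)}) \in \mathbf{U}(\Q_\sigma)$; in particular $\mu_t\in\Stab(x_0)\cap \Wst$ for every such $t$.

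Conjugating this family by $b^n$ place by place: at $\sigma = p \in \Sue$ one obtains $u^{(\sigma)}(\varrho^{-n}t\xbf^{(\sigma)})$, of $\sigma$-adic size comparable to $|t|_\sigma |\varrho|_\sigma^{-n}$, where the crucial fact $|\varrho|_\sigma = p^{v_p(q)}>1$ gives room for $|t|_\sigma$ to grow; at $\sigma \in \Sue^c$ the element $b^{(\sigma)}$ is trivial, so the conjugate is simply $g_0^{(\sigma)}\mu_t(g_0^{(\sigma)})^{-1}$, of size $\sim |t|_\sigma$. The problem thus reduces to choosing $t_n\in \Z[1/\Sf]\setminus\{0\}$ satisfying $|t_n|_\sigma \to 0$ for every $\sigma \in \Sue^c$ and $|t_n|_\sigma = o(|\varrho|_\sigma^n)$ for every $\sigma\in\Sue$. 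A concrete choice is $t_n := q^{-n+\lceil\sqrt n \rceil}\prod_{p\in \Sf\setminus \Sue} p^{\lceil\sqrt n \rceil}$, whose $\sigma$-adic sizes can be read off directly from the prime factorization, and satisfy all the required estimates.

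The main conceptual point --- and the hardest part of the proof --- is that the hypothesis supplies only a single nontrivial element of $\Stab(x_0)\cap \Wst$, while the Mahler argument requires a whole family of elements of $\Lambda^S$ which can be tuned to be small at every place of $S$ simultaneously. The nilpotency of $N$ (forced by the abelian unipotent structure of $\mathbf{U}$) is what allows us to exponentiate to a $\Z[1/\Sf]$-parameter family inside $\Lambda^S$, and the rich size structure of $\Z[1/\Sf]$ at the places of $\Sf$ --- combined with the expansion factor $|\varrho|_\sigma>1$ available at each $\sigma\in\Sue$ --- supplies just enough flexibility to meet all the decay requirements at once.
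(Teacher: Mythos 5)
Your argument is correct, but it takes a genuinely different route from the paper's. The paper argues by contradiction in one stroke: if some subsequence $b^{n_j}x_0$ converged to $x_\infty$, then the conjugates $b^{n_j}wb^{-n_j}$ of the given stabilizer element $w\in\Stab(x_0)\cap\Wst$ would accumulate (after passing to a further subsequence) to a nontrivial element $w_\infty\in\Stab(x_\infty)$ whose $\Sue$-components are all $\id$ (because $\Ad(b)$ contracts $\mathrm{U}_{\Sue}$) while its $\Sue^c$-components equal those of $w$ (because $b^{(\sigma)}=\id$ there); this contradicts Proposition~\ref{prop: in repeated use}, applied a second time. You instead go forward via the Mahler criterion: you exploit the structural fact that $h^{(\sigma)}\in\mathbf{U}(\Q_\sigma)$ for $\sigma\in\Sue$ forces the $\Z[1/\Sf]$-lift of $\lambda_0$ to have the form $cI+N$ with $N^2=0$ (the eigenvalue $c$ is rational by rationality of the characteristic polynomial, and $c\in\Z[1/\Sf]^\times$ because $c^{d+1}=\det$ is a unit and $\Z[1/\Sf]$ is integrally closed); this exponentiates to an entire $\Z[1/\Sf]$-parameter family $\mu_t$ of nontrivial elements of $\Lambda^S$, which you then tune by choosing $t_n$ with the right $\sigma$-adic decay at every place of $S$ simultaneously. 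Both proofs ultimately rest on the same two ingredients --- Proposition~\ref{prop: in repeated use} and $|\varrho^{-1}|_\sigma<1$ for $\sigma\in\Sue$ --- but you use the first only once (to know $h^{(\sigma)}\neq\id$, hence the nilpotency structure), whereas the paper uses it twice; and your argument requires the extra algebraic input of lifting $\lambda_0$ to $\GL_{d+1}(\Z[1/\Sf])$ (fine since $\mathrm{Pic}(\Z[1/\Sf])=0$) plus the elementary but fussy book-keeping of the product formula. The paper's version is shorter and softer; yours is constructive, gives an explicit rate of escape, and would adapt more readily to a quantitative non-divergence statement, at the cost of invoking the nilpotency structure of $\mathbf{U}$ which the paper's proof never needs.
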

\begin{proof}
Recall from Proposition \ref{prop: in repeated use} that \eqref{eq: and thus lambda} holds for any nontrivial element in the stabilizer of any $x \in \XS$.
Let
  $w \in W^{\mathrm{st}} \sm \{\mathrm{Id}\}$ with $wx_0 = x_0$.
  Suppose by contradiction that for some subsequence $n_j \to \infty$ we have $b^{n_j}x_0 \to x_\infty,$  for some $x_\infty \in \X_{d+1}^S$. The stabilizer of $b^{n_j}x_0$ contains $b^{n_j}wb^{-n_j}$. Thus any accumulation point $w_\infty$ of the sequence
  $(b^{n_j}wb^{-n_j})_j$ belongs to the stabilizer of $x_\infty.$
  By definition of $\Sue$, we have that $|\varrho^{-1}|_\s <1$ for all $\s\in \Sue$.
  In particular, the conjugation action of $b$ on $\mrm{U}_{\mrm{ue}} \df \mrm{U}_{S_{\mrm{ue}}}$ is contracting.
  Hence, writing $w = \left(w^{(\sigma)} \right)_{\sigma \in S}$ and $w_\infty = \left(w_\infty^{(\sigma)} \right)_{\sigma \in S}$, we see that
  $$
  w_\infty^{(\sigma)} = \left\{ \begin{matrix}  \mathrm{Id} & \sigma \in S_{\mathrm{ue}} \\ w^{(\sigma)}
  & \sigma \in S \sm S_{\mathrm{ue}}.
  \end{matrix} \right.$$
  This contradicts \eqref{eq: and thus lambda}.
\end{proof}

\begin{corollary}
    \label{cor: divergence measure Wst}
If $\nu$ is a finite measure on $\X_{d+1}^S$
    and $b \in G^S$ satisfies that $b^nx_0\to \infty$ for $\nu$-a.e.\,\,$x_0$, then $b^n_* \nu \to 0.$ In particular,
 if $
    \nu\left( \left\{x \in \X_{d+1}^S: \mathrm{Stab}(x) \cap W^{\mathrm{st}} \neq \{\mathrm{Id} \} \right\} \right) =1$, then
$b^n_{\ast}\nu \to 0$ as $n\to \infty.$
\end{corollary}
\begin{proof}
We prove the first assertion; the second assertion follows immediately in view of  Proposition \ref{prop: escape of mass case 2}.
For any compact $K \subset \X_{d+1}^S$ and for $\nu$-a.e.\,\,$x \in \X_{d+1}^S$, by assumption there is $n_0 = n_0(x,K) $ such that for all $n\geq n_0,$ $b^nx \notin K.$ This implies that for any $\vre>0$, there is $n_0 = n_0(K)$ such that
$$\nu\left( \left\{x \in \X_{d+1}^S: \forall n \geq n_0, \ b^nx \notin K \right \} \right)< \vre.$$
Thus any measure $\nu'$  which is an accumulation point of the sequence $\left(b^n_{*} \nu\right)$ satisfies $\nu'(K) \leq \vre.$ Since $K$ and $\vre$ were arbitrary we have that $b^n_{*}\nu \to 0.$
    \end{proof}

\begin{lem}\label{lem:transfer of trivial stabilizers}
    Suppose that $\nu$ assigns zero mass to the set of points $x$ with 
    $b^kx\to \infty$.
    Then, the same holds for $\bar{P}_\ast(\bar{\nu}^F)$.
\end{lem}
\begin{proof}
Suppose $x \in \X_{d+1}^S$ and $\bar x \in \bar P(P^{-1}(x)).$ Then $\bar x \in Q x$, where $Q$ is as in \eqref{eq: definition Q}. Since $b$ commutes with all elements of $Q$, $b^kx\to \infty$ implies $b^k \bar x \to \infty$. Since 
$$\mathrm{supp}\left(\bar P_* \left(\bar {\nu}^F \right) \right) \subset \bar P \left(P^{-1}(\mathrm{supp}(\nu)) \right),$$ 
the assertion follows.  
\end{proof}

\begin{proof}[Proof of Theorem \ref{thm: dynamical}]
Let $\nu$ be an ergodic $\mu$-stationary measure on $\XS$, for which $b^k_* \nu \not \to 0.$ Let $\bar \nu^F$ be a $\bar \mu^F$-stationary measure on $E$ as in Lemma \ref{lem:equivariance}, and let $\bar \nu = \bar P_* \bar \nu^F.$ By Corollary \ref{cor: divergence measure Wst} and Lemma \ref{lem:transfer of trivial stabilizers}, we have that $b^kx\not \to \infty $ for $\bar \nu$-a.e. $x \in \XS$. By Proposition, \ref{prop: escape of mass case 2} we see that case \eqref{item: case 2} of Theorem \ref{thm: stationary main} cannot hold. 
\newline \indent
We claim that case \eqref{item: case 1} cannot hold either. Indeed, items \eqref{item: B list} and \eqref{item: C list} imply that for every $x \in \supp (\nu) \subset N_1(M)x_0$, we have that $M_Sx$ is a finite-volume closed $M_S$-orbit. But then \eqref{item: E list} shows that $b $ maps $M_Sx$ to a closed orbit of strictly smaller volume, and applying Chabauty's theorem (see \cite[Thm. I.20]{Raghunathan}) to the stabilizers $M_S \cap \mathrm{Stab}(b^kx)$, this implies that the injectivity radius at $b^kx$ goes to $0$ as $k \to \infty$. That is, $b^kx\to \infty,$ establishing the claim.
\newline \indent
It follows from Theorem \ref{thm: stationary main} that $\bar \nu = m_{\XS}. $ Now by Proposition \ref{prop:prod structure of compact extension} we see that $\bar \nu^F = m_{\XS}
 \otimes m_F$, and since $m_{\XS}$ is $F$-invariant, $\nu = P_* \bar \nu^F = m_{\XS}. $
    \end{proof}

\begin{remark}
We did not give an explicit example of a $\bar \mu$-stationary measure satisfying conclusion \eqref{item: case 2} in Theorem \ref{thm: stationary main}, but the preceding arguments show that such measures exist. Indeed, start with a $\mu$-stationary measure on $\XS$ which is supported on the solenoid $\mathscr{U}^S$, and note that
    the construction of $\bar P_* \bar \nu^F$ in the proof of Lemma \ref{lem:equivariance} gives rise to such a measure. Similarly, starting with a $\mu$-stationary measure as in Example \ref{example: new example}, we obtain a $\bar \mu$-stationary measure for which conclusion \eqref{item: case 1} holds.
\end{remark}

\section{Growth properties of the random walk}\label{sec:GrowthPropertiesRandomWalk}
Following Benoist and
Quint~\cite{BQ1}, we need to understand the growth properties of a random walk generated by the support of $\bar \mu$, acting linearly via several finite dimensional linear representations of $G^S.$
One major obstruction to running the same arguments given in  \cite{BQ1} without change is the absence of uniform expansion, which cannot be expected
in the case where the Zariski closure of the group generated by the random walk
is solvable.
However, it was noted in joint work of David Simmons with the third-named author, that in some cases a useful analogue is true;
cf.~\cite[Prop.~3.1(a)]{SimmonsWeiss}.
The goal of this section is to prove several analogous growth properties for the action of the random walk in its Adjoint representation on $\mf{g}_\s, \s\in \Sue$.

Throughout this section, we fix an irreducible carpet-IFS $\Phi$  with common rational contraction ratio $\varrho > 0$.
Given a finite list of indices $i_1, \ldots, i_n$ in $\{1, \ldots, k\}$, following \cite{SimmonsWeiss}
we write
\begin{equation}
\label{eq: weird notation from SW}
\bar h_1^n \df  \bar h_{i_n} \circ \cdots  \circ \bar h_{i_1}
\ \ \ \text{ and } \ \ \ \bar h_n^1 \df  \bar h_{i_1} \circ \cdots \circ \bar h_{i_n},
\end{equation}
where the $\bar h_i$ are as in \textsection \ref{sec: mu vs mu bar}.

Further, for $\sigma \in S$ and $x \in \Q$, $|x|_\sigma$ denotes the $\sigma$-adic absolute value of $x$,
$\mathfrak{g}_\s$
denotes the Lie algebra of $\mathbf{G}_\s$, where we view $\mathfrak{g}_\s$ as a vector space over $\Q_\s$, equipped with the $\s$-adic norm.
 We denote the Lie algebra of $G^S$ by $\lieg,$ that is,
$$\lieg = \bigoplus_{\s \in S} \lieg_\s.$$

\subsection{Action on the Lie algebra}
In this subsection, for $h  = (h_\s)_{\s \in S},\ \|h\|_\s$ denotes the operator norm of the action of $\Ad(h_\s)$ on  $\mathfrak{g}_\s$.
The following Proposition implies that the norm of each of the random walk elements in each place is dictated by the growth of the scalar contraction ratios.

\begin{prop}\label{prop:predictable growth}
There is a constant $C>1$ such that for any
    $n\in \N,$ the following holds. Denoting by $A_n \asymp_C B_n$ the inequalities $C^{-1} A_n \leq B_n \leq CA_n$,  for any word $
    (i_1,\dots,i_n)$ of length $n$,
     for all $\s\in S$ we have
    \begin{align}\label{eq: norm estimate we want}
        \left\| \bar h_1^n \right\|_\s \asymp_C \begin{cases}
             |\varrho|^n_\s, & \s \in\Sue, \\
             |\varrho|_\s^{-n}, & \s \in \Sdt.
        \end{cases}
    \end{align}
\end{prop}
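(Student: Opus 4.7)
My plan is a direct block-matrix computation in each place, exploiting the simple explicit form of the generators $\bar h_i^{(\sigma)}$. I would first dispose of the places in $\Sdt$: at such a $\sigma$, all generators $\bar h_i^{(\sigma)}$ coincide with the fixed diagonal element $d_\sigma \df \mathrm{diag}(\varrho\,\mathrm{Id}_d, 1)$, so $\bar h_1^n|_\sigma = d_\sigma^n$ regardless of the word. The adjoint representation of $d_\sigma^n$ on $\mathfrak{pgl}_{d+1}(\Q_\sigma)$ is diagonal in the basis of elementary matrices, with eigenvalues $1$ (on the Levi part), $\varrho^n$ (on the upper-right $d\times 1$ block), and $\varrho^{-n}$ (on the lower-left $1\times d$ block). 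Since $\Sdt$ consists of $\infty$ together with the primes dividing the numerator $r$ of $\varrho = r/q$, we have $|\varrho|_\sigma < 1$ for every $\sigma \in \Sdt$, and so $\|\bar h_1^n\|_\sigma = |\varrho|_\sigma^{-n}$ on the nose.

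For $\sigma \in \Sue$, a short induction on the product of upper-triangular matrices gives
\[
\bar h_1^n\big|_\sigma = \begin{pmatrix} \varrho^n \mathrm{Id}_d & -\mathbf{v}_n \\ 0 & 1 \end{pmatrix}, \qquad \mathbf{v}_n = \sum_{k=1}^{n} \varrho^{\,n-k} \mathbf{y}_{i_k}.
\]
I would then factor this as $\bar h_1^n|_\sigma = d_n u_n$, with $d_n = \mathrm{diag}(\varrho^n \mathrm{Id}_d, 1)$ and $u_n = \begin{pmatrix} \mathrm{Id}_d & -\mathbf{w}_n \\ 0 & 1 \end{pmatrix}$, where $\mathbf{w}_n \df \varrho^{-n} \mathbf{v}_n = \sum_{k=1}^n \varrho^{-k} \mathbf{y}_{i_k}$. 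Since $\Sue$ collects the primes dividing the denominator $q$ of $\varrho$, at any such $\sigma$ one has $|\varrho|_\sigma > 1$, so $|\varrho^{-k}|_\sigma$ decays geometrically in $k$. By the ultrametric inequality, $|\mathbf{w}_n|_\sigma$ is then bounded by a constant depending only on $\sigma$ and on the finite set $\{\mathbf{y}_i\}$, uniformly in $n$ and in the word. Consequently $\|\mathrm{Ad}(u_n)\|_\sigma$ is uniformly bounded, whereas $\|\mathrm{Ad}(d_n)\|_\sigma = |\varrho|_\sigma^n$ (the maximum eigenvalue, attained on the upper-right block). The upper bound $\|\bar h_1^n\|_\sigma \leq C |\varrho|_\sigma^n$ will then follow from submultiplicativity of the operator norm, and the matching lower bound will follow by applying $\mathrm{Ad}(\bar h_1^n)$ to the elementary matrix $E_{1,d+1}$ in the upper-right block, which a one-line computation shows is scaled by exactly $\varrho^n$.

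No individual step in this plan is conceptually hard; the whole argument amounts to careful bookkeeping of block-matrix products. The one place where the partition of $S$ is genuinely used is the uniform boundedness of $\mathbf{w}_n$ at $\sigma \in \Sue$: it is precisely the condition $|\varrho|_\sigma > 1$ (equivalently, that $\sigma$ is a prime dividing $q$) that makes the series $\sum_k \varrho^{-k}\mathbf{y}_{i_k}$ converge in $\Q_\sigma^d$ with a uniform tail bound, and this is what lets the affine tail of the random-walk element be absorbed into the multiplicative constant in $\asymp_C$. This is the only step that I expect to require any real care.
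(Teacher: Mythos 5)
Your proposal is correct and takes essentially the same route as the paper's proof: you dispose of $\Sdt$ by observing that the random walk there is a power of a fixed diagonal element and reading off eigenvalues, and for $\Sue$ you factor $\bar h_1^n|_\sigma$ into a diagonal part times a unipotent part, bound the unipotent part uniformly via the ultrametric inequality (using $|\varrho|_\sigma>1$ on $\Sue$), obtain the upper bound by submultiplicativity, and obtain the lower bound by hitting $E_{1,d+1}$. This is exactly the paper's argument, including the factorization $\begin{pmatrix}\varrho^n\mathrm{Id}_d & 0 \\ 0 & 1\end{pmatrix}\begin{pmatrix}\mathrm{Id}_d & -\varrho^{-n}\ybf \\ 0 & 1\end{pmatrix}$ and the geometric series bound on the affine tail.
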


\begin{proof}
The element  $\begin{pmatrix}
        \varrho^n \mrm{Id}_d & 0 \\
        0 & 1
    \end{pmatrix}$
    acts on $\mathfrak{g}_\sigma$ diagonally, with three eigenvalues $\varrho^n, 1, \varrho^{-n}.$  Thus in the case $\s\in \Sdt$, the statement follows from
\eqref{eq: def h'}.

For $\s \in\Sue$, we first show that the left-hand side of \eqref{eq: norm estimate we want}
dominates the right-hand side. Indeed, let $v=\left(\begin{smallmatrix}
    0 & X \\ 0 & 0
\end{smallmatrix}\right)$, for some $X\in \Q_\s^d$ with $\norm{X}_\s =1$.
Then $\mrm{Ad}\left(\bar h_1^n \right)(v) = \varrho^n v$, and hence $\left\| \bar h_1^n \right\|_\s \geq |\varrho|_\s^n$.

For the opposite inequality,
denote by
$$\mathbf{y}_1^n = f_{i_n}\circ \cdots \circ f_{i_1}(0)$$
the translation vector  of the map $f_{i_n}\circ\cdots \circ f_{i_1}$.
Then by a straightforward induction  using \eqref{eq: def fi} and \eqref{eq: def h'} we have 
\begin{equation}\label{eq: useful formula and correct}
   \left (\bar h_1^n \right)_{\s}  
    = \begin{pmatrix}
        \varrho^n \mrm{Id}_d & 0 \\
        0 & 1
    \end{pmatrix}
    \begin{pmatrix}
        \mrm{Id}_d & -\varrho^{-n}\ybf_1^n \\
        0 & 1
    \end{pmatrix}
    .
\end{equation}
Since the operator norm is sub-multiplicative, for an upper bound it suffices to give separate upper bounds for the two elements in this product. As we saw in the case $\s \in \Sdt$, the operator norm of the first element  $\begin{pmatrix}
        \varrho^n \mrm{Id}_d & 0 \\
        0 & 1
    \end{pmatrix}$
   is $|\varrho|_
\s^n$. Again by a straightforward induction we see that we can express each of the coefficients of $\varrho^{-n}\ybf_1^n$  as a sum $\sum_{i=0}^n b_i \varrho^{-i}$, where the $b_i$ are contained in the set of coordinates of the vectors $\ybf_1, \ldots, \ybf_k$, which is a finite set. By the ultrametric property of the $\sigma$-adic absolute value and the fact that $|\varrho|_\s \ge 1$, we deduce that
the operator norm of the second element $ \begin{pmatrix}
        \mrm{Id}_d & -\varrho^{-n}\ybf_1^n \\
        0 & 1
    \end{pmatrix}$ is bounded, independently of $n$. This   completes the proof.
\qedhere

\end{proof}

Let $B, \beta$ be as in \eqref{eq: def B beta}.
As in \cite{BQ1}, to simplify notation  we identify the index set $\{1, \ldots, k\}$ with the random walk elements $\{\bar h_1, \ldots, \bar h_k\}$. In particular, for $b \in B$ we let
$$b_1^n \df  \bar h_1^n \ \ \ \text{ and } \ \ b_n^1 \df \bar h_n^1
$$ be the elements given by \eqref{eq: weird notation from SW} corresponding to the $n$-prefix $(i_1, \ldots, i_n)$ of $b$. For $\s \in S$ we also let $\mathfrak{u}_\s$ denote the Lie algebra of the group $\mathbf{U}(\Q_\s)$ (see \eqref{eq: def Wst}). Let $\mathbb{P}(\mathfrak{g})$ be the projective space over $\mathfrak{g}$, let $d$ be some metric on $\mathbb{P}(\mathfrak{g})$ inducing the topology, and for two subsets $A, A' \subset \mathbb{P}(\mathfrak{g})$, we let  $\mathrm{dist}(A,A') \df \inf \{d(a,a'): a \in A, a' \in A'\}. $

  The following lemma provides a weaker version of the aforementioned results of~\cite{SimmonsWeiss} which suffices for our purposes (cf. \cite[Cor. 5.5]{BQ1} and \cite[Prop. 3.1]{SimmonsWeiss}).
\begin{lem}\label{lem:NormGrowthBound}
  For all $\delta>0$ there are $C>1$ and $m_{0}\in \NN$
  such that for all $\s \in \Sue$ and all non-zero $v_\s\in \mathfrak{g}_\s$ we have
  \begin{equation*}
    \beta\left(\big\{ b\in B \colon
        \forall m\geq m_{0}, \quad \lVert \Ad(b_1^m) \rVert_\s \lVert v_\s\rVert_\s
    \leq C \lVert \Ad(b_1^m) v_\s\rVert_\s \big\}\right) \geq 1-\delta.
  \end{equation*}
Moreover, for every $\delta>0$ and $\eta>0$, there is $m_{1} \in \N$ such that for all $\s \in \Sue$, for any $v \in \mathfrak{g}_\s\smallsetminus \{0\},$
    $$
    \beta \left( \left\{ b \in B: \forall m \geq m_{1}, \quad \mathrm{dist}\left(\Ad(b^m_1) v, \mathfrak{u}_\s \right) < \eta \right\} \right) > 1 - \delta
    $$
  (where we identify $v, \mathfrak{u}_\s$ with their image in $\mathbb{P}(\mathfrak{g})$).
\end{lem}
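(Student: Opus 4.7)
The plan is to reduce both statements to the fact that for $\beta$-generic $b$, the image $\mrm{Ad}(b_1^m) v$ aligns in $\mathbb{P}(\mathfrak{g}_\sigma)$ with the top Lyapunov subspace of the random walk, which will turn out to be $\mathfrak{u}_\sigma$; once alignment occurs, each iteration scales the relevant component by $\varrho$, simultaneously yielding the norm lower bound and the projective convergence. To set up the decomposition, fix $\sigma \in \Sue$, so that $|\varrho|_\sigma > 1$ by definition; the common diagonal part $\mrm{diag}(\varrho\, \mrm{Id}_d, 1)$ of every $\bar{h}_i^{(\sigma)}$ induces via $\mrm{Ad}$ a weight decomposition of $\mathfrak{g}_\sigma$ with eigenvalues $\varrho, 1, \varrho^{-1}$, and the weight-$\varrho$ subspace is exactly $\mathfrak{u}_\sigma$. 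Each $\mrm{Ad}(\bar{h}_i^{(\sigma)})$ preserves the flag
\[
0 \subset \mathfrak{u}_\sigma \subset \mathfrak{p}_\sigma \subset \mathfrak{g}_\sigma,
\]
where $\mathfrak{p}_\sigma$ is the Lie algebra of the upper-parabolic stabilizer of $\langle e_{d+1}\rangle$; the induced action on the successive quotients is scaling by $\varrho$, $1$, and $\varrho^{-1}$, while the translations $\mathbf{y}_i$ couple these layers. By Proposition~\ref{prop:predictable growth}, $\|\mrm{Ad}(b_1^m)\|_\sigma \asymp |\varrho|_\sigma^m$ uniformly in $b$.

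Second, I would use irreducibility of $\Phi$ to prevent any nonzero $v \in \mathfrak{g}_\sigma$ from maintaining zero $\mathfrak{u}_\sigma$-component under every finite product $\mrm{Ad}(\bar{h}_{i_m}^{(\sigma)}\cdots \bar{h}_{i_1}^{(\sigma)})$. A direct block computation shows that on an element written as $\left(\begin{smallmatrix} A & \xi \\ w & c\end{smallmatrix}\right)$, the $\mathfrak{u}_\sigma$-component of $\mrm{Ad}(\bar{h}_i^{(\sigma)})$ applied to it equals $\varrho \xi + (A-cI)\mathbf{y}_i - \varrho^{-1}(w\mathbf{y}_i)\mathbf{y}_i$. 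If $\xi = 0$ but $(A, c, w) \ne 0$, the irreducibility argument from the proof of Proposition~\ref{prop: projection Z trivial} --- that the differences $\mathbf{y}_j - \mathbf{y}_i$ span $\Q_\sigma^d$ --- forces a nonzero $\mathfrak{u}_\sigma$-contribution to arise within a bounded number of iterations of the walk. Hence for $\beta$-a.e.\,$b$ the $\mathfrak{u}_\sigma$-component of $\mrm{Ad}(b_1^{m}) v$ becomes nonzero at some bounded time $m_0$.

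Third, once activated at time $m_0$, the $\mathfrak{u}_\sigma$-component is multiplied by $\varrho$ at each subsequent step, while the image of $\mrm{Ad}(b_1^m) v$ in $\mathfrak{p}_\sigma / \mathfrak{u}_\sigma$ stays bounded in $\sigma$-adic norm and the image in $\mathfrak{g}_\sigma / \mathfrak{p}_\sigma$ decays as $|\varrho|_\sigma^{-m}$. Strict ultrametricity then implies that for $m \gg m_0$ the $\mathfrak{u}_\sigma$-component dominates, yielding both (i) the lower bound $\|\mrm{Ad}(b_1^m) v\|_\sigma \geq C^{-1}|\varrho|_\sigma^m \|v\|_\sigma$ and (ii) $\mrm{dist}(\mrm{Ad}(b_1^m) v, \mathfrak{u}_\sigma) \to 0$ in $\mathbb{P}(\mathfrak{g}_\sigma)$. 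The probabilistic content --- that the activation time $m_0$ admits a uniform bound on a $\beta$-set of measure at least $1-\delta$ --- follows the schema of \cite[Prop.~3.1]{SimmonsWeiss}, whose proof uses only the block-triangular structure of the generators and sub-multiplicativity of the operator norm; these features are shared by the $\sigma$-adic and real settings (as noted in the discussion of $\Wst$ in \S\ref{sec:outline of proof}), so the argument transfers without essential modification.

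The main obstacle I anticipate is the uniformity in $v$: naive estimates yield constants depending on the initial projection of $v$ to $\mathfrak{u}_\sigma$ or on the activation time $m_0(b,v)$. This will be handled by compactness of $\mathbb{P}(\mathfrak{g}_\sigma)$ over the local field $\Q_\sigma$, which promotes the pointwise concentration estimates to a uniform one via a finite cover, in the spirit of \cite[Cor.~5.5]{BQ1}.
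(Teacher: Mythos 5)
Your proposal follows the same route as the paper's own proof: both reduce the statement to Simmons--Weiss \cite[Prop.~3.1]{SimmonsWeiss}, whose hypotheses are verified for the adjoint random walk on $\mathfrak{g}_\sigma$ with distinguished subspace $W=\mathfrak{u}_\sigma$, and both observe that the argument transfers from $\R$ to $\Q_\sigma$ (the paper additionally flagging the $p$-adic Oseledec theorem of Raghunathan, which your sketch omits). You helpfully make explicit the $p$-adic block computation for $\mathrm{Ad}(\bar h_i^{(\sigma)})$ and the resulting weight decomposition/flag $\mathfrak{u}_\sigma\subset\mathfrak{p}_\sigma\subset\mathfrak{g}_\sigma$, which the paper leaves implicit in the citation, but the decisive probabilistic and uniformity steps --- that the $\mathfrak{u}_\sigma$-component becomes of size $\gtrsim\|v\|$ within a uniformly bounded number of steps with $\beta$-probability at least $1-\delta$, with constants uniform in $v$ --- are ultimately deferred to the Simmons--Weiss machinery, just as in the paper.
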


\begin{proof} This follows from \cite[Sections 3 \& 6]{SimmonsWeiss}. By the argument of \cite[Section 6.1]{SimmonsWeiss}, assumptions I, II, III are satisfied for the restriction of the random walk to $\mathfrak{g}_\s,$ with $W = \mathfrak{u}_\s.$ Now the desired estimates follow from \cite[Prop. 3.1]{SimmonsWeiss}. Note that in the reference \cite[\textsection 6]{SimmonsWeiss}, the condition that $V$ has no $G$-invariant vectors was omitted; this condition is valid in our setting for the Adjoint representation, since $G$ is simple.
Also note that in \cite{SimmonsWeiss},  $V$ is a real vector space, but the arguments given there are valid in vector spaces over $\Q_\s.$ Finally, note that the arguments in \cite{SimmonsWeiss} use the Oseledec theorem; for a p-adic version of the Oseledec theorem, see \cite{Raghunathan_Oseledec}. 

For a proof which is independent of \cite{SimmonsWeiss}, one could adapt the arguments given in the proof of Lemma \ref{lem:uniform growth in irreps} and Proposition \ref{prop:measures on proj spaces2} below.  
\end{proof}

\subsection{The effect of changing prefixes}
Finally, we record the following lemma which measures the effect of changing the prefix of a long random walk trajectory. We introduce the notation $\|\gamma_\sigma\|_{\mathrm{op}}$ to denote the operator norm of the linear operator $\gamma_\sigma$ acting linearly on $\Q_\sigma^{d+1}$ (this should not be confused with the notation $\|\gamma\|_\s$ used in Proposition \ref{prop:predictable growth} and Lemma \ref{lem:NormGrowthBound}).

\begin{lem}\label{lem: more linear2}
Given $a =(a_1, \ldots)$ and $ b = (b_1, \ldots)$ in $ B$, and given $n \in \N$, let
$$\gamma(a,n,b) \df
a_1^n \circ \left(b_n^1 \right)^{-1} =
\bar h_{a_n} \cdots \bar h_{a_1} \cdot \left(\bar h_{b_n}\right)^{-1} \cdots \left(\bar h_{b_1}\right)^{-1}.$$
Let $\s\in\Sue$ and
$$
G_{a,b, \s}(n) \df \left\| \gamma(a,n,b)_\s\right \|_{\mathrm{op}}\ \ \ \text{ and } \ \ G_{\max, b, \s}(n) \df \max_{a' \in B}G_{a',b}(n).
$$
Then
\begin{enumerate}
\item \label{item: 1}
 For $\s \notin \Sue$ we have $\gamma(a,n,b)_{\s} = \mathrm{Id}.$
\item \label{item: 2}
There is $c_1>1$  such that for any $\s \in \Sue$,
 any $b \in B$ and any $n \in \N,$
$$ \frac{1}{c_1} \leq  \frac{
G_{\max,b, \s}(n)}{\lvert \varrho\rvert_\s^{n}} \leq c_1.
$$
In particular
$
G_{\max,b, \s}(n) \stackrel{n \to \infty}{\longrightarrow} \infty.
$
\item \label{item: 3}
For any  $\alpha>0$ there is $c_2>0$  such that for all $b$ and all  $n \in \N$, there is a set $B_0 = B_0(b,n) \subset B$ with $\beta(B_0) \geq 1-\alpha$, such that for all $a \in B_0$  and any $\s \in \Sue$, we have
\begin{equation}\label{eq: uniform conclusion norm}
G_{a, b, \s}(n) \geq c_2 \, G_{\max, b, \s}(n).
\end{equation}
\end{enumerate}
\end{lem}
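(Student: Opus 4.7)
The plan is to establish (1) and (2) by direct matrix computation, and to reduce (3) to a non-atomicity statement for an auxiliary $\s$-adic self-similar measure.

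For (1), observe that for $\s \in \Str$ each $\bar h_i^{(\s)} = \mathrm{Id}$, while for $\s \in \Sdt$ each $\bar h_i^{(\s)} = \begin{pmatrix}\varrho\,\mathrm{Id}_d & 0 \\ 0 & 1\end{pmatrix}$ is independent of $i$; in both cases $(a_1^n)^{(\s)}$ and $(b_n^1)^{(\s)}$ coincide regardless of $a,b$, forcing $\gamma(a,n,b)^{(\s)} = \mathrm{Id}$. For $\s \in \Sue$, a short induction using $f_j \circ f_i(0) = \varrho \mathbf{y}_i + \mathbf{y}_j$ yields
\begin{align*}
(a_1^n)^{(\s)} = \begin{pmatrix} \varrho^n \mathrm{Id}_d & -F_a(n) \\ 0 & 1 \end{pmatrix},\qquad (b_n^1)^{(\s)} = \begin{pmatrix} \varrho^n \mathrm{Id}_d & -G_b(n) \\ 0 & 1 \end{pmatrix},
\end{align*}
with $F_a(n) := f_{a_n}\circ\cdots\circ f_{a_1}(0) = \sum_{j=1}^n \varrho^{n-j}\mathbf{y}_{a_j}$ and $G_b(n) := f_{b_1}\circ\cdots\circ f_{b_n}(0) = \sum_{j=1}^n \varrho^{j-1}\mathbf{y}_{b_j}$. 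Inverting and multiplying gives $\gamma(a,n,b)^{(\s)} = \begin{pmatrix}\mathrm{Id} & G_b(n)-F_a(n) \\ 0 & 1\end{pmatrix}$, whose operator norm is $\max(1, \norm{G_b(n)-F_a(n)}_\s)$. The upper bound in (2) is then the ultrametric inequality $\norm{F_a(n)}_\s,\,\norm{G_b(n)}_\s \leq |\varrho|_\s^{n-1} \max_i \norm{\mathbf{y}_i}_\s$. For the matching lower bound, since the IFS has $k \geq 2$ distinct maps we can fix $i_0 \neq j_0$ with $\mathbf{y}_{i_0} \neq \mathbf{y}_{j_0}$, and for words $a',a''$ agreeing except in the first coordinate we have $F_{a'}(n)-F_{a''}(n) = \varrho^{n-1}(\mathbf{y}_{i_0}-\mathbf{y}_{j_0})$ of $\s$-adic norm $|\varrho|_\s^{n-1}\norm{\mathbf{y}_{i_0}-\mathbf{y}_{j_0}}_\s$; the ultrametric inequality then forces $G_{\max,b,\s}(n) \gtrsim |\varrho|_\s^{n-1}$, and the claimed divergence is automatic since $|\varrho|_\s > 1$ on $\Sue$.

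For (3), I rescale: setting $\tilde F_a(n) := \varrho^{-(n-1)}F_a(n) = \sum_{j=1}^n \varrho^{1-j}\mathbf{y}_{a_j}$ and $\tilde v_{b,n} := \varrho^{-(n-1)} G_b(n)$, the bad set $\{a : G_{a,b,\s}(n) < c_2 G_{\max,b,\s}(n)\}$ is essentially $\{a : \norm{\tilde F_a(n) - \tilde v_{b,n}}_\s < C c_2\}$ for a constant $C$ depending only on the IFS. Because $|\varrho|_\s > 1$ on $\Sue$, the series $\tilde F_a := \sum_{j \geq 1} \varrho^{1-j}\mathbf{y}_{a_j}$ converges $\s$-adically with tail error $O(|\varrho|_\s^{-n})$, and $a \mapsto \tilde F_a$ is precisely the coding map of the contracting $\s$-adic IFS $g_i(x) := \varrho^{-1} x + \mathbf{y}_i$ on $\Q_\s^d$. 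Its pushforward $\tilde\theta_\s := \tilde F_\ast \beta$ is the Bernoulli self-similar measure on the compact attractor, so up to an error vanishing as $n\to\infty$ the measure of the bad set is controlled by the $\tilde\theta_\s$-mass of small $\s$-adic balls.

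The main obstacle is to prove that $\tilde\theta_\s$ is non-atomic. Suppose for contradiction that $m := \sup_x \tilde\theta_\s(\{x\}) > 0$ and set $A := \{x : \tilde\theta_\s(\{x\}) = m\}$, which is finite of size $\leq 1/m$. The stationarity identity $\tilde\theta_\s = \sum p_i g_{i,\ast}\tilde\theta_\s$ applied at $x \in A$ reads $m = \sum p_i \tilde\theta_\s(\{g_i^{-1}(x)\})$; since each summand is $\leq m$ and the convex combination equals $m$, we must have $g_i^{-1}(x) \in A$ for every $i$. Injectivity of $g_i$ together with finiteness of $A$ upgrades this to $g_i(A) = A$, so each $g_i$ permutes $A$ and some power $g_i^N$ fixes $A$ pointwise. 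But $g_i^N$ is a strict $\s$-adic contraction with a unique fixed point, forcing $|A|=1$ and this point to be the common fixed point of every $g_i$; this in turn forces all $\mathbf{y}_i$ to coincide, contradicting distinctness. Non-atomicity combined with compactness of $\supp\tilde\theta_\s$ yields uniform $c_\alpha^{(\s)} > 0$ with $\tilde\theta_\s(B_\s(w, c_\alpha^{(\s)})) \leq \alpha/|\Sue|$ for every $w$; for $n$ larger than some $n_0$ this directly bounds the bad set, and for the finitely many smaller $n$ the uniform upper bound on $G_{\max,b,\s}(n)$ makes $G_{a,b,\s}(n) \geq 1$ sufficient after taking $c_2$ small. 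A union bound over the finite set $\Sue$ then yields the required $B_0$.
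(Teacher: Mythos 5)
Items (1) and (2) of your proof mirror the paper's matrix computation, arriving at the same description of $\gamma(a,n,b)^{(\s)}$ as a unipotent element with off-diagonal block $G_b(n)-F_a(n)$, and the ultrametric estimates are standard. For item (3) you take a genuinely different route. The paper reads off the coefficient structure of $\ybf_0 = \sum_j \varrho^{n-j}(\ybf_{b_{n-j}}-\ybf_{a_j})$ directly: by the ultrametric property, $G_{a,b,\s}(n)$ is comparable to $|\varrho|_\s^r$ where $r$ is the largest exponent appearing with non-vanishing coefficient, and $r<n-\ell$ forces $\ybf_{a_j}=\ybf_{b_{n-j}}$ for $j\le\ell$, which (since the $\ybf_i$ are pairwise distinct) pins down the first $\ell$ digits of $a$; the bad set is thus a single length-$\ell$ cylinder of measure at most $(\max_i p_i)^\ell$. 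Your argument instead rescales by $\varrho^{-(n-1)}$ so that $a\mapsto\tilde F_a$ becomes the coding map of a contracting $\s$-adic IFS, identifies $\tilde F_\ast\beta$ as a Bernoulli measure $\tilde\theta_\s$ on a compact attractor in $\Q_\s^d$, proves $\tilde\theta_\s$ is non-atomic by a clean stationarity/fixed-point argument, and invokes the uniform smallness of ball masses for a non-atomic measure on a compact metric space. Both proofs ultimately rest on the same input (the $\ybf_i$ are pairwise distinct) and both are correct; the paper's version is shorter and quantitative (explicit exponential decay of the bad set in $\ell$), while yours is more conceptual, exposing the self-similar structure that is implicitly behind the estimate. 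One point you gesture at but should pin down is the uniformity in $n$: as written the quantifiers on $n_0$ and $c_2$ look circular, but the argument closes because the only $n$ for which the bad set can be non-empty satisfy $c_1 c_2 |\varrho|_\s^n > 1$, and for such $n$ the tail error $\lVert\tilde F_a - \tilde F_a(n)\rVert_\s = O(|\varrho|_\s^{-n})$ is automatically $O(c_2)$, so the ball radius remains $O(c_2)$ uniformly over $n$ and $b$.
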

\begin{proof}
Assertion \eqref{item: 1} is clear from \eqref{eq: def h'}.
For $\sigma \in S_{\mathrm{ue}}$ write
 $(\bar h_i)_\sigma =  u(-\mathbf{y}_{i})g_0,$ where  $u : \Q_\sigma^d \to \mathbf{U}(\Q_\s)$ is the map as in \eqref{eq: def u+}, and
$
 g_0 \df \mathrm{diag}(\varrho, \ldots, \varrho, 1).
$
We have a commutation relation $g_0 \, u(\ybf)= u(\varrho \ybf)\, g_0.$
Carrying out a matrix multiplication, and using the commutation relation to move all the diagonal matrices to one side, we get that
\[\begin{split}\left(\gamma(a,n,b)\right)_\s  = & \left\{ \begin{matrix}   u(-\ybf_{a_m}) g_0 \cdots u(-\ybf_{a_1}) g_0 g_0^{-1} u(\ybf_{b_n}) \cdots g_0^{-1}u(\ybf_{b_1})    & \ \sigma \in S_{\mathrm{ue}} \\
\mathrm{Id} & \ \sigma \notin S_{\mathrm{ue}}\end{matrix} \right. \\= &
 \left\{ \begin{matrix} u(\ybf_{0})      & \sigma \in S_{\mathrm{ue}} \\
\mathrm{Id} & \ \sigma \notin S_{\mathrm{ue}},\end{matrix} \right.
 \end{split}\]
where $\ybf_0 = \ybf_0(a,n,b)$ is given by
\begin{equation}\label{eq: complicated expression for sigma}
\ybf_{0}= \sum_{i=1}^n \varrho^{i-1}\ybf_{b_i} - \sum_{i=1}^n \varrho^{n-i} \ybf_{a_i} = \sum_{j=1}^n \varrho^{n-j}(\ybf_{b_{n-j}} -\ybf_{a_j}).
\end{equation}

Since the $\ybf_i$ are contained in a finite set, the size of $G_{a,b, \s}(n)$ is
comparable to $\max(1,\|\ybf_0\|_\s)$,  i.e., to $|\varrho|_\s^r$ where $r =r(a,b,n)$ is the largest power $n-j$  appearing in \eqref{eq: complicated expression for sigma} with a nonzero
coefficient.
It is clear from \eqref{eq: complicated
expression for sigma} that $r(a,b,n) \leq n-1$ for all $a$. Also, for each
$b,n$, we can choose $a_1$ so that $\mathbf{y}_{b_{n-1}} - \mathbf{y}_{a_1}$ is nonzero, and
this is the coefficient of $\varrho^{n-1}$. This implies that for any sequence
starting with $a_1$ we have $r(a,b,n) =n-1$. This proves \eqref{item: 2}.

To prove \eqref{item: 3}, given $\alpha$, let $\ell$ be large enough so that any cylinder set in $B$ defined by specifying one prefix of length $\ell$ has $\beta$-measure less than $\alpha$; namely, we choose $\ell > \alpha / \log(\max_i p_i).$ Arguing as in the proof of \eqref{item: 2}, we see that the only way to have $r(a,b,n) < n -  \ell$ is to have $\tau_{a_j} = \tau_{b_{n-j}}$  for $j\leq \ell,$ and this means that  the first $\ell$ digits of $a$ are determined by the last $\ell$ digits of $b$. We define $B_0$ to be the complement of this prefix set of length $\ell$ corresponding to $b$, and the statement follows.
\end{proof}

\section{Preparations For Exponential Drift: Non-concentration of Limit Measures along Centralizer Orbits}
\label{sec:nonatomic}
Following \cite{BQ1}, the first key step in running the exponential drift argument is to show that, given a stationary measure $\nu$, the limit measures $\nu_b$ are non-atomic almost surely.
This property however fails for our random walks. Indeed, in case $\mu = \bar \mu$ the stationary measure $\nu = \hat \nu_0$ does have atomic limit measures, as can be seen from the proof of Proposition \ref{prop: examples stationary measures}. In the general case, it can be shown that the property fails due to the deterministic forward-contracting space $\mathfrak{u}_{\mathrm{dt}} = \mathrm{Lie}(\gUdt)$.
Nevertheless, in this section we will show that non-atomicity of limit measures (in a strong form) does hold under the hypotheses of Theorem~\ref{thm:NonAtomicityLimitMeasures} below.

In order to state the main result of this section, we introduce some notation. Let $B^X, \beta^X$ be as in
\eqref{eq: def beta X} and
Proposition \ref{prop: commutes with RW}.
Define $Z$ to be the subgroup of $G^S$ commuting with all of the $\bar h_i$.
By Proposition \ref{prop: projection Z trivial}, the elements $\left(z^{(\s)} \right)_{\s \in S}$  of $Z$ satisfy
$
z^{(\s)} = \mathrm{Id}$ if $\s \in S_{\mrm{ue}},$
$z^{(\s)} =\left(\begin{smallmatrix} A^{(\s)} & \mathbf{0} \\ \mathbf{0} & 1 \end{smallmatrix} \right)$   for some invertible $A^{(\s)}$ if $\s \in S_{\mrm{dt}}
$,
and with no restrictions on $z^{(\s)}$ for $\s \in S_{\mrm{tr}}.$
Let $\mathbf{P} \df \left(\begin{smallmatrix}
    * & * \\ \mathbf{0} & *
 \end{smallmatrix}\right)$
be the normalizer of the group $\mathbf{U}$ in \eqref{eq: def u+},
let $\Sdt$ and $\Str$ be as in \eqref{eq: def Sdt}, and let
\begin{equation}\label{eq: def Hne}  H^{\mathrm{fne}} \df \mathrm{P}_{\Sdt} \times  \mathrm{G}_{ S_{\mathrm{tr}}}.\end{equation}
The superscript `fne' stands for `forward non-expanding'; indeed, for the linear random walk consider in \textsection \ref{sec:GrowthPropertiesRandomWalk}, none of the vectors in the Lie algebra $\Vne$ of $\Hne$ expand under any of the elements $\bar h_i$.
Table~\ref{tab:groups and places} summarizes the structure of the groups $Z, \Hne,$ and $\Wst$.

\begin{table}[h]
    \centering
    \begin{tabular}{c|c|c|c}
           &  $\Sue$ & $\Sdt$ & $\Str$ \\
           \hline 
         $Z$ \rule[-2.5ex]{0pt}{6ex} & $\mathbf{1}$ & $\left(\begin{smallmatrix}
             \mrm{GL}_d & \mathbf{0}\\ \mathbf{0} & 1
         \end{smallmatrix} \right)$
         & $\gG_{\Str}$ \\ 
         \hline 
         $\Hne$ \rule[-2.5ex]{0pt}{6ex} & $\mathbf{1}$ & $\mrm{P}_{\Sdt} = \left(\begin{smallmatrix}
    * & * \\ \mathbf{0} & *
 \end{smallmatrix}\right)$ & $\gG_{\Str}$ \\
         \hline 
         $\Wst$ \rule[-2.5ex]{0pt}{6ex} & $\gUue$ & $\gG_{\Sdt}$ & $\gG_{\Str}$
         \\
         \hline 
    \end{tabular}
    \vspace{10pt}
    \caption{Projections of the groups $Z, \Hne,$ and $\Wst$ to $\gG_{S'}$, for $S'\in \{\Sue, \Sdt,\Str\}$. Here, $\mathbf{1}$ denotes the trivial group.}
    \label{tab:groups and places}
\end{table}

From \eqref{eq: def Wst}, we have
\begin{equation}\label{eq: perhaps missing something}
Z \subset H^{\mathrm{fne}} \subset W^{\mathrm{st}},
\end{equation}
and from \eqref{eq: def h'} that for all $i$,
\begin{equation}\label{eq: rw commutes with Hne}
    \bar h_i H^{\mathrm{fne}} \bar h_i^{-1} = H^{\mathrm{fne}}.
\end{equation}
In particular, each $\bar h_i$ normalizes each of the three groups in \eqref{eq: perhaps missing something}.

\begin{thm}
    \label{thm:NonAtomicityLimitMeasures}
    Let $\Phi, \bar h_i, \mathbf{p}$ and  $\bar \mu$ be as in Theorem \ref{thm: stationary main}.  Let $\nu$ be an ergodic $\bar \mu$-stationary measure.
 Suppose that $\nu(H^{\mathrm{fne}} x) =0$ for every $x \in \XS$.
    Then $\nu_b(Zx)=0$ for $\beta^X$-almost every $(b,x) \in B^X$.
\end{thm}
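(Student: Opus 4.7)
The plan is to argue by contradiction. Suppose the set $\Acal=\{(b,x)\in B^X:\nu_b(Zx)>0\}$ has positive $\beta^X$-measure. Because $Z$ centralizes every $\bar h_i$ and $\nu_{\bar h_i b}=(\bar h_i)_\ast \nu_b$, the function $(b,x)\mapsto \nu_b(Zx)$ is invariant under the backward transformation $T^X$. Ergodicity of $(B^X,\beta^X,T^X)$ given by Proposition~\ref{prop: commutes with RW}, together with a selection argument applied to the essential supremum of the $Z$-atom size, yields a constant $c>0$ and, for $\beta$-a.e.~$b$, a measurable choice $x_b\in\supp\nu_b$ with $\nu_b(Zx_b)\geq c$. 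The contradiction will come from producing, with positive probability, arbitrarily many pairwise disjoint $Z$-orbits each of $\nu$-mass $\geq c$.

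First, I would exploit the hypothesis $\nu(\Hne y)=0$ to extract, for generic $b$, a companion point $y_b\in\supp\nu_b$ close to $x_b$ whose displacement from $Zx_b$ has a nonzero component transverse to $\Vne$. The transverse directions live in the Lie algebra of $\gUue$ (together with the sub-diagonal directions at $\Sdt$-places, which expand in backward time); their existence as generic displacement directions within $\supp\nu_b$ follows from the hypothesis, which forbids the $\Hne$-leafwise measure of $\nu_b$ from being supported on a single $Z$-orbit.

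Next, I would amplify this transverse displacement using the forward random walk. Applying a length-$n$ prefix $b_1^n$ to $x_b$ and $y_b$ produces points in the support of $(b_1^n)_\ast\nu_b$; and since $Z$ commutes with the walk, the orbits $Zx_b$ and $Zy_b$ retain individual masses $\geq c$ after pushforward. By Proposition~\ref{prop:predictable growth} and Lemma~\ref{lem:NormGrowthBound}, the Adjoint action along each $\s\in\Sue$ expands the transverse component of $y_b-x_b$ at rate $\asymp|\varrho|_\s^{-n}$ while leaving the $\Vne$-component bounded. The single-contraction-ratio hypothesis makes these expansion rates comparable across every $\s\in\Sue$, so one can calibrate $n=n(\e)$ so that $b_1^n x_b$ and $b_1^n y_b$ sit at distance of order $1$ inside a prescribed compact set; this is the ``capture just as they separate'' step of the outline.

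Finally, I would invoke the prefix ergodic theorem (Theorem~\ref{thm:PrefixErgodicTheorem}) to select, with positive probability over $b$ and a positive-$\beta$-density set of length-$n$ prefixes, both (i) the macroscopic separation of $b_1^n x_b$ from $b_1^n y_b$ in a direction transverse to $Z$, and (ii) the transfer of a fixed positive-$\nu$-measure continuity set into a neighborhood on which the construction can be iterated. Iterating the construction $k$ times yields $k$ pairwise disjoint $Z$-orbits of $\nu_{b_k}$-mass $\geq c$ for a suitable $b_k$, contradicting $\nu_{b_k}(\XS)=1$ as soon as $k>1/c$. The principal obstacle is the calibration of $n$: it must be long enough for the prefix ergodic theorem to ensure the required averaging yet short enough that the transverse expansion does not overshoot the compact region on which continuity of $\nu_b$ is controlled. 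This synchronization is exactly the point at which the common contraction ratio and the uniform comparability of norms across $\Sue$-places (Proposition~\ref{prop:predictable growth}) become indispensable.
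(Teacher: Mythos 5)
You have correctly identified several of the right ingredients --- the $T^X$-invariance of $(b,x)\mapsto\nu_b(Zx)$, the role of the non-alignment hypothesis, the prefix ergodic theorem, and the expansion in $\ufrak_{\mrm{ue}}$ at a single rate --- but the final contradiction mechanism does not work. You propose to iterate the expansion step and produce, inside a single limit measure $\nu_{b_k}$, more than $1/c$ pairwise disjoint $Z$-orbits of mass at least $c$. This cannot happen: the equivariance $\nu_{ab}=(a^1_n)_\ast\nu_b$ merely transports atoms along $Z$-orbits and never creates new ones, and once the problem is set up correctly the number of positive-mass orbits in $\supp\nu'_b$ is a constant $N_0$ for $\beta$-a.e.~$b$, each carrying mass exactly $1/N_0$. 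There is no $b'$ whose limit measure carries more than $N_0$ such orbits, so a counting argument has nothing to bite on. Moreover you only establish $\nu_b(Zx_b)\geq c$ for the first point $x_b$, not for the companion $y_b$, so you do not even have two orbits of mass $\geq c$ to start with. The paper's contradiction is instead purely metric: Lusin continuity of $b\mapsto\kappa(b)=\supp\nu'_b$ forces two words sharing a long common prefix to have nearby supports, while the calibrated expansion forces the transverse displacement between those supports to macroscopic size.

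Two preparatory steps are also missing. First, under the contradiction hypothesis one must replace $Z$ by a compact subgroup $Z_0\subset Z$ via Lemma~\ref{lem:centralizer concentration}; without this, the quotient $Z\backslash\XS$ need not be Hausdorff and the structural fact that $\nu'_b$ is uniform on a \emph{finite} set of $Z_0$-orbits is unavailable, so there is no continuous support map $\kappa$ on which to run a Lusin argument. Second, your extraction of the transverse companion $y_b$ from $\nu(\Hne y)=0$ is not justified: the hypothesis constrains $\nu$, not the individual $\nu_b$, and a priori $\nu_b$ could be entirely supported on a single $\Hne$-orbit that varies with $b$, while $\nu=\int\nu_b\,\der\beta(b)$ still assigns each fixed $\Hne$-orbit measure zero. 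The paper handles this in Step~3 by using the finiteness of $\kappa(b)$: for fixed $b$, the $\Hne$-orbits through $\kappa(b)$ form a finite, $\nu'$-null collection, so they are $\nu'_{\bar b}$-null for $\beta$-a.e.~$\bar b$, giving the required non-aligned pair $(b,\bar b)$ --- a deduction that again requires the compact-quotient reduction.
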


The remainder of this section is devoted to the proof of Theorem~\ref{thm:NonAtomicityLimitMeasures}.

\subsection{Prefix ergodic theorem}
We will need a pointwise ergodic theorem
specifically geared to the symbolic space. In order to state it we introduce some additional notation and terminology.
Let $B^*$ denote the set of finite words in the alphabet $\{1, \ldots, k\}$, and for $a \in B^*$ let $[a] \subset B$ denote the cylinder set of words in $B$ whose initial word is $a$.   A \emph{complete prefix set} is a finite subset $P\subset B^{\ast}$ such that
  \begin{equation*}
    \{[a] \colon a\in P\}
  \end{equation*}
  is a partition of $B$.
 Let $\mathrm{len}(a)$ denote the length of the word $a \in B^*,$ and for $a \in B^*$ and $b \in B$ we let $ab \in B$ be the infinite word obtained by concatenation.

\begin{thm}\label{thm:PrefixErgodicTheorem}
  Let $(P_{n})_{n\in\NN}$ be a sequence of complete prefix sets such that
  \begin{equation*}
    \min\{\mathrm{len}(a) \colon a\in P_n\} \xrightarrow{n \to
    \infty} \infty.
  \end{equation*}
  Then
  \begin{equation*}
    \forall f\in L^\infty(B,\beta), \quad
    \sum_{a\in P_n}f(ab)\beta([a]) \xrightarrow{n \to
    \infty} \int_{B} f\der\beta\quad
    \text{$\beta$-a.e.}
  \end{equation*}
\end{thm}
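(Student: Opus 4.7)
The plan is to argue by approximating a general bounded measurable $f$ by cylinder functions and exploiting an $L^2$-contractivity property of the averaging operators $F_n$ induced by the product structure of $\beta$. First, since $\beta$ is Bernoulli, the map $\Psi_n \colon P_n \times B \to B$, $(a,b) \mapsto ab$, with $P_n$ equipped with the measure $\mu_n(\{a\}) \df \beta([a])$, pushes $\mu_n \otimes \beta$ to $\beta$. This immediately gives $\int F_n\, \der\beta = \int f\, \der\beta$, and Jensen's inequality applied to $x \mapsto x^2$ yields $\|F_n\|_{L^2(\beta)} \leq \|f\|_{L^2(\beta)}$, so that $f \mapsto F_n^{(f)}$ is an $L^2$ contraction. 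If $g$ depends only on the first $L$ coordinates and $\min \mathrm{len}(P_n) \geq L$, grouping the sum defining $F_n^{(g)}(b)$ by the first $L$ coordinates of $a \in P_n$ collapses it to $\int g\, \der\beta$ identically. More generally, for continuous $f$, the representation $F_n(b) = \int f(a_n(c)b)\, \der\beta(c)$, where $a_n(c) \in P_n$ is the unique prefix of $c$, combined with the product-topology convergence $a_n(c)b \to c$ as $\ell(a_n(c)) \to \infty$, yields $F_n(b) \to \int f\, \der\beta$ for every $b$ by bounded convergence.

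To pass to $L^\infty(\beta)$, choose cylinder functions $g_k$ with $\|f - g_k\|_{L^2}^2 \leq 4^{-k}$. By the contractivity above,
\[
\left\| F_n^{(f)} - {\textstyle\int} f\, \der\beta \right\|_{L^2}^2 \leq 4\|f - g_k\|_{L^2}^2 \leq 4^{1-k}
\]
for every $n$ with $\min \mathrm{len}(P_n) \geq \mathrm{depth}(g_k)$. Setting $n_k \df \min\{n \colon \min \mathrm{len}(P_n) \geq \mathrm{depth}(g_k)\}$, Chebyshev's inequality combined with Borel-Cantelli gives a.s.~convergence $F_{n_k}(b) \to \int f\, \der\beta$ along this subsequence.

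The crux is upgrading subsequential convergence to a.s.~convergence along the full sequence. The natural tool is a maximal inequality of the form $\|\sup_{n \,:\, \min \mathrm{len}(P_n) \geq L} |F_n^{(h)}|\|_{L^2(\beta)} \leq C\|h\|_{L^2(\beta)}$ for all $h$ orthogonal to cylinder functions of depth $L$, which combined with the preceding subsequence argument gives the theorem. In the special case of uniform-length prefix sets $P_n = \{1,\dots,k\}^N$, the operator $F_n$ satisfies the identification $F_n \circ T^N = \E[f \mid \sigma(T^N)]$ where $T$ is the shift; Doob's maximal inequality for reverse martingales then applies to $(\E[f \mid \sigma(T^N)])_N$, and triviality of $\bigcap_N \sigma(T^N)$ via Kolmogorov's $0$-$1$ law yields a.s.~convergence to $\int f\, \der\beta$. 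For a general prefix set $P_n$ one exploits that $P_n$ refines $Q_{N_n}$ with $N_n = \min \mathrm{len}(P_n)$, and controls the oscillation $F^{(P_n)} - F^{(Q_{N_n})}$ via an analogous $L^2$-orthogonality estimate on each cylinder of depth $N_n$. This final step is the main technical obstacle, as general prefix sets do not themselves form a filtration, and the maximal inequality must be extracted from a direct $L^2$-oscillation estimate exploiting the refinement relationship to the uniform-length prefix sets together with the Bernoulli structure of $\beta$.
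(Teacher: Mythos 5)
Your route is genuinely different from the paper's. The paper proves the continuous case by a uniform-continuity (variation-on-cylinders) estimate, giving convergence for \emph{every} $b$, and then treats a general $f\in L^\infty$ via Lusin's theorem, expressing the error in terms of ``continuity atoms'' (cylinders in $P_n$ meeting the Lusin compact set in nearly full relative measure) and the indicator of the complement of the Lusin set; no $L^2$ or martingale machinery appears. Your preliminary observations --- the pushforward identity for the concatenation map, the $L^2$-contraction, the exact annihilation of cylinder functions once $\min\mathrm{len}(P_n)$ exceeds their depth, and the Borel--Cantelli step giving a.e.\ convergence along a subsequence --- are all correct.

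The serious issue is the step you flag as the crux, and it already fails in the special case you believe to be settled. You correctly identify $F_N\circ T^N = \E[f\mid\sigma(T^N)]$, and the reverse-martingale convergence theorem together with Kolmogorov's $0$--$1$ law gives $\E[f\mid\sigma(T^N)](c)\to\int f\,\der\beta$ for $\beta$-a.e.\ $c$. But this is a.e.\ convergence of the sequence $N\mapsto F_N(T^N c)$ in the variable $c$, which is \emph{not} the statement that $N\mapsto F_N(b)$ converges for a.e.\ $b$. Writing $A_N=\{b:|F_N(b)-\int f\,\der\beta|>\delta\}$, reverse-martingale convergence yields $\beta(\limsup_N T^{-N}A_N)=0$, whereas the theorem requires $\beta(\limsup_N A_N)=0$; each $T^N$ is $\beta$-preserving so $\beta(T^{-N}A_N)=\beta(A_N)$, but the time-dependent non-invertible change of variables $c\mapsto T^N c$ does not transport the $\limsup$ of sets. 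For the same reason Doob's maximal inequality bounds $\sup_N|\E[f\mid\sigma(T^N)]|=\sup_N|F_N\circ T^N|$, not $\sup_N|F_N|$. Consequently the maximal inequality you need, of the form $\bigl\|\sup_P F_P^{(h)}\bigr\|_{L^2}\le C\|h\|_{L^2}$ with $P$ ranging over complete prefix sets of large minimal length, is not established even for the uniform-length family $Q_N=\{1,\dots,k\}^N$, and the proposed reduction of general $P_n$ to $Q_{N_n}$ by refinement has nothing to build on.
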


We give the proof of Theorem \ref{thm:PrefixErgodicTheorem} in \textsection \ref{subsec: prefix ergodic theorem}.
We will use the following useful consequence.
\begin{corollary}
  \label{cor:ChaconOrnsteinShells}
  Let $f\in L^\infty(B,\beta)$. Then
  \begin{equation*}
    \lim_{n\to\infty}\sum_{\mathrm{len}(a)=n}f(ab)\beta([a]) =
    \int_{B}f\der\beta(b)\quad \text{$\beta$-a.e.}
  \end{equation*}
\end{corollary}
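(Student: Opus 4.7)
The plan is to obtain this as an immediate specialization of Theorem~\ref{thm:PrefixErgodicTheorem} by choosing the natural length-stratified sequence of prefix sets. Specifically, for each $n \in \N$, I would set
\[
P_n \df \{a \in B^* : \mathrm{len}(a) = n\} = \{1,\ldots,k\}^n.
\]
First I would verify that each $P_n$ is a complete prefix set: the cylinders $\{[a] : a \in P_n\}$ are pairwise disjoint (two distinct words of the same length cannot be prefixes of one another), and every infinite word $b \in B$ has a unique length-$n$ initial segment, so $B = \bigsqcup_{a \in P_n} [a]$. Next I would observe that $\min\{\mathrm{len}(a) : a \in P_n\} = n \to \infty$, so the hypothesis of Theorem~\ref{thm:PrefixErgodicTheorem} is satisfied.

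Applying Theorem~\ref{thm:PrefixErgodicTheorem} to the sequence $(P_n)$ and to any $f \in L^\infty(B,\beta)$ then yields, for $\beta$-almost every $b$,
\[
\sum_{\mathrm{len}(a)=n} f(ab)\,\beta([a]) = \sum_{a \in P_n} f(ab)\,\beta([a]) \xrightarrow{n \to \infty} \int_B f \der\beta,
\]
which is exactly the assertion of the corollary. There is no real obstacle here: the entire content has been packaged into Theorem~\ref{thm:PrefixErgodicTheorem}, and the only task is the trivial bookkeeping that the uniform-length partitions form a valid sequence of complete prefix sets with lengths going to infinity.
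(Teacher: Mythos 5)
Your proposal is correct and matches the paper's intent exactly: the paper states Corollary~\ref{cor:ChaconOrnsteinShells} as an immediate consequence of Theorem~\ref{thm:PrefixErgodicTheorem} without further proof, and the intended specialization is precisely $P_n = \{1,\ldots,k\}^n$. Your verification that this is a valid sequence of complete prefix sets with lengths tending to infinity is the entire content.
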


\subsection{Consequences of concentration on the centralizer}

The following lemma provides a very useful consequence of the condition that $\nu_b(Zx)$ is positive for a positive measure set of pairs $(b,x)$.
\begin{lem}[{\cite[Prop.~7.8 and Lem.~7.9]{BQ2}}]
\label{lem:centralizer concentration}
     Let $\nu$ be an ergodic $\bar \mu$-stationary probability measure on $\XS$, and let $\nu_b$ be the system of limit measures as in \eqref{eq: def nu b}.
    Suppose that
    \begin{equation}\label{eq: suppose that limit measures}
    \beta^X \left( \left\{(b,x) \in B^X : \nu_b(Zx)>0 \right\} \right) >0.
    \end{equation}
     Then there exists a compact subgroup $Z_0\subset Z$ such that
    \begin{enumerate}[(i)]
   \item \label{item: measure is preserved}  $Z_0$ preserves the measure $\nu$;
   \item The $Z_0$-action on $\XS$ is free;
   \item \label{item:full measure set of positive measure Z_0 orbits} for a full measure set of $(b,x)$ in $B^X$ we have  $\nu_b(Z_0x)>0$;
        \item \label{item: invariance almost surely}
   $\nu_b$ is $Z_0$-invariant almost surely;
   \item \label{item: neighborhood exponential}
   there is a bounded neighborhood $W$ of $0$ in $\mathfrak{g}$ such that the Lie algebra $\mathfrak{z}_0$ of $Z_0 $ satisfies that the restriction of the exponential map to $W \cap \mathfrak{z}_0$ is well-defined and a  homeomorphism onto its image.
 \end{enumerate}
 \end{lem}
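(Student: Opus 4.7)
The plan is to follow the centralizer-concentration template of Benoist--Quint, adapted to our setting. The key inputs are: (a) the commutation $z\bar{h}_i = \bar{h}_i z$ for all $z\in Z$ and all $i$; (b) the equivariance $\nu_{Tb}=(\bar{h}_{b_1}^{-1})_\ast\nu_b$ together with ergodicity of $(B^X,\beta^X,T^X)$ from Proposition~\ref{prop: commutes with RW}; and (c) standard facts about essential stabilizers of finite Radon measures on homogeneous spaces. The argument splits into three stages.

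First I show that the mass function $m(b,x)\df \nu_b(Zx)$ is $\beta^X$-a.e.\ a positive constant. Because $Z$ commutes with every $\bar{h}_i$, the homeomorphism $\bar{h}_{b_1}^{-1}$ sends $Zx$ bijectively onto $Z(\bar{h}_{b_1}^{-1}x)$, and together with the equivariance of $\nu_b$ this gives $m(Tb,\bar{h}_{b_1}^{-1}x) = m(b,x)$. Ergodicity of $\beta^X$ under $T^X$ then forces $m$ to be a.e.\ constant, and the hypothesis~\eqref{eq: suppose that limit measures} forces this constant to be strictly positive; in particular, for $\beta$-a.e.\ $b$, $\nu_b$ is concentrated on a countable collection of $Z$-orbits, each of positive $\nu_b$-mass.

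Next I define, for a.e.\ $(b,x)$, the essential stabilizer
$$
Z_0(b,x) \df \{z\in Z \colon z_\ast(\nu_b|_{Zx}) = \nu_b|_{Zx}\},
$$
a closed subgroup of $Z$. Compactness of $Z_0(b,x)$ follows by a standard disintegration: $\nu_b|_{Zx}$ is a non-zero finite measure on the homogeneous space $Z/\mrm{Stab}_Z(x)$ whose isotropy is discrete (since $\Lambda^S$ is a discrete subgroup of $G^S$), so the $Z_0(b,x)$-action by left translation is proper, and a proper action preserving a finite non-zero measure forces the acting group to be compact. Using the commutation $z\bar{h}_i = \bar{h}_i z$ again, the equivariance of $\nu_b$ yields $Z_0(Tb,\bar{h}_{b_1}^{-1}x) = Z_0(b,x)$, so $Z_0(\cdot,\cdot)$ is a $T^X$-invariant measurable map into the Chabauty space of closed subgroups of $Z$. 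Ergodicity then produces a fixed compact subgroup $Z_0\subset Z$ with $Z_0(b,x) = Z_0$ for a.e.\ $(b,x)$; combined with the previous step this gives~\eqref{item:full measure set of positive measure Z_0 orbits}.

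Finally, the remaining items follow quickly. Item~\eqref{item: invariance almost surely} holds because $\nu_b$ decomposes as a countable sum of its restrictions to the $Z$-orbits of positive mass, each summand being $Z_0$-invariant by the defining property of $Z_0$. Item~\eqref{item: measure is preserved} follows by integrating~\eqref{item: invariance almost surely} against $\beta$ in $\nu = \int \nu_b\,d\beta(b)$. For~(ii): by the explicit description of $Z$ given at the start of this section, every $z\in Z$ satisfies $z^{(\sigma)} = \mrm{Id}$ for $\sigma\in S_{\mrm{ue}}$, whereas Proposition~\ref{prop: in repeated use} forces any non-trivial element of a point-stabilizer in $G^S$ to have non-identity components at every place; hence no non-trivial element of $Z \supset Z_0$ stabilizes any point of $\XS$, which is the claimed freeness. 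Item~\eqref{item: neighborhood exponential} is a standard property of compact Lie groups: the exponential of $\mathfrak{z}_0 \subset \mathfrak{g}$ is a local diffeomorphism at the origin. The main obstacle, in my view, is the compactness and measurability of $(b,x)\mapsto Z_0(b,x)$, which depends on the properness of the action of $Z_0(b,x)$ on $Z/\mrm{Stab}_Z(x)$ and on properly interpreting measurability in the Chabauty topology; the discreteness of $\Lambda^S$ in $G^S$ is precisely what makes this argument go through.
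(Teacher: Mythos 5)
Your overall strategy is different from the paper's, and the difference is where the gap appears. The paper first defines $Z_1$ to be the subgroup of $Z$ that preserves the stationary measure $\nu$ itself, and then uses the argument of~\cite[Proof of Prop.~7.8]{BQ2} (built around the set $E$ of ``typical'' points with $\nu(E)=1$, $Z_1$-invariance of $E$, and $zE\cap E=\varnothing$ for $z\in Z\smallsetminus Z_1$) to establish that $\nu_b(Z_1 x)>0$ almost everywhere; compactness of $Z_1$ then follows from the freeness plus the fact that a nontrivial finite left-invariant measure on $Z_1$ forces $Z_1$ to be compact. You instead define $Z_0(b,x)$ pointwise as the stabilizer in $Z$ of $\nu_b|_{Zx}$, prove compactness by the properness/finite-invariant-measure argument (which is correct), and then claim item~\eqref{item:full measure set of positive measure Z_0 orbits} ``combined with the previous step.'' That inference does not hold: knowing that $\nu_b(Zx)>0$ and that $Z_0$ is the full stabilizer of $\nu_b|_{Zx}$ does not give $\nu_b(Z_0 x)>0$. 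For instance, if $Z_0(b,x)$ happened to be the trivial group, then $\nu_b|_{Zx}$ could perfectly well be non-atomic, so that $\nu_b(Z_0 x)=\nu_b(\{x\})=0$. More generally, disintegrating $\nu_b|_{Zx}$ over $Z_0\backslash Z$, nothing forces the image measure on $Z_0\backslash Z$ to be atomic, and a fortiori nothing forces the coset of $x$ to be one of its atoms. Establishing the positivity of $\nu_b$ on the orbit of the relevant subgroup is precisely the hard step that the [BQ2] ``typical point set'' argument addresses, and your route bypasses it without substitute.

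Two smaller points. First, your justification of item~\eqref{item: neighborhood exponential} as ``a standard property of compact Lie groups'' is not quite adequate in this $S$-arithmetic setting: $Z_0$ has nontrivial $p$-adic factors, and the paper explicitly passes to a finite-index subgroup of $Z_1$ (citing~\cite[\textsection 3]{Ratnerpadic}) to arrange the exponential coordinates; your argument does not account for this step, and hence does not record that the group you construct may need to be shrunk by finite index. Second, your reduction of item~\eqref{item: measure is preserved} to integration of~\eqref{item: invariance almost surely} against $\beta$ is circular in your setup: the paper derives~\eqref{item: invariance almost surely} from $Z_1$ preserving $\nu$ and commuting with the random walk, via the defining limit~\eqref{eq: def nu b}, whereas you derive~\eqref{item: measure is preserved} from~\eqref{item: invariance almost surely}, which you obtain only from the pointwise definition of $Z_0(b,x)$; this is consistent internally, but it means item~\eqref{item: measure is preserved} cannot serve as input for showing item~\eqref{item:full measure set of positive measure Z_0 orbits} in the way that [BQ2] uses it, which again leaves the gap above.
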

\begin{proof}
 First we show that the $Z$-action on $\XS$ is free, and hence so is the action of any subgroup of $Z$. Let $x \in \XS$ and let $Z(x)$ denote its stabilizer in $Z$. Since the $\Sue$-coordinates of $h_i$ and $\bar h_i$ agree,  Proposition \ref{prop: projection Z trivial} implies that the projection of elements of $Z(x)$ to the $\Sue$-coordinates is trivial,  and we have from Proposition \ref{prop: in repeated use} that $Z(x)$ is a trivial group.

    By \eqref{eq: suppose that limit measures}, and since $\nu$ is ergodic, we have $\nu_b(Zx)>0$  for $\b^X$-almost every pair $(b,x)$.
    Let $Z_1 \subset Z$ be the subgroup consisting of elements of $Z$ which preserve $\nu$, and let $E
    \subset \XS$ be the set of `typical points' for the random walk, in the sense of~\cite[Lemma 3.7]{BQ2}. Then $E$ is $Z_1$-invariant, $\nu(E)=1$ and $zE \cap E = \varnothing $ for $z \in Z \sm Z_1.$
    By the argument in~\cite[Proof of Prop.~7.8]{BQ2}, we get that $\nu_b(Z_1x)>0$ for $\beta^X$-almost every $(b,x)$. Since  $Z_1$ commutes with the random walk and preserves $\nu$, we have from \eqref{eq: def nu b} that $Z_1$ preserves almost every $\nu_b$.

    We now show that $Z_1$ is compact.
For this,
    note that for a.e.\,\,$b$, the $Z_1$-invariant measure $\nu_b$ is supported on countably many $Z_1$-orbits of positive measure. For any $x \in \XS$, let $Z_1(x)$ denote the stabilizer of $x$ in $Z_1$. As we have seen, $Z_1(x) = \{\mathrm{Id} \}.$ On the other hand, the orbit map $Z_1/ Z_1(x) \to Z_1x$ is a Borel isomorphism, and whenever $\nu_b(Z_1x)>0$, the finite $Z_1$-invariant measure $\nu_b|_{Z_1x}$ induces a finite $Z_1$-invariant measure on $Z_1/Z_1(x).$ This implies that there are $x \in \XS$ for which $ Z_1(x)$ is a lattice in $Z_1$.
      Thus $Z_1$ admits the trivial group as a lattice, so is compact.

     We now let  $Z_0 \subset Z_1$ be a subgroup  satisfying \eqref{item: neighborhood exponential}. To see that such a subgroup exists, see e.g. \cite[\textsection 3]{Ratnerpadic}. Since $Z_1$ is compact, $Z_0$ is a subgroup  of finite index, and we claim that it satisfies the required conclusions. Indeed, properties   \eqref{item: measure is preserved} and \eqref{item: invariance almost surely} hold for $Z_1$ and thus  hold for $Z_0$.  Moreover, since each $Z_1$-orbit is a finite union of $Z_0$-orbits, there is a positive measure subset of $(b,x)$ for which $\nu_b(Z_0x)>0$. Since $Z_0$ commutes with the random walk, the set of $(b,x)$ satisfying this property is invariant for the random walk, and by ergodicity, \eqref{item:full measure set of positive measure Z_0 orbits}  holds for $Z_0$.
\end{proof}

\subsection{Notation for the Proof of Theorem~\ref{thm:NonAtomicityLimitMeasures}}\label{subsec: notation 7.1}
In the proof we will argue by contradiction, we now introduce some notation that arises when assuming that  \eqref{eq: suppose that limit measures} holds.
   Let $Z_0 \subset Z$ be a subgroup satisfying the conclusions of   Lemma~\ref{lem:centralizer concentration}.
    Then, since $Z_0 \subset Z$, the random walk acts on the quotient space
    \begin{align*}
        X'\df\rquotient{Z_0}{\XS} .
    \end{align*}
Since $Z_0$ is compact, the quotient topology on $X'$ is Hausdorff, locally compact, and second countable, and there is a Borel section $\tau: X' \to \XS$; that is, $\tau$ satisfies $\mathrm{Id}_{X'} =\pi_{X'} \circ \tau,$ where $\pi_{X'} : \XS \to X'$ is the projection.

     We fix an $\Ad(Z_0)$-invariant norm on each $\lieg_\s$,  and use it to define a metric $\dist_{G^S}$ on $G^S$ which is both right-invariant and left $Z_0$-invariant.  For the real place this involves defining a suitable Riemannian metric on $\mathrm{G}_{\R}$, for  finite places the construction is explained in \cite[\textsection 3]{Renethesis}, and for a general vector $v = \sum_\s v^{(\s)}$ with $v^{(\s)} \in \lieg_\s$, the norm on $\lieg$ is given by $\|v\| = \max_\s \|v^{(\s)}\|_\s$ where $\| \cdot \|_\s$ is the norm on $\lieg_\s$. The metric $\mrm{dist}_{G^S}$ induces a metric $\dist_{\XS}$ on $\XS$, and we use it to define
     $$
     \dist_{X'}\left(x'_1, x'_2\right) = \inf\left\{\dist_{\XS}(y_1, y_2) : y_i \in Z_0 x_i, \ i=1,2\right\}, \ \ \text{ where } x'_i = Z_0 x_i.
     $$
     Since $Z_0$ is compact and since $\dist_{\XS}$ is $Z_0$-invariant, this can also be written as
     $$
     \dist_{X'}\left(x'_1, x'_2\right) = \min\left\{\dist_{\XS}( x_1, z_0 x_2) : z_0 \in Z_0 \right\}.
     $$
     In the sequel, when discussing  balls and distances between points in the spaces $G^S, \XS, X',$ or norms of vectors in $\lieg$, we will always have in mind the metrics arising from this norm on $\lieg.$ When  confusion is unavoidable  we will simplify notation by omitting subscripts, writing `dist' for each of these metrics.

    The pushforward $\nu'$ of $\nu$ to the quotient $X'$ is an ergodic stationary measure with the property that the limit measures $\nu'_b$ have atoms for almost every $b$.
By ergodicity of $\nu'$ (see~\cite[Lemma~3.11]{BQ1} for a similar argument),
 for almost every $b\in B$, the limit measure $\nu'_b$ is in fact a purely atomic uniform measure supported by
 a finite subset of $X'$ depending on $b$.
 Moreover, the cardinality of
 the support of $\nu'_b$ is constant almost surely and we denote it by $N_0$.

 Let $X_0$ denote the
 collection of subsets $\Sigma\subset X'$ of cardinality $N_{0}$.
 Given two elements $\Sigma_1,\Sigma_2\in X_0$, we define a metric on $X_0$ by
 \begin{equation*}
   \dist_{X_0}(\Sigma_1,\Sigma_2) = \max_{x_1 \in \Sigma_1 } \min_{x_2 \in \Sigma_2}  \dist(x_1, x_2) + \max_{x_2 \in \Sigma_2 } \min_{x_1 \in \Sigma_1}  \dist(x_1, x_2).
 \end{equation*}
 The diagonal action of the group elements of the random walk  on the product space $(X')^{N_0}$ induces an action on $X_0$.
 Similarly, since the group $\Hne$ is normalized by $Z_0$, its left multiplication action on $\XS$ induces an action on the space $X'$ and hence on the space $X_0$.

Let $\mathfrak{z}, \, \lieg_{\mrm{dt}}, \, \lieg_{\mrm{ue}} $ and $\Vne$ denote respectively the Lie algebras of $Z, \mathrm{G}_{\Sdt}, \mathrm{G}_{\Sue}$ and $H^{\mathrm{fne}}.$ Let $\mathfrak{u}_{\mathrm{dt}}^- \subset \lieg_{\mrm{dt}}$ be the Lie algebra contracted by the restriction of the $\Ad(\bar h^{-1}_i)$ to $\mathfrak{g}_{\mathrm{dt}},$
and denote
\begin{equation*}
  V_{\mrm{ex}} \df  \lieu_{\mathrm{dt}}^- \oplus \lieg_{\mathrm{ue}}.
\end{equation*}
The letters `ex' stand for `expanding'; indeed, under the linear random walk, all the nonzero vectors in
$\mathfrak{u}_{\mathrm{dt}}^-$
 expand exponentially, and the vectors in $\bigoplus_{\sigma \in S_{\mathrm{ue}}} \mathfrak{g}_\s$ expand for most infinite random walk paths by the results of \textsection \ref{sec:GrowthPropertiesRandomWalk}. As suggested by this terminology, $V_{\mrm{ex}}$ is a complementary subspace to $\Vne.$

Let  $\mathfrak{z}_0$ be the Lie algebra of $Z_0$, let $\mathfrak{w}$ be an $\Ad(Z_0)$-invariant complementary subspace to $\mathfrak{z}_0$ inside $\mathfrak{z} $, and define
$\mathfrak{z}_0^\perp \subset \lieg$ by
\begin{align*}
    \mathfrak{z}_0^\perp \df \mathfrak{w} \oplus \lieu_{\mrm{dt}}\oplus V_{\mathrm{ex}}.
\end{align*}
Then $\mathfrak{z}_0^\perp $  is a complementary subspace to $\mathfrak{z}_0$
 in $\mathfrak{g}$, and the subspaces $\Vne, \mathfrak{z}_0^{\perp}, \mathfrak{w}, \mathfrak{u}_{\mathrm{dt}}, \mathfrak{u}^-_{\mathrm{dt}}, V_{\mathrm{ex}}$ and $\mathfrak{z}_0$ are all  invariant under the linear random walk.

Every vector $v_{\mrm{dt}}\in \lieg_{\mrm{dt}}$ can be written uniquely as   $v_{\mrm{dt}}=  v_{\mrm{dt ,fne}}
+v_{\mrm{dt, ex}}
$, where
$v_{\mrm{dt, fne}} \in \mathfrak{g}_{\mrm{dt}} \cap \Vne$ and $v_{\mrm{dt, ex}} \in \mathfrak{g}_{\mrm{dt}} \cap V_{\mrm{ex}}$.
Given a vector $v\in \lieg$, we define $v_{\mrm{ex}}  = \left( v_{\mrm{ex}}^{(\sigma)}\right)_{\s \in S} \in \lieg$ by
\begin{equation*}
  v_{\mrm{ex}}^{(\sigma)} \df \begin{cases}
    v_{\mrm{dt,ex}} & \text{if $\sigma \in \Sdt $}, \\
    v^{(\sigma)} & \text{otherwise},
  \end{cases}
\end{equation*}
and define
$$v_{\mrm{fne}} = v - v_\mrm{ex}.$$

From the definition of the norm, we have that
for any $v \in \Vne $ and any $i$, $\left\|\Ad\left(\bar h_i \right)v \right \| \leq \|v\|.$
Also, since $V_{\mrm{ex}}$ is invariant under $\Ad(Z_0)$,
after adjusting the inner product defining $\| \cdot \|$, we may also assume that
for any $v\in \lieg$  we have
$  \lVert v \rVert \leq \lVert v_\mrm{ex} \rVert +
  \lVert v_{\mrm{fne}} \rVert.
$

\subsection{Proof of Theorem \ref{thm:NonAtomicityLimitMeasures}}
We first give an overview of the proof. We will assume \eqref{eq: suppose that limit measures} and obtain a contradiction.
 Let  $X', \, N_0$ and $X_0$ be as in \textsection \ref{subsec: notation 7.1}.
 Define
 \begin{equation}\label{eq: def kappa}
  \kappa\colon B\to X_0, \ \ \  \ \ \kappa(b) \df \supp \nu'_b
 \end{equation}
 (more precisely, the right hand side of \eqref{eq: def kappa}  is a well-defined measurable map on a  subset of $B$ of full measure, but we will ignore nullsets and continue to denote this subset by $B$).
 Let $\Delta \subset X_0^2$ denote the diagonal. Since $\nu(H^{\mathrm{fne}}x)=0$ for every $x$ and by \eqref{eq: perhaps missing something}, $\nu'$ is not a Dirac mass on $X_0$, but the random walk pushes it toward the Dirac mass $\nu'_b$ supported on $\kappa(b)$. In particular, for a.e.\,$b$,  off-diagonal points in $X_0^2$ get pushed toward $\Delta$ by $b_n^1$.

 We will show that on a certain neighborhood $\mathcal{U}$ of a compact subset of $\Delta$, the  action of the random walk on $X_0^2$ is essentially given by $\Ad \oplus \Ad$ on $\lieg^2.$
 This step is made somewhat complicated by the fact that we have to take a quotient by the action of the compact group $Z_0$. The reader may wish to first consider the simpler case in which $Z_0$ is trivial.

 Given this relation between the random walk on $X_0^2$, and the adjoint action, we recall from \textsection \ref{sec:GrowthPropertiesRandomWalk} that vectors tend to grow under most elements of the random walk.
 Using this, we  will show that for many random walk paths of controlled length, points in $\mathcal{U} \sm \Delta$ get pushed away from $\Delta,$ which contradicts convergence to the diagonal.
  A complication in the argument is  that the linear coordinates on $\mathcal{U}$ need to make sense on a large enough set of words so that expansion can be exploited.
The
 prefix ergodic theorem will be useful for dealing with this issue.

 We proceed to the details.
 In order to make the logic more transparent, we will break up the argument into steps.

 \medskip

 {\bf Step 1. Setting up constants, defining the neighborhood $\mathcal{U}$, and formulating the goal.}
Let $T: B \to B$ be the left-shift. By Proposition \ref{prop: commutes with RW},  and since $B^{\ast}$ is countable, for $\beta$-a.e.~$b \in B$ we have
  \begin{equation}\label{eq: satisfies the properties}
  \kappa(Tb) = b_1^{-1}\cdot \kappa(b), \ \ \ \text{ and }
\kappa(ab)
 = a^{1}_n\cdot\kappa(b) \text{ for any } a
 \in B^{\ast}, \ \ \text{ where } n= \mathrm{len}(a)
 .
 \end{equation}

 Let $\varepsilon \in \left(0, \frac16 \right)$.
 By Lusin's theorem, we can find a compact set $K_1\subset B$ such that
 $\kappa|_{K_1}$ is continuous, satisfies properties \eqref{eq: satisfies the properties}, and such that $\beta(K_1) > 1-\varepsilon$.
 Let $f = \mathbf{1}_{K_1}$. Given $n\in
 \bN$, define the function $f_n \colon B\to [0,1]$ by
 \begin{equation*}
   f_n(b) = \sum_{a \in (\supp \bar{\mu})^{n}}f(ab)\beta\big([a]\big).
 \end{equation*}
 By Corollary~\ref{cor:ChaconOrnsteinShells}, there exists
 $n_0 \in \bN$ such that the set
 \begin{equation*} \beta\big(E(n_0)\big) > 1 - \frac{\varepsilon}{2}, \ \ \ \text{ where } \ \
   E(n_0) = \left\{ b \in B \colon \forall n\geq n_0, \quad f_{n}(b) > 1 -
   2\varepsilon \right\}.
 \end{equation*}
  Hence there
 exists a compact set
 \begin{align*}\label{eq:K_2}
    K_2 \subset E(n_0)    \quad \text{ such that } \quad
     \b(K_2) > 1-\varepsilon.
 \end{align*}

 We define
 \begin{equation*}
   K_3 \df \kappa(K_1),
\end{equation*}
a compact subset of $X_0.$
  Recall that the {\em injectivity radius at $x \in \XS$} is the maximal $r$ such that the restriction of the map $G^S \to \XS, \ g \mapsto gx$ to the open ball around the identity of radius $r$, is injective.
 Given $\Sigma\in X_0$, i.e., a collection of $N_0$ orbits for the group $Z_0$, we use the same letter $\Sigma$ to denote the subset of $\XS$ comprised by these orbits, and denote by  $r_\Sigma$ and $d_\Sigma$ respectively  the minimal injectivity
 radius of a point in $\Sigma$, and the minimum of the
 pairwise distances between elements of $\Sigma$. Both of the numbers $r_{\Sigma}, \, d_{\Sigma}$ depend
 continuously on $\Sigma$.
  Hence  the numbers
 \begin{equation}\label{eq:d(K_3)}
   r(K_3) \df  \inf\{r_\Sigma \colon \Sigma \in K_3\}  \ \ \ \text{ and } \ \ \  d(K_3) \df \inf\{d_\Sigma \colon \Sigma \in K_3\}
 \end{equation}
 are both positive. Choose $\iota>0$ small enough so that
 \begin{equation*}
   \iota <  \min \left \{r(K_3),d(K_3)
   ,1 \right \},
 \end{equation*}
and so that
 there is a  neighborhood $W$ of $0$ in $\lieg$ such that
 \begin{equation}
 \label{eq: def W iota}
 \exp|_W: W \to B(\mathrm{Id}, \iota) \subset G^S
 \end{equation}
 is well-defined and is a homeomorphism.  Let $C_W >1$ be a bi-Lipschitz constant for  $\exp|_W$, that is,
 $$
 \forall w_1, w_2 \in W, \quad \quad \frac{ \dist(\exp(w_1, w_2))}{C_W} \leq \|w_1 - w_2 \| \leq  C_W \, \dist(\exp(w_1, w_2));
 $$
 the fact that $\exp$ is locally bi-Lipschitz follows from the construction of the metric $\dist.$
By  item \eqref{item: neighborhood exponential} of Lemma \ref{lem:centralizer concentration}, by making $W$ smaller we can also assume that the  map
 $\exp|_{W \cap \mathfrak{z}_0} $
is a  homeomorphism onto its image, which is open in $Z_0$. By making $W$ and $\iota$ even smaller we can find an open subset $W' \subset W$ containing $0$, and a constant $C_{W'}>1, $ such that the two maps $(W' \cap \mathfrak{z}_0) \times (W' \cap \mathfrak{z}_0^\perp) \to G^S$,
 \begin{equation}\label{eq: the two maps}
 (z_0, z_0^\perp) \mapsto \exp(z_0)\exp(z_0^\perp), \ \ \ \text{ and } (z_0, z_0^\perp) \mapsto \exp(z_0^\perp)\exp(z_0)
 \end{equation}
 are both bi-Lipschitz homeomorphisms onto their image, and this image contains $B(\mathrm{Id}, \iota)$ and is contained in  $\exp(W\cap \mathfrak{z}_0 )\exp(W\cap \mathfrak{z}^\perp_0 ) \cap \exp(W\cap \mathfrak{z}^\perp_0 )\exp(W\cap \mathfrak{z}_0)$. Finally, by making $W$ even smaller, and using the fact that the $Z_0$-action on $\XS$ is free, we can assume that if  $x_1 \in \exp(W)K_3$ and $x_2 =z_0 x_1$ for $z_0 \in Z_0$, then
 $$
 \frac{1}{C_W} \, \dist_{\XS}(x_1, x_2) \leq \dist_{G^S}(\mathrm{Id}, z_0) \leq C_W \,\dist_{\XS}(x_1, x_2).
 $$

 Let
 \begin{equation}\label{eq: setting delta}
 \delta \df \frac{\varepsilon}{N_0 \,|S|},
 \end{equation}
 and let $m_0  \in \N
 $ and $C >1
$ be constants
 (depending on $\delta$) for which the conclusions of  Proposition \ref{prop:predictable growth} and Lemma~\ref{lem:NormGrowthBound} hold.

For $\s \in S$ we let
$  \lambda_\s \df |\log |\varrho|_\s|$,  and set
$
     \l_\mrm{max} \df \max \seti{\lambda_\s
     :
      \s\in S
     }.
     $
We choose $L_1$ satisfying
\begin{equation}\label{eq: choice of L1}
 L_1 > \frac{C
 \, C_W \, |S|}{\iota}.
 \end{equation}
 Now we choose
     $L_2$
      large enough so that   for all $\s \in S,$
     \begin{equation}\label{eq: L large enough 3} (m_0+ n_0) \log(L_2) - \frac{1}{\lambda_{\s}} \log(
     C_W \, L_1) -1 \geq m_0+n_0,
     \end{equation}
and so that
     if we define
     \begin{equation}\label{eq:r and L}
   r \df \iota \cdot L_2^{-(m_0 + n_0)\l_{\mrm{max}} },
 \end{equation}
  then
     \begin{equation}\label{eq: L large enough 2}
    r < \frac{1}{C^2\,
    C_W \, e^{\lambda_{\max}} \, L_1}.
     \end{equation}

 We  define
 \begin{equation*}
\Delta(K_3) \df \{(\Sigma,\Sigma) \colon \Sigma\in K_3\}.
 \end{equation*}
 Finally we
 define $\Ucal$  to be the $r$-neighborhood of $\Delta(K_3)$.
 Using uniform continuity of $\kappa|_{K_1}$, let $n_1\in \NN$ be such that for all sequences $b,b'\in K_1$ which agree on a prefix of length at least $n_1$, we have
 $\big(\kappa(b),\kappa(b')\big) \in \mathcal{U}$.

Our goal is to find finite words $\tilde{a}, a \in B^*$  and $b, \bar b \in B,$  such that the following hold:
\begin{enumerate}[(I)]
    \item \label{item:in K_1} $\tilde{a}ab \in K_1$ and  $\tilde{a}a\bar b \in K_1$.
    \item \label{item: word long enough} $\mathrm{len}(a) \geq n_1.$
    \item \label{item:outside U} The pair $(\kappa(\tilde{a}ab),\kappa(\tilde{a}a\bar b))$ is outside $\Ucal$.
\end{enumerate}
To see that this gives a contradiction, note that items~\eqref{item:in K_1} and~\eqref{item: word long enough} and the definition of $n_1$ imply that $(\kappa(\tilde{a}ab),\kappa(\tilde{a}a\bar b)) \in \Ucal$.
This contradicts item~\eqref{item:outside U}.

\medskip

{\bf Step 2. Linearizing the action of the random walk near the diagonal. }
By definition of $\iota$,   we have that for any pair $\mathbf{\Sigma} =
 (\Sigma_1,\Sigma_2)$ for which there is $\Sigma \in K_3$ with $\dist(\Sigma_i, \Sigma) < \iota/2$ for $i=1,2$, for any $x_1\in \Sigma_1$ there is
 exactly one element $x_2 \in \Sigma_2$ such that $\dist(x_1,x_2) <
 r$. In particular this holds for $\mathbf{\Sigma} \in \Ucal$. We denote this element $x_2$ by $\varphi_{\mathbf{\Sigma}}(x_1)$, so that
 $\varphi_{\mathbf{\Sigma}} \colon \Sigma_1 \to \Sigma_2$ is a bijection.
 It is easy to see that the map
 $\varphi_{\mathbf{\Sigma}}$ depends continuously on $\mathbf{\Sigma}.$

 We would like to estimate the displacement $\dist(x', \varphi_{\mathbf{\Sigma}}(x'))$ in terms of the adjoint action on $\lieg$.
To this end, note that if $x' \in \Sigma_1$ then we can write $x' = Z_0x$ for some $x \in \XS$, and by the choice of $W$, there is a unique $\tilde{v}  = \tilde{v} (x) \in \mathfrak{z}_0^\perp \cap W$ such that $\varphi_{\mathbf{\Sigma}}(x') = Z_0 \exp(\tilde{v}) x. $
 As suggested by the notation, this choice of $\tilde{v}$ depends on the choice of $x \in \pi_{X'}^{-1}(x')$.
 However, we have
 that if $x'= Z_0x_1 = Z_0x_2$ then $\|\tilde{v}(x_1)\| = \|\tilde{v}(x_2)\|$; indeed, if $x_2=z_0x_1$ for some  $z_0 \in Z_0$ then by the fact that $\mathfrak{z}_0^\perp$ is $\Ad(Z_0)$-invariant we see that $\tilde{v}(x_2) = \Ad(z_0) \tilde{v}(x_1)$, and $\|\tilde{v}(x_1)\| = \|\tilde{v}(x_2)\|$ since the norm is $\Ad(Z_0)$-invariant.
Using the section $\tau: X' \to \XS,$ we define
\begin{equation}\label{eq:v_Sigma}
v_{\mathbf{\Sigma}}\colon \Sigma_1 \to \mathfrak{z}_0^{\perp},
\qquad
v_{\mathbf{\Sigma}}(x') \df  \tilde{v}(\tau(x')), \end{equation}
and we have that  $\|v_{\mathbf{\Sigma}}(x')\|$ does not depend on the choice of the section.

 Note further that
 \begin{align}\label{eq:dist and exponential map}
\forall \mathbf{\Sigma} \in \Ucal,\ \ \forall x' \in \Sigma_1, \ \ \lVert v_{\mathbf{\Sigma}}(x') \rVert \leq   C_W \, r.
 \end{align}
Using the fact that $\mathfrak{z}_0^\perp$ is invariant under the adjoint action of the random walk, the reader can now verify the following statement:

 \medskip

 Suppose $n \in \N, \ \bar h = \bar h_{i_1} \circ \cdots \circ \bar h_{i_n}$ and $\mathbf{\Sigma} = (\Sigma_1, \Sigma_2) \in \Ucal$
satisfy that $\bar h(\Sigma_1) \in K_3$ and
\begin{equation}\label{item: for some x'}
\|\Ad(\bar h) v_{\mathbf{\Sigma}}(x')\| < \frac{\iota}{ C_W} \ \ \ \ \ \text{ for all } x' \in \Sigma_1.
\end{equation}
Then
\begin{equation}\label{eq: comparison inequality}
\frac{\dist\left(\bar h(\Sigma_1), \bar h(\Sigma_2) \right)}{ C_W } \leq \max_{x' \in \Sigma_1} \| \Ad(\bar h)v_{\mathbf{\Sigma}}(x') \| \leq C_W \, \dist \left(\bar h(\Sigma_1), \bar h(\Sigma_2)\right).
\end{equation}

\medskip

{\bf Step 3. Choosing $a, b, \bar b$.}
By \eqref{eq: perhaps missing something} we have that $Z_0 \subset \Hne$ and thus the partition of $X$ into $\Hne$-orbits induces a well-defined partition of $X'$, which we will continue to refer to as $\Hne$-orbits and denote by $\Hne x'$ (although $\Hne$  might not act on $X'$).
Our assumption is that these $\Hne$-orbits are of zero measure with respect to $\nu'$.
On the other hand, the supports $\kappa(b)$ of the limit measures $\nu'_b$ are finite sets of $Z_0$-orbits, and again by \eqref{eq: perhaps missing something}, if $\kappa(b)$ intersects an $\Hne$-orbit, the measure $\nu'_b$ assigns this orbit  positive measure. This implies via Proposition \ref{prop: commutes with RW} that for any fixed $b \in B$, for any $x' \in \kappa(b)$ and for $\beta$-a.e. $\bar b$ we have
\begin{equation}\label{eq:H- NonAlignment}
   \kappa(\bar b) \cap \Hne x' = \varnothing.
   \end{equation}
Hence we can find
 $b, \bar b \in B$ such that
  for all $x' \in \kappa(b)$, we have
 \eqref{eq:H- NonAlignment}, and
 the points $b, \bar b$ are generic for $\mathbf{1}_{K_1 \cap K_2}$ in the sense of the prefix ergodic theorem. The latter condition means that for both $c =b$ and $c=\bar b$, we have
\begin{equation}\label{eq:ChaconOrnsteinCondition}
   \lim_{n\to \infty}\sum_{a\in(\supp\bar{\mu})^{n}}\mathbf{1}_{K_1 \cap
   K_2}(ac) \beta([a]) = \beta( K_1 \cap K_2).
   \end{equation}
By~\eqref{eq:ChaconOrnsteinCondition}, since $K_1$ and $K_2$ both have measure at least $1-\vre$,
 there is $k \geq n_1$ such that
 \begin{equation}\label{eq:applyChaconOrnstein}
   \sum_{a \in (\supp \bar{\mu})^{k}}\mathbf{1}_{K_1 \cap K_2}(ac)\beta([a]) >
   1-3\varepsilon\quad(c=b,\bar b).
 \end{equation}
 Since $1-3\varepsilon > 1/2$, we can find $a\in (\supp \bar{\mu})^k$
 such that
 \begin{align}\label{eq:landing in K_1 cap K_2}
     ab, a\bar b\in K_1 \cap K_2.
 \end{align}
By \eqref{eq: rw commutes with Hne} and \eqref{eq:H- NonAlignment}
we still have
 \begin{equation}\label{eq: continues to hold}
  \text{ for all } x'\in
 \kappa(ab), \ \ \ \   \kappa(a\bar b) \cap \Hne x' = \varnothing.
 \end{equation}
The choice of $k$ ensures that the word $a$ satisfies  \eqref{item: word long enough}.

\medskip

{\bf Step 4. Choosing the word $\tilde{a}.$}
We now choose $\tilde{a}$. Let
\begin{equation}\label{eq: setting Sigma}
  \mathbf{\Sigma} = (\Sigma_1, \Sigma_2) \ \ \ \text{ where } \ \   \Sigma_1 \df \kappa(ab), \quad \Sigma_2 \df \kappa(a\bar b).
\end{equation}
By \eqref{eq:landing in K_1 cap K_2} and the definition of $n_1$, we have that $\mathbf{\Sigma} \in \Ucal.$
 It follows from \eqref{eq: perhaps missing something} and \eqref{eq: continues to hold} that for any $x'\in \Sigma_1$,
 $v_{\mathbf{\Sigma}}(x') \not \in \Vne$. In the notation introduced in \textsection \ref{subsec: notation 7.1} above, this means that $v_{\mathbf{\Sigma}}(x')_{\mrm{ex}} \neq 0.$
 Given $\sigma\in S$, and $x' \in \Sigma_1,$ let
 \begin{equation}\label{eq: def alpha sigma}
  \alpha_\sigma(x') \df
     \big\lVert v_{\mathbf{\Sigma}}(x')_{\mrm{ex}} \big\rVert_{\sigma}
    \ \ \ \ \text{ and } \ \ \ S_{\mrm{good}}(x') \df \{\sigma \in S \colon \alpha_\sigma (x') > 0\}.
 \end{equation}
Now set
 \begin{equation*}
     n(x') \df \min_{\s \in S_{\mrm{good}}(x')} \left\lfloor   \frac{1}{\lambda_\s} \left(\log \left(\frac{1}{\a_\s(x')}\right) - \log (L_1) \right)\right\rfloor.
 \end{equation*}
 This choice implies that for all $\s\in S_{\mrm{good}}(x')$, we have
 \begin{align}\label{eq:choice of n1}
     e^{\l_\s n(x')} \a_\s (x')\leq  \frac{1}{L_1},
     \end{align}
     and there is $\s\in S_{
     \mathrm{good}}(x')$ (the one for which the minimum is attained) satisfying
     \begin{align}\label{eq:choice of n2}
     \frac{1}{\lambda_{\s}} \log\left(\frac{1}{\alpha_\s(x')} \right) \leq n(x')+1 + \frac{1}{\lambda_\s} \log(L_1) \ \ \ \ \text{ and } \ \ e^{\lambda_{\s}n(x')} \alpha_\s(x') \geq \frac{1}{e^{\lambda_\s}L_1}.
 \end{align}
 Our choices ensure
\begin{equation}\label{eq:n is large enough}
    n (x') \geq m_0 +n_0.
\end{equation}
Indeed, since $\mathbf{\Sigma}\in \Ucal$, for all $\s$ we have
\begin{align*}
  \frac{1}{\lambda_\s} \left(\log \left(\frac{1}{\alpha_\s(x')} \right) + \log (C_W) \right)
  \stackrel{\eqref{eq:dist and exponential map} \& \eqref{eq: def alpha sigma}}{\geq} -\frac{1}{\l_\s} \log r \stackrel{\eqref{eq:r and L}}{\geq} (m_0 +n_0)\log L_2.
\end{align*}
Now~\eqref{eq:n is large enough} follows by \eqref{eq: L large enough 3} and \eqref{eq:choice of n2}.

Let
\begin{equation}\label{eq: def n}
n\df \min_{x' \in \Sigma_1} n(x').
\end{equation}
With this choice of $n$, let
\begin{equation*}
    P_n \df \{\tilde{a} \in (\supp\bar\mu)^n \colon  \text{ for } c = b, \bar b, \quad
   \tilde{a}ac \in K_1\}, \qquad  \mathcal{P}_n \df \bigcup_{\tilde{a}\in P_n}[\tilde{a}].
\end{equation*}
Since  $ab, a\bar b \in K_2$ by~\eqref{eq:landing in K_1 cap K_2}, and $n\geq n_0$ by~\eqref{eq:n is large enough} and \eqref{eq: def n}, the definition of $n_0$ ensures
\begin{equation}\label{eq:GoodPartitionElementsMeasure}
    \b(\mathcal{P}_n) \geq 1-2\varepsilon.
 \end{equation}

 Next
 define
 \begin{align*}
     \Xi_n(x') \df \seti{
     (\tilde{a}_1,\dots, \tilde{a}_n)\in (\supp\bar\mu)^n:  \forall \sigma \in S_{\mrm{good}}(x'), \
     \lVert \tilde{a}_n^1 \rVert_\sigma \lVert v_{\mathbf{\Sigma}}(x')_{\mrm{ex}}
     \rVert_\sigma \leq C \lVert \tilde{a}^1_n \cdot v_{\mathbf{\Sigma}}(x')_{\mrm{ex}} \rVert_\sigma}.
 \end{align*}
Note that the Bernoulli measure has a symmetry property $\beta([a_1^n]) = \beta([a_n^1]).$
 Thus using   Lemma~\ref{lem:NormGrowthBound}, and since  $n\geq m_0$,  we have
 $$
 \beta(\Xi_n(x'))\geq 1-\delta \ \ \text{ for each } x' \in \Sigma_1.$$
 The choice \eqref{eq: setting delta} now implies that $\beta\big(\bigcap_{x'\in\Sigma_1} \Xi_n(x'))\big) > 1-\varepsilon$.
 Combining with~\eqref{eq:GoodPartitionElementsMeasure} we have
 \begin{align*}
     \b \left( P_n \cap \bigcap_{ x'\in \Sigma_1} \Xi_n(x') \right)
     \geq 1- 3 \varepsilon >0.
 \end{align*}
 In particular, the above intersection is non-empty.
 Fix a word $\tilde{a}$
 in this intersection.
The definition of $P_n$ now ensures that property \eqref{item:in K_1} holds.

\medskip

{\bf Step 5. Verifying  property \eqref{item:outside U}.}
We need to show that for $\mathbf{\Sigma} = (\Sigma_1, \Sigma_2)$ as in \eqref{eq: setting Sigma} we have
 $\dist\big( \tilde{a}_n^{1} \Sigma_1 , \tilde{a}_n^{1} \Sigma_2 \big) \geq r$. For this it suffices to check condition \eqref{item: for some x'}, and show that for some $x' \in \Sigma_1,$
\begin{equation}\label{eq: suffice conditions}
\left\|\Ad\left(\tilde{a}_n^1\right)v_{\mathbf{\Sigma}}(x') \right \| \geq  C_W \, r.
\end{equation}
For \eqref{item: for some x'}, we note that for any $x' \in \Sigma_1$, the non-expanding coordinates of $v_{\mathbf{\Sigma}}(x')$ are of small norm since $\mathbf{\Sigma} \in \Ucal$. For $\s \in S_{\mrm{good}}(x')$ we have
\begin{equation*}
   \left \|\Ad(\bar h) \right\|_{\mathrm{op}}\alpha_\s(x')  \stackrel{\eqref{eq: norm estimate we want}}{\leq}  C \, e^{\lambda_\s n} \alpha_\s(x') \stackrel{\eqref{eq:choice of n1} \& \eqref{eq: def n}}{\leq}  \frac{C}{L_1} \stackrel{\eqref{eq: choice of L1}}{\leq} \frac{\iota}{C_W}.
\end{equation*}

Let $x'\in \Sigma_1$ be the element for which the minimum in \eqref{eq: def n} is attained, and let $\s \in S_{\mrm{good}}(x')$ be the place for which \eqref{eq:choice of n2} holds.
Since $\tilde{a}\in \Xi_n(x')$, we get that
\begin{align*}
      \norm{\Ad(\tilde{a}_n^1) v_{\mathbf{\Sigma}}(x')}
    &\geq \norm{\Ad(\tilde{a}^1_n)
         v_{\mathbf{\Sigma}}(x')_\s}
    \geq \frac{1}{C} \norm{\Ad(\tilde{a}_1^n)}_\s
    \norm{    v_{\mathbf{\Sigma}}(x_1)_\s}_\s
    \\
    & \stackrel{\eqref{eq: norm estimate we want}}{\geq}
    \frac{1}{C^2} \, e^{\l_\s n} \alpha_\s(x') \stackrel{\eqref{eq:choice of n2}}{\geq}
    \frac{1}{e^{\lambda_{\max}}C^2L_1}\stackrel{\eqref{eq: L large enough 2}}{\geq} C_W r.
\end{align*}
This proves \eqref{eq: suffice conditions} and completes the proof.
\qed

\subsection{Proof of the prefix ergodic theorem}
\label{subsec: prefix ergodic theorem}
In this subsection we prove Theorem \ref{thm:PrefixErgodicTheorem}.
For a bounded function $f$ on $B$ and a $A\subset B$, we define the variation of $f$ on $A$ by
$$
\mathrm{Var}
(f, A) \df \sup_{x \in A} f(x) - \inf_{x \in A} f(x).$$
Clearly for any measurable function $f$, any set $A$ with $\beta(A)>0$, and any $x_0 \in A$, we have
$$\left|  f(x_0) - \frac{1}{\beta(A)} \int_A f \der\beta \right| \leq \mathrm{Var}(f, A).$$
Equip $B$ with its standard ultra-metric, and
suppose first that $f$ is continuous.
Then, since $B$ is compact, $f$ is uniformly continuous. Using the condition that the minimal length of a word $a \in P_n$ goes to infinity, which implies that the diameter of the corresponding cylinder set $[a]$ goes to zero, we see that there is $n_0$ so that for all $n>n_0$ and $a\in P_n$,
$$
\mathrm{Var}
(f, [a]) < \varepsilon.$$

Since the sets $\{[a] : a \in P_n\}$ are a partition of $B$, we have for all $n>n_0$ and any $b$ that
\[\begin{split}
    &\left| \sum_{a \in P_n}\beta([a]) f(ab) - \int f \der\beta \right| =  \left| \sum_{a \in P_n} \beta([a]) f(ab) - \sum_{a \in P_n} \int_{[a]} f \der\beta \right| \\ \leq & \sum_{a \in P_n} \beta([a]) \,
 \left|f(ab) -
 \frac{1}{\beta([a])} \, \int_{[a]} f \der\beta
   \right| < \sum_{a \in P_n} \beta([a]) \, \varepsilon = \varepsilon.
 \end{split}
\]

The general case, in which $f \in L^\infty(B, \beta)$, will now be proved by an approximation argument. By replacing $f$ with a bounded function agreeing with it almost everywhere, subtracting a constant and rescaling, we may assume that
$$\int_B f \der\beta =0 \ \ \text{ and } \ \  \|f\|_\infty =1.$$
Let $\varepsilon>0$ and, using Lusin's theorem,
let $K = K_\varepsilon$ be a compact set such that $f|_K$ is continuous and $\beta(K) > 1-\varepsilon.$ Let $n_0$ so that for all $n>n_0$ and all $a \in P_n$ we have
$$ \mathrm{Var} (f, [a] \cap K) < \varepsilon.
$$
For fixed $n>n_0$, and an element $a \in P_n$, the corresponding cylinder set $[a]$ is called a {\em continuity atom} or {\em CA} if $\frac{\beta([a]\cap K)}{\beta([a])} \geq 1 - \varepsilon^{1/2}.$
If $a$ is a continuity atom then for any $x_0 \in [a]\cap K,$
\begin{equation}\begin{split} \label{eq: a continuity atom}
&\left| \frac{1}{\beta([a])} \int_{[a]} f \, \der\beta  -
f(x_0) \right|  \leq   \frac{1}{\beta([a])} \left|\int_{[a]} f \, \der\beta - \int_{[a]\cap K} f \, \der\beta \right| \\ + &\left|  \frac{1}{\beta([a])} \int_{[a]\cap K} f  \, \der\beta - \frac{1}{\beta([a]\cap K)} \int_{[a] \cap K} f \, \der\beta \right| + \left| \frac{1}{\beta([a]\cap K)} \int_{[a] \cap K} f \, \der\beta  - f(x_0) \right| \\
\leq & \frac{1}{\beta([a]) } \int_{[a] \smallsetminus K} |f| \, \der\beta
+ \frac{\beta([a] \smallsetminus K )}{\beta([a]) \cdot \beta([a]\cap K)} \int_{[a] \cap K} |f| \, \der\beta  +\mathrm{Var} (f, [a]\cap K) \\
\leq  &  \frac{\beta([a]\smallsetminus K)}{\beta([a])}+ \frac{\beta([a]\smallsetminus K)}{\beta([a])}+ \varepsilon < 2\varepsilon^{1/2} + \varepsilon < 3\varepsilon^{1/2}.
\end{split}\end{equation}

We claim that
\begin{equation}\label{eq: indeed otherwise}
\beta\left(\bigcup \text{continuity atoms} \right)
\geq 1 - \varepsilon^{1/2}.
\end{equation}
 Indeed, otherwise, denoting $K^c = B \smallsetminus K$, we have
$$
\beta(K^c) \geq
\beta \left(\bigcup_{[a] \text{ not a CA}} K^c \cap [a] \right)
\geq
\sum_{a \text{ not a CA}} \beta([a]) \varepsilon^{1/2}
\geq \varepsilon^{1/2} \, \varepsilon^{1/2} = \varepsilon,
$$
and we get a contradiction to $\beta(K^c)<\varepsilon.$

We now define
$$ \mathrm{Bad} = \left\{ b \in B: \beta \left( \bigcup \left\{ [a]:  ab \notin K \right\} \right ) > \varepsilon^{1/2} \right \},
$$
and claim that
\begin{equation}\label{eq: izmel}
\beta(\mathrm{Bad})\leq \varepsilon^{1/2}.
\end{equation}
Indeed, denote by $\mathbf{1} = \mathbf{1}_{K^c}$ the indicator of $K^c$. Recall that for any prefix $[a]$, the measure $\beta|_{[a]}$ is the same as the pushforward of $\beta$ by the map $b \mapsto ab$, multiplied by the scalar $\beta([a])$; this is easily verified for cylinder sets contained in $[a]$ and thus is true for all measurable subsets of $[a]$. Thus, if  \eqref{eq: izmel} is not true then
\[\begin{split}
\beta \left( K^c\right) = &
\sum_{a \in P_n} \int_{[a]} \mathbf{1}\, \der\beta  = \sum_{a \in P_n} \beta([a]) \int_{B} \mathbf{1}(ab) \, \der\beta(b)\\ = & \int_B \sum_{a \in P_n} \beta([a]) \mathbf{1} (ab) \, \der\beta(b) \geq
\int_{\mathrm{Bad}} \sum_{a \in P_n} \beta([a])\mathbf{1} (ab) \, \der\beta(b) \\
= & \int_{\mathrm{Bad}} \beta \left(\bigcup \left\{[a] :  ab \notin K \right\}  \right)  \, \der\beta(b) \geq \beta \left(\mathrm{Bad} \right) \varepsilon^{1/2} > \varepsilon^{1/2} \cdot \varepsilon^{1/2} = \varepsilon,
\end{split}\]
giving a contradiction to $\beta(K^c)<\varepsilon. $

Now for $b \notin \mathrm{Bad}$ we have
 \begin{equation}\label{eq: taar}\begin{split}
    &  \left| \sum_{a\in P_n}f(ab)\beta([a]) - \int_{B} f \, \der\beta \right|   =  \left| \sum_{a\in P_n}f(ab)\beta([a]) - \sum_{a\in P_n} \int_{[a]} f \, \der\beta  \right|\\
     \leq &  \sum_{\substack{[a] \text{ is a CA} \\ ab \in K }} \beta([a]) \left|f(ab) - \frac{1}{\beta([a])} \int_{[a]}f\, \der\beta \right|+
     \sum_{\substack{[a] \text{ not a CA} \\ \text{ or } ab \notin K }}
     \left( \beta([a]) \, |f(ab)| +  \int_{[a]} \left|f\right| \, \der\beta  \right)\\
      \leq
      & 3 \varepsilon^{1/2} + 2 \beta\left(\bigcup \left\{[a]: [a] \text{ is not a CA} \right\} \right) + \beta \left( \bigcup \left\{ [a]:  ab \notin K\right\} \right)< 7 \varepsilon^{1/2},
\end{split}\end{equation}
where in the last line we used  \eqref{eq: a continuity atom}, the definition of $\mathrm{Bad}$, and \eqref{eq: indeed otherwise}.
 Since $\varepsilon$ was arbitrary, combining \eqref{eq: izmel} and \eqref{eq: taar} we get  the desired conclusion.

\section{Preparations for Exponential Drift: Non-concentration of Limit Measures Along \texorpdfstring{W\textsuperscript{st}}{Wst}-Orbits}
\label{sec: non alignment}

The goal of this section is to establish the following result regarding non-alignment of the limit measures $\nu_b$ along orbits of the group $\Wst$. This will serve as crucial input for the exponential drift argument as described in \textsection\ref{sec:outline of proof}.
We retain the notation introduced in \textsection \ref{sec:nonatomic}.

\begin{theorem}
    \label{prop:NonAlignmentLimitMeasures}
  Let $\Phi, \bar h_i, \mathbf{p}, \bar \mu$ and  $\nu$ be as in Theorem \ref{thm: stationary main}.
 Suppose that
    $$
    \nu\left( \left\{x \in \X_{d+1}^S: \mathrm{Stab}(x) \cap W^{\mathrm{st}} \neq \{\mathrm{Id} \} \right\} \right) =0.$$
  Then, for $\beta^X$-almost
  every pair $(b,x) \in B^X$, we have
  \begin{equation*}
    \nu_{b}(\Wst x) = 0.
  \end{equation*}
\end{theorem}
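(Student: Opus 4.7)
The plan is to argue by contradiction: suppose there is a positive $\beta^X$-measure set of pairs $(b,x)$ with $\nu_b(\Wst x)>0$. The argument proceeds in three logical stages — first verify the hypothesis of Theorem~\ref{thm:NonAtomicityLimitMeasures}, namely that $\nu(\Hne x)=0$ for every $x\in\XS$; then invoke that theorem to conclude $\nu_b(Zx)=0$ for $\beta^X$-a.e.\,$(b,x)$; and finally bootstrap from $Z$-orbits up to $\Wst$-orbits by exploiting the mixed forward/backward dynamical structure of $\Wst$. This third stage is where the hypothesis of trivial stabilizer in $\Wst$ is used in an essential way.

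For the first stage, recall from~\eqref{eq: perhaps missing something} that $\Hne\subset\Wst$, so the trivial-stabilizer hypothesis implies that any $\Hne$-orbit carrying positive $\nu$-mass would be in Borel bijection with $\Hne$ itself via the orbit map, supporting a finite transverse measure there. Since $\Hne$ is normalized by each $\bar h_i$ via~\eqref{eq: rw commutes with Hne}, the family of such orbits is permuted by the random walk. By ergodicity of $\nu$ together with Proposition~\ref{prop: commutes with RW}, this structure pushes forward to a $\bar\mu$-stationary family of positive-mass orbits. Using the growth estimates of Section~\ref{sec:GrowthPropertiesRandomWalk} (in particular the non-existence of nontrivial stationary measures on transverse Lie algebra components given by Corollary~\ref{cor: no stationary measures}) and the non-compactness of $\Hne$, one rules out such configurations.

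The third stage is the heart of the argument. Decompose the transverse complement of $Z$ inside $\Wst$ into three types of pieces: the forward-contracting piece $\mathfrak{u}_\mathrm{dt}$ at $\Sdt$-places, the forward-expanding piece $\mathfrak{u}_\mathrm{ue}$ at $\Sue$-places, and bounded pieces coming from $\gGtr$ and from the diagonal directions inside $\gGdt$. Suppose $\nu_b(\Wst x)>0$ on a positive-measure set. Pushing the relation $\nu_{b_1 b}=\bar h_{b_1\,*}\nu_b$ forward contracts the piece lying in $\mathfrak{u}_\mathrm{dt}$, while pushing through the backward random walk transformation $T^X$ of Proposition~\ref{prop: commutes with RW} contracts the piece lying in $\mathfrak{u}_\mathrm{ue}$ (here Lemma~\ref{lem:NormGrowthBound} provides the needed expansion for most trajectories); the bounded pieces are neutralized by a compactness argument. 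A projection-type lemma, corresponding to the one labeled \texttt{lem:AvoidanceStable} in the outline, separates these mixed behaviors so that they can be treated independently without interference. Concatenating these contractions, any positive $\nu_b$-mass on a $\Wst$-orbit is squeezed onto a $Z$-orbit, contradicting stage two.

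The principal obstacle is precisely this separation of forward-expanding and backward-expanding directions inside $\Wst$: no single one-sided random walk simultaneously contracts all transverse directions to $Z$, so one must alternate or combine forward and backward trajectories while preserving the measure-theoretic bookkeeping. A secondary difficulty is that the expansion/contraction estimates from Lemma~\ref{lem:NormGrowthBound} hold only on large-measure (not full-measure) sets of trajectories, so as in the endgame of Theorem~\ref{thm:NonAtomicityLimitMeasures} one must select generic words via the prefix ergodic Theorem~\ref{thm:PrefixErgodicTheorem}. Finally, keeping the orbit map a local homeomorphism throughout this contraction procedure — so that a contraction on $\Wst/(\mathrm{stab})$ actually translates into a contraction of points in $\XS$ — critically uses the trivial-stabilizer hypothesis in $\Wst$, without which the $\Wst$-orbit could degenerate and invalidate the entire reduction.
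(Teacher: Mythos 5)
Your three-stage skeleton matches the paper's architecture, and you correctly identify that the trivial-stabilizer hypothesis enters in making the orbit map injective for the separation argument. However, each of stages~1 and~3 has a genuine gap.

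\textbf{Stage 1 is misdirected.} The paper's Lemma~\ref{lem: hypotheses imply hypotheses} does \emph{not} appeal to growth estimates, Corollary~\ref{cor: no stationary measures}, or non-compactness of $\Hne$. Instead, it is a clean pigeonhole on the numbers $\left\{g_\ast\nu(\Hne x) : g \in \supp(\bar\mu^{\otimes\ell})\right\}$: since $\Hne$ is normalized by each $\bar h_i$, stationarity expresses the fixed value $\nu(\Hne x)>0$ as an average over $k^\ell$ translates; if the translated orbits $g^{-1}\Hne x$ were all disjoint, a simple counting argument would force the total mass above~$1$. From the collision one extracts two equal-length words $g_1\neq g_2$ with $\Hne g_1 x = \Hne g_2 x$, and since $g_1^{-1}g_2\in\gUue$ is nontrivial one produces a nontrivial element of $\mathrm{Stab}(x)\cap\Wst$. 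The contrapositive, combined with the fact that $\Hne$ normalizes $\Wst$, delivers $\nu(\Hne x)=0$ for every $x$. Your proposed route via Corollary~\ref{cor: no stationary measures} is not obviously relevant: that corollary concerns stationary measures on exterior powers of $\gfrak_\s$ for $\s\in\Sue$, whereas $\Hne$ projects trivially to $\Sue$.

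\textbf{Stage 3 omits $\ufrak_{\mathrm{dt}}^-$ and mishandles the remaining pieces.} You split the transverse complement of $Z$ inside $\Wst$ into $\ufrak_{\mathrm{dt}}$, $\ufrak_{\mathrm{ue}}$, and ``bounded pieces'' (diagonal of $\gGdt$ and $\gGtr$). But $\Lie(\Wst)$ also contains $\ufrak_{\mathrm{dt}}^-$, which is \emph{not} bounded: it is backward-contracted (forward-expanded), and in fact $W^{\mathrm{bc}}$ is defined as $\exp(\ufrak_{\mathrm{ue}}\oplus\ufrak_{\mathrm{dt}}^-)$, not $\exp(\ufrak_{\mathrm{ue}})$. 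Dropping $\ufrak_{\mathrm{dt}}^-$ means your backward step fails to contract the whole displacement. Furthermore, the ``bounded pieces'' you list actually lie inside $Z$ (the centralizer of the random walk contains the block-diagonal matrices at $\Sdt$ and all of $\gGtr$), so they are not neutralized by a compactness argument but precisely by invoking Theorem~\ref{thm:NonAtomicityLimitMeasures}, i.e.~your stage~2. Finally, your proposal does not mention the finite-cover reduction (Lemma~\ref{lem: conclusions imply conclusions}) needed to pass from $\Hne W^{\mathrm{bc}}$-orbits to $\Wst$-orbits, nor the Poincar\'e-recurrence argument on $B\times\XS\times\XS$ with the well-defined projection map $\Theta$ (whose very well-definedness is where the trivial-stabilizer hypothesis is used). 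As written, ``concatenating these contractions'' would not by itself survive the mixed forward/backward dynamics — that alternation is handled by the recurrence argument, not by direct concatenation.
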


The proof of Theorem~\ref{prop:NonAlignmentLimitMeasures} occupies the rest of this section. The notations and conditions of Theorem \ref{thm: stationary main} will be assumed throughout this section.

A key ingredient in the  proof
of Theorem~\ref{prop:NonAlignmentLimitMeasures}
is  Theorem~\ref{thm:NonAtomicityLimitMeasures}.
The following Lemma shows that the hypotheses of Theorem~\ref{prop:NonAlignmentLimitMeasures} imply the hypotheses of Theorem~\ref{thm:NonAtomicityLimitMeasures}.

\begin{lem}\label{lem: hypotheses imply hypotheses}
    Let $\nu$ be a $\bar \mu$-stationary measure such that $\nu(\Hne x)>0$ for some $x$.
    Then $\mrm{Stab}(x)\cap\Wst \neq \seti{\id}$.
\end{lem}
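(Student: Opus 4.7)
The plan is to reduce to the case where $\nu$ is concentrated on finitely many $\Hne$-orbits permuted by the random walk, and then to construct a nontrivial element of $\mrm{Stab}(x)\cap \Wst$ by comparing two cyclic powers $\bar h_{i_1}^M$ and $\bar h_{i_2}^M$ for distinct generators, where $M$ is the order of the induced permutation group on this finite family of orbits.

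First, by ergodic decomposition of stationary measures I may assume $\nu$ is $\bar\mu$-ergodic, since the hypothesis $\nu(\Hne x)>0$ forces some ergodic component to also assign positive mass to $\Hne x$. Next, set $c \df \sup_y \nu(\Hne y)$, which is attained on a finite collection $\mathcal{O}_c$ of $\Hne$-orbits because total mass is $1$. Applying stationarity to any $\mathcal{O}\in\mathcal{O}_c$ yields
\[
c \;=\; \nu(\mathcal{O}) \;=\; \sum_j p_j\,\nu\bigl(\bar h_j^{-1}\mathcal{O}\bigr),
\]
and since each summand is at most $c$, equality forces $\bar h_j^{-1}\mathcal{O}\in\mathcal{O}_c$ (using that the $\bar h_j$ normalize $\Hne$ via \eqref{eq: rw commutes with Hne}). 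Thus $\mathcal{O}_c$ is a finite random-walk-invariant family, and $Y \df \bigcup\mathcal{O}_c$ is stable under each $\bar h_j$. Consequently $\nu|_Y$ is itself $\bar\mu$-stationary, and extremality of $\nu$ together with $\nu(Y)\geq c>0$ forces $\nu(Y)=1$. The hypothesis therefore forces $\Hne x\in\mathcal{O}_c$. Writing $\mathcal{O}_c = \{\Hne x_1,\ldots,\Hne x_N\}$ and observing that $\Hne$ normalizes $\Wst$---so that conjugating a stabilizer element by an element of $\Hne$ preserves both the stabilizer (at the translated point) and membership in $\Wst$---I may replace $x$ by $x_j$ for some index $j$.

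Let $\sigma_i$ be the permutation of $\{1,\ldots,N\}$ determined by $\bar h_i x_j = g_{ij} x_{\sigma_i(j)}$ with $g_{ij}\in\Hne$, and let $M$ be the order of the finite group $\langle\sigma_1,\ldots,\sigma_k\rangle$. Then $\bar h_i^M x_j = g_i x_j$ for some $g_i\in\Hne$, so $g_i^{-1}\bar h_i^M\in\mrm{Stab}(x_j)$. Choosing $i_1\neq i_2$ (possible since $k\geq 2$), set
\[
w \df g_{i_1}^{-1}\bar h_{i_1}^M\bar h_{i_2}^{-M}g_{i_2} \;\in\;\mrm{Stab}(x_j).
\]
At any $\sigma\in\Sue$ the components $g_{ij}^{(\sigma)}$ are trivial, and a direct calculation with \eqref{eq: def h'} identifies $\bar h_{i_1}^M\bar h_{i_2}^{-M}$ at that place with the unipotent element associated to the vector $(\ybf_{i_2}-\ybf_{i_1})\tfrac{1-\varrho^M}{1-\varrho}$, which is nonzero because $\ybf_{i_1}\neq\ybf_{i_2}$ in $\Q^d$ and $|\varrho|<1$. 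On places outside $\Sue$, $w$ automatically lies in $\mathbf{G}$. Hence $w\in\Wst\setminus\{\mrm{Id}\}$, and Proposition~\ref{prop: in repeated use} then propagates nontriviality at $\sigma$ to every place. Transferring from $x_j$ back to a general $x\in\Hne x_j$ by conjugation in $\Hne$ concludes the argument.

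The main obstacle I expect is the first step: distilling, from the hypothesis that a single orbit has positive mass, the much stronger statement that $\nu$ is supported on a finite family of orbits permuted by the random walk. This depends on a careful use of ergodicity, stationarity, and the $\bar h_j$-normalization of $\Hne$ together. Once this rigid finite-orbit structure is in hand, the production of $w$ is a short explicit matrix calculation driven by the distinctness of the translation vectors $\ybf_i$, which itself follows from irreducibility of $\Phi$.
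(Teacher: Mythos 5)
Your proof is correct, but it takes a genuinely different route from the paper. The paper's argument is shorter: it applies a direct pigeonhole to the numbers $\{\nu(g^{-1}\Hne x) : g \in \supp(\bar\mu^{*\ell})\}$ to extract, for some $\ell$, two distinct words $g_1, g_2$ of length $\ell$ (in the $\bar h_i^{-1}$) with $\Hne g_1 x = \Hne g_2 x$; it then builds the nontrivial element $h\,g_1^{-1}g_2 \in \mrm{Stab}(x)\cap\Wst$ essentially in one line, observing that $g_1^{-1}g_2 \in \mrm{U}_{\Sue}$ because the $\Sdt$- and $\Str$-components of $g_1$ and $g_2$ agree. No ergodic reduction is needed there. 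Your route instead first reduces to the ergodic case, then uses a maximum-principle argument (together with the fact that the $\bar h_j$ normalize $\Hne$ and extremality of $\nu$) to establish that $\nu$ is actually concentrated on a finite family $\mathcal{O}_c$ of $\Hne$-orbits permuted by the random walk; from there you extract an explicit stabilizer element $w = g_{i_1}^{-1}\bar h_{i_1}^M\bar h_{i_2}^{-M}g_{i_2}$ whose $\Sue$-component is $u\bigl(\tfrac{1-\varrho^M}{1-\varrho}(\ybf_{i_2}-\ybf_{i_1})\bigr)$. Your version is longer and establishes more structure than needed (the finite permuted orbit family is a nice byproduct), but it produces a fully explicit unipotent element and its nontriviality follows just from $\ybf_{i_1}\neq\ybf_{i_2}$; the paper's shorter version relies implicitly (via the standing open-set-condition hypothesis of Theorem~\ref{thm: stationary main}) on distinct words yielding distinct group elements, since otherwise $g_1^{-1}g_2$ could be the identity. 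One caveat to flag in your write-up: you need the pair $i_1 \neq i_2$ to satisfy $\ybf_{i_1}\neq\ybf_{i_2}$, which is automatic if $\Phi$ is read as a set of distinct maps, but worth saying explicitly since nothing else in the argument forces it.
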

\begin{proof}
We first claim that
 we can find two distinct words $g_1,g_2$ in the semigroup generated by $\left\{\bar h^{-1}_i: i =1 , \ldots, k \right\}$, of the same length,  such that the orbits  $\Hne g_1x, \Hne g_2 x$ are the same. To see this, note that in order to show that two orbits are the same, it is enough to show that these orbits intersect nontrivially. Moreover,  it follows from \eqref{eq: def h'} and \eqref{eq: def Hne} that the elements  $\bar h_i$ normalize $\Hne$, and hence the same is true for the group generated by the $\bar h_i$. Hence for every word $g$ in the $\bar h^{-1}_i$ we have $g \Hne x = \Hne gx. $ Let $\bar \mu^{\otimes \ell}$ denote the $\ell$-th convolution power of $\bar\mu$; this measure is supported on finitely many products of  $\ell$ elements in $\mathrm{supp} (\bar \mu)$.  Suppose by contradiction that for all $\ell$, the collection $\left\{g^{-1} \Hne x : g \in \supp \left(\bar \mu^{\otimes \ell} \right)\right\}$ is disjoint.  
  Let $c_0\df\nu(\Hne x)>0$ and 
  consider the collection of numbers
 $$\mathcal{C} \df \left\{c_g 
 : g \in \supp \left( \bar \mu^{\otimes\ell} \right) \right\} 
 ,
 \ \ \ \text{ where } c_g \df g_\ast\nu(\Hne x)$$
 (this set depends on $\ell$ but we omit the dependence). Disjointness and the fact that $\nu$ 
is a probability measure implies $\sum_{c \in \mathcal{C}}c \leq 1$.   
 We write $\mathcal{C} = \mrm{Big} \sqcup \mrm{Small},$ where 
 $$\mrm{Big} \df \seti{c \in \mathcal{C} : 
 c >\frac{c_0}{2} } \ \ \text{ and } \ \mrm{Small} \df \seti{c \in \mathcal{C} : 
c \leq \frac{c_0}{2} }.$$ 
 By the stationarity of $\nu$ we have $c_0= \sum_g \bar{\mu}^{\otimes \ell}(\{g\}) c_g,  $ and thus
 $$c_0\leq  \bar{\mu}^{\otimes \ell}(\mrm{Big}) 
 + \frac{c_0}{2}\, \bar{\mu}^{\otimes \ell}(\mrm{Small)} \leq \bar{\mu}^{\otimes \ell}(\mrm{Big}) 
 + \frac{c_0}{2}.$$
 In particular, $\bar{\mu}^{\otimes \ell}(\mrm{Big})\geq \frac{c_0}{2}$.
 On the other hand, letting $p \df \max_i \bar{\mu}(\bar{h}_i)\in (0,1)$ denote the maximal weight for the random walk, we have that 
 $$\bar{\mu}^{\otimes \ell}(\mrm{Big})\leq p^\ell \, \#
 \mrm{Big}$$
 and thus 
 $$\#
 \, \mrm{Big} 
\geq \frac{c_0}{2p^{\ell}} \to_{\ell \to \infty} \infty,$$
 which contradicts the fact that $\nu$ is a probability measure if  the orbits $g^{-1}\Hne x$ are all disjoint for $g\in \supp(\bar{\mu}^{\otimes \ell})$.

The claim implies
that there is $h_1 \in \Hne$ such that $h_1g_1x=g_2 x$.
 Hence, $h \df g_1^{-1}h_1^{-1} g_1 \in \Hne$ satisfies that $h g_1^{-1}g_2 \in \mathrm{Stab}(x)$.
    We write $g^{(\mathrm{ue})}$ for the projection of $g \in G^S $ to $\mrm{G}_{\mrm{ue}}.$
    Since $g_1$ and $g_2$ are of the same length, it follows from \eqref{eq: def h'} that
    $$ g_1^{-1}g_2 = (g_1^{-1}g_2)^{(\mrm{ue})} \in \mrm{U}_{\mrm{ue}},$$ and this implies via \eqref{eq: def Wst} that $g_1^{-1}g_2 \in \Wst$. Since we also have $\Hne \subset \Wst, $ we have found that $hg_1^{-1}g_2$ belongs to $\Wst$. By \eqref{eq: def Hne} we have that $h^{(\mathrm{ue})} = \mathrm{Id}$. Since $(g_1^{-1}g_2)^{(\mrm{ue})} = g_1^{-1}g_2 \neq \mathrm{Id}$, we see that $hg_1^{-1}g_2 \neq \mathrm{Id}$.
\end{proof}

In the notation of \textsection \ref{subsec: notation 7.1},  let $\mathfrak{u}^-_{\mathrm{dt}}$ be the subspace of vectors in $\mathfrak{g}_{\mathrm{dt}}$ which are contracted by the action of $\Ad\left(\bar h_i^{-1} \right)$ for each $i$, let $\mathfrak{u}_{\mathrm{ue}} = \Lie(\mathrm{U}_{S_{\mrm{ue}}}),$ and set
$$
\mathfrak{w}^{\mrm{bc}} \df  \mathfrak{u}_{\mathrm{ue}}\oplus \mathfrak{u}^-_{\mathrm{dt}} .
$$
It is easily checked that, with the notation \eqref{eq: weird notation from SW}, we have
$$
\mathfrak{w}^{\mrm{bc}} = \left \{ x \in \mathfrak{g}: \forall b \in B, \lim_{n\to \infty} \left(\bar h_n^1 \right)^{-1} x =0 \right \},
$$
and
\begin{equation}\label{eq: Lie Wst}
    \Lie(\Wst)
=
\Vne \oplus \mathfrak{w}^{\mrm{bc}}.
\end{equation}
 We define
 $$
 W^{\mrm{bc}} \df\left\{g \in G^S: \forall b \in B, \ \lim_{n\to \infty} \left(\bar h_n^1 \right)^{-1} g \left(\bar h_n^1 \right) = \mathrm{Id}  \right\}. $$
 The superscript `bc' stands for `backward contracting'.
Clearly $\Lie(W^{\mrm{bc}}) = \mathfrak{w}^{\mrm{bc}}$ and $W^{\mrm{bc}} $ is normalized by the random walk.
Let $\Hne W^{\mrm{bc}} $ denote the set of pairwise products $\{hw: h \in \Hne, w \in W^{\mrm{bc}} \}$. Clearly $\Hne W^{\mrm{bc}}  \subset \Wst$, and it follows from \eqref{eq: Lie Wst} that $ \Hne W^{\mrm{bc}} $ contains an open neighborhood of $\mathrm{Id}$ in $\Wst. $ We have the following:

\begin{lem}\label{lem: conclusions imply conclusions}
    There is a finite set $F \subset G^S$ such that $\Wst = \bigcup_{f \in F}  \Hne W^{\mrm{bc}}  f.$ In particular, if $\nu_b(\Hne W^{\mrm{bc}} x)=0$ for $\beta^X$-a.e. pair $(b,x)$, then $\nu_b(\Wst x)=0$ for  $\beta^X$-a.e. pair $(b,x)$.
\end{lem}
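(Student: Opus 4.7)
The plan is to derive the covering directly from the product structure of $\Wst$, $\Hne$, and $W^{\mrm{bc}}$ across the partition $S = \Sue \sqcup \Sdt \sqcup \Str$, reducing to a Bruhat-type covering at the deterministic places; the second assertion then follows by subadditivity and finiteness of $F$.

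First, unpacking the definitions componentwise, $\Wst = \gUue \times \gGdt \times \gGtr$, $\Hne = \{\id\}\times \mrm{P}_{\mrm{dt}}\times \gGtr$, and $W^{\mrm{bc}} = \gUue \times \mrm{U}_{\mrm{dt}}^-\times \{\id\}$, where $\mathbf{U}^-$ is the unipotent radical opposite to $\mathbf{U}$. Hence $\Hne W^{\mrm{bc}} = \gUue \times (\mrm{P}_{\mrm{dt}}\mrm{U}_{\mrm{dt}}^-) \times \gGtr$, so the $\Sue$- and $\Str$-components already match those of $\Wst$. The problem therefore reduces to covering $\gGdt = \prod_{\sigma \in \Sdt}\mathbf{G}(\Q_\sigma)$ by finitely many right-translates of the big Bruhat cell $\mrm{P}_{\mrm{dt}}\mrm{U}_{\mrm{dt}}^-$.

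Next, at each $\sigma \in \Sdt$, $\mrm{P}\mrm{U}^- \subset \mathbf{G}(\Q_\sigma) \cong \mathrm{PGL}_{d+1}(\Q_\sigma)$ is the Zariski-open big cell, characterized by non-vanishing of the $(d+1,d+1)$-entry. I would pick $w_1,\dots,w_{d+1}\in \mathrm{GL}_{d+1}(\Z)$ to be permutation matrices carrying $\mathbf{e}_{d+1}$ to $\mathbf{e}_j$; then $\mrm{P}\mrm{U}^- w_j$ is the open subset of matrices with non-zero $(d+1,j)$-entry, and since every invertible matrix has a non-zero entry somewhere in its last row these $d+1$ translates cover $\mathbf{G}(\Q_\sigma)$. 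Taking products of such Weyl representatives over the finite set $\Sdt$, and embedding them in $G^S$ as the identity at places outside $\Sdt$, furnishes the desired finite set $F$.

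For the consequence, each $f \in F$ lies in $\Wst$, so $\Wst x = \bigcup_{f \in F} \Hne W^{\mrm{bc}}(fx)$ for every $x$, and subadditivity gives $\nu_b(\Wst x) \leq \sum_{f \in F} \nu_b(\Hne W^{\mrm{bc}} fx)$. Finiteness of $F$ and the hypothesis then force each summand to vanish for $\beta^X$-a.e.\ $(b,x)$, yielding the conclusion. The main obstacle in the plan is the first (covering) step; once it is in place, the measure-theoretic deduction is essentially formal.
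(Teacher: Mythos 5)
Your proof of the covering statement is correct and follows the same reduction as the paper: both approaches peel off the $\Sue$- and $\Str$-components, where the factorization is trivial, and reduce to covering $\mrm{G}_{\Sdt}$ by finitely many right-translates of the big cell $\mrm{P}_{\Sdt}\mrm{U}^-_{\Sdt}$. Where you differ is in how the finite covering at the $\Sdt$ places is produced. The paper argues softly: $\mrm{P}_{\Sdt}\backslash \mrm{G}_{\Sdt}$ is compact, $\mrm{P}_{\Sdt}\mrm{U}^-_{\Sdt}$ is open and left-$\mrm{P}_{\Sdt}$-invariant, so the open cover by right translates admits a finite subcover. You instead identify $\mrm{P}\mrm{U}^-$ with the set of (classes of) matrices whose $(d+1,d+1)$-entry is nonzero and write down an explicit finite covering by translates $\mrm{P}\mrm{U}^-w_j$, where $w_j$ is the transposition swapping $j$ and $d+1$, since no invertible matrix has all of its last row vanish. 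Your version is more explicit and yields a concrete bound $|F|\le (d+1)^{|\Sdt|}$; the paper's version is shorter and bypasses the Bruhat-cell identification. Both are valid.

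For the second assertion your argument is the same one-liner as the paper's, and it shares the same gap: from the hypothesis that $\nu_b(\Hne W^{\mrm{bc}}x)=0$ for $\beta^X$-a.e.~$(b,x)$ one cannot directly conclude that $\nu_b(\Hne W^{\mrm{bc}}fx)=0$ for $\beta^X$-a.e.~$(b,x)$, because the translation $x\mapsto fx$ does not preserve $\nu_b$ (the elements $f$ are permutation matrices at $\Sdt$ places and do not lie in the centralizer of the random walk), so $fx$ need not lie in the $\nu_b$-conull set where the hypothesis holds. A correct deduction uses the property, established in the paper just before the lemma, that $\Hne W^{\mrm{bc}}$ contains an open neighborhood $V_0$ of the identity in $\Wst$. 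If $\nu_b(\Wst x_0)>0$, set $\lambda=\nu_b|_{\Wst x_0}$; the hypothesis gives $\lambda(V_0 x')\le\lambda(\Hne W^{\mrm{bc}}x')=0$ for $\lambda$-a.e.~$x'$, so $\lambda$-a.e.~point has a $\lambda$-null open neighborhood in the orbit, forcing $\lambda(\mathrm{supp}\,\lambda)=0$ and hence $\lambda=0$, a contradiction. You should either supply this neighborhood argument or explain why the $f$-translates of the conull set remain conull; as written, the subadditivity step does not close on its own.
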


\begin{proof}
In order to prove the first assertion we note
that with respect to the partition \eqref{eq: partition of places}, we have from \eqref{eq: def Wst} and \eqref{eq: def Hne} that
$$
\Wst = \mathrm{U}_{S_{\mrm{ue}}} \times \mathrm{G}_{S_{\mrm{dt}}} \times \mathrm{G}_{S_{\mrm{tr}}}, \ \ \ \ \Hne = \{0\} \times \mathrm{P}_{S_{\mrm{dt}}}
\times \mathrm{G}_{S_{\mrm{tr}}}.
$$
Also $W^{\mrm{bc}} $ contains the connected groups $\mathrm{U}_{S_{\mrm{ue}}}$ and $\mathrm{U}^-_{S_{\mathrm{dt}}}$ with Lie algebras $\mathfrak{u}_{\mrm{ue}}$ and $\mathfrak{u}^-_{\mrm{dt}}.$ Thus it suffices to prove the statement for $\mathrm{G}_{S_{\mrm{dt}}} , \mathrm{P}_{S_{\mathrm{dt}}}, \mathrm{U}^-_{S_{\mathrm{dt}}}$ in place of $\Wst, \Hne, W^{\mrm{bc}} .$ Note that the quotient $P_{S_{\mrm{dt}}} \backslash \mrm{G}_{S_{\mrm{dt}}}  $ is compact,  and since $W^{\mrm{bc}}  \Hne$ contains a neighborhood of the identity, $\mrm{U}_{S_{\mrm{dt}}}$ projects onto a subset of $\mrm{P}_{S_{\mrm{dt}}} \backslash \mrm{G}_{S_{\mrm{dt}}} $ with nonempty interior. Thus the cover $\{\mrm{P}_{S_{\mrm{dt}}} \mathrm{U}^-_{S_{\mathrm{dt}}} g : g \in \mathrm{G}_{S_{\mrm{dt}}}\}$ has a finite sub-cover, and this proves the first assertion.
    The second assertion follows from the first one, by covering an orbit $\Wst x$ of positive measure for $\nu_b$, by the finitely many sets $\Hne W^{\mrm{bc}}  f x, \ f \in F$.
\end{proof}

\begin{lem}\label{lem:AvoidanceStable}
 Suppose that
    \begin{equation}\label{eq: probably assumption}
    \nu\left( \left\{x \in \X_{d+1}^S: \mathrm{Stab}(x) \cap W^{\mathrm{st}} \neq \{\mathrm{Id} \} \right\} \right) =0.
    \end{equation}
    Then
  \begin{equation*}
    \nu_{b}(\Hne W^{\mrm{bc}}  x \sm \Hne x) = 0
  \end{equation*}
  for almost every pair $(b,x) \in B^X$.
\end{lem}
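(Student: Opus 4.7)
The plan is to show $\psi(b,x) \df \nu_b(\Hne W^{\mrm{bc}} x \setminus \Hne x)$ vanishes $\beta^X$-almost everywhere. A direct computation using $\nu_{Tb} = (\bar h_{b_1})^{-1}_\ast \nu_b$, together with the fact that both $\Hne$ and $W^{\mrm{bc}}$ are normalized by each $\bar h_i$ (cf.~\eqref{eq: rw commutes with Hne} and the definition of $W^{\mrm{bc}}$), shows that $\psi$ is $T^X$-invariant. Since $\beta^X$ is $T^X$-ergodic by Proposition~\ref{prop: commutes with RW} (using that $\nu$ is $\bar\mu$-ergodic by the hypothesis of Theorem~\ref{thm: stationary main}), $\psi$ equals a constant $c \in [0,1]$ almost surely, and the goal is to establish $c = 0$.

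Two structural facts will be used. First, by Fubini, the hypothesis $\nu(\{x : \Stab(x) \cap \Wst \neq \{\id\}\}) = 0$ yields $\nu_b(\{x : \Stab(x) \cap \Wst \neq \{\id\}\}) = 0$ for $\beta$-a.e.~$b$. Second, a direct inspection of Lie algebras gives $W^{\mrm{bc}} \cap \Hne = \{\id\}$: in the $\mrm{dt}$-places, $\mrm{P}_{\mrm{dt}} \cap \mrm{U}^-_{\mrm{dt}} = \{\id\}$; in the $\mrm{ue}$-places, $\Hne$ projects trivially; in the $\mrm{tr}$-places $W^{\mrm{bc}}$ is itself trivial. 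Consequently, for $\beta^X$-a.e.~$(b,y)$, the orbit map $\Hne \times W^{\mrm{bc}} \to \XS$, $(h,w) \mapsto hwy$, is injective.

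The core of the argument exploits backward contraction of $W^{\mrm{bc}}$. Fix a metric on $W^{\mrm{bc}}$ and denote its closed $\delta$-ball around the identity by $W^{\mrm{bc}}_{\leq \delta}$. Set
\[
G'_\delta(b,y) \df \nu_b\bigl(\Hne\, W^{\mrm{bc}}_{\leq \delta}\, y \setminus \Hne y\bigr).
\]
The injectivity of the orbit map ensures $\bigcap_{\delta > 0}\bigl(\Hne\, W^{\mrm{bc}}_{\leq \delta}\, y \setminus \Hne y\bigr) = \emptyset$ for $\beta^X$-a.e.~$(b,y)$, hence by continuity of $\nu_b$ along decreasing sequences, $G'_\delta(b,y) \downarrow 0$ as $\delta \to 0$. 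In the other direction, conjugation by $\bar h_n^1$ expands $W^{\mrm{bc}}$ by $|\varrho|_\sigma^n$ in the $\mrm{ue}$-places and $|\varrho|_\sigma^{-n}$ in the $\mrm{dt}$-places (both of absolute value exceeding one), so the sets $K'_n \df \bar h_n^1\, W^{\mrm{bc}}_{\leq \delta}\, (\bar h_n^1)^{-1}$ form an increasing family exhausting $W^{\mrm{bc}}$. The identity $\nu_{T^n b}(E) = \nu_b(\bar h_n^1 E)$, together with the normalization of $\Hne$, gives
\[
G'_\delta\bigl((T^X)^n(b,x)\bigr) = \nu_b\bigl(\Hne\, K'_n\, x \setminus \Hne x\bigr),
\]
which increases monotonically in $n$ to $\nu_b(\Hne W^{\mrm{bc}} x \setminus \Hne x) = c$ for $\beta^X$-a.e.~$(b,x)$. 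By $T^X$-invariance of $\beta^X$ and monotone convergence,
\[
\int G'_\delta\, \der\beta^X = \int G'_\delta \circ (T^X)^n\, \der\beta^X \xrightarrow{n \to \infty} \int c\, \der\beta^X = c,
\]
so $\int G'_\delta\, \der\beta^X = c$ for every $\delta > 0$. Letting $\delta \to 0$ and applying the dominated convergence theorem (with $G'_\delta \leq 1$) yields $\int G'_\delta\, \der\beta^X \to 0$, forcing $c = 0$ as required.

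The main technical subtlety is that the iterated measures $\nu_{T^n b}$ and base points $x_n = (\bar h_n^1)^{-1} x$ both vary with $n$, so a naive limiting argument is not available. The key device is to transport the computation back to the \emph{fixed} measure $\nu_b$ and base point $x$ via $\nu_{T^n b}(\cdot) = \nu_b(\bar h_n^1\, \cdot)$, which converts the dynamical question into a monotone set-theoretic limit for a single fixed measure. It is precisely here that the backward-contracting character of $W^{\mrm{bc}}$ (so that the expanded truncations exhaust $W^{\mrm{bc}}$) and the trivial-stabilizer hypothesis (so that $\bigcap_\delta \Hne W^{\mrm{bc}}_{\leq \delta} y$ collapses to the single $\Hne$-orbit $\Hne y$) combine to close the argument.
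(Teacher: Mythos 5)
Your proof is correct, and it takes a genuinely different route from the paper. The paper argues by contradiction via Poincar\'e recurrence on the triple-product system $\bigl(B\times\XS\times\XS,\,R,\,\tilde\beta\bigr)$ with $\tilde\beta = \int\delta_b\otimes\nu_b\otimes\nu_b\,\der\beta(b)$: it posits a positive-measure $R$-invariant set $\Zcal$ of aligned triples, defines (via the unique decomposition $x'=p(z)w(z)x$ afforded by the trivial-stabilizer hypothesis) the positive function $\Theta(b,x,x')=\dist(x,p(z)^{-1}x')$, and shows $\Theta\circ R^n\to 0$ because $W^{\mrm{bc}}$ is contracted by conjugation along backward trajectories --- contradicting recurrence. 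Your argument avoids the auxiliary triple system entirely and works with $\psi(b,x)=\nu_b(\Hne W^{\mrm{bc}} x\sm\Hne x)$ on $B^X$: the normalization of $\Hne$ and $W^{\mrm{bc}}$ by the walk makes $\psi$ a $T^X$-invariant function, so it is a.e.~a constant $c$; the truncated version $G'_\delta$ tends pointwise to $0$ as $\delta\to 0$ (using the same uniqueness of the $\Hne\times W^{\mrm{bc}}$ decomposition, for which you correctly combined $\Hne\cap W^{\mrm{bc}}=\{\id\}$ with the trivial-stabilizer hypothesis via Fubini), yet has integral $c$ for every $\delta$ (via $T^X$-invariance, the identity $G'_\delta\circ(T^X)^n(b,x)=\nu_b(\Hne K'_n x\sm\Hne x)$, and exhaustion $K'_n\nearrow W^{\mrm{bc}}$), forcing $c=0$. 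Both proofs rest on the same three structural pillars --- normalization, backward-contraction of $W^{\mrm{bc}}$, and uniqueness from the $\Wst$-stabilizer hypothesis --- but your substitution of a dominated/monotone-convergence sandwich for Poincar\'e recurrence is a clean and slightly more elementary packaging. One small point worth a sentence in a write-up: the monotonicity $K'_n\subset K'_{n+1}$ is not automatic for an arbitrary metric on $W^{\mrm{bc}}$; either choose the metric so that $\Ad(\bar h_i^{-1})$ is non-expanding on $\Lie(W^{\mrm{bc}})$ for each $i$, or replace monotone convergence by the observation that every compact $K\subset W^{\mrm{bc}}$ is eventually contained in $K'_n$, and sandwich $\liminf_n \nu_b(\Hne K'_n x\sm\Hne x)$ between $\nu_b(\Hne Kx\sm\Hne x)$ and $c$.
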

\begin{proof}
Let $\mathbf{Z} \df B \times X \times X$  and define $R \colon \mathbf{Z} \to \mathbf{Z}$ by
  \begin{equation*}
    R(b,x,x') \df \big( Tb , b_1^{-1}x, b_1^{-1}x' \big), \qquad \text{where } b\in B, \ b= (b_1, b_2, \ldots),
  \end{equation*}
  and $T: B \to B$ is the shift.
  We define a probability measure $\tilde{\beta}$ on $\mathbf{Z}$ by
  \begin{equation}\label{eq: def tilde beta}
    \tilde{\beta} = \int_B \delta_b \otimes \nu_b \otimes \nu_b \, \der \beta(b).
  \end{equation}
  It follows from
  the equivariance property $\nu_{h_ib} = h_{i*} \nu_b$ in Proposition \ref{prop: commutes with RW} that
  $\nu_{Tb} = (b_1^{-1})_* \nu_b,$
  and this implies that $\tilde{\beta}$ is $R$-invariant.
  Suppose by contradiction that $E \subset
  B^X$ is a measurable set such that $\beta^X(E) > 0$ and
  \begin{equation*}
    \forall (b,x) \in E \quad \nu_b(\Hne W^{\mrm{bc}}  x \sm \Hne x) > 0.
  \end{equation*}
  Let
  \begin{equation*}
    \Zcal = \{(b,x,x') \in \mathbf{Z} \colon x' \in \Hne W^{\mrm{bc}}  x \sm \Hne x\}.
  \end{equation*}
  Note that $\Zcal$ is $R$-invariant. Indeed, suppose that $z = (b,x,x') \in \Zcal$
  and write $x' = pux$ where $p\in \Hne$ and $u\in W^{\mrm{bc}}  \sm \{\mathrm{Id}\}$.
  Then
  $$R(z) =
  \left(Tb, b_1^{-1}x, \bar b_1 b_1^{-1}x \right), \ \ \text{ where } \bar b_1 = b_1^{-1}(pu) b_1. $$
  Since the conjugation map $g \mapsto b_1^{-1} g b_1$
  is a group automorphism, and  the groups $\Hne, \, W^{\mrm{bc}} $ are both invariant under conjugation by elements of the random walk, we have that $\bar b_1 \in \Hne W^{\mrm{bc}} $ and its $W^{\mrm{bc}} $-component $b_1^{-1}ub_1$ is nontrivial.

  We claim that $\tilde{\beta}(\Zcal) > 0$.
  Indeed, if $\tilde{\beta}(\Zcal) = 0$ then by \eqref{eq: def tilde beta} and Fubini's theorem we have that
  for $\beta$-a.e.~$b\in B$ and for
  $\nu_b$-a.e.~$x \in X$ we have that
  \begin{equation*}
    \nu_b(\Hne W^{\mrm{bc}} x \sm \Hne x) = \nu_b\big(\{ x' \in X \colon x' \in
    \Hne W^{\mrm{bc}} x \sm \Hne x\}\big) = 0;
  \end{equation*}
 but recalling \eqref{eq: def beta X}, we see that this contradicts  the assumption that $\beta^X(E)>0.$

  We now remove from $\Zcal$ the set of $(b,x,x')$ for which $\mathrm{Stab}(x) \cap \Wst \neq \{\mathrm{Id} \}.$ We continue to denote the resulting set by $\Zcal$ and note that by assumption \eqref{eq: probably assumption}, it still satisfies $\tilde \beta (\Zcal)>0.$ Following this modification,  if
  $z=(b,x,x') \in \Zcal$, then there are {\em unique} $p \in \Hne$ and $w  \in W^{\mrm{bc}}  \sm
  \{\mathrm{Id}\}$ such that $x' = pw x$; indeed, if  $\tilde{p} \in \Hne$ and
  $\tilde{w} \in W^{\mrm{bc}} $ also satisfy $x' = \tilde{p}\tilde{w} x$, then
  $w^{-1}p^{-1}\tilde{p}\tilde{w} \in \mrm{Stab}(x) \cap W^{\mrm{st}}$.
  Using this uniqueness, and denoting $p = p(z), w = w(z)$ the elements in $\Hne$ for which $x'=pwx,$ we obtain an almost surely well-defined map
  \begin{equation}\label{eq:project mod forward contracting}
  \Theta \colon \mathcal{Z} \to [0,\infty), \ \ \
    \Theta(b,x,x') \df \mrm{dist}_{\XS} \left(x,p(z)^{-1}x' \right) \qquad \text{(where $z = (b,x,x') \in
    \mathcal{Z}$).}
  \end{equation}
  Here $\dist_{\XS}$ is the metric introduced in \textsection \ref{subsec: notation 7.1}.
 Then $\Theta$ is positive on $\mathcal{Z}.$

  Given $b \in B$ and $n\in\bN$, and writing $b_n^1$
  as in \eqref{eq: weird notation from SW},
we have
  \begin{equation*}
    R^nz = \left(T^n b , (b_{n}^1)^{-1} x , (b_{n}^1)^{-1}x'\right) = \left(T^n b, (b_{n}^1)^{-1} x,
    p_{z,n} w_{z,n} (b_{n}^1)^{-1}  x\right),
  \end{equation*}
  where
  \begin{equation*}
    p_{z,n} = (b_{n}^1)^{-1} p(z) b_{n}^1, \quad w_{z,n} = (b_{n}^1)^{-1}w(z)
    b_{n}^1.
  \end{equation*}
  In particular, by the construction of the metrics $\dist_{\XS}, \dist_{G^S},$ there is $C>0$ such that for all large enough $n$,
  \begin{equation*}
    \Theta(R^nz) = \mrm{dist}_{\XS} \left( (b_{n}^1)^{-1} x, w_{z,n} (b_{n}^1)^{-1} x \right) \leq  C \,
    \dist_{G^S} \left(\mathrm{Id},w_{z,n} \right) .
  \end{equation*}
  In particular $\Theta(R^nz) \to 0,$ on a set $\mathcal{Z}$ of positive measure.
  This gives a contradiction to
  Poincar\'e recurrence, and completes the proof.
\end{proof}

\begin{lem}\label{lem:AvoidanceContracting}
  Suppose that for $\beta^X$-a.e. $(b,x) \in B^X$ we have $\nu_b(Zx)=0$ (where $Z$ is the centralizer of the random walk as in \textsection \ref{sec:nonatomic}).
Then for $\beta^X$-a.e. $(b,x) \in B^X$ we have $\nu_b(\Hne x)=0$.
\end{lem}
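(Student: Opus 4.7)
The plan is to follow the Poincar\'e recurrence blueprint of Lemma~\ref{lem:AvoidanceStable}. I would set $\mathbf{Z} = B\times X \times X$, equipped with the transformation $R(b,x,x') = (Tb, b_1^{-1}x, b_1^{-1}x')$ and the $R$-invariant probability measure $\tilde\beta = \int_B \delta_b \otimes \nu_b\otimes \nu_b\, \der\beta(b)$, and let $\mathcal{Z} = \{(b,x,x') : x' \in \Hne x \smallsetminus Zx\}$. Since $\Hne$ is normalized by the random walk and $Z$ commutes with it, $\mathcal{Z}$ is $R$-invariant. By Fubini's theorem, the hypothesis translates to $\tilde\beta(\{x'\in Zx\}) = 0$, so the conclusion $\nu_b(\Hne x)=0$ almost surely would follow from showing $\tilde\beta(\mathcal{Z})=0$.

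Assuming for contradiction that $\tilde\beta(\mathcal{Z}) > 0$, I would first note that $\Hne$ acts freely on $X$: any element of $\mrm{Stab}(x)\cap \Hne$ projects trivially to the $\Sue$ places (since $\Hne$ does), hence equals $\mrm{Id}$ by Proposition~\ref{prop: in repeated use}. Next I would use the Levi--unipotent decomposition at each $\Sdt$ place, together with $Z=\Hne$ at $\Str$ places and the fact that $\Hne$ is trivial at $\Sue$ places, to factor $\Hne = Z \cdot \mrm{U}_{\Sdt}$ with $Z\cap \mrm{U}_{\Sdt} = \{\mrm{Id}\}$. Thus for $z=(b,x,x')\in \mathcal{Z}$ the unique $h\in \Hne$ with $x'=hx$ admits a unique expression $h = z_0 \cdot u(v)$ with $z_0\in Z$ and $v\in \Q_{\Sdt}^d$, and $x'\notin Zx$ corresponds exactly to $v\neq 0$. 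I would then define $\Theta(z) \df \|v\|$, a Borel strictly positive function on $\mathcal{Z}$. A direct matrix computation, using that $z_0$ commutes with every $\bar h_i$ and that $\bar h_i^{-1} u(v) \bar h_i = u(\varrho^{-1} v)$ at each $\s\in \Sdt$, gives $b_n^{1\,{-1}} h\, b_n^1 = z_0\, u(\varrho^{-n} v)$, and hence $\Theta(R^n z) = \|\varrho^{-n} v\|$. Since $|\varrho|_\s < 1$ for every $\s\in \Sdt$ (by definition of $\Sdt$), and since $v$ is nonzero in some $\Sdt$-component, this forces $\Theta(R^n z)\to\infty$ for every $z\in \mathcal{Z}$.

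Finally, choosing $M>0$ so that $A_M = \{z \in \mathcal{Z}: \Theta(z) \leq M\}$ has positive $\tilde\beta$-measure, Poincar\'e recurrence for $R$ would force $\tilde\beta$-almost every $z\in A_M$ to return to $A_M$ infinitely often, contradicting $\Theta(R^n z)\to\infty$. Hence $\tilde\beta(\mathcal{Z})=0$, yielding the lemma. The main conceptual subtlety, as compared to Lemma~\ref{lem:AvoidanceStable}, is that the displacement direction $\Hne/Z \cong \mrm{U}_{\Sdt}$ is backward-\emph{expanded} (rather than contracted) by the random walk, so the Poincar\'e step must exploit escape to infinity of $\Theta$ rather than its decay to zero.
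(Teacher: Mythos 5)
Your proof is correct, and it takes a genuinely different route from the paper's. The paper's proof of this lemma is a forward argument: using Lusin's theorem to get a continuity set for $(b,x)\mapsto\nu_b$, compact sets $M\subset Z$, $C\subset\gUdt$ capturing a definite fraction $\varepsilon$ of the mass of $\Hne x$, the Chacon--Ornstein theorem for the adjoint of $T^X$ to produce a recurrent sequence of prefixed points $(a(j)b, a(j)x)$, and then the forward contraction $a(j)\,C\,a(j)^{-1}\to\{\id\}$ to conclude in the limit that $\nu_{b'}(Mx')\geq\varepsilon>0$, contradicting the hypothesis at the limit point $(b',x')$. Your argument instead mirrors the paper's own proof of Lemma~\ref{lem:AvoidanceStable} verbatim in its blueprint: build the $R$-invariant positive-measure set $\mathcal{Z}=\{x'\in\Hne x\smallsetminus Zx\}$, exploit freeness of the $\Hne$-action (a correct use of Proposition~\ref{prop: in repeated use}, since $\Hne$ is trivial at $\Sue$-places) and the unique decomposition $\Hne=Z\cdot\mathrm{U}_{\Sdt}$ (correct, being the Levi decomposition of $\mathrm{P}_{\Sdt}$ combined with $Z=\Hne$ at $\Str$) to get a Borel function $\Theta(z)=\|v\|$, verify the conjugation identity $(b_n^1)^{-1}z_0 u(v)b_n^1=z_0 u(\varrho^{-n}v)$ using that $b_n^1$ is diagonal at $\Sdt$-places and commutes with $z_0$, and then invoke Poincar\'e recurrence for the sublevel sets $A_M=\{\Theta\leq M\}$, which have positive measure for some $M$ yet cannot recur because $\Theta(R^nz)\to\infty$. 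The one technical point you gloss over is the Borel measurability of $z\mapsto v$, which (since $\Hne$ is not unipotent) is best justified via the Lusin--Souslin theorem applied to the injective continuous map $(x,h)\mapsto(x,hx)$ on $X\times\Hne$. Compared to the paper's approach, yours is more economical and avoids the choice of compacts and the Lusin continuity set; the trade-off is that the paper's forward argument is arguably more transparent about where the positivity of $\nu_{b'}(Zx')$ is coming from, and is structured so as to resemble the more delicate argument used for Theorem~\ref{thm:NonAtomicityLimitMeasures}, where the simple Poincar\'e blueprint is not available. As you correctly note, the essential conceptual difference from Lemma~\ref{lem:AvoidanceStable} is the flip in monotonicity: there the displacement decays to $0$ in backward time, whereas here $\gUdt$ is backward-expanded, so $\Theta$ escapes to $\infty$; Poincar\'e recurrence rules out either monotone behaviour equally well.
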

\begin{proof}
  Recall from \eqref{eq: def Hne} and Proposition \ref{prop: projection Z trivial} that $\Hne = Z\gUdt$.
  Let
  \begin{equation*}
    E \df \left\{(b,x) \colon \nu_b(\Hne x) > 0\right \} \ \ \text{ and } \ \  F \df \left\{(b,x) \in B^X \colon \nu_b(Z x) > 0\right\}.
  \end{equation*}
  We assume $\beta^X(F)=0$ and
  suppose for sake of contradiction that $\beta^X(E) > 0$. Since $\bar \mu$ is finitely supported, we can
  assume without loss of generality that for all $(b,x) \in E$, for all $k \in
  \mathbb{N}$, and for all $a \in \mrm{supp}(\bar \mu)^k$ we have
  $  \nu_{ab} = a_\ast\nu_b.
  $
   Using Lusin's theorem, let
  $K\subset E \setminus F$ be a compact  set with $\beta^X(K)>0$ and such that the map $(b,x) \mapsto \nu_b$ is continuous on $K$.

Let
  $
  T^X: B^X \to B^X$
  be as in  \eqref{eq: def beta X}.
  By  Proposition \ref{prop: commutes with RW},  $\beta^X$ is $T$-invariant and ergodic.
  Let
$  L \colon L^1(B^X,\beta^X) \to L^1(B^X,\beta^X)$
  be the
  adjoint operator of $T^X$; that is, for all $f \in
  L^1(B^X,\beta^X)$ and for all $\varphi \in L^\infty(B^X,\beta^X)$
  we have
  \begin{equation*}
    \int_{B^X} f \cdot (\varphi \circ T^X) \, \der \beta^X = \int_{B^X} Lf \cdot
    \varphi \, \der \beta^X.
  \end{equation*}
From the definition and by a straightforward induction, one finds that $\beta^X$-almost surely
  \begin{equation*}
    L^nf(b,x) = \sum_{a \in B^*, \, \mathrm{len}(a)=n} f(a b,a_n^1 \cdot x) \,  \beta([a]).
  \end{equation*}
  We recall that by the Chacon-Ornstein ergodic theorem~\cite{ChaconOrnstein}
  the averages
  \begin{equation*}
    A_N^\ast f(b,x) = \frac{1}{N}\sum_{n=0}^{N-1}L^nf(b,x)
  \end{equation*}
  converge pointwise $\beta^X$-almost surely.
  We choose large compact subsets $C$ and $M$ of $\mrm{U}_{\mrm{dt}}$ and $Z$ respectively, and $\varepsilon>0$, such that
   the set
  \begin{equation*}
    Q' \df \{(b,x) \in K \colon \nu_b(MC x) > \varepsilon\}
  \end{equation*}
  has positive measure.
  We let $Q \subset Q'$ be compact and of positive
  measure. By the Chacon-Ornstein ergodic theorem and ergodicity of $T^X$, we
  know that $A_N^\ast\mathbf{1}_Q(b,x) \to \beta^X(Q) > 0$
  for $\beta^X$-a.e. $(b,x) \in B^X$. Thus, there are $(b,x) \in Q$, a
  sequence $n_j \to \infty$, and a sequence of words $a(j) \in B^*$ of length $n_j$, such that
  \begin{equation*}
    \forall j, \quad (b_j,x_j) \df \big(a(j)b,a(j)\cdot x\big) \in Q.
  \end{equation*}
Here we view $a(j)$ as both an element of $B^*$, and as an element of $G^S$ obtained by the product  $\left(a(j)\right)_j^1$.   Passing to a subsequence, we assume that $(b_j,x_j) \stackrel{j \to \infty}{\longrightarrow}  (b',x') \in Q$.
  We claim that
  \begin{equation}\label{eq: to see this}
    \nu_{b'}(M x') > 0.
  \end{equation}
This will give us the desired contradiction, since  $M \subset Z$ and $(b',x') \in K \subset E \sm F$.

  To see \eqref{eq: to see this}, let $\mathcal{O} \subset G$ be a compact
  neighborhood of the identity in $G^S$. Using Urysohn's lemma and outer
  regularity of $\nu_{b'}$, we have that
  \begin{equation*}
    \nu_{b'}(M\mathcal{O} x') = \inf\{\nu_{b'}(\varphi) \colon \varphi \in
    C_c(X) \text{ and }\varphi|_{M\mathcal{O} x'} = 1\}.
  \end{equation*}
If follows from the definitions of the group $U$, the places $S_{\mrm{dt}}$, and the elements $\bar h_i$ (see \eqref{eq: def u+}, \eqref{eq: def Sdt} and \eqref{eq: def h'}) that the conjugation of elements of $\mathrm{U}_{\mrm{dt}}$ by elements of the $\bar \mu$ random walk is uniformly contracting,   in the following sense. For any compact subset $C_1 \subset \mrm{U}_{\mrm{dt}}$ and any open neighborhood $\mathcal{O}_1$ of the identity in $\mrm{U}_{\mrm{dt}}$, there is $k_0$ such that for all $k \geq k_0$, for any $a \in B^*$ of length $k$, we have
$$a_k^{1} C_1 (a_k^1)^{-1} \subset \mathcal{O}_1.$$
In particular, taking $C_1 = C$ and $\mathcal{O}_1$ so that $\mathcal{O}_1$ such that the closure of $\mathcal{O}_1$ is contained in the interior of $ \mathcal{O}$,
 we have that
  \begin{equation*}
    a(j)Cx = a(j)Ca(j)^{-1}  x_j\subset \mathcal{O}x'
  \end{equation*}
  for all sufficiently large
  $j$. Therefore, for $\varphi\in C_c(X)$ satisfying
  $\varphi|_{M\mathcal{O} x'} = 1$, we have
  \begin{equation*}
    \int \varphi \, \der \nu_{b'} \geq \limsup_{j\to \infty} \nu_{a(j)b}(M\mathcal{O} x')
    \geq \limsup_{j\to \infty}\nu_{a(j)b}(Ma(j)C x) = \nu_b(MC x)
    > \varepsilon,
  \end{equation*}
  and therefore $\nu_{b'}(M\mathcal{O} x') \geq \varepsilon$. Now $M x'$ is
  the intersection of all sets of the form $M\mathcal{O} x'$, where
  $\mathcal{O}$ is a neighborhood of the identity in $G^S$. This yields $\nu_{b'}(M x') \geq
  \varepsilon$ and
proves \eqref{eq: to see this}.
\end{proof}

\begin{proof}[Proof of Theorem~\ref{prop:NonAlignmentLimitMeasures}]
 Using Lemma \ref{lem: hypotheses imply hypotheses} we see that under our hypotheses, we can apply
Theorem~\ref{thm:NonAtomicityLimitMeasures}. This yields
$$\text{ for } \beta^X \text{-a.e. } (b,x), \ \
\nu_b(Zx)=0.
$$
Hence, by
  Lemma~\ref{lem:AvoidanceContracting},
$$\text{ for } \beta^X \text{-a.e. } (b,x), \ \
\nu_b(\Hne x)=0.
$$
 Combining this with
  Lemma~\ref{lem:AvoidanceStable}, it follows that
  $$\text{ for } \beta^X \text{-a.e. } (b,x), \ \
\nu_b(\Hne W^{\mrm{bc}}  x)=0.
$$
Applying Lemma \ref{lem: conclusions imply conclusions} we get that
$$\text{ for } \beta^X \text{-a.e. } (b,x), \ \
\nu_b(\Wst x)=0.
$$\qedhere
\end{proof}

\section{Additional Invariance: The Exponential Drift} \label{sec: exponential drift}
The goal of this section is to prove that, under the assumptions of Theorem~\ref{prop:NonAlignmentLimitMeasures}, almost every limit
measure decomposes as a convex combination of measures invariant under
one-parameter subgroups of $\gUue$. To this end, we will employ  the exponential drift argument introduced by Benoist and Quint in~\cite{BQ1}. Running the argument will require the preliminary work done in \textsection\ref{sec:GrowthPropertiesRandomWalk}
--\textsection \ref{sec: non alignment}.
The following result is the main result of this section.

\begin{theorem}\label{thm:AdditionalInvariance}
  Suppose that $\nu$ is a stationary measure satisfying the hypotheses of Theorem~\ref{prop:NonAlignmentLimitMeasures}. Then for $\beta$-a.e.~$b\in B$ and for
  $\nu_b$-a.e.~$x \in \XS$, there exists a non-trivial subgroup $W_{(b,x)}
  \subset \gUue$ generated by one-parameter unipotent subgroups and a
  $W_{(b,x)}$-invariant probability measure $\nu_{(b,x)}$ on $\XS$ such that
  \begin{equation*}
    \nu_{b} = \int_{X} \nu_{(b,x)} \, \mrm{d}\nu_b(x).
  \end{equation*}
  Moreover, for $\beta^X$-almost every $(b,x) \in B^X$ we have the equivariance property:
  \begin{align}\label{eq:equivariance of W_z}
   W_{(b,x)} = b_1
  W_{T^X(b,x)}b_1^{-1}, \qquad \text{and}
  \qquad \nu_{(b,x)} = {b_1}_\ast \nu_{T^X(b,x)}.   
  \end{align}
  
\end{theorem}

For the rest of this section we retain the notations and assumptions of Theorem \ref{thm:AdditionalInvariance}. The proof relies on an analysis of leafwise measures, for which we follow ~\cite[\textsection4]{BQ1}. For more information on leafwise measures, we refer the reader to \cite{Einsiedler_Lindenstrauss_Clay}.
We now recall the necessary notations and results.

Denote by $\mathcal{M}(\ufrak_{\mrm{ue}})$ the convex cone of positive non-null
Radon measures on $\ufrak_{\mrm{ue}}$.
Also let $\mathcal{M}_1(\ufrak_{\mrm{ue}})$ be the set of rays in
$\mathcal{M}(\ufrak_{\mrm{ue}})$, i.e., the quotient space for the equivalence relation
of {\em proportionality}, defined by
\begin{equation*}
  \sigma_1 \propto \sigma_2 \iff \exists c > 0 \text{ such that } \sigma_2 = c\sigma_1.
\end{equation*}
The choice of $\s$ to denote leafwise measures is consistent with \cite{BQ1}, although we also use $\s$ to denote elements of $S$; even punctilious readers should have no difficulty disambiguating these two uses.
Let
\begin{equation*}
    \overline{\sigma} \colon B^X \rightarrow \mathcal{M}_1(\ufrak_{\mrm{ue}}), \ \ \ \ \ \
    (b,x)  \mapsto \overline{\sigma}_{(b,x)}
\end{equation*}
be a family of leafwise measures for $\beta^X$ with respect to the action of
$\ufrak_{\mrm{ue}}$ given by
\begin{equation}
\label{eq: group action rhs}
    \Psi \colon B^X \times \ufrak_{\mrm{ue}} \rightarrow B^X, \ \ \ \ \ \
   \Psi \big((b,x), u\big)  \df\big(b,\exp(u)x\big).
\end{equation}
For $u \in \ufrak_{\mrm{ue}}$ we will use the notation
\begin{equation}\label{eq: use the notation}
\Psi_u : B^X \to B^X, \ \ \ \Psi_u(z) \df \Psi(z,u).
\end{equation}
Since the Lie algebra $\ufrak_{\mrm{ue}}$ is abelian, $\exp: \ufrak_{\mrm{ue}} \to \mrm{U}_{\mrm{ue}}$ is a group isomorphism and the expression on the right-hand side of \eqref{eq: group action rhs} coincides with the action of $\mrm{U_{ue}}$ on $\XS.$ It will be more convenient (and also consistent with the notation of \cite{BQ1}) to stick with exponential notation and use $\ufrak_{\mrm{ue}}$ instead of $\mrm{U_{ue}}.$

Note that $\overline{\sigma}_{(b,x)}$ denotes an equivalence class of measures.
In order to choose a concrete representative, we let $(K_n)_{n \in \N}$ be a sequence of compact sets which form an exhaustion of $\ufrak_{\mrm{ue}}$, and we denote by $\sigma_{(b,x)}$ the element in the proportionality class
$\overline{\sigma}_{(b,x)}$ satisfying $\sigma_{(b,x)}(K_{n_0}) = 1$, where $n_0$ is
the minimal $n$ for which  $\overline{\sigma}_{(b,x)}(K_n)>0.$
Similarly, given $\sigma \in \mathcal{M}(\ufrak_{\mrm{ue}})$, we will denote by
$\overline{\sigma} \in \mathcal{M}_1(\ufrak_{\mrm{ue}})$ its equivalence class.
We will fix a conull subset $E \subset B^X$ on which
$\sigma$ is well-defined up to proportionality and satisfies all the
characterizing properties of leafwise measures; cf.~\cite[Prop.~4.2]{BQ1}.

Recall that $\varrho\in
\bQ$ denote the contraction ratio of the maps in the IFS $\Phi$, and for any $u \in \ufrak_{\mrm{ue}}$ and any $\bar h_i \in \supp \bar \mu$ we have $\Ad_{\bar h_i}(u) = \varrho u$. Therefore
\begin{equation}\label{eq:EquivarianceHorocycleFlow}
  \forall u \in \ufrak_{\mrm{ue}},
  \quad  \quad \Psi_{\varrho^{-1} u} \circ T^X  =
  T^X \circ \Psi_u.
\end{equation}
Given $a \in \Q_S^\times$, denote
\begin{equation*}
    \eta_a \colon \ufrak_{\mrm{ue}} \rightarrow \ufrak_{\mrm{ue}}, \ \ \ \ \
    \eta_a(u)  \df au.
\end{equation*}
Let $\mathcal{B}^X$ denote the Borel $\sigma$-algebra on $B^X$ and let
\begin{equation}\label{eq:TailSigmaAlgebra}
  \mathcal{Q}_\infty^X = \bigcap_{n\in\mathbb{N}}\left(T^X \right)^{-n}\mathcal{B}^X.
\end{equation}
The following is an immediate consequence of the uniqueness of leafwise
measures and~\eqref{eq:EquivarianceHorocycleFlow}.
\begin{corollary}[{cf.~\cite[Cor.~6.12 and Cor.~6.13]{BQ1}}]\label{cor: E as in}
  There exists a conull subset $E\subset B^X$ such that
\begin{equation}\label{eq:LeafwiseUnderAutomorphism}
    \forall (b,x) \in E, \ \forall n\in\mathbb{N}, \quad (T^X)^n(b,x) \in E \implies
    \sigma_{(b,x)} \propto (\eta_{\varrho^n})_\ast \sigma_{(T^X)^n(b,x)}.
  \end{equation}
  Moreover, the map $\overline{\sigma}
  $ is
  $\Qcal_\infty^X$-measurable.
\end{corollary}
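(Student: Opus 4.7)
The plan is to deduce both parts of the corollary from the equivariance identity \eqref{eq:EquivarianceHorocycleFlow} combined with the uniqueness (up to proportionality) of leafwise measures for the action of $\ufrak_{\mrm{ue}}$ on $(B^X,\beta^X)$, following the template of \cite[Cor.~6.12-6.13]{BQ1}.

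For the first assertion, I would argue as follows. The relation $\Psi_{\varrho^{-1}u}\circ T^X = T^X\circ \Psi_u$ says that the measurable transformation $T^X$ intertwines the $\ufrak_{\mrm{ue}}$-action on $B^X$ with its rescaling via the automorphism $\eta_{\varrho^{-1}}$ of $\ufrak_{\mrm{ue}}$. Since $T^X$ preserves $\beta^X$ (Proposition~\ref{prop: commutes with RW}), the family of measures $\{(\eta_\varrho)_\ast\sigma_{T^X(b,x)}\}$ also satisfies the characterizing properties of leafwise measures for the $\ufrak_{\mrm{ue}}$-action on $(B^X,\beta^X)$: equivariance under $\exp(u)$-translation and the consistency under conditioning on $\ufrak_{\mrm{ue}}$-orbits. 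By the uniqueness of leafwise measures up to proportionality, we obtain a conull set $E_1\subset B^X$ on which
\[
\sigma_{(b,x)} \propto (\eta_\varrho)_\ast \sigma_{T^X(b,x)}.
\]
Setting $E \df \bigcap_{n\geq 0}(T^X)^{-n}E_1$ gives a conull, $T^X$-invariant set on which \eqref{eq:LeafwiseUnderAutomorphism} follows by induction on $n$, using that $\eta_{\varrho^n} = \eta_\varrho\circ\cdots\circ \eta_\varrho$.

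For the $\Qcal_\infty^X$-measurability of $\overline{\sigma}$, I would invoke the relation just proved, rewritten as $\overline{\sigma}_{(b,x)} = (\eta_{\varrho^n})_\ast \overline{\sigma}_{(T^X)^n(b,x)}$, which holds $\beta^X$-almost surely for every $n\in\NN$. Since $\eta_{\varrho^n}$ is a fixed homeomorphism of $\ufrak_{\mrm{ue}}$ and the pushforward operation is measurable on $\mathcal{M}_1(\ufrak_{\mrm{ue}})$, the right-hand side is measurable with respect to $(T^X)^{-n}\mathcal{B}^X$ for every $n$. Hence $\overline{\sigma}$ is measurable with respect to the intersection $\Qcal_\infty^X = \bigcap_n (T^X)^{-n}\mathcal{B}^X$, modulo $\beta^X$-nullsets.

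The main technical obstacle lies in the first step: making precise the assertion that $T^X$ sends a leafwise measure system to a leafwise measure system for a rescaled group action, and invoking uniqueness in a setting where the underlying group action is changing. This requires careful bookkeeping of the characterization of leafwise measures as in \cite[\textsection 6]{Einsiedler_Lindenstrauss_Clay} (or equivalently \cite[Prop.~4.2]{BQ1}), together with the observation that the automorphism $\eta_\varrho$ of $\ufrak_{\mrm{ue}}$ preserves the exhaustion class used to normalize representatives; otherwise the argument is a direct transcription of \cite[Cor.~6.12-6.13]{BQ1} to our $S$-arithmetic setting.
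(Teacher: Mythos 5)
Your proposal is correct and follows the same route as the paper: the paper itself treats Corollary~\ref{cor: E as in} as an immediate consequence of the equivariance relation~\eqref{eq:EquivarianceHorocycleFlow} and the uniqueness of leafwise measures, citing the same sources \cite[Cor.~6.12--6.13]{BQ1}, and your write-up simply fills in the standard bookkeeping (verifying that $(\eta_\varrho)_\ast\sigma_{T^X(\cdot)}$ is again a leafwise system, taking the $T^X$-invariant intersection of conull sets, and using the reverse martingale argument to pass from $(T^X)^{-n}\mathcal{B}^X$-measurability for each $n$ to $\Qcal_\infty^X$-measurability). The only computation worth recording explicitly is the identity $\eta_\varrho\circ\tau_{-\varrho^{-1}u}=\tau_{-u}\circ\eta_\varrho$, which is what forces the factor $\eta_\varrho$ (and not $\eta_{\varrho^{-1}}$) in~\eqref{eq:LeafwiseUnderAutomorphism}; your sign and scaling conventions are consistent with this.
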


Note that the right hand side of formula \eqref{eq: group action rhs} involves the action of the group $\mrm{U_{ue}}$ on $\XS$, and in contrast with \cite{BQ1}, the acting group does not depend on $b$. Therefore
for $\beta^X$-a.e.~$(b,x)$, $\bar \sigma(b,x)$ is the leafwise measure of $\nu_b$ with respect to the $\mrm{U_{ue}}$-action on $\XS.$

We also introduce notation for the translation map
\begin{equation*}
    \tau \colon \ufrak_{\mrm{ue}} \times \ufrak_{\mrm{ue}}
    \rightarrow \ufrak_{\mrm{ue}}, \quad \quad
 \tau_u(v) \df  v+u.
\end{equation*}
We recall from~\cite[Prop.~4.2]{BQ1} that (after removing a nullset),  for all $
z \in
E$ and all $u \in \ufrak_{\mrm{ue}}$ such that $\Psi_u(z) \in E$, we have that
\begin{equation}\label{eq:TranslationLeafwiseMeasure}
  \sigma_{z} \propto {\tau_u}_\ast \sigma_{\Psi_u(z)}.
\end{equation}

Given Theorem~\ref{thm:NonAtomicityLimitMeasures}, the main step towards a proof
of Theorem~\ref{thm:AdditionalInvariance} is to show that there exist many
generic pairs $(b,x)$ for which the leafwise measure agrees  up to proportionality with the translate
by arbitrarily small non-trivial elements of
$\ufrak_{\mrm{ue}}$.
The first step in the argument is the proof that there exist many nearby pairs of
points whose displacement is
not contained in
\begin{equation}
\label{eq: def wstfrak}
\wstfrak \df \Lie(\Wst) = \ufrak_{\mrm{ue} }\oplus
\gfrak_{\mrm{dt}} \oplus \gfrak_{\mrm{tr}}
.\end{equation}

\begin{lem}\label{lem:SequenceOfDisplacements}
  For every measurable set $L \subset B^X$ there exists a conull subset $L'\subset L$ and, for every element $(b,x) \in L'$, a sequence
  $(v_m)_{m\in\mathbb{N}}$ of elements of $\gfrak \sm \wstfrak $
  such that $v_m \to 0$ as $m \to \infty$ and
  \begin{equation*}
    \forall m \in \mathbb{N}, \quad \quad \big(b , \exp(v_m) x\big) \in L.
  \end{equation*}
\end{lem}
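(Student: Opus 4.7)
The plan is to combine the non-alignment result of Theorem~\ref{prop:NonAlignmentLimitMeasures} with a Lebesgue-type density argument for the measures $\nu_b$. Recall from \eqref{eq: def beta X} that $\beta^X = \int_B \delta_b\otimes \nu_b\,\der\beta(b)$, so the sections $L_b \df \{x\in\XS:(b,x)\in L\}$ satisfy $\beta^X(L) = \int_B \nu_b(L_b)\,\der\beta(b)$. Thus it suffices to produce, for $\beta$-a.e.\ $b$ and for $\nu_b$-a.e.\ $x\in L_b$, a sequence $v_m\to 0$ in $\gfrak\sm\wstfrak$ with $\exp(v_m)x\in L_b$.

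The first ingredient is Theorem~\ref{prop:NonAlignmentLimitMeasures}, which under the standing hypothesis of this section yields
\[
\nu_b(\Wst x) = 0 \quad \text{for } \beta^X\text{-a.e.\ }(b,x).
\]
The second ingredient is a Lebesgue density statement for the finite Borel measures $\nu_b$ on $\XS$. In a sufficiently small neighborhood of any point we can use the exponential chart from the metric of \textsection\ref{subsec: notation 7.1} to transport $\nu_b$ to a finite Borel measure on an open subset of $\gfrak = \bigoplus_{\s\in S}\gfrak_\s$. On each local factor a Besicovitch/Vitali-type covering argument applies (in the Euclidean factor this is classical; in the $p$-adic factors it is immediate from the ultrametric), so we obtain: for each $b$, $\nu_b$-a.e.\ $x\in L_b$ is a density point of $L_b$, in particular $\nu_b(B(x,r)\cap L_b)>0$ for every $r>0$.

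Now define $L'$ to be the set of $(b,x)\in L$ at which both properties above hold. This is a conull subset of $L$. For any $(b,x)\in L'$ and every $r>0$,
\[
\nu_b\!\left((B(x,r)\cap L_b)\sm \Wst x\right) \;=\; \nu_b\!\left(B(x,r)\cap L_b\right) \;-\; 0 \;>\; 0,
\]
so this set is nonempty. Choose $y_m\in L_b\sm \Wst x$ with $y_m\to x$. Using the injectivity radius at $x$ we may write $y_m = \exp(v_m)x$ with $v_m\in\gfrak$, $v_m\to 0$. Since $\Wst$ is a closed subgroup of $G^S$ with Lie algebra $\wstfrak$ (see \eqref{eq: def wstfrak}), the exponential map is a local diffeomorphism sending a neighborhood of $0$ in $\wstfrak$ onto a neighborhood of the identity in $\Wst$; in particular, for all sufficiently large $m$ we have $\exp(v_m)\in\Wst$ if and only if $v_m\in\wstfrak$. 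Because $y_m\notin \Wst x$ forces $\exp(v_m)\notin \Wst$, we conclude $v_m\notin\wstfrak$, completing the argument.

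The only nonroutine point is the density theorem for $\nu_b$ on the $S$-adic space, which might look delicate across mixed real and $p$-adic places; however, in a small chart around $x$ the metric is a maximum of the real Euclidean metric and the ultrametric $\s$-adic distances, and the standard differentiation theorems apply factor by factor, so this step is not a real obstacle once one is careful with the covering lemmas.
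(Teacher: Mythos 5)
Your proof is correct and follows the same overall strategy as the paper: combine the non-alignment result (Theorem~\ref{prop:NonAlignmentLimitMeasures}) with a local positivity property of the measures $\nu_b$, then subtract. However, the justification of the local positivity step is considerably heavier than necessary. You invoke a Besicovitch/Vitali covering argument to establish a full Lebesgue density theorem for $\nu_b$, and then only use the weak ``in particular'' consequence $\nu_b\bigl(B(x,r)\cap L_b\bigr)>0$ for small $r$. The paper gets this directly from second countability: for any finite Borel measure on a second countable space and any Borel set $E$ of positive measure, a.e.\ point of $E$ has every neighborhood meeting $E$ in positive measure, since the ``bad'' points are covered by countably many open sets $U$ with $\nu_b(E\cap U)=0$, whose union is a null set. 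No covering lemma or differentiation theorem is needed. Moreover, your claim that Besicovitch applies ``factor by factor'' on the mixed real/$p$-adic space deserves more care (covering properties do not automatically pass to products with the max metric), but since you only need the trivial support-point statement rather than an actual differentiation theorem, this is a detour you can delete. The rest of the argument — picking $y_m\to x$ in $L_b\sm\Wst x$, writing $y_m=\exp(v_m)x$ inside the injectivity radius, and deducing $v_m\notin\wstfrak$ from the fact that $\exp$ maps $\wstfrak$ into $\Wst$ locally — is exactly what the paper does and is fine.
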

\begin{proof}
  Assume without loss of generality that $\beta^X(L) > 0$ and, using inner
  regularity, that $L$ is compact. Denote by $F'$ the projection of $L$ to $B$
  and for any $b\in F'$ let $L_b$ denote the fiber
  $\{x \in \XS : (b,x) \in L\}$,
  so that
  \begin{equation*}
    \beta^X(L) = \int_{F'}\nu_b(L_b) \, \mrm{d}\beta(b).
  \end{equation*}
  Let $F \subset F'$ consist of those $b \in F'$ satisfying $\nu_b(L_b) > 0$.
  The intersection $\tilde{L} \subset B^X$ of the preimage of $F$ with $L$ is
  a conull subset of $L$. Let $b \in F$, then for $\nu_b$-a.e.~$x \in L_b$ and for
  every neighborhood $V$ of $0$ in $ \mathfrak{g}$  for which the
  exponential map is well-defined,
  \begin{equation}\label{eq: is well defined 1}
    \nu_b\big(L_b \cap \exp(V)x\big) > 0.
  \end{equation}
  From
 Theorem \ref{prop:NonAlignmentLimitMeasures}  we know that for all $b\in F$
  we have
  \begin{equation}\label{eq: is well defined 2}
    \text{for $\nu_b$-a.e. $x$,} \quad \nu_b\big(L_b  \sm \Wst x
      \big) > 0.
  \end{equation}
 Then the claim holds if we take $L'$ to be the subset of $\tilde{L}$ consisting of $(b,x)$ for which \eqref{eq: is well defined 2} holds, and \eqref{eq: is well defined 1} holds for a countable basis of identity neighborhoods $V$.
\end{proof}
We set up some more notation. We write $(\supp \bar \mu)^n$ for the words in $B^*$ of length $n$. Given  $(b,x) \in B^X$,
define a map
\begin{equation*}
    h_{n,(b,x)} \colon (\supp \bar \mu)^n
    \rightarrow B^X, \ \ \ \ \ \ \ \
   h_{n,(b,x)}(a) \df  \left(aT^nb,a_n \cdots a_1 b_n^{-1} \cdots
    b_1^{-1}x\right);
\end{equation*}
i.e., for any $a \in (\supp \bar \mu)^n$, and $z \in B^X$, $z' = h_{n,z}(a) $ has the same future
(under $T^X$) as $z$ and, in fact, $(T^X)^n(z) = (T^X)^n(z')$. The
image of $h_{n,z}$ is the set of all points whose futures agree with that
of $z$ after time $n$. Put differently,
\begin{equation*}
  \mrm{im}\big(h_{n,z}\big) = \left(T^X \right)^{-n}\big\{(T^X)^n(z)\big\}.
\end{equation*}
As in~\cite[Prop.~2.3]{BQ1}, using the decreasing martingale theorem, we have
that for all $\varphi \in \mrm{L}^{\infty}(B^X,\beta^X)$,
\begin{equation}\label{eq:MartingaleTheorem}
  \text{for $\beta^X$-a.e. $z$,} \quad \quad\mathbb{E}\left(\varphi | \mathcal{Q}_\infty^X \right)(z) =
  \lim_{n\to\infty} \sum_{a \in (\supp \bar \mu)^n} \beta([a]) \, \varphi \circ h_{n,z}(a)
  .
\end{equation}
Indeed, the image of $h_{n,z}$ is
the atom of $z$ in $(T^X)^{-n}\mathcal{B}^X$ and thus one only has
to check that the right hand side defines a conditional measure for $\beta^X$
with respect to the sub-$\sigma$-algebra $\left(T^X \right)^{-n}\mathcal{B}^X$.

\begin{lem}
Let $E$ be as in
Corollary \ref{cor: E as in}.
There exists a conull subset $F \subset E$ such that for
  $
  z\in F$, for all $n \in \mathbb{N}$, and for all $a \in (\supp \bar \mu)^n$ we have
  \begin{equation*}
    h_{n,
    z}(a) \in E \implies \sigma_{
    z} \propto
    \sigma_{
    z'},  \ \ \text{ where }
    z'= h_{n,
    z}(a).
  \end{equation*}
\end{lem}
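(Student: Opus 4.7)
The plan is to use the pullback of the full-measure set $E$ from Corollary~\ref{cor: E as in} under iterates of $T^X$, and then exploit the fact that $z$ and $z' = h_{n,z}(a)$ share the same $n$-th image under $T^X$ by the very definition of $h_{n,z}$.

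More precisely, I define
\[
F \df \bigcap_{n \in \mathbb{N}} (T^X)^{-n} E.
\]
Since $\beta^X$ is $T^X$-invariant (Proposition~\ref{prop: commutes with RW}) and each $(T^X)^{-n}E$ is conull, $F$ is a conull subset of $E$. For any $z \in F$, we have $(T^X)^n z \in E$ for every $n \in \mathbb{N}$.

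Now fix $z \in F$, $n \in \mathbb{N}$, and $a \in (\supp \bar\mu)^n$ and assume $z' := h_{n,z}(a) \in E$. By the definition of $h_{n,z}$, we have $(T^X)^n(z) = (T^X)^n(z')$. In particular, since $z \in F$, this common image lies in $E$. Applying relation~\eqref{eq:LeafwiseUnderAutomorphism} from Corollary~\ref{cor: E as in} once to $z$ (using $z, (T^X)^n z \in E$) and once to $z'$ (using $z', (T^X)^n z' = (T^X)^n z \in E$), I obtain
\[
\sigma_z \propto (\eta_{\varrho^n})_\ast \sigma_{(T^X)^n z} \quad \text{and} \quad \sigma_{z'} \propto (\eta_{\varrho^n})_\ast \sigma_{(T^X)^n z'}.
\]
Since $(T^X)^n z = (T^X)^n z'$, the right-hand sides are proportional, and hence $\sigma_z \propto \sigma_{z'}$, as required.

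There is no real obstacle here: the content is entirely packaged into~\eqref{eq:LeafwiseUnderAutomorphism}, and the only subtlety is ensuring that the hypothesis ``$(T^X)^n z \in E$'' of that relation is available simultaneously for $z$ and for $z'$. This is precisely what enlarging $E$ to the $T^X$-invariant subset $F$ accomplishes, while keeping the conclusion of the lemma unchanged because $F \subset E$.
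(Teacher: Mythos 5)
Your proof is correct and follows the same strategy as the paper: use \eqref{eq:LeafwiseUnderAutomorphism} to recover $\sigma_z$ and $\sigma_{z'}$ from the common value $\sigma_{(T^X)^n z}=\sigma_{(T^X)^n z'}$. You are in fact a bit more careful than the paper, since you explicitly construct the conull set $F=\bigcap_n (T^X)^{-n}E$ needed to guarantee $(T^X)^n z\in E$ for all $n$ (one should of course also intersect with $E$ itself if the index set $\mathbb{N}$ excludes $0$, but this is cosmetic), whereas the paper leaves this step implicit.
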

\begin{proof}
Formula~\eqref{eq:LeafwiseUnderAutomorphism} means that as long as $z = (b,x) \in E$, the measure $\sigma_{
z}$ can be recovered from $\sigma_{(T^X)^nz}$ for every $n$, by composition with the contraction map $\eta_{\varrho^n}$.
Thus the conclusion of the Lemma follows from the fact
  $(T^X)^n(
  z') = (T^X)^n(
  z)$.
\end{proof}
Given $n\in\bN$ and $c\in B^\ast \cup B
$ such that $\mrm{len}(c)\geq n$, following \cite{BQ1}, we use the letter $R$ to denote the adjoint representation, so that
$$
R\left(c_n^1 \right) \df \Ad_{c_1} \circ \cdots \circ \Ad_{c_n} , \ \ \ \ \ R\left(c_1^n \right) \df \Ad_{c_n} \circ \cdots \circ \Ad_{c_1},
$$
and set
\begin{equation}\label{eq:ShorthandAdjointAction}
 F_{n,b}(a) =
  R\left(a_1^n \right) \circ R\left(b_n^1 \right)^{-1}.
\end{equation}
 Note that
     for $\s \in S \sm \Sue,$ the
   adjoint  action of $F_{n,b}(a)$ on $\gfrak_\s$ is trivial for any $n,b,a$.

Using Lemma~\ref{lem:SequenceOfDisplacements}, we can run the core of the
exponential drift argument.
\begin{prop}[{cf.~\cite[Prop.~7.1]{BQ1}}]\label{prop:ExponentialDrift}
For any $\delta_0>0$, for $\beta^X$-almost every $
z \in E$
  there exist $u \in \ufrak_{\mrm{ue}}$
  and $
  z' \in E$ such that
  \begin{equation}\label{eq:GoalInvariance}
    0 < \lVert u \rVert < \delta_0, \quad
    \Psi_u(z')
    \in E, \quad  \quad
    \sigma_{\Psi_u(z')}
    \propto
    \sigma_{z'} \propto \sigma_{z}.
  \end{equation}
\end{prop}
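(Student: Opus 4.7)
The plan is to execute the Benoist--Quint exponential drift argument (\cite[Prop.~7.1]{BQ1}) adapted to our non-stiff setting, using the non-atomicity and non-alignment results of the two preceding sections as crucial input. Fix $\delta_0>0$. First, by Lusin's theorem applied to $z\mapsto\bar\sigma_z$, choose a compact set $L\subset E$ with $\beta^X(L)>1-\vre$ (for a small $\vre$ to be determined) on which this map is continuous. For $\beta^X$-a.e.\ $z=(b,x)$, Lemma~\ref{lem:SequenceOfDisplacements} supplies a sequence $v_m\to 0$ in $\gfrak\smallsetminus\wstfrak$ with $\Psi_{v_m}(z)\in L$, and the martingale identity \eqref{eq:MartingaleTheorem} applied to $\mathbf{1}_L$ gives
\begin{equation*}
    \lim_{n\to\infty}\sum_{a\in(\supp\bar\mu)^n}\beta([a])\,\mathbf{1}_L\bigl(h_{n,z}(a)\bigr) \;\geq\; 1-2\vre,
\end{equation*}
and likewise with $z$ replaced by $\Psi_{v_m}(z)$.

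Since $v_m\notin\wstfrak$, the decomposition \eqref{eq: def wstfrak} shows that the projection of $v_m$ to $\gfrak_\sigma/\ufrak_\sigma$ is nonzero for some $\sigma\in\Sue$. Choose $n_m$ so that $|\varrho|_\sigma^{n_m}\|v_m\|_\sigma\asymp\delta_0$. Then Proposition~\ref{prop:predictable growth} guarantees $\|F_{n_m,b}(a)(v_m)\|_\sigma\asymp\delta_0$ for \emph{every} length-$n_m$ prefix $a$, while Lemma~\ref{lem:NormGrowthBound} gives, outside a set of prefixes of $\beta$-measure at most $\vre$, that the direction $[F_{n_m,b}(a)(v_m)]$ in $\mathbb{P}(\gfrak)$ is arbitrarily close to $\ufrak_\sigma\subset\ufrak_{\mrm{ue}}$ once $m$ is large. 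The components of $v_m$ at places $\sigma'\notin\Sue$ are preserved by $F_{n_m,b}(a)$ by Lemma~\ref{lem: more linear2}(1), and hence also tend to $0$. Combining with the two landing estimates above, for $\vre$ small enough and each large $m$ there is a prefix $a_m\in(\supp\bar\mu)^{n_m}$ satisfying all three constraints simultaneously.

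Set $z_m'\df h_{n_m,z}(a_m)\in L$ and $z_m''\df h_{n_m,\Psi_{v_m}(z)}(a_m)=\Psi_{w_m}(z_m')\in L$, where $w_m=F_{n_m,b}(a_m)(v_m)=u_m+r_m$ with $u_m\in\ufrak_{\mrm{ue}}$, $\|u_m\|\asymp\delta_0$, and $r_m\to 0$. By compactness of $L$, pass to a subsequence along which $z_m'\to z'\in L$, $u_m\to u\in\ufrak_{\mrm{ue}}$ with $0<\|u\|<\delta_0$, and hence $z_m''\to\Psi_u(z')\in L\subset E$. Since $(T^X)^{n_m}z_m'=(T^X)^{n_m}z$ and $(T^X)^{n_m}z_m''=(T^X)^{n_m}\Psi_{v_m}(z)$, formula \eqref{eq:LeafwiseUnderAutomorphism} yields $\sigma_{z_m'}\propto\sigma_z$ and $\sigma_{z_m''}\propto\sigma_{\Psi_{v_m}(z)}$. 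Combining $\Psi_{v_m}(z)\to z$ in $L$ with continuity of $\bar\sigma$ on $L$ then gives $\sigma_{\Psi_u(z')}\propto\sigma_{z'}\propto\sigma_z$, which is \eqref{eq:GoalInvariance}.

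The hard part is producing the prefix $a_m$ in the second step, which must simultaneously accomplish three tasks: align the $\gfrak_{\mrm{ue}}$-part of the displacement with $\ufrak_{\mrm{ue}}$ at the prescribed macroscopic scale $\delta_0$, drive the remaining components of the displacement to $0$ as $m\to\infty$, and return both $z_m'$ and $z_m''$ to the Lusin set $L$. The prefix ergodic theorem (Theorem~\ref{thm:PrefixErgodicTheorem}) is what makes the landing constraints compatible with the linear-algebraic constraints via a union bound, and the fact that all $\sigma\in\Sue$ share a common contraction ratio $\varrho$ (Proposition~\ref{prop:predictable growth}) is what keeps the comparisons $\asymp$ uniform across places and allows a \emph{single} choice of $n_m$ to work simultaneously in every ue-place.
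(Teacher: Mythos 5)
Your overall strategy is the one the paper uses (Benoist--Quint drift via Lemma~\ref{lem:SequenceOfDisplacements}, the decreasing-martingale theorem, Lemma~\ref{lem:NormGrowthBound}, Lemma~\ref{lem: more linear2}~(1), and continuity of $\bar\sigma$ on a Lusin set), but the step where you produce the macroscopic displacement has a genuine gap.

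You choose $n_m$ so that $|\varrho|_\sigma^{n_m}\|v_m\|_\sigma\asymp\delta_0$ and claim that Proposition~\ref{prop:predictable growth} then gives $\|F_{n_m,b}(a)(v_m)\|_\sigma\asymp\delta_0$ for \emph{every} length-$n_m$ prefix $a$. Neither part of this is correct. First, $F_{n,b}(a)=R(a_1^n)\circ R(b_n^1)^{-1}$, and the backward leg $R(b_n^1)^{-1}$ is not an isometry: on $\gfrak_\s$ ($\s\in\Sue$) it contracts $\ufrak_\s$ by $|\varrho|_\s^{-n}$, shears the Levi part, and expands $\ufrak_\s^-$ by $|\varrho|_\s^{n}$, so $\|R(b_n^1)^{-1}v_m^{(\s)}\|_\s$ can differ from $\|v_m^{(\s)}\|_\s$ by an unbounded, $n$-dependent factor. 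The correct scale to track is the quantity the paper calls $r_{n,m,\s}=|\varrho|_\s^n\,\|R(b_n^1)^{-1}v_m^{(\s)}\|_\s$, and the index $n_m$ must be chosen so that $\max_\s r_{n_m,m,\s}\asymp\delta_0$. Even the non-trivial claim that such an $n_m$ exists (i.e.\ $\limsup_n r_{n,m}=\infty$) requires an argument: the paper proves it by decomposing $v_m^{(\s)}$ into $\Ad(a)$-eigenspaces, using the unipotent-shear structure of $\Ad(\bar h_i^{(\s)})^{-1}$, and invoking $v_m\notin\wstfrak$ to ensure a non-contracted component survives. Your proposal doesn't carry out any of this.

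Second, Proposition~\ref{prop:predictable growth} only controls the \emph{operator norm} $\|R(a_1^n)\|_\s$, not the growth of $\|R(a_1^n)w\|_\s$ for a fixed vector $w$; the latter holds only for \emph{most} prefixes $a$, and that is precisely the content of Lemma~\ref{lem:NormGrowthBound} (the $\mathrm{Eq}^\s_m$-set construction in the paper). So the statement ``for every prefix $a$'' is false, and the norm comparability must be added to the list of constraints that are satisfied simultaneously on a set of prefixes of measure $>1-O(\vre)$ via a union bound. Once this and the $r_{n,m}$-based choice of $n_m$ are in place, the rest of your argument (compactness, passing to a subsequence, continuity of $\Psi$ and of $\bar\sigma$ on the Lusin set) goes through as you describe.

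One smaller point: the martingale identity \eqref{eq:MartingaleTheorem} gives that $\sum_a\beta([a])\,\mathbf{1}_L(h_{n,z}(a))$ converges to $\mathbb{E}(\mathbf{1}_L\,|\,\mathcal{Q}_\infty^X)(z)$, which need not be $\geq 1-2\vre$ pointwise a.e.; you need to first restrict to the set where the conditional expectation exceeds $1-\vre$ (which has measure $>1-\vre/2$ by a Chebyshev-type computation) and then apply Egorov to get uniform convergence on a slightly smaller compact subset. This is a fixable gloss, but as written the landing estimate does not hold for a.e.\ $z$.
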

\begin{proof}
  Let $\mathcal{Q}_\infty^X $ be the $\sigma$-algebra defined by \eqref{eq:TailSigmaAlgebra}, let $\varepsilon \in (0,1)$ be arbitrary and fix a compact subset $K \subset E$
  such that $\overline{\sigma}$ is continuous on $K$ and such that
  $\beta^X(K) > 1-\varepsilon^2/2$. We denote by $\mathbf{1}_K$ the indicator function
  of $K$. Recall that the conditional expectation  $\mathbb{E}\left(\mathbf{1}_K |
  \mathcal{Q}_\infty^X \right)(\cdot) $ is an almost everywhere defined $\mathcal{Q}_\infty^X $-measurable function on $B^X$ which is uniquely defined up to a nullset; we fix one  representative and continue to denote it by $\mathbb{E}\left(\mathbf{1}_K |
  \mathcal{Q}_\infty^X \right)$.  We let
  \begin{equation*}
    F_\varepsilon = \left\{
    z \in E \colon \mathbb{E}\left(\mathbf{1}_K |
    \mathcal{Q}_\infty^X \right)
    (z) > 1-\varepsilon\right\}.
  \end{equation*}
  One computes that
  \begin{align*}
    1 - \frac{\varepsilon^2}{2} & < \beta^X(K)
    \leq (1 - \varepsilon)\big(1 - \beta^X(F_\varepsilon)
    \big) + \beta^X(F_\varepsilon) \\
    & = 1 - \varepsilon + \varepsilon \beta^X(F_\varepsilon).
  \end{align*}
  Hence $\beta^X(F_\varepsilon) > 1 - \frac{\varepsilon}{2}$. Given
  $n\in\mathbb{N}$, let
  \begin{equation*}
      \psi_n \colon B^X
      \longrightarrow \mathbb{R},
     \ \ \ \ \ \ \ \ \  \psi_n
     (z)
     \df \sum_{a \in (\supp \bar \mu)^n}  \bar \beta([a]) \, \mathbf{1}_K\circ
      h_{n,
      z }(a).
  \end{equation*}
  Using~\eqref{eq:MartingaleTheorem}, we fix a full measure subset $E_1
  \subset E$ such that
  \begin{equation}\label{eq: convergence in E}
    \forall
    z\in E_1, \quad  \quad \psi_n(
    z) \overset{n\to
    \infty}{\longrightarrow} \mathbb{E}\left(\mathbf{1}_K | \mathcal{Q}_\infty^X \right)(
    z).
  \end{equation}
  We let $L_1 \subset E_1$ be a compact continuity set for $\mathbb{E}\left(\mathbf{1}_K |
  \mathcal{Q}_\infty^X \right)$ such that $\beta^X(L_1) > 1 - \varepsilon$. Using
  Egorov's theorem, we can additionally assume that the convergence in \eqref{eq: convergence in E}
  is uniform on $L_1$, and thus there exists $n_0 \in \mathbb{N}$ such that
\begin{equation}\label{eq:UniformityOfGoodProportion}
    \forall n \geq n_0,\ \forall
    z \in L_1, \quad \psi_n(
    z) > 1-\varepsilon.
  \end{equation}
  This implies that
  \begin{equation*}
    \forall n\geq n_0,\ \forall
    z \in L_1,\ \quad \bar
    \beta \left(\bigcup\{[a]: a \in
    (\supp \bar \mu)^n, \ h_{n,
    z}(a) \in K \}\right) > 1 - \varepsilon.
  \end{equation*}
  Using Theorem~\ref{prop:NonAlignmentLimitMeasures}, we fix a compact set
  $L \subset L_1$ such that $\beta^X(L) > 1 - \varepsilon$ and
  \begin{equation*}
    \forall (b,x)
    \in L,\ \quad \nu_b(\Wst x) = 0.
  \end{equation*}
  We further fix a compact subset $L' \subset L$ of measure $\beta^X(L') >
  1-\varepsilon$ satisfying the conclusion of
  Lemma~\ref{lem:SequenceOfDisplacements}.

  For what follows, we fix an element $z_0 = (b,x) \in L'$ and a sequence
  $(v_m)_{m \in \mathbb{N}}$ of vectors in $\gfrak\sm \wstfrak $ such that $\exp(v_m)$ is well-defined for all $m$, $v_m \to 0$ as $m \to \infty$,  and
  \begin{equation*}
    \forall m\in\bN  \quad z_m \in L, \text{ where } z_m \df \big( b , \exp(v_m)x \big) .
  \end{equation*}
Using~\eqref{eq:UniformityOfGoodProportion}, we know that for all $m\geq 0$
  and for all $n\geq n_0$ we have
  \begin{equation}\label{eq:ManyGoodPrefixes}
    \beta\big(\{a \in (\supp \bar \mu)^n \colon h_{n,z_m}(a) \in K\}\big) > 1 -
    \varepsilon.
  \end{equation}

Extending the definition in \eqref{eq: use the notation}, we write $\Psi_v(b,x) =(b, \exp(v)x)$ for every $v \in \gfrak$ for which $\exp(v)$ is well-defined. Note that the map $(z,v) \mapsto \Psi_v(z)$ is continuous where defined. Recall (or see \cite[\textsection 5]{BQ2}) that if $\exp(v)$ is well-defined, then so is $\exp(\Ad(g)v)$ for every $g \in G^S$, with $\exp(\Ad(g)v) = g\exp(v) g^{-1}$. From this and using~\eqref{eq:ShorthandAdjointAction}
we  see that if  $y \in X$ satisfies $y = \exp(v)x$, where $\exp(v)$ is well-defined,  then
  \begin{equation*}
    \forall a \in (\supp \bar \mu)^n \quad \quad h_{n,(b,y)}(a) =
    \big(\Psi_{F_{n,b}(a)v}\circ h_{n,(b,x)}\big)(a).
  \end{equation*}
  Given $\s\in S$, let $v^{(\s)}_{m} \in \gfrak_\s$ denote the
  projection of $v_m$ to $\gfrak_\s$.
  Given $m, n\in\bN$ and $\s \in S$,
  define
   $$ r_{n,m,\s} \df  \lvert \varrho\rvert_\s^n \, \left\|
   R\left(b_n^1 \right)^{-1}v^{(\s)}_{m}\right \|_\s \ \ \text{ and } \
    r_{n,m} \df \max_{\s
   \in S}r_{n,m,\s}.
   $$
   We claim that
\begin{equation}\label{eq: enumerate1}
  \limsup_{n \to \infty} r_{n,m} = \infty.
 \end{equation}
   To see this, let $a(\varrho) = \mrm{diag}(\varrho, \ldots, \varrho, 1)$, and for each $\s \in \Sue$, let
   $$\gfrak_\s = \ufrak_\s \oplus \ufrak_{\s,1}^\perp \oplus \ufrak_{\s,\varrho}^\perp$$
   be the decomposition of $\gfrak_\s$ into the eigenspaces for the adjoint action of $a(\varrho)^{-1}$, corresponding respectively to the eigenvalues $\varrho^{-1}, 1, \varrho. $
   Using  this direct sum decomposition we write
    $$v_m^{(\s)} = v_{m,1}^{(\s)}+ v_{m,2}^{(\s)} + v^{(\s)}_{m,3}.
   $$
Since $v_m \notin \wfrak_{\mrm{st}} $   we see from \eqref{eq: def wstfrak} that there is $\s \in S_{\mrm{ue}}$ such that at least one of
$v_{m,2}^{(\s)} $ and $v_{m,3}^{(\s)}$ is nonzero. 
For this choice of $\s$ and each $i$ we can write $\bar h_i^{(\s)}=
 a(\varrho) u_i$, 
 where $u_i \in \mrm{U}_{\s}$.
The adjoint action of $u_i^{-1}$ satisfies
$$\Ad_{u_i}^{-1}\left(v_{m,2}^{(\s)}\right) - v_{m,2}^{(\s)} \in \ufrak \ \  \text{ and }\ \ \Ad_{u_i}^{-1}\left(v_{m,3}^{(\s)} \right) - v_{m,3}^{(\s)} \in \ufrak \oplus \ufrak^\perp_{\s, 1}.$$
Using this one verifies that
$$\inf_n \left \|R\left(b_n^1 \right)^{-1}v_{m}^{(\s)} \right \|_\s>0;$$
indeed, the component in at least one of $\ufrak^\perp_{\s,1 }, \ufrak^\perp_{\s, \varrho}$ is not contracted by $R\left(b_n^1 \right)^{-1}$. Since $\s \in \Sue$, we have $|\varrho|_\s >1$. From this it follows that
$|\varrho|_\s^n \,\left\|R\left(b_n^1 \right)^{-1}v_{m}^{(\s)}  \right\|_\s \to \infty.$ This shows \eqref{eq: enumerate1}.

  Let $C>1$ be as in  Lemma~\ref{lem:NormGrowthBound}, let
  $$C_1 \df  \max\{ \lvert \varrho \rvert_\s \colon \s \in S\}, \quad C_2 \df
  \max\left\{ \left \lVert \Ad(\bar h_i^{-1}) \right \rVert_\s \colon \s \in S, \, \bar h_i \in
  \mrm{supp}\, \bar \mu \right\},$$
  and
  $$A' \df  \frac{\delta_0}{2C_1C_2C}.$$
 For each $n \in \N$ let $n_m \in \bN$ be the minimal index
  for which $A' \leq r_{n_m,m}$, which is well-defined in light of \eqref{eq: enumerate1}. It follows from the definition of $C_1$ and $C_2$ that $r_{n+1,m} \leq C_1C_2 r_{n,m}$, and hence
  \begin{equation*}
    \forall m \in \bN \quad A' \leq r_{n_m,m} \leq A'C_1C_2.
  \end{equation*}
Since $v_m \to 0$ we have  $n_m \overset{m \to \infty}{\longrightarrow} \infty$, and since the action of $F_{n,b}(a)$ on $\gfrak_\s$ is trivial for $\s \notin \Sue$, the place $\s$ for which the maximum in the definition of $r_{n_m, m}$ is attained,  belongs to $\Sue$ for all large enough $m$.

  Given $\s\in\Sue$ and $m,M \in \mathbb{N}$, let
  \begin{align*}
    \mrm{Sh}_{m,M}^\s(b) & \df \left\{
      a \in B
      \colon \mrm{dist}(\bQ_\s
      F_{n_m,b}(a)v_{m}^{(\s)} , \ufrak_{ \s}) \leq \frac{1}{M}
    \right\}, \\
    \mrm{Eq}_{m}^\s(b) & \df \left\{
      a \in B
      \colon \frac{1}{C} r_{n_m,m,\s} \leq \left\lVert
      F_{n_m,b}(a)v_{m}^{(\s)} \right\rVert_\s \leq C r_{n_m,m,\s}
    \right\}.
  \end{align*}
  Given $m\in\bN$, we let $M_{m,\s} \in \bN \cup \{\infty\}$ maximal such that
  \begin{equation*}
    \forall M \leq M_{m,\s}\quad
  \beta \big(\mrm{Sh}_{m,M}^\s(b)\big) > 1 - \varepsilon.
  \end{equation*}

  Since $\varepsilon$ was small, using
Lemmas~\ref{lem:NormGrowthBound} and ~\ref{lem: more linear2}, there exists $m_0 \in \bN$
  such that for every $m \geq m_0$ we can find a word
  \begin{equation*}
    a_m \in \bigcap_{\s \in \Sue}\big(\mrm{Sh}_{m,M_{m,\s}}^\s(b) \cap
    \mrm{Eq}_m^\s(b)\big)
  \end{equation*}
  such that $h_{n_m,z_0}(a_m), h_{n_m,z_m}(a_m) \in K$.
  Note that
  \begin{equation}\label{eq: growth of Mmsigma}
  \text{ for all } \s \in \Sue, \quad M_{m,\s}
  \overset{m \to \infty}{\longrightarrow} \infty. \end{equation}
  Hence, after passing to a subsequence, there is $u \in \gfrak$ such that
  $$h_{n_m,z_0}(a_m) \to z' \in K, \ \  h_{n_m,z_m}(a_m) \to z'' \in
  K, \ \ \ \text{ and } \ \
  F_{n_m,b}(a_m)v_m \to u.$$
  We claim that $u \in \ufrak_{\mrm{ue}}.$
  Indeed, for $\s \notin \Sue$ the action of $F_{n,b}(a)$ on $\gfrak_\s$ is trivial and hence the $\s$-component of $ F_{n_m,b}(a_m)v_m$ is equal to $v_m^{(\s)} $, which goes to zero as $m
\to \infty$. Furthermore, for $\s \in \Sue$, it follows from the definition of $\mrm{Sh}_{m,M}^\s(b) $ and from
\eqref{eq: growth of Mmsigma} that any limit of $ F_{n_m,b}(a_m)v_m$ belongs to $\ufrak_\s$ (and in particular, to the domain of definition of $\Psi$).
From the definition of $ \mrm{Eq}_{m}^\s(b)$ that for $\s \in \Sue$ for which the maximum in the definition of $r_{n_m, m}$ is realized, we have
$$0< \frac{A'}{C} \leq \lVert u^{(\s)} \rVert \leq \lVert u\rVert \leq A'C_1C_2C < \delta_0.$$
From the continuity of the map $\Psi $ we have
  \begin{equation*}
    z'' = \Psi_uz'.
  \end{equation*}
  Because $K$ is a continuity set, we have that
  \begin{equation*}
    \overline{\sigma}_{z'} = \overline{\sigma}_{z_0} \quad \text{ and }
    \overline{\sigma}_{z''} = \overline{\sigma}_{z_0}.
  \end{equation*}
  This proves~\eqref{eq:GoalInvariance}.
\end{proof}
In what follows, given $z \in B^X$, we denote by $W_z$ the stabilizer of
$\overline{\sigma}_z$ in $
\mrm{U}_{\mrm{ue}}$.
Combining  Proposition~\ref{prop:ExponentialDrift} and formula ~\eqref{eq:TranslationLeafwiseMeasure}, we obtain:

\begin{corollary}\label{cor:NonTrivialStabilizers}
  There exists $E \subset B^X$ of full measure such that $W_z$ is non-trivial
  for every $z \in E$.
\end{corollary}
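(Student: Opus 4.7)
The plan is to combine Proposition~\ref{prop:ExponentialDrift} with the translation identity~\eqref{eq:TranslationLeafwiseMeasure} in a direct way. Fix any $\delta_0 > 0$ (for instance $\delta_0 = 1$; no limiting process is actually needed since the conclusion only asks for non-triviality). Proposition~\ref{prop:ExponentialDrift} furnishes a full-measure subset $E_0 \subset E$ such that, for every $z \in E_0$, one can find $u \in \ufrak_{\mathrm{ue}}$ with $0 < \|u\| < \delta_0$ and $z' \in E$ with $\Psi_u(z') \in E$, and moreover
\begin{equation*}
    \overline{\sigma}_{\Psi_u(z')} = \overline{\sigma}_{z'} = \overline{\sigma}_z
\end{equation*}
(as classes in $\mathcal{M}_1(\ufrak_{\mathrm{ue}})$).

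Next, since $z', \Psi_u(z') \in E$, the translation identity~\eqref{eq:TranslationLeafwiseMeasure} applies and gives $\overline{\sigma}_{z'} = (\tau_u)_\ast \overline{\sigma}_{\Psi_u(z')}$. Substituting the first equality from the previous display yields
\begin{equation*}
    (\tau_u)_\ast \overline{\sigma}_{z'} = \overline{\sigma}_{z'},
\end{equation*}
which is precisely the statement $u \in W_{z'}$. But by the second equality above, $\overline{\sigma}_z = \overline{\sigma}_{z'}$, so $W_z = W_{z'}$, and hence $u \in W_z$. Since $\exp$ restricted to the abelian Lie algebra $\ufrak_{\mathrm{ue}}$ is a group isomorphism onto $\mathrm{U}_{\mathrm{ue}}$, the condition $u \neq 0$ means $\exp(u) \neq \mathrm{Id}$, and $W_z$ is non-trivial.

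There is essentially no obstacle here; the entire content of the corollary is already contained in Proposition~\ref{prop:ExponentialDrift}, and the only step is to recognize that the two proportionalities (the one produced by the exponential drift, and the one produced by the translation formula for leafwise measures) combine to give the invariance relation $(\tau_u)_\ast \overline{\sigma}_{z'} = \overline{\sigma}_{z'}$. If one prefers to obtain a uniform full-measure set on which $W_z$ contains \emph{arbitrarily small} non-trivial elements (which will be useful in \textsection\ref{sec:endgame}, since closed subgroups of $\mathrm{U}_{\mathrm{ue}}$ containing arbitrarily small elements are generated by one-parameter subgroups), one applies the argument along a sequence $\delta_n \to 0$ and intersects the countably many full-measure sets obtained.
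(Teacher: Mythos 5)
Your proof is correct and is essentially the same argument the paper has in mind: the paper simply states the corollary follows from "Combining Proposition~\ref{prop:ExponentialDrift} and formula~\eqref{eq:TranslationLeafwiseMeasure}," and your write-up supplies exactly that combination, correctly identifying that the chain of proportionalities yields $(\tau_u)_\ast \overline{\sigma}_{z'} = \overline{\sigma}_{z'}$ and hence $\exp(u) \in W_{z'} = W_z$ with $\exp(u) \neq \mathrm{Id}$. The closing remark about intersecting over $\delta_n \to 0$ is not needed for the corollary itself, as you note, but is a sensible observation for the later use in Lemma~\ref{lem:StabilizersLeafwiseMeasures}.
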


\begin{lem}\label{lem:StabilizersLeafwiseMeasures}
There exists a conull subset $E \subset B^X$ such that:
  \begin{enumerate}
    \item \label{item:SLMEquivariance} If $z = (b,x)\in E$ and $T^Xz \in E$,
      then $W_{T^Xz} = b_1^{-1}W_z b_1$ and $\overline{\sigma}_{T^Xz} =
      \overline{(\Ad b_1^{-1})_\ast \sigma_z}$.
    \item \label{item:SLMProduct} For every $z \in E$, we have that
      \begin{equation*}
        W_z = \prod_{\s \in \Sue} W_{z,\s},
      \end{equation*}
      where $W_{z,\s} = \exp(\wfrak_{z,\s})$ for a Lie subalgebra $\wfrak_{z,\s}
      \subset \ufrak_\s$.
    \item \label{item:SLMInvariance} For every $z \in E$, $\sigma_z$ is
      $W_z$-invariant.
  \end{enumerate}
\end{lem}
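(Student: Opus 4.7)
My proof plan addresses the three assertions in order, with (\ref{item:SLMProduct}) being the technical heart of the argument.

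For (\ref{item:SLMEquivariance}), the equivariance is essentially bookkeeping from Corollary \ref{cor: E as in}. Applied for $n=1$, the covariance reads $\sigma_z \propto (\eta_\varrho)_\ast \sigma_{T^X z}$. The elementary identity $\eta_\varrho \circ \tau_u = \tau_{\varrho u} \circ \eta_\varrho$ on $\ufrak_{\mrm{ue}}$ then translates the stabilizer equation $\tau_{u\ast}\sigma_{T^X z} \propto \sigma_{T^X z}$ into $\tau_{\varrho u\ast}\sigma_z \propto \sigma_z$, so that $W_z = \varrho \cdot W_{T^X z}$. Since $\Ad(b_1^{-1})$ acts on $\ufrak_{\mrm{ue}}$ as scaling by $\varrho^{-1}$ (from \eqref{eq: def h'}), this exponentiates to $W_{T^X z} = b_1^{-1}W_z b_1$, and the leafwise measure identity is a restatement of the covariance with $\eta_{\varrho^{-1}}$ identified as $\Ad(b_1^{-1})$.

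For (\ref{item:SLMProduct}), I would combine Poincar\'e recurrence with the scaling relations from (\ref{item:SLMEquivariance}). By recurrence for the ergodic transformation $(T^X, \beta^X)$, for $\beta^X$-a.e. $z$ there is a sequence $n_k \to \infty$ with $T^{Xn_k}z \to z$. Restricting to a large Lusin set on which $z \mapsto \overline\sigma_z$ is continuous, one may further arrange that $W_{T^{X n_k} z} \to W_z$ in the topology of Chabauty convergence of closed subgroups. Iterating (\ref{item:SLMEquivariance}) gives $W_z = \varrho^{n_k} W_{T^{X n_k} z}$, so $W_z$ is an asymptotic Chabauty limit of its own dilates by $\varrho^{n_k}$. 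Because $|\varrho|_\sigma > 1$ at every $\sigma \in \Sue$, these are genuine dilations in every factor, and a standard argument then shows that such a closed subgroup of the totally disconnected abelian group $\prod_{\sigma \in \Sue} \Q_\sigma^d$ must be of the form $\prod_\sigma V_\sigma$ with each $V_\sigma$ a $\Q_\sigma$-linear subspace: iterated dilation sweeps each orbit $\Q_\sigma \cdot v$ through the subgroup, and the differing $\sigma$-adic growth rates prevent nontrivial diagonal components from surviving in the limit.

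For (\ref{item:SLMInvariance}), since every place in $\Sue$ is finite, $\mrm{U}_{\mrm{ue}} = \prod_{\sigma \in \Sue}\mathbf{U}(\Q_\sigma)$ and its closed subgroup $W_z$ are totally disconnected. The map $c \colon W_z \to \R^+$, defined by $\tau_{u\ast}\sigma_z = c(u)\sigma_z$, is a continuous group homomorphism (by weak-$\ast$ continuity of the translation action on Radon measures), and any continuous homomorphism from a totally disconnected group into the connected group $\R^+$ is trivial. Hence $c \equiv 1$ and $\sigma_z$ is $W_z$-invariant.

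The main obstacle is the argument in (\ref{item:SLMProduct}): rigorously extracting the product-of-subspaces decomposition from asymptotic dilation invariance requires care, particularly in handling the case where $|\varrho|_\sigma$ may coincide across distinct places, so separating the factors cannot rely solely on contrasting dilation rates and will likely require a more delicate analysis of the Chabauty limits together with the fact that the scalings $\varrho^{n_k}$ preserve the decomposition $\ufrak_{\mrm{ue}} = \bigoplus_{\sigma \in \Sue}\ufrak_\sigma$.
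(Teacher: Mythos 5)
Your treatment of item~(i) is correct and matches the paper's (which dismisses it as immediate). Item~(iii) reaches the right conclusion, but the stated principle is false: a continuous homomorphism from a totally disconnected group to $\R^+$ need not be trivial ($\Z$ is discrete, hence totally disconnected, yet $n\mapsto 2^n$ is a nontrivial continuous homomorphism into $\R^+$). What is actually needed, and what the paper uses, is the $\Q_\s$-vector space structure of $\wfrak_{z,\s}$ secured by item~(ii): a continuous homomorphism from a $\Q_\s$-vector space to $\R$ kills a compact open subgroup by compactness and then everything by $\Q_\s$-divisibility. (Alternatively, one can note that $W_z$, being a closed subgroup of $\prod_\s\Q_\s^d$, is an increasing union of compact open subgroups, and the image of each in $\R^+$ is trivial.) This is a fixable slip in wording, but as written the argument does not support the conclusion.

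The serious gap is in item~(ii), and you have correctly identified it as the technical heart. Your Chabauty-limit approach hinges on the implication ``$\overline\sigma$ continuous on $K$ (Lusin) $\Rightarrow$ $z\mapsto W_z$ continuous on $K$ in the Chabauty topology,'' but the stabilizer of a (class of a) measure is only \emph{upper} semi-continuous: limits of stabilizing elements still stabilize, but nearby measures may have strictly smaller stabilizers. So from Poincar\'e recurrence you only get $\lim_k \varrho^{-n_k}W_z\subset W_z$, which carries no information (e.g.\ $W_z=\Z_p$ satisfies $\lim p^{-n_k}\Z_p=\{0\}\subset\Z_p$ trivially). Moreover, even granting full Chabauty continuity, the deduction that a closed subgroup fixed under Chabauty limits of $\varrho^{-n_k}$-dilates must be a \emph{product} of $\Q_\s$-subspaces is left at the level of ``a standard argument,'' and your own closing paragraph flags the difficulty of separating places. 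The paper's proof avoids both issues by making a genuinely different decomposition: it first invokes strong approximation ($\overline{\Z v}=\bigoplus_\s\Z_\s v^{(\s)}$) to obtain the product decomposition $W_z=\prod_\s W_{z,\s}$ directly from closedness of $W_z$, then introduces the \emph{scalar} defect function $\varphi_\s(z)=\inf\{\|v^{(\s)}\|\colon v^{(\s)}\in\wfrak_{z,\s},\,\exp(v)\notin W_{z,\s}\}$, which satisfies the clean covariance $\varphi_\s(T^Xz)=|\varrho|_\s^{-1}\varphi_\s(z)$; Poincar\'e recurrence applied to this scalar function then forces $\varphi_\s=0$ a.e. Working with a covariant scalar sidesteps both the semi-continuity obstruction and the Chabauty-limit classification; your approach as written does not replace either ingredient.
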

\begin{proof}
  The proof is mostly a combination of facts about closed subgroups of unipotent
  $S$-arithmetic groups and techniques used in~\cite[\textsection8.2]{BQ2}.
  Item~\eqref{item:SLMEquivariance} is an immediate consequence
  of
  the characterizing properties of leafwise
  measures.
  For item~\eqref{item:SLMProduct}, we know from Corollary \ref{cor:NonTrivialStabilizers} that
  for typical $z \in B^X$ the stabilizer $W_z$ of $\overline{\sigma}_z$ is
  non-trivial.
  Let $u$ be a nontrivial element of $W_z$. Then the logarithm $v =
  \log u
  \in  \ufrak_\mrm{ue}$ is well-defined and
  non-zero, and we write $v = (v^{(\s)})_{\s \in \Sue}$.
  It follows from strong approximation (see~\cite[Ch.~3, Lem.~3.1]{Cassels}) that
  \begin{equation*}
    \overline{\bZ v} = \bigoplus_{\s \in \Sue}\bZ_\s v^{(\s)}.
  \end{equation*}
  In particular, for any element in $W_z$, each
  component fixes $\overline{\sigma}_z$ separately, i.e., $W_z$ is a direct
  product of subgroups of $\mrm{U_{ue}}$. We denote these groups by $W_{z, \s}$, and note that each of the $W_{z,\s}$ is closed, since $W_z$  is closed.

  Let $\wfrak_{z,\s} \subset \ufrak_{\mrm{ue}}$ be the Lie algebra generated by $\log(W_{z,\s})$ and
  note that, since $W_z$ is abelian,
  \begin{equation*}
    \wfrak_{z,\s} = \mrm{span}_{\bQ_\s} \log(W_{z,\s}).
  \end{equation*}
  By the preceding argument $\wfrak_{z,\s} = \{0\}$ if and only if $W_{z,\s}$ is trivial.
  We claim that
  \begin{equation*}
    \exp(\wfrak_{z,\s}) = W_{z,\s}.
  \end{equation*}
 This certainly holds if $W_{z, \s}$ is trivial.  By construction,  $W_{z,\s} \subset \exp(\wfrak_{z,\s})$. For the
  opposite inclusion, we use the argument given in the proof
  of~\cite[Lem.~8.3]{BQ2}. We suppose by contradiction that the opposite inclusion does not hold for some  $z \in B^X$, and let
  \begin{equation*}
    \varphi_\s(z) = \inf \left\{ \left\lVert v^{(\s)} \right \rVert \colon v^{(\s)} \in \wfrak_{z,\s},
      \, \exp(v) \not\in W_{z,\s}
    \right\}.
  \end{equation*}
  Then there is a subset of $B^X$ of positive measure on which $\varphi_\s> 0$.
  By the definition of $\Sue$ we have that $\lvert \varrho \rvert_\s>
  1$ for every $\s \in \Sue$, and this implies that for every $b \in B$ we have that $\left \lVert
  (b_1^n)^{-1}|_{\ufrak_\s} \right \rVert \to 0$ as $n \to \infty$. By
  item \eqref{item:SLMEquivariance} we have  that $\varphi_\s ((T^X)^n(z))$ tends to zero for $\beta^X$-a.e. $z$, and this contradiction to
  the Poincar\'e
  recurrence theorem proves the claim.

  In order to prove item~\eqref{item:SLMInvariance}, note that, since $\ufrak_{\mrm{ue}}$ is
  abelian and since the action of $\ufrak$ on Radon measures on $\mrm{U_{ue}}$ is
  continuous, for every $\s \in \Sue$ the map $\alpha_{z,\s} \colon \wfrak_{z,\s} \to
  \bR$ given by
  \begin{equation*}
    \forall v_\s \in \wfrak_{z,\s} \quad {\tau_{v_\s}}_\ast \sigma_z =
    e^{\alpha_{z,q}(v_\s)} \sigma_z
  \end{equation*}
  is a continuous group homomorphism. Since $\wfrak_{z,\s}$ is a $\Q_\s$-vector
  space, $\alpha_{z,\s}$ is trivial.
\end{proof}
\begin{proof}[Proof of Theorem~\ref{thm:AdditionalInvariance}]
  For the proof we disintegrate each measure $\nu_b$ along the map that sends $z = (b,x)$ to the Lie algebra $\wfrak_z = \Lie(W_z)$. This is done as in Propositions~7.5 and~7.6
  in~\cite{BQ1}. Note that \cite{BQ1} treats actions of $\R^d$. Recall that we showed in the proof of Lemma \ref{lem:StabilizersLeafwiseMeasures} that if $v \in
  \ufrak_{\mrm{ue}}$ satisfies that $\exp(v)$ fixes a
  leafwise measure, then so does $\exp(v')$
  for any $v' \in \mrm{span}_{\Q_\s}(v).$
  Since the exponential map is a bijection between $\ufrak_\mrm{ue}$ and $\mrm{U_{ue}}$, all the statements in \cite[\textsection4]{BQ1} adapt mutatis mutandi to our situation.
  \end{proof}

\section{Ratner's Theorem and Concluding the Proof of Theorem \ref{thm: stationary main}}\label{sec:endgame}
Armed with the additional invariance of limit measures by unipotents, provided by Theorem \ref{thm:AdditionalInvariance}, and following the ideas of \cite{BQ1}, we will use Ratner's theorem to reduce the analysis of stationary measures on $\XS$ to the analysis of stationary measures on the projectivization of an exterior power of $\mathfrak{g}$. 
A description of such stationary measures is provided in \textsection\ref{subsec: exterior powers}.
The proof of Theorem \ref{thm: stationary main} is then concluded in \textsection\ref{subsec:conclude stationary main}.

\subsection{Stationary measures on projective spaces}\label{subsec: exterior powers}
In this section we give a description of stationary measures under the action of the random walk on projective spaces in linear representations of $\gG_\s, \s\in \Sue$; Propositions \ref{prop:measures on proj spaces} and \ref{prop:measures on proj spaces2}.
The proof uses a lemma showing that a $\s$-adic analog of the fractal measure associated to the IFS $\Phi$ gives $0$ mass to proper subvarieties of $\Q_\s^d$; Lemma \ref{lem:zero mass on proper subvarieties}. The latter is used to show that vectors grow uniformly under the random walk (Lemma \ref{lem:uniform growth in irreps}) to conclude that trajectories converge towards a (deterministic) subspace of maximal expansion.

\subsubsection{Zero mass on proper subvarieties}
Let $\s \in \Sue.$
Using \eqref{eq: useful formula and correct}, for every $b\in B$ and $n\in \N$, we write $b_1^n = a(\varrho^n) u_n(b)$ where 
\begin{equation}
    \label{eq: useful formula and correct2}
a(\varrho) \df \begin{pmatrix}
        \varrho \mrm{Id}_d & 0 \\
        0 & 1
    \end{pmatrix} \ \ \text{ and } \ \ u_n(b) \df \begin{pmatrix}
    \mathrm{Id}_d & -\varrho^{-n}\mathbf{y}_1^n \\ 0 & 1 
        \end{pmatrix}
    \in \gU_\s.
\end{equation}

We begin with the following preliminary algebraic lemma.
\begin{lem}
    \label{lem:Zariski closure and finite index}
    Let $\s\in S$, and let $\Gamma_\sigma$ be the projection of $\Gamma_{\bar{\mu}}$ to $\gG_\sigma$.
    Then, the following hold.
    \begin{enumerate}
        \item \label{item: Zariski closure of a(rho)} For every $\s\in \Sdt\cup \Sue$, and every $k\in \Z\smallsetminus\{0\}$, the $\Q_\s$-Zariski closure of the cyclic group $\langle a(\varrho^k)\rangle$ is the torus $T \df \{a(t): t\in \Q_\s^\ast\}$, where $\Q_\s^\ast$ is the multiplicative group of $\Q_\s$.

        \item \label{item:Zariski closure of Gamma_sigma contains U_sigma} For every $\s\in \Sue$, the $\Q_\s$-Zariski closure of $\Gamma_\sigma$ contains $\gU_\sigma$.

        \item \label{item:Zariski closure of finite index subgroups of Gamma_sigma} For every $\s\in S$, $\Gamma_\sigma$ has the same $\Q_\s$-Zariski closure as any of its finite index subgroups.
    \end{enumerate}

\end{lem}
\begin{proof}
    For Part \eqref{item: Zariski closure of a(rho)}, note that $|\varrho|_\s \neq 1$ for $\s\in \Sdt\cup\Sue$, and hence the cyclic group $\langle a(\varrho^k)\rangle$ is infinite whenever $k\neq 0$.
    Moreover, its Zariski closure is isomorphic to a Zariski closed subgroup of the multiplicative algebraic group $T\cong \mathbb{G}_\mrm{m}\cong \Q_\s^\ast$, and
    the only proper Zariski closed subgroups of $\mathbb{G}_{\mrm{m}}$ are finite. This follows by Zariski connectedness of $\mathbb{G}_{\mrm{m}}$, or alternatively, by the correspondence between such algebraic subgroups and quotients of the group of characters $X(\mathbb{G}_{\mrm{m}})\cong\Z$; cf.\ \cite[Section 8.2]{Borel}. This proves Part \eqref{item: Zariski closure of a(rho)}.

    For Part \eqref{item:Zariski closure of Gamma_sigma contains U_sigma}, let $\s\in \Sue$, and denote by $H_1$ the Zariski closure of $\Gamma_\s$.
    Then, a straightforward computation shows that for all pairs of indices $i, j$, $h_i \circ  h_j^{-1}=u(\varrho^{-1}(\mathbf{y}_i -\mathbf{y}_j)). $  It follows from the irreducibility of the IFS $\Phi$ (as in the proof of Proposition \ref{prop: projection Z trivial}) that the Zariski closure of the group generated by $\{u(\varrho^{-1}(\mathbf{y}_i - \mathbf{y}_j)): i,j=1, \ldots, k\}$ coincides with $\gU_\s$, and hence Part \eqref{item:Zariski closure of Gamma_sigma contains U_sigma} follows.

    For 
    Part \eqref{item:Zariski closure of finite index subgroups of Gamma_sigma}, note that its assertion is vacuous for $\s\in \Str$.
    Note further that Part \eqref{item: Zariski closure of a(rho)} implies Part \eqref{item:Zariski closure of finite index subgroups of Gamma_sigma} for $\s\in \Sdt$.
    Now let $\s\in \Sue$.
    Let $\Gamma_0$ be a finite index subgroup of $\Gamma_\sigma$, and denote by
     $H_0$ its Zariski closure.
        We claim that $\gU_\sigma$ is contained in $H_0$.
     Indeed, note that $H_0$ is a finite index subgroup of $H_1$. This follows by writing $\Gamma_\sigma= \cup_i \g_i \Gamma_0$ as a finite disjoint union of cosets of $\Gamma_0$ and taking Zariski closure of both sides of this equation.
     In particular, $H_1=\cup_i \g_i H_0$, where after possibly passing to a subset of $\{\g_i\}$,m the right side is a finite disjoint union of proper subvarieties. 
      By Part \eqref{item:Zariski closure of Gamma_sigma contains U_sigma} and irreducibility of the affine algebraic group $\gU_\sigma\cong \Q_\sigma^d$, we see that $\gU_\sigma \subseteq \g_i H_0$ for some $i$. Since $\gU_\sigma$ contains the identity, we see that in fact $\gU_\sigma$ is contained in $H_0$.

      Finally, we note that any element $\g$ of $\Gamma_0\subseteq \Gamma_\sigma$ can be written as $\g=a(\varrho^k)u$ for some $k\in\Z$ and $u\in \gU_\sigma$.
      It follows that $a(\varrho^k)\in H_0$ for some $k\neq 0$, and hence the Zariski closure of the cyclic group $\langle a(\varrho^k)\rangle$ is also contained $H_0$. Hence, by Part \eqref{item: Zariski closure of a(rho)}, $H_0$ contains all diagonal elements of the form $a(\varrho^n)$ for any $n\in \Z$, and thus $H_0$ contains all of $\Gamma_\sigma$. It follows that $H_0=H_1$ as desired.
\end{proof}

The following lemma is the main result of this subsection.
\begin{lem}\label{lem:zero mass on proper subvarieties}
For every $b= (i_1, i_2, \ldots)$, the limit $u_\infty(b)\df\lim_{n\to\infty} u_n(b)$ exists in $\gU_\s$, and is equal to 
$u_\infty(b)= u(\mathbf{y_\infty}(b))$, where 
\begin{align}\label{eq: define y infinity}
    \mathbf{y}_\infty(b) = -\sum_{n=1}^\infty \varrho^{-n} \mathbf{y}_{i_n}.
\end{align}
In particular, the set $\{u_n(b):n\in\N, b\in B\}$ is bounded.
The map $u_\infty: B \to \mrm{U}_\s$ satisfies 
\begin{equation}\label{eq: equivariance property for varieties}
u_\infty(ib) = u(-\varrho^{-1} \mathbf{y}_i)\, \Ad\left(\bar h_i^{-1} \right)(u_\infty(b)),
\end{equation}
and for any proper subvariety $\Ucal \subset \gU_\s$ we have $u_\infty(b)_*\beta(\Ucal)=0$. 
\end{lem}

\begin{proof} 
Define $\mathbf{y}_\infty(b) $ via \eqref{eq: define y infinity}. This is a geometric series, and since $\|\varrho\|_\s>1$ for $\s \in S_{\mrm{ue}}$, it converges. Using the easily verified identity $\mathbf{y}_1^n = \varrho \mathbf{y}_1^{n-1}+\mathbf{y}_{i_n}$, we see that the  $n$th partial sum of this series is equal to $-\varrho \mathbf{y}_1^n$. Since $u(-\varrho \mathbf{y}_1^n)=u_n(b)$ by \eqref{eq: useful formula and correct2}, this proves the first assertion.  
Boundedness follows from the definition of $\mathbf{y}_\infty(b)$.

It is clear from the definition that 
\begin{equation*}\label{eq: equivariance for the map}
\mathbf{y}_\infty(ib) = - \varrho^{-1} \mathbf{y}_i + \varrho^{-1}\mathbf{y}_\infty(b)  \ \ \text{ (for $i=1, \ldots, k$).}
\end{equation*}
This implies \eqref{eq: equivariance property for varieties}.

For the last assertion we follow standard arguments,  see e.g. \cite[Chap. 3]{BQbook}.
Let $\hat h_i : \mathfrak{u}_\sigma \to \mathfrak{u}_\s$ be the affine map appearing in 
\eqref{eq: equivariance property for varieties}, that is, 
$$\hat h_i (x) \df \varrho^{-1}\left( x -\mathbf{y}_i\right).$$ 
Identifying $\mathfrak{u}_\s$ with $\mrm{U}_\s$ via the exponential map, 
 we get an affine random walk on $\mrm{U}_\s$ induced by the maps $\hat h_i$ and the measure $\hat \mu = \sum p_i \delta_{\hat h_i}$. 

Suppose by contradiction that for some subvariety $\Ucal \subset \gU_\s,$ we have that there is a subset of $b$, of positive measure with respect to $\beta$, for which $u_\infty(b) \in \Ucal$. 
Let $\theta_b$ denote the Dirac measure  on $u_\infty(b)$ and 
let
$$\theta \df \int \theta_b\, \der \beta(b).$$ It follows  from Proposition \ref{prop: commutes with RW} and \eqref{eq: equivariance property for varieties} that $\theta$ is $\hat \mu$-stationary, and by our assumption, $\theta(\Ucal)>0$. 
Let $\Ucal_0$ be an irreducible subvariety of minimal dimension for which $\theta(\Ucal_0)>0$, and assume further that $\theta(\Ucal_0)$ is the maximal mass assigned by $\theta$ to irreducible subvarieties in this dimension.  
It follows from the equivariance property $u_\infty(ib) = \hat h_i( u_\infty(b))$, maximality of the mass, and from stationarity that $\theta\left(\hat h_i \left (\Ucal_0 \right) \right) = \theta(\Ucal_0) >0$ for each $i$. 
By  minimality of dimension, and by irreducibility of $\Ucal_0$, for each $i$ and $j$, the images  $\hat h_i(\Ucal_0)$ satisfy that either $\theta \left(\hat h_i \left(\Ucal_0 \right) \cap \hat h_j \left(\Ucal_0\right)\right)=0$ or $\hat h_i(\Ucal_0) = \hat h_j(\Ucal_0)$. This means that the $\hat \mu$ random walk induces a permutation action on a finite collection of such subvarieties. 

Let $\hat \Gamma \subset \mathrm{Aff}(\mathfrak{u}_\s)$ be the subgroup of affine maps of $\mathfrak{u}_\s$ generated by the maps $\hat h_i$. Then there is a finite index subgroup $\hat \Gamma_0 \subset \hat \Gamma$ which preserves $\Ucal_0.$ By Lemma \ref{lem:Zariski closure and finite index}\eqref{item:Zariski closure of finite index subgroups of Gamma_sigma}, the Zariski closure of $\hat \Gamma_0$ in $\mrm{Aff}(\mathfrak{u}_\s)$ coincides with the Zariski closure of 
$\hat \Gamma$ and contains the full group of translations $\mathfrak{u}_\s$.
Thus $\Ucal_0$ is invariant under translation by all elements of $\mathfrak{u}_\s$. This implies that $\Ucal = \mathfrak{u}_\s$, so there is no proper subvariety of $\mathfrak{u}_\s$ which is of positive measure with respect to $\theta$.
\end{proof}

\subsubsection{Uniform growth and stationary measures on projective spaces}
Given $\s\in S$,
let $\bar{\mu}_\s$ be the projection of $\bar\mu$ to $\mrm{G}_\sigma$ and denote by $\Gamma_{\bar{\mu}_\s}$ the subgroup of $\mrm{G}_\sigma$ generated by its support. 
The main result of this subsection is the following statement. 

\begin{prop}\label{prop:measures on proj spaces}
    Let $\s\in \Sue$ and let $V$ be a 
     $\Q_\s$-algebraic 
    representation of $\mrm{G}_\sigma$. 
    Then, every $\bar{\mu}_\s$-stationary measure $\eta$ on $\mathbb{P}(V)$ is a Dirac mass on a line through an eigenvector of $a(\varrho)$ which is fixed by $\gU_\s$. In particular, the support of $\eta$ is a fixed point of $\Gamma_{\bar \mu_\s}$.


    \end{prop}

 Given a $\Q_\s$-algebraic representation $\mrm{G}_\s \to \mathrm{GL}(V)$, given $a \in \mrm{G}_\s$, and given an eigenvalue $\lambda \in \Q_\s$ for the action of $a$ on $V$, we say that $\lambda$ is {\em maximal in $V$} if for any eigenvalue $\lambda'$ for $a$ in $V$, we have $|\lambda|_\s \geq |\lambda'|_\s$.

\begin{lem}\label{lem: unique maximal eigenvalue}
    Let $V$ be a $\Q_\s$-algebraic representation of $\gG_\sigma$. Then, there is a unique maximal eigenvalue in $V$  for the action of $a(\varrho)$. If this maximal eigenvalue is of modulus 1, then $V$ is the trivial representation. 
\end{lem}

\begin{proof}
By Lemma \ref{lem:Zariski closure and finite index}\eqref{item: Zariski closure of a(rho)}, the $\Q_\sigma$-Zariski closure of the cyclic group $\langle a(\varrho)\rangle$ is the torus $T= \{a(t): t \in \Q^\ast_\sigma\}$. In particular, eigenvalues of $a(\varrho)$ are restrictions of joint eigenvalues of $T$ in $V$, i.e., of algebraic characters of $T$ to $a(\varrho)$. Moreover, the group $\mrm{Hom}(T,\Q_\s^\ast)\cong \Z$ of algebraic characters consists of homomorphisms of the form $a(t) \mapsto t^n$, for some integer $n\in\Z$. Hence, since $|\varrho|_\sigma \neq 1$, this implies uniqueness of the eigenvalue of maximal modulus. 
If this maximal value is of modulus one, then the action of $a(\varrho)$  $V$ is 
trivial, and thus
$a(\varrho)$ lies in the kernel of the homomorphism $\gG_\sigma\to \mrm{PGL}(V)$. Since $\gG_\s$ is simple, the kernel of this homomorphism is either finite or all of $\gG_\s$. Since $\s\in \Sue$, $|\varrho|_\s\neq 1$ and hence $a(\varrho)$ generates an infinite cyclic group. It follows that all of $\gG_\s$ acts trivially on $V$.
\end{proof}

We denote the set of maximal eigenvalues in $V$ by $\Lambda_{\max}(V)$, the eigenspace corresponding to an eigenvalue $\lambda$ by $V^{\lambda}$, let $\lambda_{\max}(V) = |\l|_\s$ for some (any) $\l \in \Lambda_{\max}(V)$, and write 
$$V^{\max} \df \bigoplus_{\l \in \Lambda_{\max}(V)}V^\l.$$
We will show: 
\begin{prop}\label{prop:measures on proj spaces2}
    Let the notation be as in Proposition \ref{prop:measures on proj spaces}. 
   Let $V=\oplus_j V_j$ be a decomposition of  $V$ into $\mrm{G}_\sigma$-irreducible representations,
    and let
    $$V_\star \df \bigcup_{\delta >0} \bigoplus_{j:\l^{\max}(V_j)=\delta}   V_j^{\max}.$$
    Then every $\bar{\mu}_\s$-stationary measure on $\mathbb{P}(V)$ is supported on $\P(V_\star)$.
\end{prop}

\begin{proof}[Proof of Proposition \ref{prop:measures on proj spaces} assuming Proposition \ref{prop:measures on proj spaces2}]
We first note that each $v \in V_\star$ is fixed by the group $\gU_\s$. Indeed, it suffices to show this for $v \in V^{\l}_j$ for some irreducible component $V_j$ and some $\l \in \Lambda_{\max}(V_j).$ In this case, the fact that $\gU_\s$ acts trivially on $V_j^\l$ follows since $\gU_\s$ is the expanding horospherical subgroup of $\gG_\s$ associated to $a(\varrho)$, i.e.,
 since $\mathfrak{u}_\s $ is the eigenspace of maximal eigenvalue for $\Ad(a(\varrho))$ acting on $\mathfrak{g}_\s$.
Since each $\bar h_i$ is of the form $a(\varrho)u_i$ for some $u_i \in \gU_\s$, it follows that the action of the $\bar \mu_\s$ random walk on $\P(V_\star)$ factors through the (deterministic) $\Z$-action generated by the map induced by $a(\varrho)$. Thus any stationary measure is $a(\varrho)$-invariant. 
Finally, by Lemma \ref{lem: unique maximal eigenvalue}, since $a(\varrho)$ acts by multiplication by a scalar of modulus $\delta$ on $\bigoplus_{\lambda^{\max}(V_j) = \delta} V_j^{\max}$, we see that $a(\varrho)$ fixes $\P(V_\star)$ pointwise. Hence, every $a(\varrho)$-ergodic invariant measure is a Dirac mass on a fixed point by $a(\varrho)$ and $\gU_\s$, and hence also by the group
$\Gamma_{\bar{\mu}}$. This completes the proof.
\end{proof}

We will denote the action of $g \in \mrm{G}_\s$ on $V$ by $(g,v) \mapsto g \cdot v$. 
Denote 
$$V^{<\max} \df \bigoplus_{\lambda' \notin \Lambda_{\max}(V)} V^{\lambda'},$$
and let $\pi_{\max}$ denote the projection to $V^{\max}$ parallel to $V^{<\max}$.
As a first step toward the proof of Proposition \ref{prop:measures on proj spaces2}, we give the following lemma regarding uniform growth in irreducible representations of $\mrm{G}_\sigma$. Its proof adapts ideas from \cite{SimmonsWeiss}.

\begin{lem}\label{lem:uniform growth in irreps}
     Let $\s\in \Sue$ and let $V$ be a 
     representation of $\mrm{G}_\sigma$, endowed with a norm $\norm{\cdot}$.
     Then, for every non-zero $v\in V$, we have that for $\b$-almost every $b$, 
     \begin{align*}
         \liminf_n \norm{\pi_{\max}(u_n(b)\cdot v)} >0.
     \end{align*}
\end{lem}
\begin{proof}
By decomposing $V$ into irreducible subrepresentations, we may assume that $V$ is irreducible. 
Fix a non-zero $v\in V$. 
 Let
  $$\Ucal_v \df \left\{u\in \gU_\s :  u\cdot v \in V^{<\max}\right \}.$$
Since $u_n(b) \to u_\infty(b)$, it suffices to show that 
for $\beta$-a.e. $b \in B$, $u_\infty(b) \notin \mathcal{U}_v.$
In light of Lemma \ref{lem:zero mass on proper subvarieties}, it suffices to show that $\mathcal{U}_v$ is a proper subvariety of $\mrm{U}_\s.$

To see this, note first 
that  $\Ucal_v$ is characterized by vanishing of the polynomial map $u\mapsto \pi_{\max}(u\cdot v)$, and, hence, it is a subvariety of $\mrm{U}_\s$.
Now
suppose for a contradiction that $\gU_\s \cdot v \subset V^{<\max}$. 
Note that the adjoint action of $\Ad(a(\varrho))$ on $\mathfrak{g}_\s$ has three eigenvalues $\lambda^-, \lambda^0, \lambda^+$, satisfying 
$$
|\lambda^-|_\s < 1 = \lambda^0 < |\lambda^+|_\s,
$$
and $\mathfrak{u}_\s$ is the eigenspace for $\lambda^+$.
Let 
$$\gP^-_\s = \left\{g\in \mrm{G}_\sigma: \{a(\varrho^n) g a(\varrho^{-n}) : n\geq 1\} \text{ is bounded} \right \}.$$
That is, its Lie algebra $\mathfrak{p}^-_\s$ is the sum of the $\lambda^-$ and $\lambda^0$ eigenspaces. 
This implies that $V^{<\max}$ is $\gP^-_\s$-invariant, and hence $\gP^-_\s \gU_\s \cdot v \subset V^{<\max}$. But $\mathfrak{u}_\s \oplus \mathfrak{p}^-_\s = \mathfrak{g}_\s$ and thus $\gP^-_\s \gU_\s$ is Zariski-dense in $\mrm{G}_\sigma$. We deduce  $\mrm{G}_\sigma \cdot v \subset V^{<\max}$. In particular, the span of the orbit $\mrm{G}_\sigma \cdot v$ is a proper invariant subspace of $V$, which contradicts irreducibility of $V$.
\end{proof}

\begin{proof}[Proof of Proposition \ref{prop:measures on proj spaces2}]
Let $\eta$ be an ergodic stationary measure on $\P(V)$ and let $v \in V.$
Let $V = \bigoplus_j V_j$ be a decomposition of $V$ into irreducibles, let $\pi_j: V \to V_j$ be the corresponding projections, and let
$$\mathcal{J}(v) = \{j : \pi_j(v)\neq 0\}.$$
Finally let 
$$\l_{\max} (v)\df \max \{\l_{\max}(V_j) : j \in \mathcal{J}(v)\}, \ \ \ \ \ \ V_{\mathcal{J}} \df \bigoplus_{j \in \mathcal{J}(v)} V_j.$$  
 Writing $b_1^n=a(\varrho^n)u_n(b)$ as before, denoting by $\| b_1^n\|$ the operator norm of $b_1^n$ acting on $V_{\mathcal{J}}$, and writing $\lambda_{\max} = 
 \lambda_{\max}(v)$, 
 we have from Lemma \ref{lem:zero mass on proper subvarieties} that the set $\seti{u_n(b):b\in B, n\in\N}$ is pre-compact in $\gU_\s$, and hence 
 that $\norm{b_1^n}\asymp \l_{\max}^n$ uniformly in $n$ and $b$.
Moreover, 
    \begin{align*}
        b_1^n \cdot v = \l_{\max}^n \pi_{\max}(u_n(b)v) + o(\l_{\max}^n).
    \end{align*}
This estimate together with Lemma \ref{lem:uniform growth in irreps} imply that all projective accumulation points of $(b_1^n \cdot v)$ belong to $\mathbb{P}(V_{\mathcal{J}}^{\max}) \subseteq \P(V_\star)$. It follows that the limit measures $\eta_b$ of $\eta$, defined in \eqref{eq: def nu b}, are supported on $\mathbb{P}(V_\star)$ almost surely.
From \eqref{eq: stationarity equivariance2}, we conclude that $\eta$ is supported on $\mathbb{P}(V_\star)$.
\end{proof}

As in the proof of Proposition \ref{prop:measures on proj spaces}, since the $\bar \mu$ random walk in the $\Sdt$ places is actually deterministic, and consists of the $\Z$-action induced by the action of $a(\varrho)$, and since the random walk in the $\Str$ places is trivial, we have the following. 

\begin{lem}
    \label{lem:meas on proj spaces Sdt}
    Let $\s\in \Sdt \cup \Str$ and let $V$ be a representation of $\mrm{G}_\sigma$.
        Then, every $\bar{\mu}_\s$-stationary measure $\eta$ on $\mathbb{P}(V)$ is a Dirac mass on a line through an eigenvector of $a(\varrho)$. In particular, the support of $\eta$ is a fixed point of $\Gamma_{\bar \mu_\s}$.
\end{lem}

\subsection{Concluding the proof}\label{subsec:conclude stationary main}

In this section, we complete the proof of Theorem \ref{thm: stationary main}. We recall the statement of the theorem for the reader's convenience.

\begin{thm-nonumber}
    Let $\Phi$ be an irreducible carpet IFS satisfying the open set condition, let $\bar h_i$ be as in \eqref{eq: def h'},  let $\mathbf{p}$ be a probability vector, let $\bar \mu$ be as in \eqref{eq: def mu bar}, and let $\Wst$ be as in \eqref{eq: def Wst}. Then for any $\bar \mu$-ergodic $\bar \mu$-stationary measure $\nu$, one of the following holds:
    \begin{enumerate}
    \item  \label{item: case 2 repeat}
    $
    \nu\left( \left\{x \in \X_{d+1}^S: \mathrm{Stab}(x) \cap W^{\mathrm{st}} \neq \{\mathrm{Id} \} \right\} \right) =1.$
    \item \label{item: case 1 repeat}
    there is $g_0 \in G^S$ and  a closed subgroup $M \subset G^S,$  such that for $x_0 = g_0 \Lambda^S \in \X_{d+1}^S$, 
    we have:
    \begin{enumerate}[(A)]
    \item \label{item: A list repeat} $\nu$ is $M$-invariant;
    \item \label{item: B list repeat} $Mx_0$ is a closed orbit and $M \cap \mathrm{Stab}(x_0)$ is a lattice in $M$; 
        \item \label{item: B1 list repeat}
    The conjugate $g_0^{-1} M g_0$ is of finite index in $\mathbf{M}(\Q_S)$, where $\mathbf{M}$ is a $\Q$-algebraic subgroup of $\mathbf{M}$;
    \item \label{item: C list repeat}
    $N_1(M)x_0$ is a closed orbit satisfying
    $\nu(N_1(M)x_0)=1$; 
    \item \label{item: D list repeat}
    $\bar h_i \in N_1(M)$ for each $i$; 
    \item \label{item: E list repeat} the element $b$ of \eqref{eq: def a2} normalizes $M$, and we have $b_* m_{M}=c\, m_{M}$ for $c<1$.
    \end{enumerate}
    \item \label{item: case 3 repeat} $\nu = m_{\X_{d+1}^S}.$
    \end{enumerate}
\end{thm-nonumber}

\begin{proof}[Proof of Theorem \ref{thm: stationary main}]
We will assume that $\nu$ is an ergodic stationary measure for the $\bar \mu$ random walk, which satisfies the 
hypotheses of Theorem~\ref{prop:NonAlignmentLimitMeasures} and is not equal to $m_{\XS}$, 
and conclude that the statements in item \eqref{item: case 1 repeat} of Theorem \ref{thm: stationary main} hold.



Let $\nu_z, W_z$ be as in the conclusion of Theorem \ref{thm:AdditionalInvariance}, where $z = (b,x) \in B \times \XS$.
Note that for every $\s\in S$, $b\in B$ and $n\in \N$, the action of $\Ad(b_1^n)$ on $\mathfrak{u}_{\mrm{ue}} = \Lie (\gU_\s)$ is given by multiplication by a scalar.
In particular, $b_1^n$ normalizes every subgroup of $\gUue$.
Hence, the equivariance property of the groups $W_z$ in \eqref{eq:equivariance of W_z} and ergodicity imply that $W_z$ is constant almost surely.
Denote by $W$ the almost sure common value of these groups.
It follows that $\nu_z$ is $W$-invariant almost surely, and hence 
$\nu$ is $W$-invariant.
Since the group $\Gamma_{\bar\mu}$ is generated (as a group) by the elements $b_1^n$, we have from  
\eqref{eq:equivariance of W_z} that  
$\Gamma_{\bar\mu}$ normalizes $W$.

Let us write $\nu= \int_{\XS}\nu_x \, \der \nu(x)$ for the decomposition of $\nu$ into its $W$-ergodic components $\nu_x$.
By Ratner's theorem \cite{Ratnerpadic}, for $\nu$-almost every $x \in \XS$, there is a group $M(x)$ containing $W$ such that the orbit $M(x) \, x$ is closed and equal to $\overline{Wx}$.
Moreover, by \cite{Tomanov}, if $g_0\in G^S $ satisfies $x = g_0\Gamma^S$ then the Zariski closure $\mathbf{M}$ of the group  $M'(g_0,x) \df g_0^{-1}M(x)g_0$ is a $\Q$-algebraic subgroup of $ \mathbf{G} $ and  $M'(g_0,x)$ is a finite-index subgroup of  $\mathbf{M}(\Q_S)$.

We denote by $\mathfrak{m}(x)$ the Lie algebra of $M(x)$, and claim that for $\nu$-almost every $x \in \XS$ and every $g \in \mathrm{supp} (\bar \mu^{*n})$, we have 
$\mathfrak{m}(gx) = \Ad(g)\mathfrak{m}(x) .$
Indeed, since $\Gamma_{\bar \mu}$ normalizes $W$, for $g 
\in \Gamma_{\bar \mu}
$
and $\nu$-almost every $x$, we have 
$$g^{-1}M(gx)gx = g^{-1}\overline{Wgx} = \overline {g^{-1}Wgx}= \overline{Wx}=M(x)x. $$
Thus the groups $M(gx)$ and $gM(x)g^{-1}$ coincide in a neighborhood of the identity, and the claim follows. 

We can assign to $M(x)$  the dimension $\dim (\mathbf{M})$, where $\mathbf{M}$ is the algebraic group above. Then $\mf{m}(x) = \prod_{\s \in S} \mf{m}_\s(x)$, where $\mf{m}_\s$ is the Lie algebra of $M_\s$, and $\mf{m}_\s(x)$ is a vector space over $\Q_\s$ of dimension $\dim (\mathbf{M})$. We denote this dimension by $\dim (\mf{m}(x)). $
Then $\dim (\mf{m}(x) )= \dim (\Ad(g) \mf{m}(x)) =\dim( \mf{m}(gx))$ for $g \in \supp(\bar \mu^{*n})$, and hence 
by ergodicity, $\dim (\mathfrak{m}(x))$ is a.e.-constant, and we denote its almost sure value by $k_0$. 
For each $\s \in S$, we define a measure $\theta $ on $\mathbb{P}(\bigwedge^{k_0}\mathfrak{g}_\s)$ by $\theta \df \int \delta_{[\mathfrak{m}(x)_\s]} \, \der \nu(x),$
where $[\mathfrak{m}(x)_\s]$ is the line spanned by $\bigwedge^{k_0} \mathfrak{m}(x)_\s $ in $\bigwedge^{k_0} \mathfrak{g}_\s.$ Then, by Proposition \ref{prop: commutes with RW},  $\theta$ is stationary and ergodic for the $\bar \mu$ random walk induced by the action of $\bigwedge^{k_0}\Ad(\bar h_i)$ on $\mathbb{P}(\bigwedge^{k_0} \mathfrak{g}_\s)$. By Proposition \ref{prop:measures on proj spaces} and Lemma \ref{lem:meas on proj spaces Sdt}, 
$\theta$ is supported on $\Gamma_{\bar \mu}$ fixed points. This implies that $\Gamma_{\bar \mu}$ normalizes $\mathfrak{m}(x)_\s$ for every $\s \in S$ and $\nu$-a.e. $x$, and hence normalizes $M(x)$ for $\nu$-a.e. $x$.

By ergodicity, this implies that $M(x)$ is a.e.\ constant, and we denote the almost sure value of $M(x)$ by $M$. Choosing $x_0 = g_0 \Lambda^S$ in the set of full measure for which $M(x_0) = M$ and $Mx_0 = \overline{Wx_0}$ is a finite volume homogeneous subspace of $\X$, we see that assertions \eqref{item: A list repeat}, \eqref{item: B list repeat} and \eqref{item: B1 list repeat} hold. Furthermore, we have obtained that each $\bar h_i$ normalizes $M$.

In order to prove \eqref{item: D list repeat} we 
let $N(M)$ denote the normalizer of $M$, and define
\begin{equation}\label{eq: definition of phi}
\varphi: N(M) \to \R_{>0}, \ \ \ \ \varphi(g) = \prod_{\s \in S} \left|\det(\Ad(g)|_{\mf{m}_\s})\right|_\s.
\end{equation}
Then, by \cite[Chapter 3, Proposition 55(ii)]{Bourbaki-LieGroupsAlgebrasChap1-3}, $\varphi$ is a homomorphism which records the effect of conjugation on the Haar measure of $M$; i.e., $\Ad(g)_*m_M = \varphi(g) \, m_M $ for all $g \in N(M)$. 
We need to show that $\varphi$ is trivial on $\Gamma_{\bar \mu}$. Clearly, for $\s \in \Sdt \cup \Str,$, $(\bar h_i)_\s$ is independent of $i$ and hence $|\det(\Ad(\bar h_i)|_{\mf{m}_\s})|_\s$ is also  independent of $i$. 
By Proposition \ref{prop:measures on proj spaces}, for $\s\in \Sue$, we have $|\det(\Ad(\bar h_i)|_{\mf{m}_\s})|_\s$ is equal to $ |\det(\Ad(a(\varrho))|_{\mf{m}_\s})|_\s$, since the latter is the modulus of the eigenvalue of $a(\varrho)$ in its action on $\bigwedge^{k_0}\mf{m}_\s$. Hence, $\varphi(\bar h_i)$ is independent of $i$. We denote  $\varphi(\bar h_i) = c_0$. Consider the forward random walk transformation 
$$R: B \times X \to B \times X,  \ \ \ \ R(b,x)\df \left(Tb, \bar h_{b_1}x \right). $$
Then $R$
 preserves the measure $\beta \otimes \nu$. Define 
$$f: B \times X \to \R_{>0}, \ \ \ \ f(b,x) \df \varphi(\bar h_{b_1}),$$
then $f \circ R = c_0 f$ and hence
$$\int f \, \der \beta \otimes \nu = \int f \circ R \, \der \beta \otimes \nu = c_0 \int f \, \der \beta \otimes \nu.$$
This implies $c_0 = 1 $, and proves \eqref{item: D list repeat}. 

For item \eqref{item: C list repeat}, since $\Gamma_{\bar\mu}\subset N_1(M)$ by item \eqref{item: D list repeat}, every $N_1(M)$ orbit is invariant by the random walk.
Moreover, by \cite[Thm. 3.4]{DM_linearization} (see also \cite[\textsection 4d]{Handbook_KSS})\footnote{The cited reference proves this statement for quotients of real Lie groups, but the proof extends readily to our $S$-arithmetic setting.}, the orbit $N_1(M)y$ closed whenever $My$ has finite volume, and hence $N_1(M)x$ is closed for $\nu$-a.e.\ $x$.
By ergodicity of $\nu$, and since the random walk preserves each $N_1(M)x$, we see that $\nu$ lives on a single closed orbit of $N_1(M)$. 

Finally, for \eqref{item: E list repeat} we let $b = (b_\s)_{\s \in S}$ be as in \eqref{eq: def a2} and note that $b_\s$ is trivial for $\s \in \Sdt \cup \Str$, and that for $\s \in \Sue$, the action of $\Ad(b)$ on $\bigwedge^{k_0} \mf{m}_\s$ is the same as the action of $\Ad(\bar h_i^{-1})$ for any $i$. In particular $b \in N(M)$. Let $m_M$ denote the Haar measure on $M$ and let $c >0$ be such that $b_* m_M = c\,m_M$. Then, we have  $c = \varphi(b)$, where $\varphi$ is as in \eqref{eq: definition of phi}. By Propositions \ref{prop:measures on proj spaces} and \ref{prop:measures on proj spaces2}, for $\s \in \Sue$,  the vector $\bigwedge^{k_0} \mf{m}_\s$ is an eigenvector for a maximal eigenvalue $\delta$ of $a(\varrho)$ in some representation $V$ of $\gG_\s$.
Moreover, we have
$$\Ad(\bar h_i)\cdot \bigwedge^{k_0} \mf{m}_\s = a(\varrho) \cdot \bigwedge^{k_0} \mf{m}_\s = \delta \bigwedge^{k_0} \mf{m}_\s.$$
In particular, $c=\prod_{\s\in \Sue}|\delta|_\s^{-1}$.
We wish to show that $c<1$, which verifies item \eqref{item: E list repeat} and completes the proof of Theorem \ref{thm: stationary main}.
To this end, it suffices to show that $|\delta_\s|>1$ for all $\s\in \Sue$. 
Fix some $\s\in\Sue$, and suppose that $|\delta|_\s=1$.
Then, by Lemma \ref{lem: unique maximal eigenvalue}, we obtain that $\gG_\s$ acts trivially on the representation $V$ containing $\bigwedge^{k_0}\mf{m}_\s$.
Hence, $\bigwedge^{k_0} \mf{m}_\s$ is fixed by $\gG_\s$, i.e., $\mf{m}_\s$ is an ideal in $\mf{g}_\s$.
By simplicity of $\mf{g}_\s$, we conclude that this can only hold if $\mf{m}_\s=\mf{g}_\s$, which in turn implies that $\nu=m_{\XS}$, since the only $\gG_\s$-invariant measure is $m_{\XS}$. This contradicts our hypothesis that $\nu\neq m_{\XS}$ and shows that $|\delta|_\s\neq 1$. As $\delta$ is a maximal eigenvalue, we further conclude that $|\delta|_\s >1$. This being true for all $\s\in \Sue$, it follows that $c<1$ as desired.
\end{proof}

\bibliographystyle{amsalpha}
\bibliography{bibfile}

\end{document}